\newcommand{\mapto}{\ensuremath{\rightarrow}}
\newcommand{\approach}{\ensuremath{\rightarrow}}
\newcommand{\imply}{\ensuremath{\Rightarrow}}
\newcommand{\equivalent}{\ensuremath{\Longleftrightarrow}}
\newcommand{\A}{\mathcal{A}}
\newcommand{\X}{\mathcal{X}}
\newcommand{\N}{\mathbb{N}}
\newcommand{\R}{\mathbb{R}}
\newcommand{\Fcal}{\mathcal{F}}
\newcommand{\la}{\langle}
\newcommand{\ra}{\rangle}
\newcommand{\F}{\mathcal{F}}
\renewcommand{\H}{\mathcal{H}}
\renewcommand{\L}{\mathcal{L}}
\def\Im{{\rm Im}}
\def\trace{{\rm tr}}
\def\HS{{\rm HS}}
\def\Sym{{\rm Sym}}
\def\tr{{\rm tr}}
\def\Tr{{\rm Tr}}
\newcommand{\W}{\mathcal{W}}
\def\1{\mathbf{1}}
\newcommand{\detX}{{\rm det_X}}
\newcommand{\trX}{{\rm tr_X}}
\newcommand{\Ncal}{\mathcal{N}}
\newcommand{\Bsc}{\mathscr{B}}
\newcommand{\Csc}{\mathscr{C}}
\newcommand{\Ker}{{\rm Ker}}
\newcommand{\logdet}{{\rm logdet}}
\newcommand{\Real}{{\rm Re}}
\newcommand{\detn}{{\rm det_n}}
\newcommand{\detone}{{\rm det_1}}
\newcommand{\dettwo}{{\rm det_2}}
\newcommand{\Com}{{\rm Com}}
\newcommand{\Bcal}{\mathcal{B}}
\newcommand{\pr}{{\rm pr}}
\newcommand{\post}{{\rm post}}
\newcommand{\KL}{{\rm KL}}
\newcommand{\Rrm}{{\rm R}}
\newcommand{\Brm}{{\rm B}}
\newcommand{\Hrm}{{\rm H}}
\newtheorem{assumption}{Assumption}
\title{Regularized divergences between covariance operators and Gaussian measures on Hilbert spaces}
\titlerunning{Regularized divergences between Gaussian measures on Hilbert spaces}   
\author{H\`a Quang Minh}
\institute{
	\at
	RIKEN Center for Advanced Intelligence Project, 1-4-1 Nihonbashi, Chuo-ku,
 Tokyo 103-0027, JAPAN
 \\
\email{minh.haquang@riken.jp}
}
\begin{document}
\maketitle

\begin{abstract}
	This work presents an infinite-dimensional generalization of the correspondence between
	the Kullback-Leibler and R\'enyi divergences between Gaussian measures on Euclidean  space 
	and the Alpha Log-Determinant divergences between symmetric, positive definite matrices.
	Specifically, we present the regularized  Kullback-Leibler  and R\'enyi divergences between covariance operators and Gaussian measures on an infinite-dimensional Hilbert space, which are defined using the infinite-dimensional Alpha Log-Determinant divergences between positive definite trace class operators.  We show that, as the regularization parameter approaches zero, the regularized Kullback-Leibler and R\'enyi divergences between two equivalent Gaussian measures on a Hilbert space converge to the corresponding true divergences. The explicit formulas for the divergences involved are  presented in the most general Gaussian setting.
\keywords{	Gaussian measures \and Hilbert space \and covariance operators \and
	Kullback-Leibler divergence \and R\'enyi divergence \and regularized divergences}	
\subclass{28C20 \and 60G15 \and 47B65 \and 15A15}
\end{abstract}





\section{Introduction}

This work is concerned with the correspondence between divergences between covariance operators
and the corresponding Gaussian measures on an infinite-dimensional Hilbert space. Specifically, we study the correspondence between
the infinite-dimensional Alpha Log-Determinant (Log-Det) divergences between covariance operators on a Hilbert space $\H$ and  the Kullback-Leibler and R\'enyi divergences, together with related quantities, between Gaussian measures on $\H$.

In the finite-dimensional setting, let $\Sym^{++}(n)$ denote the set of symmetric, positive definite (SPD) matrices. Then a divergence on $\Sym^{++}(n)$ correspond to a divergence on the set
of zero-mean Gaussian measures on $\R^n$ with strictly positive covariance matrices.
In particular, the Alpha Log-Det divergences \cite{Chebbi:2012Means}
on $\Sym^{++}(n)$ correspond to the Kullback-Leibler  and R\'enyi divergences between zero-mean Gaussian measures
on $\R^n$.

The infinite-dimensional generalization of the finite-dimensional setting requires substantially more
mathematical machinery. It is not straightforward, for instance, to define Log-Determinant divergences 
between covariance operators on an infinite-dimensional Hilbert space $\H$, which are trace class operators, thus have vanishing eigenvalues and therefore unbounded inverses and principal logarithms. In \cite{Minh:LogDet2016},
the author generalized the Alpha Log-Det divergences on $\Sym^{++}(n)$
to the set of positive definite trace class operators on $\H$
of the form $A+\gamma I > 0$, where $A$ is trace class, $\gamma \in \R, \gamma > 0$,
and $I$ is the identity operator.
This was subsequently generalized to the infinite-dimensional Alpha-Beta Log-Det divergences between positive definite trace class operators
\cite{Minh:LogDet2016-AB} and on the more general set of positive definite Hilbert-Schmidt operators 
\cite{Minh:GSI2017}.
Other distance functions on the set of  positive definite Hilbert-Schmidt operators 
include the affine-invariant Riemannian distance \cite{Larotonda:2007}\cite{Minh:GSI2015} and the Log-Hilbert-Schmidt distance \cite{MinhSB:NIPS2014}.

For a fixed $\gamma > 0$, each of the above divergence/distance functions automatically
becomes a divergence/distance function between covariance operators on $\H$.
In particular, for covariance operators on reproducing kernel Hilbert spaces (RKHS),
they all admit closed form expressions that can readily be employed in practical applications, 
see e.g.
\cite{MinhSB:NIPS2014,Minh:CVPR2016,Minh:Covariance2017}.
In computer vision and pattern recognition, other papers 
employing
this approach
include 
in \cite{ProbDistance:PAMI2006} and \cite{Covariance:CVPR2014}, in which 
Bregman divergences between RKHS covariance operators 
are applied to problems in object recognition and texture classification, among others.


It is not clear, however, how all of the above functions relate to the divergence/distance functions
between Gaussian measures on the Hilbert space $\H$, such as the Kullback-Leibler or R\'enyi divergences, as is the case in the finite-dimensional setting.
The aim of this work is to establish these correspondences
in the case of the infinite-dimensional Alpha Log-Det divergences.


{\bf Contributions}. The following are the main contributions of the current work.

\begin{enumerate}
	\item We study regularized versions of the Kullback-Leibler and R\'enyi divergences
	between covariance operators and Gaussian measures on Hilbert spaces, using
	the infinite-dimensional Alpha Log-Det divergences. We show that
	for two {\it equivalent} Gaussian measures on $\H$,
	the regularized Kullback-Leibler and R\'enyi divergences 
	converge to the corresponding true Kullback-Leibler and R\'enyi divergences, respectively,
	as the regularization parameter $\gamma \approach 0$.
	
	\item As part of the proof, we derive the explicit formulas for the Radon-Nikodym derivative and 
	the true Kullback-Leibler and R\'enyi divergences between two equivalent Gaussian measures
	$\Ncal(m,C)$, $\Ncal(m_0,C_0)$ on $\H$, under the most general setting. These formulas generalize
	those available in the current literature, which assume either $C_0 = C$ or
	$m_0 = m = 0$. We illustrate this with the computation of the Kullback-Leibler divergence
	between the posterior and prior probability measures, under the Gaussian setting, in a Bayesian inverse problem on Hilbert spaces.
\end{enumerate}

\textbf{Organization}. 
The paper is structured as follows.
In Section \ref{section:definitions}, we present the definitions of the regularized 
divergences between covariance operators and Gaussian measures on $\H$, using the
Alpha Log-Det divergences. Section \ref{section:main-results} summarizes
the main results on the convergence of the regularized divergences to the true divergences.
The proofs for the convergence are given in Sections 
\ref{section:limit-KL} and \ref{section:limiting-Renyi}.
In Section \ref{section:exact-divergences}, we present
the explicit formulas for the Radon-Nikodym derivative and the true Kullback-Leibler and 
R\'enyi divergences between two equivalent Gaussian measures on $\H$.

{\bf Notation}.
Throughout the paper, we assume that $\H$ is a real separable Hilbert space, with $\dim(\H) = \infty$, unless explicitly stated otherwise.
Let $\L(\H)$ be the Banach space of bounded linear operators on $\H$, with operator norm $||\;||$.
Let $\Sym(\H) \subset \L(\H)$ denote the subspace of bounded, self-adjoint operators on $\H$. Let $\Sym^{+}(\H) \subset \Sym(\H)$ denote the set of self-adjoint, {\it positive} operators on $\H$, that is $A \in \Sym^{+}(\H) \equivalent \la x, Ax\ra \geq 0$ $\forall x \in \H$. Let $\Sym^{++}(\H) \subset \Sym^{+}(\H)$ denote the set of self-adjoint, {\it strictly positive} operators on $\H$, that is $\A \in \Sym^{++}(\H)
\equivalent \la x, Ax\ra > 0$ $\forall x \in \H, x\neq 0 $, or equivalently, $\ker(A) = \{0\}$.

\section{Main definitions}
\label{section:definitions}

We first present the definitions of the key concepts involved in the paper, namely the infinite-dimensional Alpha Log-Determinant divergences and the corresponding regularized divergences between Gaussian measures on Hilbert spaces.  Many of these concepts 
were first introduced in \cite{Minh:LogDet2016}.

\subsection{Infinite-dimensional Alpha Log-Det divergences between positive definite trace-class operators}
In \cite{Minh:LogDet2016}, we introduced the following infinite-dimensional divergences between positive definite trace class operators on a Hilbert space $\H$, which generalize the 
Alpha Log-Determinant divergences between SPD matrices
\cite{Chebbi:2012Means}.

\begin{definition}
	[\textbf{Alpha Log-Determinant divergences between positive definite trace class operators}]
	\label{def:logdet}
	Assume that {$\dim(\H) = \infty$}. For {$-1 < \alpha < 1$},
	the Log-Det {$\alpha$}-divergence 
	$d^{\alpha}_{\logdet}[(A+\gamma I), (B+ \mu I)]$ between  $(A+\gamma I) > 0, (B+\mu I) > 0$,
	$A, B \in \Tr(\H)$, $\gamma,\mu \in \R$, 
	is defined to be
	{
		\begin{align}\label{equation:det1}
		&d^{\alpha}_{\logdet}
		[(A+\gamma I), (B+ \mu I)]
		\nonumber
		\\
		& = \frac{4}{1-\alpha^2}  
		\log\left[\frac{\detX\left(\frac{1-\alpha}{2}(A+\gamma I) + \frac{1+\alpha}{2}(B+\mu I)\right)}{\detX(A+\gamma I)^{\beta}\detX(B + \mu I)^{1-\beta}}\left(\frac{\gamma}{\mu}\right)^{\beta - \frac{1-\alpha}{2}}\right],
		\end{align}
	}
	where {$\beta = \frac{(1-\alpha)\gamma}{(1-\alpha) \gamma + (1+\alpha)\mu}$}.
The limiting cases $\alpha \approach \pm 1$ are defined by
\begin{align}
&
d^{1}_{\logdet}[(A+\gamma I), (B+\mu I)] = \left(\frac{\gamma}{\mu}-1\right)\log\frac{\gamma}{\mu} 
\nonumber
\\
&\;\;\;\;\;+ \trX[(B+\mu I)^{-1}(A+\gamma I) - I] - \frac{\gamma}{\mu}\log\detX[(B+\mu I)^{-1}(A+\gamma I)].
\label{equation:alpha+1}
\\
&
d^{-1}_{\logdet}[(A+\gamma I), (B+\mu I)] = 
\left(\frac{\mu}{\gamma}-1\right)\log\frac{\mu}{\gamma} 
\nonumber
\\
&+ \trX\left[(A+\gamma I)^{-1}(B+\mu I) - I\right] - \frac{\mu}{\gamma}\log\detX[(A+\gamma I)^{-1}(B+\mu I)].
\label{equation:alpha-1}
\end{align}
\end{definition}

In Definition \ref{def:logdet}, $\detX$ denotes the extended Fredholm determinant
defined via $\detX(A+\gamma I) = \gamma \det[(A/\gamma) + I])$,
for $A \in \Tr(\H), \gamma \in \R, \gamma \neq 0$, with $\det$ being the Fredholm determinant.
Likewise, $\trX$ denotes the extended trace, defined by $\trX(A+\gamma I) = \trace(A) + \gamma$
(see \cite{Minh:LogDet2016} for the motivations leading to these concepts).
 
In the case $\gamma = \mu$, $d^{\alpha}_{\logdet}[(A+\gamma I), (B+ \gamma I)]$ assumes a much simpler form, which directly generalizes the finite-dimensional formulas in \cite{Chebbi:2012Means}, as follows.
\begin{align}
\label{equation:det2}
d^{\alpha}_{\logdet}
[(A+\gamma I), (B+ \gamma I)]
& = \frac{4}{1-\alpha^2}  
\log\left[\frac{\detX\left(\frac{1-\alpha}{2}(A+\gamma I) + \frac{1+\alpha}{2}(B+\gamma I)\right)}{\detX(A+\gamma I)^{\frac{1-\alpha}{2}}\detX(B + \gamma  I)^{\frac{1+\alpha}{2}}}\right],
\\
d^{1}_{\logdet}[(A+\gamma I), (B+\gamma I)] &= 
\trX[(B+\gamma I)^{-1}(A+\gamma I) - I]
\nonumber 
\\
&- \log\detX[(B+\gamma I)^{-1}(A+\gamma I)].
\\
d^{-1}_{\logdet}[(A+\gamma I), (B+\gamma I)] &= 
\trX\left[(A+\gamma I)^{-1}(B+\gamma I) - I\right]
\nonumber
\\
& - \log\detX[(A+\gamma I)^{-1}(B+\gamma I)].
\end{align}
The finite-dimensional formulas are obtained by letting $A,B \in \Sym^{++}(n)$ 
and $\gamma = 0$.


From the above formulation, the following result is immediate.

\begin{theorem}
	[\textbf{Regularized divergences between covariance operators and zero-mean Gaussian measures on Hilbert spaces}]
\label{theorem:divergence-Gaussian-mean-0}
Let $-1 \leq \alpha \leq 1$ be fixed.
For each fixed $\gamma \in \R$, $\gamma > 0$, the following is a divergence on the set
$\Sym^{+}(\H) \cap \Tr(\H)$ of self-adjoint, positive trace class operators on $\H$
\begin{align}
D^{\gamma}_{\alpha}(A,B) = d^{\alpha}_{\logdet}[(A+\gamma I), (B+ \gamma I)], \;\;\; A,B \in \Sym^{+}(\H) \cap \Tr(\H).
\end{align}
Consequently, the following is a divergence on the set of Gaussian measures
on $\H$ with mean zero and  covariance operators $C_1,C_2 \in \Sym^{+}(\H) \cap \Tr(\H)$
\begin{align}
D^{\gamma}_{\alpha}[\Ncal(0, C_1), \Ncal(0,C_2)] = d^{\alpha}_{\logdet}[(C_1+\gamma I), (C_2+ \gamma I)].
\end{align}

\end{theorem}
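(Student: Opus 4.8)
The plan is to deduce the statement directly from the two basic structural properties of the infinite-dimensional Alpha Log-Det divergences established in \cite{Minh:LogDet2016}: for $-1 \le \alpha \le 1$ and any strictly positive operators $(A+\gamma I) > 0$, $(B+\mu I) > 0$ with $A,B \in \Tr(\H)$ and $\gamma,\mu > 0$, the quantity $d^\alpha_{\logdet}[(A+\gamma I),(B+\mu I)]$ is finite and non-negative, and it equals $0$ if and only if $A+\gamma I = B+\mu I$ as operators. Granting these, the proof is essentially an unwinding of definitions together with one standard fact about Gaussian measures on $\H$.

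First I would check that $D^\gamma_\alpha$ is well-defined and finite on $\Sym^{+}(\H) \cap \Tr(\H)$. For fixed $\gamma > 0$ and $A \in \Sym^{+}(\H) \cap \Tr(\H)$ we have $\langle x, (A+\gamma I)x\rangle \ge \gamma \|x\|^2 > 0$ for all $x \neq 0$, so $A + \gamma I > 0$ is strictly positive while $A$ remains trace class; hence the pair $(A+\gamma I, B+\gamma I)$ lies in the domain of Definition \ref{def:logdet}, and since $\gamma = \mu$ here, the simplified formula \eqref{equation:det2} and its $\alpha = \pm 1$ counterparts apply. All extended Fredholm determinants appearing are determinants of operators of the form (trace class) $+\,\gamma I$ with positive $\gamma I$ component, hence are well-defined strictly positive reals, so $D^\gamma_\alpha(A,B) \in [0,\infty)$ by the non-negativity property above. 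The same property gives that $D^\gamma_\alpha(A,B) = 0$ forces $A+\gamma I = B+\gamma I$, i.e. $A = B$; and $D^\gamma_\alpha(A,A) = 0$ is immediate because the numerator and denominator of the logarithm in \eqref{equation:det2} then coincide (and the corresponding trace and log terms vanish in the limiting cases). This shows $D^\gamma_\alpha$ is a divergence on $\Sym^{+}(\H) \cap \Tr(\H)$.

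For the consequence about Gaussian measures I would invoke the standard fact that a centered Gaussian measure $\Ncal(0,C)$ on the separable Hilbert space $\H$ is uniquely determined by its covariance operator $C$, that $C$ is necessarily self-adjoint, positive and trace class, and that conversely every $C \in \Sym^{+}(\H) \cap \Tr(\H)$ is the covariance of such a measure; thus $C \mapsto \Ncal(0,C)$ is a bijection onto the set of centered Gaussian measures with trace class covariance. Transporting $D^\gamma_\alpha$ through this bijection, i.e. setting $D^\gamma_\alpha[\Ncal(0,C_1),\Ncal(0,C_2)] = D^\gamma_\alpha(C_1,C_2)$, then automatically inherits the divergence property: the value is non-negative and vanishes exactly when $C_1 = C_2$, which holds exactly when $\Ncal(0,C_1) = \Ncal(0,C_2)$. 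Since the non-negativity and equality characterization of $d^\alpha_{\logdet}$ are already in hand, there is no genuine obstacle here; the only points requiring a line of justification are the domain check for $A+\gamma I$ and the separate handling of the endpoints $\alpha = \pm 1$ via the limiting formulas, for which one uses the corresponding statements in \cite{Minh:LogDet2016} in place of the $-1 < \alpha < 1$ case.
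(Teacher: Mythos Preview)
Your proposal is correct and matches the paper's approach: the paper states that the result ``is immediate'' from the formulation of Definition~\ref{def:logdet} and the divergence properties of $d^{\alpha}_{\logdet}$ established in \cite{Minh:LogDet2016}, and what you have written is precisely the explicit unwinding of that claim. There is nothing to add.
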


\subsection{Regularized divergences between general Gaussian measures on Hilbert spaces}
We next consider divergences between Gaussian measures on Hilbert spaces without the zero-mean condition.
Motivated by the explicit formulas for the divergences between Gaussian densities in $\R^n$,  
in \cite{Minh:LogDet2016} we introduced the following
{\it regularized divergences} between Gaussian measures on Hilbert spaces, using the infinite-dimensional Log-Det divergences above.

\begin{definition}
[\textbf{Regularized Kullback-Leibler divergences between Gaussian measures on Hilbert spaces}]
\label{definition:regularized-KL}
Let $\Ncal(m_1,C_1)$ and $\Ncal(m_2,C_2)$ be 
two Gaussian measures on $\H$, with corresponding mean vectors $m_1,m_2 \in \H$  
and
covariance operators $C_1, C_2 \in \Sym^{+}(\H) \cap \Tr(\H)$. For any 
fixed $\gamma \in \R$, 
$\gamma > 0$, 
the regularized Kullback-Leibler divergence, denoted by $D^{\gamma}_{\KL}(\Ncal(m_1, C_1)|| \Ncal(m_2, C_2))$, 
is defined to be
\begin{align}
\label{equation:regularized-KL}
D^{\gamma}_{\KL}(\Ncal(m_1, C_1)|| \Ncal(m_2, C_2)) 
&=\frac{1}{2}\la m_1 - m_2, (C_2 + \gamma I)^{-1}(m_1 - m_2)\ra 
\nonumber
\\
&+ \frac{1}{2}d^1_{\logdet}[(C_1 + \gamma I), (C_2 + \gamma I)].
\end{align}
\end{definition}

\begin{definition}
[\textbf{Regularized R\'enyi divergences between Gaussian measures on Hilbert spaces}]
\label{definition:regularized-Renyi}
For two Gaussian measures $\Ncal(m_1,C_1)$ and $\Ncal(m_2,C_2)$ on $\H$,
the regularized R\'enyi divergence of order $r$, $0 < r < 1$, for a 
fixed $\gamma \in \R$, 
$\gamma > 0$,
denoted by $D^{\gamma}_{\Rrm,r}(\Ncal(m_1,C_1) ||\Ncal(m_2,C_2))$,
is defined to be 
\begin{align}
\label{equation:regularized-Renyi-infinite}
&D^{\gamma}_{\Rrm,r}(\Ncal(m_1,C_1) ||\Ncal(m_2,C_2)) 
\nonumber
\\
&\;\; = \frac{1}{2}\la m_1 - m_2, [(1-r)(C_1 + \gamma I) + r(C_2 + \gamma I)]^{-1}(m_1 - m_2)\ra
\nonumber
\\
&\;\; + \frac{1}{2}d^{2r-1}_{\logdet}[(C_1 + \gamma I), (C_2 + \gamma I)].
\end{align}
\end{definition}
{\bf Remark}. Our definition of the regularized R\'enyi divergence differs from 
that in \cite{Minh:LogDet2016} by a factor of $\frac{1}{r}$. It is motivated
from the finite-dimensional definition $d_{R,r}(P_1,P_2) = -\frac{1}{r(1-r)}\log\int_{\R^n}P_1^r(x)
P_2^{1-r}(x)dx$, see e.g. \cite{Pardo:2005}, of the R\'enyi divergence between two probability densities $P_1,P_2$ on $\R^n$
. This differs from the original definition by R\'enyi \cite{Renyi:1961}, namely $d_{R,r}(P_1,P_2) = -\frac{1}{(1-r)}\log\int_{\R^n}P_1^r(x)
P_2^{1-r}(x)dx$ by the factor $\frac{1}{r}$.
The advantage of the current formulation is that one can see immediately that
\begin{align}
\lim_{r \approach 1}D^{\gamma}_{\Rrm,r}(\Ncal(m_1,C_1) ||\Ncal(m_2,C_2)) = D_{\KL}^{\gamma}(\Ncal(m_1,C_1) ||\Ncal(m_2,C_2)),
\\
\lim_{r \approach 0}D^{\gamma}_{\Rrm,r}(\Ncal(m_1,C_1) ||\Ncal(m_2,C_2)) = D_{\KL}^{\gamma}(\Ncal(m_2,C_2) ||\Ncal(m_1,C_1)).
\end{align}
\begin{definition}
[\textbf{Regularized Bhattacharyya and Hellinger distances between Gaussian measures on Hilbert spaces}]
For two Gaussian measures $\Ncal(m_1,C_1)$ and $\Ncal(m_2,C_2)$ on $\H$,
the regularized Bhattacharyya 
distance $D_B^{\gamma}(\Ncal(m_1, C_1),\Ncal(m_2,C_2))$, for a fixed $\gamma \in \R, \gamma > 0$, is defined to be
\begin{align}
&D_B^{\gamma}(\Ncal(m_1, C_1),\Ncal(m_2,C_2))
\\
&= \frac{1}{8}\la (m_1 - m_2), \left(\frac{(C_1 +\gamma I) + (C_2+\gamma I)}{2}\right)^{-1}(m_1 - m_2)\ra 
\nonumber
\\
&+ \frac{1}{8}d^{0}_{\logdet}[(C_1 + \gamma I), (C_2 + \gamma I)] 
= \frac{1}{4}D^{\gamma}_{R,1/2}(\Ncal(m_1, C_1),\Ncal(m_2,C_2)).
\nonumber
\end{align}
The regularized Hellinger distance $D_H^{\gamma}(\Ncal(m_1, C_1),\Ncal(m_2,C_2))$
is defined via the regularized Bhattacharyya $D_B^{\gamma}(\Ncal(m_1, C_1),\Ncal(m_2,C_2))$
distance 
by 
\begin{align}
D_H^{\gamma} = \sqrt{2[1-\exp(-D_B^{\gamma})]}.
\end{align}
\end{definition}

{\bf Properties of the regularized divergences}.
\begin{enumerate}
	\item The regularized divergences between any pair of covariance operators, not necessarily strictly positive, are always well-defined and finite  for any $\gamma > 0$. Likewise, the regularized divergences
	between the corresponding Gaussian measures, not necessarily non-degenerate or equivalent (see below), are always well-defined and finite for any $\gamma > 0$.
	
	\item The regularized divergences between Gaussian measures are defined explicitly in terms of 
	their mean vectors and covariance operators,
	not via the evaluation of the 
	Radon-Nikodym derivatives and the corresponding integrals.
	
	\item In the RKHS setting, when the mean vectors and covariance operators are RKHS vectors and covariance operators, respectively, all of these divergences admit closed form formulas that can be efficiently computed
	\cite{Minh:LogDet2016}.
\end{enumerate}

\section{Main theorems}
\label{section:main-results}
The regularized divergences stated above are well-defined for any pairs of Gaussian measures on 
a Hilbert space $\H$. It is not clear from the definition, however,
whether they possess a probabilistic interpretation.
We now show that they are, in fact, closely related to the corresponding
true divergences when the Gaussian measures under consideration are equivalent.
Specifically, the following results state that, as $\gamma \approach 0^{+}$,
the regularized Kullback-Leibler and regularized R\'enyi divergences between two equivalent, non-degenerate Gaussian measures $\Ncal(m_0,C_0)$ and $\Ncal(m,C)$ converge to
the true Kullback-Leibler and R\'enyi divergences, respectively, between  $\Ncal(m_0,C_0)$ and $\Ncal(m,C)$.

\begin{theorem}
[\textbf{Limiting behavior of the regularized Kullback-Leibler divergence}]
\label{theorem:limit-KL-gamma-0}
Let $\mu = \Ncal(m_0, C_0)$ and $\nu = \Ncal(m,C)$ be two non-degenerate, equivalent Gaussian measures on $\H$, that is with $C_0, C \in \Sym^{++}(\H)$. Assume that $\mu$ and $\nu$ are equivalent, that is $m-m_0 \in \Im(C_0^{1/2})$ and there exists $S \in \Sym(\H) \cap\HS(\H)$ such that $C= C_0^{1/2}(I-S)C_0^{1/2}$. Then
\begin{align}
\label{equation:limit-KL-gamma-0-equivalent}
\lim_{\gamma \approach 0^{+}}D_{\KL}^{\gamma}(\nu || \mu) &= \frac{1}{2}||C_0^{-1/2}(m-m_0)||^2 - \frac{1}{2}\log\dettwo(I-S)
\\
& =D_{\KL}(\nu ||\mu),
\label{equation:limit-KL-gamma-0-equivalent-2}
\end{align}
where $D_{\KL}(\nu ||\mu)$ 
denotes the Kullback-Leibler divergence between $\nu$ and $\mu$.
\end{theorem}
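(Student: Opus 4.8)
\medskip
\noindent\textbf{Proof proposal.}

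The plan is to insert the closed form of $d^1_{\logdet}$ valid when both operators carry the same shift $\gamma I$, to recognize the covariance contribution as minus the logarithm of a Hilbert--Carleman determinant, and then to take the limit by a Hilbert--Schmidt approximation. By Definition~\ref{definition:regularized-KL} and the formula for $d^1_{\logdet}$ in the common-shift case,
\begin{align*}
D^{\gamma}_{\KL}(\nu||\mu) = \tfrac12\la m-m_0,(C_0+\gamma I)^{-1}(m-m_0)\ra + \tfrac12\big(\trX[P_\gamma - I] - \log\detX P_\gamma\big),
\end{align*}
where $P_\gamma := (C_0+\gamma I)^{-1}(C+\gamma I)$. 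The equivalence hypothesis gives $C - C_0 = -C_0^{1/2}SC_0^{1/2}$, hence $P_\gamma = I - K_\gamma$ with $K_\gamma := (C_0+\gamma I)^{-1}C_0^{1/2}SC_0^{1/2}$; since $C_0^{1/2}$ is Hilbert--Schmidt and $(C_0+\gamma I)^{-1}$ is bounded, $K_\gamma$ is trace class. Therefore $\trX[P_\gamma - I] = -\trace(K_\gamma)$ and $\detX P_\gamma = \det(I-K_\gamma)$ (Fredholm determinant), and the identity $\dettwo(I-K) = \det(I-K)\,e^{\trace(K)}$ for trace class $K$ collapses the covariance part to $-\log\dettwo(I-K_\gamma)$. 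It thus remains to show $\la m-m_0,(C_0+\gamma I)^{-1}(m-m_0)\ra \to ||C_0^{-1/2}(m-m_0)||^2$ and $\dettwo(I-K_\gamma) \to \dettwo(I-S)$ as $\gamma \to 0^{+}$.

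For the mean term, write $m - m_0 = C_0^{1/2}a$ with $a := C_0^{-1/2}(m-m_0) \in \H$, which is legitimate since $m - m_0 \in \Im(C_0^{1/2})$ and $C_0^{1/2}$ is injective. In an orthonormal eigenbasis $(e_j)$ of $C_0$ with eigenvalues $\lambda_j > 0$ one has $\la m-m_0,(C_0+\gamma I)^{-1}(m-m_0)\ra = \sum_j \tfrac{\lambda_j}{\lambda_j+\gamma}|\la a,e_j\ra|^2$, which increases to $||a||^2 = ||C_0^{-1/2}(m-m_0)||^2$ by monotone convergence. For the determinant term I would use the cyclic identity $\dettwo(I-AB) = \dettwo(I-BA)$, valid whenever $AB$ and $BA$ are both trace class (it reduces to $\det(I-AB) = \det(I-BA)$ together with $\trace(AB) = \trace(BA)$). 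Taking $A = (C_0+\gamma I)^{-1}C_0^{1/2}S$ and $B = C_0^{1/2}$ gives $K_\gamma = AB$ and $BA = C_0(C_0+\gamma I)^{-1}S =: T_\gamma S$, both trace class, so $\dettwo(I-K_\gamma) = \dettwo(I-T_\gamma S)$. Since $T_\gamma - I = -\gamma(C_0+\gamma I)^{-1}$, in the basis $(e_j)$ one gets $||T_\gamma S - S||_{\HS}^2 = \sum_{j,k}\big(\tfrac{\gamma}{\lambda_j+\gamma}\big)^2|\la e_j,Se_k\ra|^2 \to 0$ by dominated convergence, i.e. $T_\gamma S \to S$ in Hilbert--Schmidt norm. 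Non-degeneracy together with equivalence force (via Feldman--H\'ajek) $I - S$ to be strictly positive and boundedly invertible, so $\dettwo(I-S) > 0$; likewise $I - K_\gamma = (C_0+\gamma I)^{-1}(C+\gamma I)$ is invertible with positive $\dettwo$. Hence by the standard Hilbert--Schmidt-norm continuity of $\dettwo$, $\log\dettwo(I-K_\gamma) = \log\dettwo(I-T_\gamma S) \to \log\dettwo(I-S)$. Combining the two limits yields \eqref{equation:limit-KL-gamma-0-equivalent}, and \eqref{equation:limit-KL-gamma-0-equivalent-2} then follows from the explicit formula for $D_{\KL}(\nu||\mu)$ between equivalent Gaussian measures derived in Section~\ref{section:exact-divergences}.

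I expect the main obstacle to be the bookkeeping in the first paragraph: one must verify that for each fixed $\gamma > 0$ the operator $P_\gamma - I$ is genuinely trace class, so that $\trX$ and $\detX$ really reduce to the ordinary trace and the Fredholm determinant and recombine into $-\log\dettwo$ --- this is precisely what makes the regularized quantity comparable to the true divergence at all. Once that reduction is in place, the limit is a routine Hilbert--Schmidt approximation plus the known continuity of the $2$-regularized determinant, the only delicate point being the (harmless) cyclic rearrangement that turns $K_\gamma$ into $T_\gamma S$ with $T_\gamma \to I$ strongly and $||T_\gamma S - S||_{\HS} \to 0$.
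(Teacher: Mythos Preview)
Your proof is correct. The treatment of the mean term is essentially the paper's (Proposition~\ref{proposition:limit-mean-cov-inner-gamma-0} via Lemma~\ref{lemma:limit-inner-gamma-0}), and the final identification with $D_{\KL}(\nu||\mu)$ is, as you say, the content of Section~\ref{section:exact-divergences} (Theorem~\ref{theorem:KL-infinite}).

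For the covariance term you take a genuinely different route. The paper first passes to the \emph{symmetric} conjugate, writing (Proposition~\ref{proposition:d1logdet-HS})
\[
d^1_{\logdet}[(C+\gamma I),(C_0+\gamma I)] = -\log\dettwo\bigl[I - M_\gamma\bigr],\qquad M_\gamma = (C_0+\gamma I)^{-1/2}C_0^{1/2}SC_0^{1/2}(C_0+\gamma I)^{-1/2},
\]
and then proves $\|M_\gamma - S\|_{\HS}\to 0$ (Proposition~\ref{proposition:norm-convergence-HS}) by the Radon--Riesz property of $\HS(\H)$: weak convergence in $\HS(\H)$ (Lemma~\ref{lemma:weak-convergence-HS}) plus convergence of norms (Lemma~\ref{lemma:convergence-of-norm-HS}) are established separately, each via a dominated-convergence argument on matrix entries. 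You instead keep the asymmetric $K_\gamma$ and use the cyclic identity $\dettwo(I-AB)=\dettwo(I-BA)$ to replace $K_\gamma$ by $T_\gamma S$ with $T_\gamma = C_0(C_0+\gamma I)^{-1}$, after which $\|T_\gamma S - S\|_{\HS}\to 0$ is a single dominated-convergence computation. Your argument is shorter and avoids the Radon--Riesz detour; the paper's symmetric form, on the other hand, keeps everything self-adjoint throughout and does not need the cyclic identity. One small caveat: the parenthetical ``valid whenever $AB$ and $BA$ are both trace class'' is not quite the right hypothesis for $\trace(AB)=\trace(BA)$ in general, but in your situation $A=(C_0+\gamma I)^{-1}C_0^{1/2}S$ is itself trace class (bounded $\times$ HS $\times$ HS) and $B=C_0^{1/2}$ is bounded, so the trace commutation and Sylvester's determinant identity both apply without issue.
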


In Theorem \ref{theorem:limit-KL-gamma-0}, $\dettwo$ denotes the Hilbert-Carleman determinant
(see e.g. \cite{Simon:1977}).
For a Hilbert-Schmidt operator $A$, the Hilbert-Carleman determinant of $I+A$ is defined by
$\dettwo(I+A) = \det[(I+A)\exp(-A)]$.
In particular, for $A \in \Tr(\H)$,
we have
$\dettwo(I+A) = \det(I+A) \exp(-\trace(A))$,
and
$\log\dettwo(I+A) = \log\det(I+A) - \trace(A)$.
The function $\dettwo(I+A)$ is continuous in the Hilbert-Schmidt norm, so that
$\lim_{k\approach \infty}||A_k - A||_{\HS} = 0 \imply \lim_{k \approach \infty}\dettwo(I+A_k) = \dettwo(I+A)$. 

Theorem \ref{theorem:limit-KL-gamma-0} can also be equivalently stated as
\begin{align}
\lim_{\gamma \approach 0^{+}}D_{\KL}^{\gamma}(\nu || \mu) &= \frac{1}{2}||m-m_0||^2_{C_0} - \frac{1}{2}\log\dettwo(I-S) =D_{\KL}(\nu ||\mu),
\end{align}
where $||\;||_{C_0}$ is the norm corresponding to the inner product
\begin{align}
\la x, y\ra_{C_0} = \la C_0^{-1/2}x, C_0^{-1/2}y\ra, \;\;\; x , y \in \Im(C_0^{1/2})
\end{align}
of the Cameron-Martin space $(\Im(C_0^{1/2}), \la \;, \;\ra_{C_0})$ associated with
$\Ncal(m_0,C_0)$.
\begin{theorem}
	[\textbf{Limiting behavior of the regularized R\'enyi divergences}]
\label{theorem:limit-Renyi-infinite}
Assume the hypothesis of Theorem \ref{theorem:limit-KL-gamma-0}.
Let $D_{\Rrm,r}(\nu ||\mu)$ denote the R\'enyi divergence of order $r$ between $\nu$ and $\mu$
, $0 < r <1$.
Then
\begin{align}
\label{equation:limit-Renyi-infinite}
\lim_{\gamma \approach 0^{+}}D_{\Rrm,r}^{\gamma}(\nu ||\mu) 
&= \frac{1}{2}||(I-(1-r)S)^{-1/2}C_0^{-1/2}(m-m_0)||^2
\\
&+ \frac{1}{2r(1-r)}\log\det[(I-(1-r)S)(I-S)^{r-1}]
\nonumber
\\
&= D_{\Rrm,r}(\nu ||\mu).
\label{equation:limit-Renyi-infinite-2}
\end{align}
\end{theorem}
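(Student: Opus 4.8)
The plan is to pass to the limit $\gamma \approach 0^{+}$ separately in the ``mean part'' and the ``Log-Det part'' of
\[
D^{\gamma}_{\Rrm,r}(\nu\|\mu) = \tfrac12\la m-m_0,\, \big((1-r)(C+\gamma I)+r(C_0+\gamma I)\big)^{-1}(m-m_0)\ra + \tfrac12\, d^{2r-1}_{\logdet}[(C+\gamma I),(C_0+\gamma I)] ,
\]
reusing the two ingredients behind Theorem~\ref{theorem:limit-KL-gamma-0}: the scalar fact that $\lim_{\gamma\approach 0^{+}}\la v,(K+\gamma I)^{-1}v\ra = \|K^{-1/2}v\|^2$ for an injective positive compact operator $K$ and $v\in\Im(K^{1/2})$ (valid because $K(K+\gamma I)^{-1}\to I$ strongly with norm $\le 1$), and the continuity of the Hilbert--Carleman determinant $\dettwo$ in the Hilbert--Schmidt norm. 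For the mean part I would first use $(1-r)(C+\gamma I)+r(C_0+\gamma I) = (1-r)C+rC_0+\gamma I$ together with $C = C_0^{1/2}(I-S)C_0^{1/2}$ to rewrite $(1-r)C+rC_0 = C_0^{1/2}(I-(1-r)S)C_0^{1/2}$; writing $T := I-(1-r)S$, the equivalence of $\mu$ and $\nu$ forces $I-S$ to be strictly positive with bounded inverse, hence $T\ge rI>0$ and $T^{-1/2}$ is bounded. Applying the scalar fact with $K = C_0^{1/2}TC_0^{1/2}$ and $v = m-m_0\in\Im(C_0^{1/2}) = \Im(K^{1/2})$ — the last identity by Douglas' range theorem, since $C_0^{1/2}T^{1/2}(C_0^{1/2}T^{1/2})^{*} = K$ and $T^{1/2}$ is boundedly invertible — and using the factorization $C_0^{1/2}T^{1/2} = K^{1/2}V$ with $V$ unitary to evaluate $\|K^{-1/2}(m-m_0)\|^2 = \|T^{-1/2}C_0^{-1/2}(m-m_0)\|^2$, would give the first term of \eqref{equation:limit-Renyi-infinite}.

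For the Log-Det part, with $\alpha = 2r-1$ one has $\tfrac{1-\alpha}{2} = 1-r$, $\tfrac{1+\alpha}{2} = r$, $\tfrac{4}{1-\alpha^2} = \tfrac{1}{r(1-r)}$, so by the simplified formula \eqref{equation:det2},
\[
d^{2r-1}_{\logdet}[(C+\gamma I),(C_0+\gamma I)] = \frac{1}{r(1-r)}\log\frac{\detX\big((1-r)(C+\gamma I)+r(C_0+\gamma I)\big)}{\detX(C+\gamma I)^{\,1-r}\,\detX(C_0+\gamma I)^{\,r}} .
\]
Using $\detX(A+\gamma I) = \gamma\det(I+A/\gamma)$ and the elementary factorization $I+\tfrac1\gamma C_0^{1/2}(I-tS)C_0^{1/2} = (I+\tfrac1\gamma C_0)(I-tM_\gamma)$, valid for every $t\in\R$, where $M_\gamma := (C_0+\gamma I)^{-1}C_0^{1/2}SC_0^{1/2}$ is trace class, the powers of $\gamma$ and of $\det(I+\tfrac1\gamma C_0)$ cancel (since $1 = (1-r)+r$), leaving $\tfrac{1}{r(1-r)}\log\big[\det(I-(1-r)M_\gamma)\big/\det(I-M_\gamma)^{1-r}\big]$. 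Replacing $\det(I-tM_\gamma)$ by $\dettwo(I-tM_\gamma)\exp(-t\,\trace M_\gamma)$ (legitimate as $M_\gamma$ is trace class) makes the exponential factors cancel, so
\[
d^{2r-1}_{\logdet}[(C+\gamma I),(C_0+\gamma I)] = \frac{1}{r(1-r)}\log\frac{\dettwo(I-(1-r)M_\gamma)}{\dettwo(I-M_\gamma)^{1-r}} .
\]

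To take the limit I would factor $M_\gamma = AB$ with $A = (C_0+\gamma I)^{-1}C_0^{1/2}S$ and $B = C_0^{1/2}$, both Hilbert--Schmidt, and use $\dettwo(I-AB) = \dettwo(I-BA)$ for Hilbert--Schmidt $A,B$ to get $\dettwo(I-tM_\gamma) = \dettwo(I-tQ_\gamma S)$, where $Q_\gamma := C_0(C_0+\gamma I)^{-1}$ satisfies $0\le Q_\gamma\le I$ and $Q_\gamma\to I$ strongly. A dominated-convergence estimate for $\trace\big((Q_\gamma-I)^2 S^2\big) = \|(Q_\gamma-I)S\|_{\HS}^2$ shows $Q_\gamma S\to S$ in $\HS(\H)$; since $I-(1-r)S$ and $I-S$ are strictly positive and boundedly invertible (so their Hilbert--Carleman determinants are positive), continuity of $\dettwo$ yields $\lim_{\gamma\approach 0^{+}} d^{2r-1}_{\logdet}[(C+\gamma I),(C_0+\gamma I)] = \tfrac{1}{r(1-r)}\log\big[\dettwo(I-(1-r)S)\big/\dettwo(I-S)^{1-r}\big]$. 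Finally, writing $\log\dettwo(I+A) = \trace[\log(I+A)-A]$ and using that $I-(1-r)S$ and $I-S$ commute, this equals $\tfrac{1}{r(1-r)}\trace\big[\log(I-(1-r)S)-(1-r)\log(I-S)\big]$; the $S$-linear terms cancel, so the operator in brackets is trace class and the quantity equals $\tfrac{1}{r(1-r)}\log\det\big[(I-(1-r)S)(I-S)^{r-1}\big]$. Together with the mean part this proves \eqref{equation:limit-Renyi-infinite}, and \eqref{equation:limit-Renyi-infinite-2} then follows by comparing with the explicit formula for the true R\'enyi divergence between equivalent Gaussian measures derived in Section~\ref{section:exact-divergences}.

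I expect the main obstacle to be the interchange of the limit with the determinant in the Log-Det part: for each $\gamma>0$ the operators $tM_\gamma$ are trace class, but their limits $tS$ are only Hilbert--Schmidt, so one cannot pass to the limit inside the Fredholm determinant and must route the argument through $\dettwo$. This forces the careful bookkeeping above — the cyclic identity $\dettwo(I-AB) = \dettwo(I-BA)$ for Hilbert--Schmidt $A,B$, the Hilbert--Schmidt convergence $Q_\gamma S\to S$, the verification (via the equivalence hypothesis) that $I-S$ and $I-(1-r)S$ are strictly positive with bounded inverse so that all determinants are positive and the logarithms finite, and finally the re-expression of the limiting ratio of Hilbert--Carleman determinants as the single Fredholm determinant $\det[(I-(1-r)S)(I-S)^{r-1}]$ by noting that the individually divergent $S$-linear traces cancel.
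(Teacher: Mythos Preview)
Your proposal is correct, and your treatment of both terms is sound: the Douglas/polar argument for the mean part is equivalent to the paper's eigenvector calculation, and your Log-Det manipulations (the factorization $(I+\tfrac1\gamma C_0)(I-tM_\gamma)$, the cancellation of $\gamma$'s and $\det(I+C_0/\gamma)$'s, the passage to $\dettwo$ and the cyclic swap to $Q_\gamma S$, the dominated-convergence estimate for $\|(I-Q_\gamma)S\|_{\HS}$, and the final trace cancellation recovering the Fredholm determinant) all check out.

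The paper takes a different route for the Log-Det part. It works throughout with the \emph{self-adjoint} conjugate $A_{S,\gamma}=(C_0+\gamma I)^{-1/2}C_0^{1/2}SC_0^{1/2}(C_0+\gamma I)^{-1/2}$ and writes $d^{2r-1}_{\logdet}$ directly as $\tfrac{1}{r(1-r)}\log\det\big[(I-(1-r)A_{S,\gamma})(I-A_{S,\gamma})^{r-1}\big]$, a Fredholm determinant for each $\gamma>0$. The convergence $A_{S,\gamma}\to S$ in $\HS(\H)$ is obtained not by a one-line dominated-convergence bound but by the Radon--Riesz property (weak convergence plus convergence of norms), and the passage to the limit inside the power $(I-A_{S,\gamma})^{r-1}$ requires an operator-power inequality (Lemma~\ref{lemma:power-inequality}, quoted from an external reference); trace-norm convergence of the product then follows from $\|A_1A_2\|_{\trace}\le\|A_1\|_{\HS}\|A_2\|_{\HS}$, and one finishes with continuity of the Fredholm determinant. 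Your route is more elementary in that it sidesteps both the Radon--Riesz argument and the power-continuity lemma by routing through $\dettwo$ and the cyclic identity; the price is the extra bookkeeping converting between $\det$ and $\dettwo$ and the verification that the final ratio of Hilbert--Carleman determinants collapses to a single Fredholm determinant. Either way the mean-part argument is essentially the same as the paper's.
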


\begin{corollary}
	[\textbf{Limiting behavior of the regularized Bhattacharyya and Hellinger distances}]
	\label{corollary:limit-Hellinger-infinite}
	Assume the hypothesis of Theorem \ref{theorem:limit-KL-gamma-0}.
	Let $D_B(\nu||\mu)$ denote the true Bhattacharyya distance between $\nu$ and $\mu$. Then
	\begin{align}
	\lim_{\gamma \approach 0^{+}}D_B^{\gamma}(\nu ||\mu) 
	&= \frac{1}{8}||(I-\frac{1}{2}S)^{-1/2}C_0^{-1/2}(m-m_0)||^2
	\\
	&+ \frac{1}{2}\log\det[(I-\frac{1}{2}S)(I-S)^{-1/2}]
	\nonumber
	\\
	& = D_B(\nu ||\mu).
	\end{align}
	Similarly, let $D_H(\nu ||\mu)$ denote the true Hellinger distance between $\nu$ and $\mu$. Then
	\begin{align}
	\lim_{\gamma \approach 0^{+}}D_H^{\gamma}(\nu || \mu) &=
	\sqrt{2}\left[1 -  \frac{\exp\left(-\frac{1}{8}||(I-\frac{1}{2}S)^{-1/2}C_0^{-1/2}(m - m_0)||^2\right)}{\sqrt{\det[(I-\frac{1}{2}S)(I-S)^{-1/2}]}} \right]^{1/2}.
	\\
	& =  D_H(\nu ||\mu). 
	\end{align}
	\end{corollary}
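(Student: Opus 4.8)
The plan is to obtain this corollary as a direct specialization of Theorem \ref{theorem:limit-Renyi-infinite} to the order $r = 1/2$, supplemented by two elementary observations.

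First I would invoke the defining identity $D_B^{\gamma}(\Ncal(m_1,C_1),\Ncal(m_2,C_2)) = \tfrac14 D^{\gamma}_{R,1/2}(\Ncal(m_1,C_1),\Ncal(m_2,C_2))$, so that $\lim_{\gamma\approach 0^{+}}D_B^{\gamma}(\nu\|\mu) = \tfrac14\lim_{\gamma\approach 0^{+}}D^{\gamma}_{R,1/2}(\nu\|\mu)$. The right-hand limit is then read off from formula \eqref{equation:limit-Renyi-infinite} with $r = 1/2$: since $1-r = r = 1/2$ one has $(I-(1-r)S)^{-1/2} = (I-\tfrac12 S)^{-1/2}$, $\tfrac{1}{2r(1-r)} = 2$, and $(I-S)^{r-1} = (I-S)^{-1/2}$; carrying the factor $\tfrac14$ through reproduces exactly the claimed expression for $\lim_{\gamma\approach 0^{+}}D_B^{\gamma}(\nu\|\mu)$. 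To identify this limit with the \emph{true} Bhattacharyya distance, I would apply the same $r = 1/2$ specialization on the probabilistic side: for any common dominating measure $\lambda$ (say $\lambda = \mu$, legitimate since $\mu \sim \nu$) the order-$1/2$ R\'enyi divergence satisfies $D_{\Rrm,1/2}(\nu\|\mu) = -4\log\int_{\H}\sqrt{(d\nu/d\lambda)(d\mu/d\lambda)}\,d\lambda = 4 D_B(\nu\|\mu)$, and Theorem \ref{theorem:limit-Renyi-infinite} already gives $\lim_{\gamma\approach 0^{+}}D^{\gamma}_{R,1/2}(\nu\|\mu) = D_{\Rrm,1/2}(\nu\|\mu)$; hence $\lim_{\gamma\approach 0^{+}}D_B^{\gamma}(\nu\|\mu) = D_B(\nu\|\mu)$.

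For the Hellinger distance I would use the definition $D_H^{\gamma} = \sqrt{2[1-\exp(-D_B^{\gamma})]}$ together with the continuity of the scalar map $x \mapsto \sqrt{2[1-\exp(-x)]}$ on $[0,\infty)$, which yields $\lim_{\gamma\approach 0^{+}}D_H^{\gamma}(\nu\|\mu) = \sqrt{2[1-\exp(-\lim_{\gamma\approach 0^{+}}D_B^{\gamma}(\nu\|\mu))]}$. Substituting the $D_B^{\gamma}$-limit just obtained and rewriting $\exp(-\tfrac12\log\det[(I-\tfrac12 S)(I-S)^{-1/2}]) = \det[(I-\tfrac12 S)(I-S)^{-1/2}]^{-1/2}$ gives the stated closed form. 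The equality with the true Hellinger distance $D_H(\nu\|\mu)$ then follows from the classical measure-theoretic identity $D_H(\nu\|\mu)^2 = 2[1 - \int_{\H}\sqrt{(d\nu/d\lambda)(d\mu/d\lambda)}\,d\lambda] = 2[1-\exp(-D_B(\nu\|\mu))]$, available precisely because $\mu$ and $\nu$ are equivalent.

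I do not expect a genuine obstacle here: the analytic heavy lifting (convergence as $\gamma \approach 0^{+}$, well-definedness of the limiting determinants and Cameron--Martin norms under the equivalence hypothesis $S \in \Sym(\H)\cap\HS(\H)$, $I - S > 0$, $m - m_0 \in \Im(C_0^{1/2})$) is entirely inherited from Theorem \ref{theorem:limit-Renyi-infinite}. The only small checks are that $I - \tfrac12 S > 0$ — immediate from $I - S > 0$, since $S$ is compact so its spectrum lies in $(-\infty,1)$ with the supremum attained and $< 1$, hence the spectrum of $I - \tfrac12 S$ lies in $(\tfrac12,\infty)$ — and that $\det[(I-\tfrac12 S)(I-S)^{-1/2}]$ is a bona fide Fredholm determinant, which holds because the scalar function $x \mapsto (1-\tfrac12 x)(1-x)^{-1/2}$ has vanishing first derivative at $x = 0$, so $(I-\tfrac12 S)(I-S)^{-1/2} - I$ has eigenvalues of order $\lambda_i(S)^2$ and is therefore trace class whenever $S$ is Hilbert--Schmidt. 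Everything else is bookkeeping of constants, so the real content is supplied by Theorem \ref{theorem:limit-Renyi-infinite}.
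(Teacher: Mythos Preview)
Your proposal is correct and follows essentially the same route as the paper: the paper deduces Corollary~\ref{corollary:limit-Hellinger-infinite} from Theorem~\ref{theorem:limit-Renyi-infinite-1} together with Corollary~\ref{corollary:Hellinger}, which is precisely the specialization of the R\'enyi limit and exact R\'enyi formula to $r=1/2$ combined with $D_B = \tfrac14 D_{\Rrm,1/2}$ and $D_H = \sqrt{2[1-\exp(-D_B)]}$. Your additional checks that $I-\tfrac12 S > 0$ and that $(I-\tfrac12 S)(I-S)^{-1/2}-I$ is trace class are nice explicit justifications of facts the paper leaves implicit.
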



{\bf Computational consequences}. The focus of the current work
is on the statistical interpretation of the infinite-dimensional Alpha Log-Det 
divergences and the corresponding regularized divergences between Gaussian measures on Hilbert spaces. The results just stated also suggest numerical algorithms
for approximating the Kullback-Leibler and R\'enyi divergences between probability measures on infinite-dimensional Hilbert spaces.
This is an important topic, see e.g. \cite{Pinski:2015KL},\cite{Pinski:2015KLalgorithms}, which will be explored in a companion future work.

\subsection{Example: KL divergences in Bayesian inverse problems on Hilbert spaces}
In this section, we apply the concept of regularized KL divergences above to the setting
of linear Bayesian inverse problems. As a specific example, 
consider the following setting from \cite{Stuart:Inverse2010} (Theorem 6.20 and Example 6.23).
Let $u$ be a Gaussian random variable on the Hilbert space $\H$, distributed according
to the Gaussian measure $\mu_0 = \Ncal(m_0,C_0)$, with $\ker(C_0) = \{0\}, m_0 \in \Im(C_0^{1/2})$.
Let $A: \H \mapto \R^n$ be a bounded linear operator. Assume that the following random variable $y \in \R^n$ is Gaussian
\begin{align}
y = Au + \eta, \;\;\; \;\; \;\; \; \eta \sim \Ncal(0, \Gamma), \Gamma \in \Sym^{++}(n),
\end{align}
where 
$\eta$ is independent of $u$.
Then the random variable $y|u$ is Gaussian, with density propositional to
$\exp(-\frac{1}{2}(Au - y)^T\Gamma^{-1}(Au - y))$.
The Gaussian measure corresponding to $u|y$ is
$\mu^{y} = \Ncal(m,C)$, where $m$ and $C$ are given by, respectively (\cite{Stuart:Inverse2010}),
\begin{align}
\label{equation:m-posterior}
&m = m_0 + C_0A^{*}(\Gamma + AC_0A^{*})^{-1}(y- Am_0),
\\
&C = C_0 - C_0A^{*}(\Gamma + AC_0A^{*})^{-1}AC_0.
\label{equation:C-posterior}
\end{align}
In the Bayesian setting, $\mu_0$ is the prior probability measure on $u$
and $\mu^y$ is the posterior probability measure of $u$ given the data $y$.
In \cite{Alexanderian:2016}, the authors computed the KL-divergence $D_{\KL}(\Ncal(m, C)||\Ncal(m_0, C_0))$ directly for $\Gamma = I$. We now present the general formula for $\Gamma \in \Sym^{++}(n)$,
which is a straightforward consequence of the general expression for the KL-divergence given in Theorem \ref{theorem:limit-KL-gamma-0}.

\begin{theorem}
	\label{theorem:limit-KL-gamma-0-inverse}
	Assume that $m$ and $C$ are given by Eqs.~(\ref{equation:m-posterior}) and (\ref{equation:C-posterior}), respectively.
	Then the KL divergence between the
	posterior measure $\Ncal(m, C)$ and the prior measure $\Ncal(m_0, C_0)$ is given by
	\begin{align}
	& D_{\KL}(\Ncal(m, C)||\Ncal(m_0, C_0)) = \lim_{\gamma \approach 0^{+}}D_{\KL}^{\gamma}(\Ncal(m, C)||\Ncal(m_0, C_0)) 
	\\
	& = \frac{1}{2}\left[\log\det(\Gamma+AC_0A^{*}) -\log\det(\Gamma) -\trace(ACA^{*}\Gamma^{-1}) - \la m-m_0, A^{*}\Gamma^{-1}(Am-y)\ra\right].
	\nonumber
	\end{align}
\end{theorem}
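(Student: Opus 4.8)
The plan is to specialize the general Kullback-Leibler formula of Theorem~\ref{theorem:limit-KL-gamma-0}, namely
$D_{\KL}(\Ncal(m,C)\|\Ncal(m_0,C_0)) = \frac{1}{2}\|C_0^{-1/2}(m-m_0)\|^2 - \frac{1}{2}\log\dettwo(I-S)$, to the posterior/prior pair given by~(\ref{equation:m-posterior}) and~(\ref{equation:C-posterior}), and then rewrite everything in terms of the finite-dimensional operator $A$, the finite matrix $\Gamma$, and the prior covariance $C_0$. First I would identify the Hilbert-Schmidt operator $S$: from $C = C_0^{1/2}(I-S)C_0^{1/2}$ and~(\ref{equation:C-posterior}) we read off $C_0^{1/2}S\,C_0^{1/2} = C_0 A^{*}(\Gamma + AC_0A^{*})^{-1}AC_0$, hence $S = C_0^{1/2}A^{*}(\Gamma + AC_0A^{*})^{-1}AC_0^{1/2}$. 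This $S$ is self-adjoint, positive, and has finite rank (at most $n$, since it factors through $\R^n$), so it is certainly trace class and Hilbert-Schmidt, and the hypotheses of Theorem~\ref{theorem:limit-KL-gamma-0} are satisfied provided $m - m_0 \in \Im(C_0^{1/2})$ — which holds because $m - m_0 = C_0 A^{*}(\Gamma+AC_0A^{*})^{-1}(y-Am_0) = C_0^{1/2}\bigl[C_0^{1/2}A^{*}(\Gamma+AC_0A^{*})^{-1}(y-Am_0)\bigr]$.

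Next I would handle the log-determinant term. Since $S$ is trace class, $\log\dettwo(I-S) = \log\det(I-S) - \trace(-S) = \log\det(I-S) + \trace(S)$. For $\det(I-S)$ I would use the Sylvester-type identity $\det(I_{\H} - C_0^{1/2}A^{*}(\Gamma+AC_0A^{*})^{-1}AC_0^{1/2}) = \det(I_n - (\Gamma+AC_0A^{*})^{-1}AC_0A^{*})$, and the right-hand side equals $\det((\Gamma+AC_0A^{*})^{-1}\Gamma) = \det(\Gamma)/\det(\Gamma+AC_0A^{*})$; thus $\log\det(I-S) = \log\det(\Gamma) - \log\det(\Gamma+AC_0A^{*})$. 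For $\trace(S)$, cyclicity gives $\trace(S) = \trace(A^{*}(\Gamma+AC_0A^{*})^{-1}AC_0) = \trace((\Gamma+AC_0A^{*})^{-1}AC_0A^{*})$; combining with~(\ref{equation:C-posterior}) written as $AC_0A^{*} = (\Gamma+AC_0A^{*}) - \Gamma$ and with $ACA^{*} = AC_0A^{*} - AC_0A^{*}(\Gamma+AC_0A^{*})^{-1}AC_0A^{*}$, a short manipulation yields $\trace(S) = \trace(ACA^{*}\Gamma^{-1})$ (equivalently, $I_n - (\Gamma+AC_0A^{*})^{-1}AC_0A^{*} = \Gamma^{-1}(\Gamma - AC_0A^{*}(\Gamma+AC_0A^{*})^{-1}\Gamma) \cdot$ after symmetrizing, so that $\trace((\Gamma+AC_0A^{*})^{-1}AC_0A^{*}) = \trace(\Gamma^{-1}AC_0A^{*}) - \trace(\Gamma^{-1}AC_0A^{*}(\Gamma+AC_0A^{*})^{-1}AC_0A^{*}) = \trace(\Gamma^{-1}ACA^{*})$). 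Hence $-\frac{1}{2}\log\dettwo(I-S) = \frac{1}{2}[\log\det(\Gamma+AC_0A^{*}) - \log\det(\Gamma) - \trace(ACA^{*}\Gamma^{-1})]$, which is exactly the bracketed log/trace part of the claimed formula.

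For the Cameron-Martin norm term, set $v = (\Gamma+AC_0A^{*})^{-1}(y-Am_0) \in \R^n$, so that $m - m_0 = C_0 A^{*} v$ and $C_0^{-1/2}(m-m_0) = C_0^{1/2}A^{*}v$; therefore $\|C_0^{-1/2}(m-m_0)\|^2 = \langle C_0^{1/2}A^{*}v, C_0^{1/2}A^{*}v\rangle = \langle v, AC_0A^{*}v\rangle_{\R^n}$. On the other hand, $\langle m-m_0, A^{*}\Gamma^{-1}(Am-y)\rangle = \langle A(m-m_0), \Gamma^{-1}(Am - y)\rangle_{\R^n}$, and I would expand $Am - y = A(m-m_0) + (Am_0 - y)$ with $A(m-m_0) = AC_0A^{*}v$ and $Am_0 - y = -(\Gamma+AC_0A^{*})v$, giving $Am - y = AC_0A^{*}v - (\Gamma+AC_0A^{*})v = -\Gamma v$; hence $\langle m-m_0, A^{*}\Gamma^{-1}(Am-y)\rangle = \langle AC_0A^{*}v, \Gamma^{-1}(-\Gamma v)\rangle = -\langle AC_0A^{*}v, v\rangle = -\|C_0^{-1/2}(m-m_0)\|^2$, so $\frac{1}{2}\|C_0^{-1/2}(m-m_0)\|^2 = -\frac{1}{2}\langle m-m_0, A^{*}\Gamma^{-1}(Am-y)\rangle$. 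Adding the two pieces gives precisely the asserted expression, and the equality with $\lim_{\gamma\to 0^{+}}D_{\KL}^{\gamma}$ is immediate from Theorem~\ref{theorem:limit-KL-gamma-0}. The only real care needed — the ``hard part'' — is the bookkeeping that turns $\trace(S)$ into $\trace(ACA^{*}\Gamma^{-1})$ and that identifies $Am - y = -\Gamma v$; once those two identities are in hand the rest is the finite-dimensional Sylvester determinant identity and cyclicity of the trace. I would also remark that the finite rank of $S$ makes $\dettwo$ reduce to an honest $n\times n$ determinant, so no infinite-dimensional subtleties beyond citing Theorem~\ref{theorem:limit-KL-gamma-0} arise.
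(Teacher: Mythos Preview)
Your proposal is correct and follows essentially the same route as the paper: identify $S = C_0^{1/2}A^{*}(\Gamma+AC_0A^{*})^{-1}AC_0^{1/2}$ (finite rank, hence trace class), apply the general KL formula, reduce $\det(I-S)$ via Sylvester to $\det(\Gamma)/\det(\Gamma+AC_0A^{*})$, reduce $\trace(S)$ to $\trace(ACA^{*}\Gamma^{-1})$ via $ACA^{*} = AC_0A^{*}(AC_0A^{*}+\Gamma)^{-1}\Gamma$, and show $\|C_0^{-1/2}(m-m_0)\|^2 = -\langle m-m_0, A^{*}\Gamma^{-1}(Am-y)\rangle$. The only cosmetic difference is that the paper invokes Theorem~\ref{theorem:KL-infinite} (the exact KL formula with $\det$ and $\trace$ separated) rather than Theorem~\ref{theorem:limit-KL-gamma-0}, and it expands the Cameron--Martin term using the resolvent identity $(\Gamma+AC_0A^{*})^{-1} = \Gamma^{-1} - \Gamma^{-1}AC_0A^{*}(\Gamma+AC_0A^{*})^{-1}$ instead of your slicker substitution $Am-y=-\Gamma v$; the computations are otherwise identical.
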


{\bf Special case}. For $\Gamma = I$, we obtain
\begin{align}
\label{equation:KL-inverse-Gamma-I}
& D_{\KL}(\Ncal(m, C)||\Ncal(m_0, C_0)) =\lim_{\gamma \approach 0^{+}}D_{\KL}^{\gamma}(\Ncal(m, C)||\Ncal(m_0, C_0)) 
\nonumber 
\\
& = \frac{1}{2}\left[\log\det(I+AC_0A^{*}) -\trace(ACA^{*}) - \la m-m_0, A^{*}(Am-y)\ra\right].
\end{align}
This is precisely Eq.(19) in Proposition 3 in \cite{Alexanderian:2016}.

{\bf Remark}. As noted in \cite{Alexanderian:2016},
the last term in Eq.(\ref{equation:KL-inverse-Gamma-I}) is precisely $\frac{1}{2}||C_0^{-1/2}(m- m_0)||^2$.
As we can see from Theorem \ref{theorem:limit-KL-gamma-0}, this term is part of the general formula
for KL divergences and is not a specific feature
of the Bayesian inverse problem.

\section{Limiting behavior of the regularized Kullback-Leibler divergences}
\label{section:limit-KL}
In this section, we prove Equation (\ref{equation:limit-KL-gamma-0-equivalent}) in Theorem
\ref{theorem:limit-KL-gamma-0}, which we restate below.
\begin{theorem}
	Assume the hypothesis of Theorem \ref{theorem:limit-KL-gamma-0}. Then
\label{theorem:limit-KL-gamma-0-1}
\begin{align}
\label{equation:limit-KL-gamma-0-equivalent-1}
\lim_{\gamma \approach 0^{+}}D_{\KL}^{\gamma}(\nu || \mu) &= \frac{1}{2}||C_0^{-1/2}(m-m_0)||^2 - \frac{1}{2}\log\dettwo(I-S).
\end{align}
\end{theorem}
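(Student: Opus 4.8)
By Definition~\ref{definition:regularized-KL},
\begin{align*}
D^{\gamma}_{\KL}(\nu || \mu) = \tfrac12\la m - m_0, (C_0+\gamma I)^{-1}(m-m_0)\ra + \tfrac12\, d^1_{\logdet}[(C+\gamma I),(C_0+\gamma I)],
\end{align*}
so the plan is to handle the ``mean part'' and the ``covariance part'' separately, showing that as $\gamma\approach 0^{+}$ the first term tends to $\|C_0^{-1/2}(m-m_0)\|^2$ and the second to $-\log\dettwo(I-S)$; adding the two limits gives \eqref{equation:limit-KL-gamma-0-equivalent-1}.

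\textbf{Mean part.} Since $m-m_0\in\Im(C_0^{1/2})$ and $\ker(C_0^{1/2})=\{0\}$, set $a:=C_0^{-1/2}(m-m_0)\in\H$, so that $m-m_0=C_0^{1/2}a$ and, by the functional calculus for $C_0$, $\la m-m_0,(C_0+\gamma I)^{-1}(m-m_0)\ra=\la a, g_\gamma(C_0)a\ra$ with $g_\gamma(\lambda)=\lambda/(\lambda+\gamma)$. As $\gamma\downarrow 0$ one has $g_\gamma(\lambda)\uparrow 1$ for every $\lambda>0$, and the spectral measure $\la E_{C_0}(\cdot)a,a\ra$ assigns no mass to $\{0\}$ because $C_0$ is strictly positive; monotone convergence then yields $\la a,g_\gamma(C_0)a\ra\to\|a\|^2=\|C_0^{-1/2}(m-m_0)\|^2$. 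This step is routine.

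\textbf{Covariance part.} From $C=C_0^{1/2}(I-S)C_0^{1/2}$ we get $C-C_0=-C_0^{1/2}SC_0^{1/2}$. Set $B_\gamma:=C_0^{1/2}(C_0+\gamma I)^{-1/2}$, which is self-adjoint with $\|B_\gamma\|\le 1$ since it equals $h_\gamma(C_0)$ for $h_\gamma(\lambda)=\sqrt{\lambda/(\lambda+\gamma)}$, and put $\tilde T_\gamma:=B_\gamma S B_\gamma$. Conjugating $(C_0+\gamma I)^{-1}(C+\gamma I)=I+(C_0+\gamma I)^{-1}(C-C_0)$ by $(C_0+\gamma I)^{1/2}$ gives $(C_0+\gamma I)^{-1/2}(C+\gamma I)(C_0+\gamma I)^{-1/2}=I-\tilde T_\gamma$, and $\tilde T_\gamma\in\Tr(\H)$ because it contains the two Hilbert--Schmidt factors $C_0^{1/2}$ (the remaining factors being bounded). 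Since for operators of the form $I+(\text{trace class})$ the extended determinant $\detX$ reduces to the Fredholm determinant and both $\detX$ and the extended trace $\trX$ are invariant under conjugation by a bounded operator with bounded inverse, the $d^1_{\logdet}$ formula (with equal regularization parameters) gives
\begin{align*}
d^1_{\logdet}[(C+\gamma I),(C_0+\gamma I)] &= \trX[-\tilde T_{\gamma}] - \log\detX[I-\tilde T_{\gamma}] \\
&= -\trace(\tilde T_{\gamma}) - \log\det(I-\tilde T_{\gamma}) = -\log\dettwo(I-\tilde T_{\gamma}),
\end{align*}
the last equality being the trace-class identity $\log\dettwo(I+A)=\log\det(I+A)-\trace(A)$ with $A=-\tilde T_\gamma$.

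\textbf{Passing to the limit, and the main obstacle.} By the stated continuity of $\dettwo$ in the Hilbert--Schmidt norm it remains to prove $\|\tilde T_\gamma-S\|_{\HS}\to 0$ as $\gamma\approach 0^{+}$. Write $\tilde T_\gamma-S=B_\gamma S(B_\gamma-I)+(B_\gamma-I)S$; using $\|B_\gamma\|\le 1$ this reduces to $\|(B_\gamma-I)S\|_{\HS}\to 0$, together with $\|S(B_\gamma-I)\|_{\HS}=\|(B_\gamma-I)S\|_{\HS}\to 0$ (since $S=S^{*}$ and $\|X\|_{\HS}=\|X^{*}\|_{\HS}$). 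Now $h_\gamma(\lambda)\uparrow 1$ pointwise on $\mathrm{spec}(C_0)$, which carries no mass at $0$, so $B_\gamma\to I$ strongly; expanding $S$ in its singular system $S=\sum_j\sigma_j\la\cdot,e_j\ra f_j$ gives $\|(B_\gamma-I)S\|_{\HS}^2=\sum_j\sigma_j^2\|(B_\gamma-I)f_j\|^2$, where each term tends to $0$ and is dominated by $4\sigma_j^2$, so dominated convergence upgrades strong convergence to Hilbert--Schmidt convergence. This is the crux of the argument, and the only point where the Hilbert--Schmidt (rather than merely bounded) hypothesis on $S$ is essential; a secondary point to check carefully is that $\tilde T_\gamma$, though written as $B_\gamma S B_\gamma$ with $B_\gamma$ only bounded, is genuinely trace class, so that the $d^1_{\logdet}$ formula and the trace-class form of $\dettwo$ indeed apply. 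Combining the mean and covariance parts yields \eqref{equation:limit-KL-gamma-0-equivalent-1}.
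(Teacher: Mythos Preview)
Your proof is correct and follows the same overall architecture as the paper: split $D_{\KL}^\gamma$ via Definition~\ref{definition:regularized-KL}, handle the mean term by monotone convergence in the spectral representation of $C_0$ (this matches Lemma~\ref{lemma:limit-inner-gamma-0}/Proposition~\ref{proposition:limit-mean-cov-inner-gamma-0}), rewrite the covariance term as $-\log\dettwo(I-\tilde T_\gamma)$ with $\tilde T_\gamma=B_\gamma S B_\gamma$ (this matches Proposition~\ref{proposition:d1logdet-HS}), and conclude via continuity of $\dettwo$ in $\|\cdot\|_{\HS}$ once $\|\tilde T_\gamma-S\|_{\HS}\to0$ is established.

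The genuine difference is in how you prove $\|\tilde T_\gamma-S\|_{\HS}\to0$ (the content of Proposition~\ref{proposition:norm-convergence-HS} in the paper). The paper argues indirectly via the Radon--Riesz property of the Hilbert space $\HS(\H)$: it shows separately that $\tilde T_\gamma\to S$ weakly in $\HS(\H)$ (Lemma~\ref{lemma:weak-convergence-HS}) and that $\|\tilde T_\gamma\|_{\HS}\to\|S\|_{\HS}$ (Lemma~\ref{lemma:convergence-of-norm-HS}), each via dominated convergence in an eigenbasis of $C_0$, and then invokes Radon--Riesz to upgrade to norm convergence. Your route is more direct: the telescoping identity $\tilde T_\gamma-S=B_\gamma S(B_\gamma-I)+(B_\gamma-I)S$ reduces everything to $\|(B_\gamma-I)S\|_{\HS}\to0$, which you obtain from the \emph{strong} operator convergence $B_\gamma\to I$ (functional calculus, using $\ker C_0=\{0\}$) together with the fact that post-multiplication by a fixed Hilbert--Schmidt operator upgrades strong convergence to $\|\cdot\|_{\HS}$-convergence (dominated convergence over the singular values of $S$). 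This is shorter and avoids the Radon--Riesz machinery; the paper's approach, on the other hand, isolates the weak-limit and norm-limit computations as standalone lemmas, which may be of independent use. Both arguments use the Hilbert--Schmidt hypothesis on $S$ in an essential way at exactly this step, and both rely on the observation (made explicit in the paper's Remark after Lemma~\ref{lemma:limit-inner-gamma-1}) that $B_\gamma\to I$ cannot hold in operator norm when $\dim\H=\infty$.
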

The first term on the right hand side of (\ref{equation:limit-KL-gamma-0-equivalent-1}) follows from the following result.

\begin{proposition}
\label{proposition:limit-mean-cov-inner-gamma-0}
Assume that $\ker(C_0) = \{0\}$. Then
\begin{align}
&\lim_{\gamma \approach 0^{+}} \la m- m_0, (C_0 +\gamma I)^{-1}(m-m_0)\ra
\nonumber
\\
&=\left\{
\begin{matrix}
||C_0^{-1/2}(m-m_0)||^2 & \;\;\; \text{when}\;\; m-m_0 \in \Im(C_0^{1/2}),
\\
\infty & \;\;\; \text{when}\;\; m-m_0 \notin  \Im(C_0^{1/2}).
\end{matrix}
\right.
\end{align}
\end{proposition}

We first prove the following more general 
technical result.

\begin{lemma} 
	\label{lemma:limit-inner-gamma-0}
	Let
	$A$ be a self-adjoint, positive, compact operator on $\H$.
	Then
	\begin{align}
	\lim_{\gamma \approach 0^{+}}\la x, A^{1/2}(A+\gamma I)^{-1}A^{1/2}x\ra = ||x||^2 \;\;\; \forall x \in \H.
	\end{align}
	Assume further that $\ker(A) = \{0\}$, then for any $x \in \H$,
	\begin{align}
	\lim_{\gamma \approach 0^{+}}\la x, (A+\gamma I)^{-1}x\ra =
	\left\{
	\begin{matrix}
	||A^{-1/2}x||^2 & \;\;\; \text{when}\;\; x \in \Im(A^{1/2}),
	\\
	\infty & \;\;\; \text{when}\;\; x \notin  \Im(A^{1/2}).
	\end{matrix}
	\right.
	\end{align}
\end{lemma}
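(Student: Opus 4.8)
The plan is to prove both statements via the spectral theorem for $A$, reducing everything to a convergence statement about a series of nonnegative terms, and then invoking the monotone convergence theorem for series.

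First I would diagonalize. Since $A$ is self-adjoint, positive, and compact, the spectral theorem gives an orthonormal system $\{e_k\}$ of eigenvectors with eigenvalues $\lambda_k \geq 0$. For the first claim, write $x = \sum_k x_k e_k$ (plus possibly a component in $\ker(A)$, which is killed by $A^{1/2}$ on both sides, so it contributes $0$ to the left side; but note on the right side we want $\|x\|^2$ including that component — so I should be slightly careful and handle the $\ker(A) = \{0\}$ reduction, or note that $A^{1/2}(A+\gamma I)^{-1}A^{1/2}$ acts as $\frac{\lambda_k}{\lambda_k + \gamma}$ on $e_k$ and as $0$ on $\ker(A)$; since that coefficient is $\leq 1$ always, I first reduce to $\overline{\Im(A)}$). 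On $\overline{\Im(A)}$ we get
\begin{align}
\la x, A^{1/2}(A+\gamma I)^{-1}A^{1/2}x\ra = \sum_k \frac{\lambda_k}{\lambda_k + \gamma}x_k^2.
\end{align}
Each summand increases to $x_k^2$ as $\gamma \to 0^+$ (monotonically, since $\gamma \mapsto \frac{\lambda_k}{\lambda_k+\gamma}$ is decreasing), and $\sum_k x_k^2 = \|x\|^2 < \infty$, so monotone convergence for series gives the limit $\|x\|^2$. For the general $x \in \H$ without the $\ker(A) = \{0\}$ assumption, I add back the kernel component, which contributes its squared norm to the right side and $0$ to the left side — wait, that would break the identity; so in fact the clean statement requires either $\ker(A)=\{0\}$ or reinterpreting. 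I would simply note that $A^{1/2}x$ has no component in $\ker(A)$, and decompose $\|x\|^2 = \|P_{\ker A}x\|^2 + \sum_k x_k^2$; then observe the stated identity as written is really the case where we track only the $\overline{\Im A}$ part, or just present it cleanly under the understanding (consistent with the paper's use, where $\ker(C_0)=\{0\}$). I'll present it assuming the reader reads $\|x\|^2$ on the relevant subspace, or equivalently prove it first assuming $\ker(A)=\{0\}$ and remark the general case follows by splitting off the kernel.

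For the second claim, assume $\ker(A) = \{0\}$, so all $\lambda_k > 0$ and $\{e_k\}$ is a complete orthonormal basis. Then
\begin{align}
\la x, (A+\gamma I)^{-1}x\ra = \sum_k \frac{x_k^2}{\lambda_k + \gamma},
\end{align}
and again each summand increases monotonically to $\frac{x_k^2}{\lambda_k}$ as $\gamma \to 0^+$. By the monotone convergence theorem for series the limit equals $\sum_k \frac{x_k^2}{\lambda_k} \in [0,\infty]$, and this sum is finite if and only if $x \in \Im(A^{1/2})$ — indeed $x \in \Im(A^{1/2})$ means $x = A^{1/2}y$ for some $y \in \H$, i.e. $x_k = \lambda_k^{1/2}y_k$ with $\sum y_k^2 < \infty$, i.e. $\sum_k x_k^2/\lambda_k < \infty$, in which case the sum equals $\|y\|^2 = \|A^{-1/2}x\|^2$. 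When $x \notin \Im(A^{1/2})$ the sum diverges to $\infty$, giving the second case.

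I do not expect a serious obstacle here; the only mild subtlety is the monotone convergence / interchange-of-limits step, which is justified because for fixed $\gamma$ the partial sums are increasing in $\gamma \to 0^+$ and dominated termwise, so one can apply, e.g., the monotone convergence theorem (viewing the series as an integral over counting measure with the decreasing-in-$\gamma$ integrand, or directly via an $\varepsilon$-argument on a finite truncation). The other point to state carefully is the characterization $x \in \Im(A^{1/2}) \iff \sum_k x_k^2/\lambda_k < \infty$, which is immediate from the spectral representation of $A^{1/2}$. Proposition \ref{proposition:limit-mean-cov-inner-gamma-0} then follows by applying the second part of the lemma with $A = C_0$ and $x = m - m_0$.
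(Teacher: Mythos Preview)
Your proposal is correct and follows essentially the same route as the paper: diagonalize $A$ via the spectral theorem, write the inner products as series in the eigenbasis, and pass to the limit using monotone convergence (the paper phrases this as Lebesgue's Monotone Convergence Theorem applied termwise). Your observation about the kernel is well taken: the first identity as stated is only literally true when $\ker(A)=\{0\}$, and the paper's proof tacitly writes $x=\sum_k x_k e_k$ with the $e_k$ spanning all of $\H$, which amounts to the same assumption; since every application in the paper has $\ker(A)=\{0\}$, this is harmless.
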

\begin{proof} 
	Let $\{\lambda_k\}_{k=1}^{\infty}$ be the eigenvalues of $A$, with corresponding orthonormal eigenvectors $\{e_k\}_{k=1}^{\infty}$, then 
	we have 
	the spectral decomposition 
	$A = \sum_{k=1}^{\infty}\lambda_k e_k \otimes e_k \imply A^{1/2}(A+\gamma I)^{-1}A^{1/2} = \sum_{k=1}^{\infty}\frac{\lambda_k}{\lambda_k + \gamma}e_k \otimes e_k$.
	For each $x \in \H$, write $x = \sum_{k=1}^{\infty}x_k e_k$, where $x_k = \la x, e_k\ra$. Then
	$\la x, A^{1/2}(A+\gamma I)^{-1}A^{1/2}x\ra = \sum_{k=1}^{\infty}\frac{\lambda_k}{\lambda_k +\gamma}x_k^2$.
	By Lebesgue's Monotone Convergence Theorem, we then have
	$\lim_{\gamma \approach 0^{+}}\la x, A^{1/2}(A+\gamma I)^{-1}A^{1/2}x\ra = \lim_{\gamma \approach 0^{+}}\sum_{k=1}^{\infty}\frac{\lambda_k}{\lambda_k +\gamma}x_k^2
	= \sum_{k=1}^{\infty} \lim_{\gamma \approach 0^{+}}\frac{\lambda_k}{\lambda_k +\gamma}x_k^2 = 
	\sum_{k=1}^{\infty}x_k^2 = ||x||^2.
	$
	This proves
	the first identity. If $\ker(A) = \{0\}$, then we have $\lambda_k > 0$ $\forall k \in \N$ and
	\begin{align*}
	\Im(A^{1/2}) = \left\{ x = \sum_{k=1}^{\infty}x_ke_k \in \H \; : \; \sum_{k=1}^{\infty}\frac{x_k^2}{\lambda_k} < \infty\right\}.
	\end{align*}
	Thus for any $ x\in \H$, we have
	\begin{align*}
	\lim_{\gamma \approach 0^{+}}\la x, (A+\gamma I)^{-1}x\ra &= \lim_{\gamma \approach 0^{+}}\sum_{k=1}^{\infty}\frac{1}{\lambda_k +\gamma}x_k^2 =
	\sum_{k=1}^{\infty}\lim_{\gamma \approach 0^{+}}\frac{1}{\lambda_k +\gamma}x_k^2 = 
	\sum_{k=1}^{\infty}\frac{x_k^2}{\lambda_k}
	\\
	&=
	\left\{
	\begin{matrix}
	||A^{-1/2}x||^2 \;\;\; & \text{when} \;\; x \in \Im(A^{1/2})
	\\
	\infty \;\;\; & \text{when}\;\; x \notin \Im(A^{1/2}).
	\end{matrix}
	\right.
	\end{align*}
\qed \end{proof}

\begin{proof}
	[\textbf{
of Proposition \ref{proposition:limit-mean-cov-inner-gamma-0}}]
	This follows from Lemma \ref{lemma:limit-inner-gamma-0} by letting $x = m-m_0$ and $A = C_0$.
	\qed \end{proof}

The second term on the right hand side of (\ref{equation:limit-KL-gamma-0-equivalent-1}) follows from the following result.

\begin{assumption}
\label{assumption:main-1}
Let $C \in \Tr(\H),C_0 \in \Tr(\H)$ be self-adjoint, positive. Assume that there exists
$S \in \Sym(\H) \cap \HS(\H)$ such that 
$I-S$ is strictly positive and that
\begin{align}
C = C_0^{1/2}(I-S)C_0^{1/2}.
\end{align}
%
\end{assumption}
\begin{theorem} 
\label{theorem:general-limit-detX-gamma-0}
Let $C_0,C,S$ be three bounded linear operators on $\H$ satisfying the hypothesis of Assumption  \ref{assumption:main-1}. Then 
\begin{align}
\lim_{\gamma \approach 0^{+}}d^1_{\logdet}[(C + \gamma I), (C_0 + \gamma I)]
= -\log\dettwo(I-S).
\end{align}
The right hand side is nonnegative, with zero equality if and only if $S=0$, that is if and only if $C= C_0$.
If, in addition, $S$ is assumed to be trace class, then
\begin{align}
\lim_{\gamma \approach 0^{+}}d^1_{\logdet}[(C + \gamma I), (C_0 + \gamma I)]
= -\log\det(I-S) - \trace(S).
\end{align}
\end{theorem}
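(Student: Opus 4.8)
The plan is to show that for each fixed $\gamma>0$ the divergence equals a single Hilbert--Carleman determinant of a trace-class perturbation of the identity, and then to pass to the limit $\gamma\approach0^{+}$ by a Hilbert--Schmidt continuity argument. Starting from the simplified closed form (valid since the two regularization parameters coincide)
\[
d^1_{\logdet}[(C + \gamma I), (C_0 + \gamma I)] = \trX\bigl[(C_0+\gamma I)^{-1}(C+\gamma I) - I\bigr] - \log\detX\bigl[(C_0+\gamma I)^{-1}(C+\gamma I)\bigr],
\]
I would first note that $K_\gamma := (C_0+\gamma I)^{-1}(C+\gamma I) - I = (C_0+\gamma I)^{-1}(C - C_0) = -(C_0+\gamma I)^{-1}C_0^{1/2} S C_0^{1/2}$, and that $C_0^{1/2}SC_0^{1/2}$ is trace class (the product of the Hilbert--Schmidt operators $C_0^{1/2}$ and $SC_0^{1/2}$), so $K_\gamma\in\Tr(\H)$. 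Hence $\trX[K_\gamma]=\trace(K_\gamma)$ and $\detX[I+K_\gamma]=\det(I+K_\gamma)$, the Fredholm determinant; using $\log\det(I+K_\gamma)=\trace(K_\gamma)+\log\dettwo(I+K_\gamma)$, the two trace terms cancel, and one is left with the exact identity $d^1_{\logdet}[(C + \gamma I), (C_0 + \gamma I)]=-\log\dettwo\bigl(I-(C_0+\gamma I)^{-1}C_0^{1/2}SC_0^{1/2}\bigr)$ for every $\gamma>0$.

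Next I would symmetrize the operator under $\dettwo$. Splitting $(C_0+\gamma I)^{-1}=(C_0+\gamma I)^{-1/2}(C_0+\gamma I)^{-1/2}$ and invoking cyclicity of the Fredholm determinant and of the trace (one factor being bounded, the other trace class) gives $d^1_{\logdet}[(C + \gamma I), (C_0 + \gamma I)]=-\log\dettwo(I-U_\gamma S U_\gamma)$, where $U_\gamma:=C_0^{1/2}(C_0+\gamma I)^{-1/2}$ is a self-adjoint function of $C_0$ with spectrum in $[0,1]$ (eigenvalues $\sqrt{\lambda_k/(\lambda_k+\gamma)}$ for the eigenvalues $\lambda_k$ of $C_0$) and, since $\ker(C_0)=\{0\}$, satisfies $U_\gamma\to I$ strongly as $\gamma\approach0^{+}$. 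This symmetrization is the crux: $U_\gamma$ is uniformly bounded by $1$, whereas the unsymmetrized factor $(C_0+\gamma I)^{-1}C_0^{1/2}$ has operator norm of order $\gamma^{-1/2}$. I would then prove $U_\gamma S U_\gamma\to S$ in Hilbert--Schmidt norm by writing $U_\gamma S U_\gamma-S=U_\gamma S(U_\gamma-I)+(U_\gamma-I)S$, bounding $||U_\gamma||\le1$, reducing $||S(U_\gamma-I)||_{\HS}=||(U_\gamma-I)S||_{\HS}$ by self-adjointness, and noting that $||(U_\gamma-I)S||_{\HS}^2=\sum_k||(U_\gamma-I)Se_k||^2\to0$ by dominated convergence in an orthonormal basis $\{e_k\}$ (the $k$-th summand is dominated by $||Se_k||^2$ since $||U_\gamma-I||\le1$, and tends to $0$ because $U_\gamma\to I$ strongly at the fixed vector $Se_k$). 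Since $\dettwo(I+\cdot)$ is continuous in the Hilbert--Schmidt norm, this yields $\lim_{\gamma\approach0^{+}}d^1_{\logdet}[(C + \gamma I), (C_0 + \gamma I)]=-\log\dettwo(I-S)$.

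For the remaining assertions, nonnegativity and the equality case would follow from the product expansion $\dettwo(I-S)=\prod_k(1-s_k)\exp(s_k)$ over the eigenvalues $s_k$ of $S$ (each $<1$ because $I-S>0$), combined with the elementary inequality $1-t\le e^{-t}$ with equality iff $t=0$: every factor is $\le1$, hence $-\log\dettwo(I-S)\ge0$ with equality iff $S=0$, equivalently $C=C_0$ (the implication $C=C_0\imply S=0$ being immediate from the limit just established, since then the left-hand side is identically $0$). When $S$ is in addition trace class, substituting $\dettwo(I-S)=\det(I-S)\exp(\trace S)$ gives $-\log\dettwo(I-S)=-\log\det(I-S)-\trace(S)$. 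The step I expect to be the main obstacle is the Hilbert--Schmidt convergence $U_\gamma S U_\gamma\to S$: since $S$ is only Hilbert--Schmidt, trace-norm continuity of the Fredholm determinant is unavailable, and a direct estimate fails because $(C_0+\gamma I)^{-1}C_0^{1/2}$ is not uniformly bounded --- it is precisely the cyclic rearrangement into $U_\gamma S U_\gamma$ that makes the dominated-convergence argument go through.
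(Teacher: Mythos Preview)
Your proof is correct and follows the same overall architecture as the paper: establish the identity $d^1_{\logdet}[(C+\gamma I),(C_0+\gamma I)]=-\log\dettwo(I-U_\gamma S U_\gamma)$ for each $\gamma>0$ (this is the paper's Proposition~\ref{proposition:d1logdet-HS}), prove Hilbert--Schmidt convergence $U_\gamma S U_\gamma\to S$, and conclude via continuity of $\dettwo$.

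The genuine difference is in how you obtain the Hilbert--Schmidt convergence. The paper (Proposition~\ref{proposition:norm-convergence-HS}) proves it indirectly by establishing weak convergence in $\HS(\H)$ (Lemma~\ref{lemma:weak-convergence-HS}) and convergence of the $\HS$-norms (Lemma~\ref{lemma:convergence-of-norm-HS}) separately, and then invoking the Radon--Riesz property of the Hilbert space $\HS(\H)$. Your route is more elementary: from the decomposition $U_\gamma S U_\gamma - S = U_\gamma S(U_\gamma-I)+(U_\gamma-I)S$, the uniform bound $\|U_\gamma\|\le 1$, the adjoint identity $\|S(U_\gamma-I)\|_{\HS}=\|(U_\gamma-I)S\|_{\HS}$, and dominated convergence on $\sum_k\|(U_\gamma-I)Se_k\|^2$ using only the strong convergence $U_\gamma\to I$. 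This avoids Radon--Riesz entirely and is a cleaner argument; what the paper's approach buys in exchange is a reusable Proposition~\ref{proposition:norm-convergence-HS} stated without reference to commutativity between the sandwich operator and its regularized inverse. Note that your invocation of $U_\gamma\to I$ strongly uses $\ker(C_0)=\{0\}$; this is not stated in Assumption~\ref{assumption:main-1} but is implicitly needed in the paper's argument as well (the first identity of Lemma~\ref{lemma:limit-inner-gamma-0} also fails on $\ker(A)$), and holds throughout the paper's applications since $C_0$ is a non-degenerate covariance operator.
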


The limit in Theorem \ref{theorem:general-limit-detX-gamma-0} follows from the continuity of the Hilbert-Carleman determinant 
$\dettwo$
in the Hilbert-Schmidt norm $||\;||_{\HS}$. Its proof consists of two steps, which constitute the following two results.

\begin{proposition}
\label{proposition:d1logdet-HS}
Let $C_0,C$ be two self-adjoint, positive, trace class operators. Assume that there exists a self-adjoint, Hilbert-Schmidt operator $S$ such that $C = C_0^{1/2}(I-S)C_0^{1/2}$.
Then for any $\gamma > 0$, $\gamma \in \R$,
\begin{align}
&d^1_{\logdet}[(C+\gamma I), (C_0 +\gamma I)] 
\nonumber
\\
&= -\log\dettwo[I - (C_0 +\gamma I)^{-1/2}C_0^{1/2}SC_0^{1/2}(C_0 + \gamma I)^{-1/2}].
\end{align}
\end{proposition}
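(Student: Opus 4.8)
The plan is to obtain the identity by a direct computation starting from the closed form of $d^1_{\logdet}$ in the equal-regularization case. Since both arguments carry the \emph{same} shift $\gamma I$, the prefactor $(\gamma/\mu-1)\log(\gamma/\mu)$ in \eqref{equation:alpha+1} vanishes and $\gamma/\mu = 1$, so that $d^1_{\logdet}[(C+\gamma I),(C_0+\gamma I)] = \trX[(C_0+\gamma I)^{-1}(C+\gamma I) - I] - \log\detX[(C_0+\gamma I)^{-1}(C+\gamma I)]$. The algebraic heart of the matter is the identity $(C_0+\gamma I)^{-1}(C+\gamma I) - I = (C_0+\gamma I)^{-1}(C - C_0) = -(C_0+\gamma I)^{-1}C_0^{1/2}SC_0^{1/2}$, which follows by writing $C_0 = C_0^{1/2}C_0^{1/2}$ and substituting $C = C_0^{1/2}(I-S)C_0^{1/2}$; in particular $C_0^{1/2}SC_0^{1/2} = C_0 - C$ and $(C_0+\gamma I)^{-1}(C+\gamma I) = I - T_\gamma$, where I set $T_\gamma := (C_0+\gamma I)^{-1}C_0^{1/2}SC_0^{1/2}$.

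Next I would record the operator-ideal memberships that make the two terms an \emph{ordinary} trace and an \emph{ordinary} Fredholm determinant. Since $C_0 \in \Tr(\H)$ and $C_0 \geq 0$, its square root $C_0^{1/2}$ lies in $\HS(\H)$; since $S \in \HS(\H)$, the operator $C_0^{1/2}SC_0^{1/2}$ is a product of two Hilbert--Schmidt operators (with a bounded factor between them) and hence trace class (indeed $= C_0 - C$). For $\gamma > 0$ we have $C_0 + \gamma I \geq \gamma I$, so $(C_0+\gamma I)^{-1}$ and $(C_0+\gamma I)^{-1/2}$ are bounded; therefore $T_\gamma$ and $S_\gamma := (C_0+\gamma I)^{-1/2}C_0^{1/2}SC_0^{1/2}(C_0+\gamma I)^{-1/2}$ are both trace class. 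Consequently $(C_0+\gamma I)^{-1}(C+\gamma I) = I - T_\gamma$ is an identity-plus-trace-class operator, so by the definitions of the extended trace and the extended Fredholm determinant, $\trX[(C_0+\gamma I)^{-1}(C+\gamma I) - I] = \trX(-T_\gamma) = -\trace(T_\gamma)$ and $\detX[(C_0+\gamma I)^{-1}(C+\gamma I)] = \det(I - T_\gamma)$. Moreover $I - S_\gamma = (C_0+\gamma I)^{-1/2}(C+\gamma I)(C_0+\gamma I)^{-1/2} > 0$ since $C+\gamma I > 0$, which guarantees the determinants are positive and the logarithms are well defined.

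It then remains to pass to the symmetric variable $S_\gamma$. For the trace, split $(C_0+\gamma I)^{-1} = (C_0+\gamma I)^{-1/2}(C_0+\gamma I)^{-1/2}$ and apply cyclicity $\trace(AB) = \trace(BA)$ with $A = (C_0+\gamma I)^{-1/2}$ bounded and $B = (C_0+\gamma I)^{-1/2}C_0^{1/2}SC_0^{1/2}$ trace class, giving $\trace(T_\gamma) = \trace(S_\gamma)$. For the determinant, apply the Sylvester-type identity $\det(I+AB) = \det(I+BA)$ (valid for $A$ bounded, $B$ trace class; see \cite{Simon:1977}) with the same $A, B$, giving $\det(I - T_\gamma) = \det(I - S_\gamma)$. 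Hence $d^1_{\logdet}[(C+\gamma I),(C_0+\gamma I)] = -\trace(S_\gamma) - \log\det(I - S_\gamma)$. Finally, invoking the definition $\log\dettwo(I+A) = \log\det(I+A) - \trace(A)$ for $A \in \Tr(\H)$ with $A = -S_\gamma$ shows this equals $-\log\dettwo(I - S_\gamma)$, which is exactly the claimed formula.

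The genuinely delicate part is not the algebra but the operator-ideal bookkeeping: one must be certain that the relevant products land in the trace class, so that (i) $\trX$ and $\detX$ collapse to the ordinary trace and Fredholm determinant, (ii) cyclicity of the trace is legitimate, and (iii) $\det(I+AB) = \det(I+BA)$ applies. All of this rests only on $C_0$ being trace class and $\gamma > 0$ (making the resolvent-type factors bounded) together with $S$ being Hilbert--Schmidt — precisely the hypotheses of the proposition; self-adjointness of $S$ and strict positivity of $I-S$ are not needed for this particular step.
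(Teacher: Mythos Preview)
Your proof is correct and follows essentially the same route as the paper. The only cosmetic difference is the order of operations: the paper first passes to the symmetric form $(C_0+\gamma I)^{-1/2}(C+\gamma I)(C_0+\gamma I)^{-1/2}$ by invoking the product/commutativity properties of $\detX$ and $\trX$ (from \cite{Minh:LogDet2016}), and \emph{then} observes that this equals $I - S_\gamma$ with $S_\gamma$ trace class so that the extended objects reduce to the ordinary trace and Fredholm determinant; you instead first reduce $\trX,\detX$ to $\trace,\det$ on the non-symmetric $I-T_\gamma$ and then symmetrize via cyclicity of the trace and the Sylvester identity $\det(I+AB)=\det(I+BA)$. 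These are the same argument up to reordering, and your explicit operator-ideal bookkeeping is a welcome addition.
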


\begin{proposition}
\label{proposition:norm-convergence-HS}
Let $A$ be a compact, self-adjoint, positive operator on $\H$. Let $B \in \HS(\H)$. Then
\begin{align}
\lim_{\gamma \approach 0^{+}}||(A+\gamma I)^{-1/2}A^{1/2}BA^{1/2}(A+\gamma I)^{-1/2} - B||_{\HS} = 0.
\end{align}
\end{proposition}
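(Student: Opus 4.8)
The plan is to reduce everything to the spectral decomposition of $A$ and then apply a dominated-convergence argument in the Hilbert-Schmidt norm, which is an $\ell^2$-type norm on the matrix entries. Write $A = \sum_{k=1}^{\infty}\lambda_k\, e_k\otimes e_k$ with $\lambda_k \geq 0$ and $\{e_k\}$ an orthonormal system; since $A$ is only assumed compact and positive (not injective), I would first handle the possible kernel of $A$ separately. On $\ker(A)$ the operator $(A+\gamma I)^{-1/2}A^{1/2} = 0$, so the first term vanishes identically there, and the contribution to the Hilbert-Schmidt norm coming from rows or columns indexed by the kernel is just $\|P_{\ker A}B\|_{\HS}^2 + \|BP_{\ker A}\|_{\HS}^2$ minus the overlap — this is a fixed finite quantity that does \emph{not} go to zero, which signals that the statement as I am reading it needs $A$ injective, or more precisely that one must be slightly careful. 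Re-examining, the intended and correct reading is that $f_\gamma(A) := (A+\gamma I)^{-1/2}A^{1/2}$ converges strongly to the orthogonal projection onto $\overline{\Im(A)} = \ker(A)^{\perp}$, and on the kernel $B$ should be thought of as already living in the relevant subspace; under Assumption \ref{assumption:main-1} with $\ker(C_0)=\{0\}$ this issue does not arise, so I will simply assume $\ker(A)=\{0\}$, hence $\lambda_k > 0$ for all $k$, which is the case actually used.

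With $\lambda_k > 0$, set $\phi_\gamma(\lambda) = \sqrt{\lambda/(\lambda+\gamma)} = \lambda^{1/2}(\lambda+\gamma)^{-1/2} \in (0,1)$, so that $(A+\gamma I)^{-1/2}A^{1/2} = \sum_k \phi_\gamma(\lambda_k)\, e_k\otimes e_k =: D_\gamma$, a diagonal contraction. Then the operator in question is $D_\gamma B D_\gamma - B$, and in the basis $\{e_k\}$ its matrix entries are $\langle e_i, (D_\gamma B D_\gamma - B)e_j\rangle = (\phi_\gamma(\lambda_i)\phi_\gamma(\lambda_j) - 1)\, B_{ij}$ where $B_{ij} = \langle e_i, Be_j\rangle$. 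Therefore
\begin{align}
\|D_\gamma B D_\gamma - B\|_{\HS}^2 = \sum_{i,j=1}^{\infty} \bigl(\phi_\gamma(\lambda_i)\phi_\gamma(\lambda_j) - 1\bigr)^2 |B_{ij}|^2.
\end{align}
Since $0 < \phi_\gamma(\lambda_i)\phi_\gamma(\lambda_j) < 1$ we have $(\phi_\gamma(\lambda_i)\phi_\gamma(\lambda_j)-1)^2 \leq 1$, and $\sum_{i,j}|B_{ij}|^2 = \|B\|_{\HS}^2 < \infty$, so the summand is dominated by the summable sequence $|B_{ij}|^2$ uniformly in $\gamma$. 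For each fixed pair $(i,j)$, as $\gamma \approach 0^{+}$ we have $\phi_\gamma(\lambda_i) \to 1$ and $\phi_\gamma(\lambda_j) \to 1$ because $\lambda_i, \lambda_j > 0$, hence $(\phi_\gamma(\lambda_i)\phi_\gamma(\lambda_j)-1)^2 \to 0$. By the dominated convergence theorem applied to the counting measure on $\N\times\N$, the whole sum tends to $0$, which is the claim.

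The main obstacle is not the analytic estimate, which is routine once the diagonalization is in place, but rather the bookkeeping around the non-injective case flagged above: one must either invoke $\ker(A)=\{0\}$ (legitimate here, since the proposition is applied with $A = C_0$ and $\ker(C_0)=\{0\}$ under the standing assumptions) or reinterpret the conclusion on $\ker(A)^{\perp}$. A secondary point worth stating carefully is the uniform domination: one should note explicitly that $\phi_\gamma$ takes values in $[0,1]$ so that the bound $(\phi_\gamma(\lambda_i)\phi_\gamma(\lambda_j)-1)^2 \leq 1$ holds for \emph{all} $\gamma > 0$ simultaneously, which is exactly what licenses the interchange of limit and (double) sum. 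Everything else — the spectral representation of $(A+\gamma I)^{-1/2}A^{1/2}$, the computation of matrix entries, and the identification of $\|\cdot\|_{\HS}^2$ with $\sum_{i,j}|B_{ij}|^2$ — is standard.
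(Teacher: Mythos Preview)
Your proof is correct and takes a more direct route than the paper's. The paper establishes the result via the Radon--Riesz property of the Hilbert space $\HS(\H)$: it first shows weak convergence $\la D_\gamma B D_\gamma, C\ra_{\HS} \approach \la B, C\ra_{\HS}$ for every $C \in \HS(\H)$ (Lemma~\ref{lemma:weak-convergence-HS}) and then convergence of norms $\|D_\gamma B D_\gamma\|_{\HS} \approach \|B\|_{\HS}$ (Lemma~\ref{lemma:convergence-of-norm-HS}), each through its own dominated-convergence argument on the eigenbasis of $A$, and concludes by Radon--Riesz. Your argument collapses these into a single dominated-convergence step on the double sum of matrix entries, exploiting directly that $\|\cdot\|_{\HS}^2$ is an $\ell^2$ sum in any orthonormal basis; the uniform bound $|\phi_\gamma(\lambda_i)\phi_\gamma(\lambda_j) - 1| \leq 1$ then does all the work. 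The paper's route is more modular (the intermediate lemmas are stated separately and could be reused); yours is shorter and entirely elementary. Your remark about $\ker(A)$ is also well taken: the proposition as stated needs $\ker(A) = \{0\}$, and the paper's own auxiliary lemmas (e.g., the first identity in Lemma~\ref{lemma:limit-inner-gamma-0}) carry the same tacit assumption, which is harmless in the intended application since there $A = C_0$ with $\ker(C_0) = \{0\}$ under the hypotheses of the main theorems.
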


\begin{lemma} 
	\label{lemma:logdet-trace-S}
	Let $S \in \Sym(\H) \cap \HS(\H)$ such that $I-S$ is strictly positive. Then
	\begin{align}
	\log\dettwo(I-S) \leq 0,
	\end{align}
	with equality if and only if $S = 0$.
\end{lemma}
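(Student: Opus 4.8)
The plan is to reduce the statement to an elementary scalar inequality applied eigenvalue-by-eigenvalue. First I would invoke the spectral theorem for the compact self-adjoint operator $S$: write $S = \sum_{k=1}^\infty s_k\, e_k\otimes e_k$ with real eigenvalues $s_k \to 0$ and $\sum_k s_k^2 < \infty$ (since $S \in \HS(\H)$). The hypothesis that $I - S$ is strictly positive forces $1 - s_k > 0$, i.e. $s_k < 1$ for every $k$. By the defining property of the Hilbert-Carleman determinant recalled just after Theorem \ref{theorem:limit-KL-gamma-0}, namely $\dettwo(I-S) = \prod_{k=1}^\infty (1 - s_k)e^{s_k}$ (the product converging absolutely precisely because $\sum_k s_k^2 < \infty$), we get
\begin{align}
\log\dettwo(I-S) = \sum_{k=1}^\infty \left[\log(1 - s_k) + s_k\right].
\end{align}

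The core step is then the scalar fact that $\phi(t) := \log(1-t) + t \leq 0$ for all $t \in (-\infty, 1)$, with equality if and only if $t = 0$. This is immediate from concavity of $\log$: $\phi(0) = 0$, $\phi'(t) = -\frac{1}{1-t} + 1 = \frac{-t}{1-t}$, which is positive for $t < 0$ and negative for $0 < t < 1$, so $\phi$ attains a strict global maximum of $0$ at $t = 0$. Applying this with $t = s_k$ (all of which lie in $(-\infty,1)$) shows every summand is $\leq 0$, hence the sum is $\leq 0$, giving $\log\dettwo(I-S) \leq 0$.

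For the equality case: if $\log\dettwo(I-S) = 0$, then the sum of nonpositive terms vanishes, so each term vanishes, so $\phi(s_k) = 0$ for all $k$, whence $s_k = 0$ for all $k$ by the strictness in the scalar inequality; therefore $S = \sum_k s_k\, e_k\otimes e_k = 0$. Conversely $S = 0$ trivially gives $\dettwo(I) = 1$ and $\log\dettwo(I) = 0$.

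I do not anticipate a serious obstacle here; the only point requiring a little care is the justification that the infinite product defining $\dettwo(I-S)$ converges and that $\log\dettwo(I-S)$ equals the termwise sum $\sum_k[\log(1-s_k) + s_k]$ — this is standard (the series $\sum_k |\log(1-s_k) + s_k| = \sum_k |\phi(s_k)|$ converges because $\phi(t) = O(t^2)$ near $0$ and $s_k \to 0$), and can be cited from \cite{Simon:1977} together with the continuity property of $\dettwo$ already quoted in the excerpt. If one wants to avoid even the convergence discussion, one can alternatively note $-\log\dettwo(I-S) = \trace[-\log(I-S) - S]$ with $-\log(I-S) - S \in \Sym^+(\H)\cap\Tr(\H)$ by functional calculus applied to the positive operator $I - S$, which makes nonnegativity of the trace manifest and the vanishing condition equivalent to the operator $-\log(I-S)-S$ being $0$, i.e. $S = 0$.
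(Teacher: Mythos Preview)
Your proof is correct and essentially identical to the paper's: both spectrally decompose $S$, use the scalar inequality $\log(1-t)+t\le 0$ for $t<1$ (proved via the same derivative computation), and conclude term-by-term. Your additional remarks on the convergence of the series and the alternative functional-calculus viewpoint are accurate but go slightly beyond what the paper records.
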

\begin{proof}
	Consider the function $f(x) = \log(1-x) + x$ for $x < 1$. We have $f{'}(x) = -\frac{x}{1-x}$, with
	$f'(x) > 0$ for $x < 0$ and $f'(x) < 0$ for $0 < x < 1$. Thus $f$ has a unique global maximum $f_{\max} = f(0) = 0$. Hence $f(x) \leq 0$, with 
	equality if and only if $x = 0$.
	
	Let $\{\lambda_k\}_{k=1}^{\infty}$ denote the eigenvalues of $S$, then since $I-S$ is strictly positive, we have $\lambda_k < 1$ $\forall k \in \N$. Then
	$\log\dettwo(I-S) = \sum_{k=1}^{\infty}[\log(1-\lambda_k) +\lambda_k] \leq 0$,
	with equality if and only if $\lambda_k = 0$ $\forall k\in \N$, that is if and only if $S = 0$. 
\qed \end{proof}

\begin{proof}
{\textbf{(of Theorem \ref{theorem:general-limit-detX-gamma-0})}}
By Proposition \ref{proposition:d1logdet-HS}, we have for any $\gamma > 0$,
\begin{align*}
&d^1_{\logdet}[(C+\gamma I), (C_0 +\gamma I)] 
\nonumber
\\
&= -\log\dettwo[I - (C_0 +\gamma I)^{-1/2}C_0^{1/2}SC_0^{1/2}(C_0 + \gamma I)^{-1/2}].
\end{align*}
By Proposition \ref{proposition:norm-convergence-HS}, we have
\begin{align}
\lim_{\gamma \approach 0^{+}}||(C_0 +\gamma I)^{-1/2}C_0^{1/2}SC_0^{1/2}(C_0+\gamma I)^{-1/2} - S||_{\HS} = 0.
\end{align}
By Theorem 6.5 in \cite{Simon:1977}, which states the continuity of the Hilbert-Carleman determinant in the Hilbert-Schmidt norm topology, we then obtain
\begin{align*}
& \lim_{\gamma \approach 0^{+}}\dettwo[I - (C_0 +\gamma I)^{-1/2}C_0^{1/2}SC_0^{1/2}(C_0 + \gamma I)^{-1/2}]
= \dettwo(I-S).
\end{align*}
It then follows that
\begin{align*}
\lim_{\gamma \approach 0^{+}}d^1_{\logdet}[(C+\gamma I), (C_0 +\gamma I)]  =-\log\dettwo(I-S).
\end{align*}
By Lemma \ref{lemma:logdet-trace-S}, the right hand side is always nonnegative, with zero equality if and only if $S=0$.
From the expression $C = C_0^{1/2}(I-S)C_0^{1/2}$, this happens if and only if $C = C_0$.
If $S$ is trace class, then $\dettwo(I-S) = \det(I-S)\exp(\trace(S))$ and we have
\begin{align*}
\lim_{\gamma \approach 0^{+}}d^1_{\logdet}[(C+\gamma I), (C_0 +\gamma I)]  =-\log\det(I-S) - \trace(S).
\end{align*}
\qed \end{proof}

\begin{proof}
{\textbf{(of Proposition \ref{proposition:d1logdet-HS})}}
By the product property of the extended Fredholm determinant (Proposition 4 in \cite{Minh:LogDet2016}) and the commutativity of the extended trace operation (Lemma 4 in \cite{Minh:LogDet2016}), we have
\begin{align*}
&\detX[(C_0 + \gamma I)^{-1}(C+\gamma I)] = \detX[(C_0 + \gamma I)^{-1/2}(C+\gamma I)(C_0 + \gamma I)^{-1/2}],
\\
&\trace_X[(C_0 + \gamma I)^{-1}(C+\gamma I) - I] = \trace_X[(C_0 + \gamma I)^{-1/2}(C+\gamma I)(C_0 + \gamma I)^{-1/2}- I].
\end{align*}
For $C = C_0^{1/2}(I-S)C_0^{1/2} = C_0 - C_0^{1/2}SC_0^{1/2}$, we have for any $\gamma > 0$,
$C + \gamma I = C_0 + \gamma I  - C_0^{1/2}SC_0^{1/2}$.
Thus it follows that
\begin{align*}
&(C_0 +\gamma I)^{-1/2}(C+\gamma I)(C_0 + \gamma I)^{-1/2} 
&= I - (C_0 +\gamma I)^{-1/2}C_0^{1/2}SC_0^{1/2}(C_0 + \gamma I)^{-1/2}.
\end{align*}
By definition of $d^1_{\logdet}$, we have
\begin{align*}
&d^1_{\logdet}[(C+\gamma I), (C_0+\gamma I)]
\\
&= \trace_X[(C_0+\gamma I)^{-1}(C+\gamma I)-I] - \log\detX[(C_0+\gamma I)^{-1}(C+\gamma I)]
\\
& = \trace_X[(C_0 + \gamma I)^{-1/2}(C+\gamma I)(C_0 + \gamma I)^{-1/2}- I]
\\
&-\log\detX[(C_0 + \gamma I)^{-1/2}(C+\gamma I)(C_0 + \gamma I)^{-1/2}]
\\
& =  - \trace[(C_0 +\gamma I)^{-1/2}C_0^{1/2}SC_0^{1/2}(C_0 + \gamma I)^{-1/2}]
\\
& - \log\det[I - (C_0 +\gamma I)^{-1/2}C_0^{1/2}SC_0^{1/2}(C_0 + \gamma I)^{-1/2}]
\\
& = -\log\dettwo[I - (C_0 +\gamma I)^{-1/2}C_0^{1/2}SC_0^{1/2}(C_0 + \gamma I)^{-1/2}].
\end{align*}
\qed \end{proof}

\textbf{Proof of Proposition \ref{proposition:norm-convergence-HS}}.
We recall that a Banach space $\Bcal$ is  said to have the Radon-Riesz Property if $||x_n|| \approach ||x||$ and
$x_n \approach x$ weakly imply that $||x_n - x|| \approach 0$ for all $\{x_n\}_{n \in \N}$ and $x$ in $\Bcal$.
In particular, 
a Hilbert space $\H$
possesses the Radon-Riesz Property. 
We
now
utilize this property for the Hilbert space $\HS(\H)$, under the Hilbert-Schmidt inner product. We first prove the following.

\begin{lemma} 
\label{lemma:limit-inner-gamma-1}
Let
$A$ be a self-adjoint, positive, compact operator on $\H$.
Then
\begin{align}
\lim_{\gamma \approach 0^{+}}\la (A+\gamma I)^{-1/2}A^{1/2}x, y\ra = \la x, y\ra, \;\;\;\forall x,y \in \H,
\end{align}
that is 
$(A+\gamma I)^{-1/2}A^{1/2}$ converges to 
$I$ in the weak operator topology as $\gamma \approach 0^{+}$.
\end{lemma}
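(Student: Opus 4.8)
The plan is to use the spectral decomposition of the compact, self-adjoint, positive operator $A$ and reduce the weak convergence statement to a scalar limit that is handled by dominated convergence. Write $A = \sum_{k=1}^{\infty}\lambda_k\, e_k\otimes e_k$ with orthonormal eigenvectors $\{e_k\}$ and $\lambda_k \geq 0$; here the $e_k$ span the closure of $\Im(A)$, and $\ker(A)$ is handled separately. Then $(A+\gamma I)^{-1/2}A^{1/2} = \sum_{k=1}^{\infty}\frac{\sqrt{\lambda_k}}{\sqrt{\lambda_k+\gamma}}\,e_k\otimes e_k + (\text{action on }\ker(A))$, where on $\ker(A)$ the operator $(A+\gamma I)^{-1/2}A^{1/2}$ acts as $0$ (since $A^{1/2}$ vanishes there). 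For $x,y \in \H$, expand $x = \sum_k x_k e_k + x^{\perp}$ and $y = \sum_k y_k e_k + y^{\perp}$ with $x^{\perp},y^{\perp} \in \ker(A)$, so that
\begin{align}
\la (A+\gamma I)^{-1/2}A^{1/2}x, y\ra = \sum_{k=1}^{\infty}\frac{\sqrt{\lambda_k}}{\sqrt{\lambda_k+\gamma}}\,x_k y_k.
\end{align}

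Next I would pass to the limit $\gamma \approach 0^{+}$ inside the sum. For each fixed $k$ with $\lambda_k > 0$, the factor $\frac{\sqrt{\lambda_k}}{\sqrt{\lambda_k+\gamma}} \to 1$; for $k$ with $\lambda_k = 0$ the term is identically $0$. The series is dominated uniformly in $\gamma$ by $\sum_k |x_k y_k|$, which is finite by Cauchy-Schwarz since $\sum_k x_k^2 \leq \|x\|^2$ and $\sum_k y_k^2 \leq \|y\|^2$. Hence the dominated convergence theorem (for series, i.e. with counting measure) yields
\begin{align}
\lim_{\gamma \approach 0^{+}}\la (A+\gamma I)^{-1/2}A^{1/2}x, y\ra = \sum_{k:\,\lambda_k>0} x_k y_k = \la x, y\ra,
\end{align}
where the last equality uses that $x_k y_k = 0$ whenever $\lambda_k = 0$ and that $\la x^{\perp}, y^{\perp}\ra$ contributes nothing beyond what is already captured — more precisely, $\la x,y\ra = \sum_k x_k y_k + \la x^{\perp},y^{\perp}\ra$, so one needs $x^{\perp} = 0$ or $y^{\perp}=0$; in general $\la x,y\ra$ should be replaced by $\la P x, y\ra$ where $P$ is the orthogonal projection onto $\overline{\Im(A)}$, but since the lemma is applied with $A = C_0$ strictly positive (so $\ker(C_0)=\{0\}$ and $P = I$), the stated identity holds in the regime of interest. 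The operator-norm bound $\|(A+\gamma I)^{-1/2}A^{1/2}\| \leq 1$ uniformly in $\gamma$ then promotes pointwise-on-a-dense-set convergence to convergence in the weak operator topology on all of $\H$, which gives the final assertion.

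The main obstacle is bookkeeping rather than depth: one must be careful that $A$ need not be strictly positive in the stated generality, so the kernel of $A$ must be tracked explicitly, and one must verify that $A^{1/2}$ and $(A+\gamma I)^{-1/2}$ genuinely commute and combine to the claimed spectral multiplier $\frac{\sqrt{\lambda_k}}{\sqrt{\lambda_k+\gamma}}$ (this uses the functional calculus for the single operator $A$). Once the series representation is in hand, the interchange of limit and sum is routine via the uniform dominating function $\sum_k|x_k y_k|$, and the uniform bound $\|(A+\gamma I)^{-1/2}A^{1/2}\|\le 1$ takes care of extending from a dense set to all of $\H$. This lemma then feeds into the proof of Proposition \ref{proposition:norm-convergence-HS}: combined with the Radon-Riesz property of $\HS(\H)$ and a computation showing $\|(A+\gamma I)^{-1/2}A^{1/2}BA^{1/2}(A+\gamma I)^{-1/2}\|_{\HS} \to \|B\|_{\HS}$, weak-plus-norm convergence upgrades to Hilbert-Schmidt norm convergence.
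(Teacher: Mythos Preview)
Your proposal is correct and follows essentially the same route as the paper: spectral decomposition of $A$, reduction of $\la (A+\gamma I)^{-1/2}A^{1/2}x,y\ra$ to the series $\sum_k \frac{\sqrt{\lambda_k}}{\sqrt{\lambda_k+\gamma}}x_ky_k$, and passage to the limit via dominated convergence with the uniform majorant $\sum_k|x_ky_k|$. Your additional care about $\ker(A)$ is a genuine refinement --- you are right that the stated conclusion $\la x,y\ra$ only holds as written when $\ker(A)=\{0\}$ (otherwise the limit is $\la Px,y\ra$ with $P$ the projection onto $\overline{\Im(A)}$); the paper's proof tacitly expands in an eigenbasis without separating off the kernel, but since every application downstream has $A=C_0$ injective, this does not affect the results.
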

\begin{proof} 
Let $\{\lambda_k\}_{k=1}^{\infty}$ be the eigenvalues of $A$, with corresponding orthonormal eigenvectors $\{e_k\}_{k=1}^{\infty}$.
For any 
$x, y \in \H$, write $x = \sum_{k=1}^{\infty}x_k e_k$, $y = \sum_{k=1}^{\infty}y_ke_k$, where $x_k = \la x, e_k\ra$, $y_k = \la y, e_k\ra$. Then
$\la (A+\gamma I)^{-1/2}A^{1/2}x, y\ra = \sum_{k=1}^{\infty}\frac{\lambda_k^{1/2}}{(\lambda_k +\gamma)^{1/2}}x_ky_k$.
For each
$k \in \N$,
$\lim_{\gamma \approach 0^{+}} \frac{\lambda_k^{1/2}}{(\lambda_k +\gamma)^{1/2}}x_ky_k = x_ky_k$.
Furthermore,
\begin{align*}
&\sum_{k=1}^{\infty}\left|\frac{\lambda_k^{1/2}}{(\lambda_k +\gamma)^{1/2}}x_ky_k \right|\leq \sum_{k=1}^{\infty}|x_ky_k| \leq \frac{1}{2}\sum_{k=1}^{\infty}[|x_k|^2 + |y_k|^2] = \frac{1}{2}[||x||^2 + ||y||^2]
< \infty.
\end{align*}
Thus by Lebesgue's Dominated Convergence Theorem, 
$\lim_{\gamma \approach 0^{+}}\la (A+\gamma I)^{-1/2}A^{1/2}x,y\ra = \lim_{\gamma \approach 0^{+}}\sum_{k=1}^{\infty}\frac{\lambda_k^{1/2}}{(\lambda_k +\gamma)^{1/2}}x_ky_k 
 = \sum_{k=1}^{\infty}\lim_{\gamma \approach 0^{+}}\frac{\lambda_k^{1/2}}{(\lambda_k +\gamma)^{1/2}}x_ky_k 
= \sum_{k=1}^{\infty}x_ky_k = \la x, y\ra.
$
\qed \end{proof}

\begin{remark}
Lemma \ref{lemma:limit-inner-gamma-1}
states that 
$(A+\gamma I)^{-1/2}A^{1/2}$ converges weakly to the identity operator $I$ as $\gamma \approach 0^{+}$.
When $\dim(\H) = \infty$, this convergence does not hold in the operator norm topology.
For any $\gamma > 0$, the operator $A(A+\gamma I)^{-1}$ has eigenvalues $\{\frac{\lambda_k}{\lambda_k +\gamma}\}_{k=1}^{\infty}$, with
$\lim_{\gamma \approach 0^{+}}\frac{\lambda_k}{\lambda_k +\gamma} = 1$.
However, $\lim_{\gamma \approach 0^{+}}||I - A(A+\gamma I)^{-1}||  = \lim_{\gamma \approach 0^{+}}\gamma ||(A+\gamma I)^{-1}||\neq 0$ if $\dim(\H) = \infty$. In fact, we have
\begin{align*}
||\gamma (A+\gamma I)^{-1}e_k|| = \frac{\gamma}{\lambda_k + \gamma} \imply \sup_{k\in \N}||\gamma (A+\gamma I)^{-1}e_k|| = 1
\end{align*}
for any $\gamma > 0$, since $\lim_{k \approach \infty}\lambda_k = 0$. Thus $||\gamma (A+\gamma I)^{-1}|| = 1$ 
$\forall \gamma > 0$.
\end{remark}

\begin{lemma}
\label{lemma:weak-convergence-HS}
Let $A$ be a compact, self-adjoint, positive operator on $\H$. Let $B \in \HS(\H)$. Then
for any operator $C \in \HS(\H)$,
\begin{align}
\lim_{\gamma \approach 0^{+}}\la (A+\gamma I)^{-1/2}A^{1/2}BA^{1/2}(A+\gamma I)^{-1/2}, C\ra_{\HS} = \la B, C\ra_{\HS},
\end{align}
i.e.
$(A+\gamma I)^{-1/2}A^{1/2}BA^{1/2}(A+\gamma I)^{-1/2}$ converges weakly to $B$ in $\HS(\H)$
as $\gamma \approach 0^{+}$.
\end{lemma}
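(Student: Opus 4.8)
The plan is to write $T_\gamma := (A+\gamma I)^{-1/2}A^{1/2}$ and to combine three elementary facts about it: it is self-adjoint (being a function of $A$), it is a contraction ($\|T_\gamma\| \le 1$, since in the spectral decomposition $A = \sum_{k}\lambda_k\, e_k\otimes e_k$ of Lemma \ref{lemma:limit-inner-gamma-1} it has eigenvalues $\lambda_k^{1/2}/(\lambda_k+\gamma)^{1/2} \in [0,1)$ with the same eigenvectors), and it converges weakly to $I$ as $\gamma \approach 0^{+}$ (Lemma \ref{lemma:limit-inner-gamma-1}). Consequently the map $\Phi_\gamma(B) := T_\gamma B T_\gamma$ on $\HS(\H)$ satisfies $\|\Phi_\gamma(B)\|_{\HS} \le \|T_\gamma\|^2\|B\|_{\HS} \le \|B\|_{\HS}$, so $\sup_{\gamma>0}\|\Phi_\gamma\| \le 1$. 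Since finite-rank operators are dense in $\HS(\H)$, this uniform bound reduces the assertion $\la \Phi_\gamma(B), C\ra_{\HS} \approach \la B, C\ra_{\HS}$, via a standard $3\epsilon$ estimate, to the case where $B$ has finite rank, and hence by linearity to $B$ of rank one.

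For $B = u\otimes v$ (that is, $Bx = \la x, v\ra u$), self-adjointness of $T_\gamma$ gives $\Phi_\gamma(B) = (T_\gamma u)\otimes(T_\gamma v)$, so that for any $C \in \HS(\H)$,
\begin{align*}
\la \Phi_\gamma(B), C\ra_{\HS} = \la T_\gamma u,\, C\,T_\gamma v\ra, \qquad \la B, C\ra_{\HS} = \la u, Cv\ra.
\end{align*}
I would then split
\begin{align*}
\la T_\gamma u,\, C\,T_\gamma v\ra - \la u, Cv\ra = \la T_\gamma u,\, C(T_\gamma v - v)\ra + \la T_\gamma u - u,\, Cv\ra .
\end{align*}
The second term tends to $0$ because $T_\gamma \approach I$ in the weak operator topology. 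For the first term, by the first identity in Lemma \ref{lemma:limit-inner-gamma-0} we have $\|T_\gamma v\|^2 = \la v,\, A^{1/2}(A+\gamma I)^{-1}A^{1/2}v\ra \approach \|v\|^2$, which together with $T_\gamma v \approach v$ weakly and the Radon--Riesz property of $\H$ yields $T_\gamma v \approach v$ in norm; hence $|\la T_\gamma u,\, C(T_\gamma v - v)\ra| \le \|u\|\,\|C\|\,\|T_\gamma v - v\| \approach 0$. This proves the rank-one case.

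Combining the rank-one case with the density reduction of the first step completes the proof. I do not anticipate a genuine obstacle: the points requiring care are (i) using the uniform contraction bound $\|\Phi_\gamma\| \le 1$ to control the approximation error uniformly in $\gamma$ in the $3\epsilon$ argument, and (ii) getting the rank-one identities $\Phi_\gamma(u\otimes v) = (T_\gamma u)\otimes(T_\gamma v)$ and $\la u\otimes v, C\ra_{\HS} = \la u, Cv\ra$ right with the paper's conventions. (Alternatively, in the first term of the split one may simply invoke that $C\in\HS(\H)$ is compact and therefore maps the weakly convergent $T_\gamma v$ to a norm-convergent sequence, bypassing the Radon--Riesz step.)
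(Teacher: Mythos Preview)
Your proof is correct but proceeds quite differently from the paper's. The paper expands everything in the eigenbasis $\{e_k\}$ of $A$, writing
\[
\la \Phi_\gamma(B), C\ra_{\HS} = \sum_{k}\frac{\lambda_k^{1/2}}{(\lambda_k+\gamma)^{1/2}}\la T_\gamma Ce_k, Be_k\ra,
\]
applies Lemma \ref{lemma:limit-inner-gamma-1} termwise, and uses the summable dominator $\|Ce_k\|\,\|Be_k\|$ to invoke Lebesgue's Dominated Convergence Theorem directly. No density argument, no Radon--Riesz step. Your route is more structural: you use the uniform contraction bound $\|\Phi_\gamma\|\le 1$ to reduce to rank-one $B$, and then combine the weak convergence of Lemma \ref{lemma:limit-inner-gamma-1} with the norm convergence from Lemma \ref{lemma:limit-inner-gamma-0} via the Radon--Riesz property of $\H$ to get $T_\gamma v \to v$ strongly. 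In effect you establish the auxiliary fact that $T_\gamma \to I$ in the strong operator topology, which is of independent interest and makes the rank-one case immediate. The paper's approach is more hands-on and parallels the proof of the companion Lemma \ref{lemma:convergence-of-norm-HS}; yours avoids coordinates entirely and would transfer to settings where an eigenbasis for $A$ is less convenient. Both arguments rely on the same inputs (Lemmas \ref{lemma:limit-inner-gamma-0} and \ref{lemma:limit-inner-gamma-1}), just assembled differently.
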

\begin{proof}
Let $\{\lambda_k\}_{k=1}^{\infty}$ be the eigenvalues of $A$, with corresponding orthonormal eigenvectors $\{e_k\}_{k=1}^{\infty}$.
For any operator $C \in \HS(\H)$, we have
\begin{align*}
&\la (A+\gamma I)^{-1/2}A^{1/2}BA^{1/2}(A+\gamma I)^{-1/2}, C\ra_{\HS}
\\
&
= \trace[C^{*} (A+\gamma I)^{-1/2}A^{1/2}BA^{1/2}(A+\gamma I)^{-1/2}]
\\
& = \sum_{k=1}^{\infty}\la e_k, C^{*} (A+\gamma I)^{-1/2}A^{1/2}BA^{1/2}(A+\gamma I)^{-1/2}e_k\ra
\\
& = \sum_{k=1}^{\infty}\frac{\lambda_k^{1/2}}{(\lambda_k + \gamma)^{1/2}}\la (A+\gamma I)^{-1/2}A^{1/2}Ce_k, Be_k\ra.
\end{align*}
By Lemma \ref{lemma:limit-inner-gamma-1}, we have for each fixed $k \in \N$,
\begin{align*}
\lim_{\gamma \approach 0^{+}}\frac{\lambda_k^{1/2}}{(\lambda_k + \gamma)^{1/2}}\la (A+\gamma I)^{-1/2}A^{1/2}Ce_k, Be_k\ra = \la Ce_k, Be_k\ra.
\end{align*}
Furthermore,
\begin{align*}
&\left|\frac{\lambda_k^{1/2}}{(\lambda_k + \gamma)^{1/2}}\la (A+\gamma I)^{-1/2}A^{1/2}Ce_k, Be_k\ra \right|
\\
& \leq ||(A+\gamma I)^{-1/2}A^{1/2}Ce_k||\;||Be_k|| \leq ||Ce_k||\;||Be_k||, \;\;\;\text{with}
\\
&\sum_{k=1}^{\infty}||Ce_k||\;||Be_k|| \leq \frac{1}{2}\sum_{k=1}^{\infty}[||Ce_k||^2 + ||Be_k||^2] = \frac{1}{2}[||C||^2_{\HS} + ||B||^2_{\HS}] < \infty.
\end{align*}
Thus by Lebesgue's Dominated Convergence Theorem, we then have
\begin{align*}
\lim_{\gamma \approach 0^{+}}\la (A+\gamma I)^{-1/2}A^{1/2}BA^{1/2}(A+\gamma I)^{-1/2}, C\ra_{\HS} = \sum_{k=1}^{\infty}
\la Ce_k,Be_k\ra  = \la C,B\ra_{\HS}.
\end{align*}
\qed \end{proof}

\begin{lemma}
\label{lemma:convergence-of-norm-HS}
Let $A$ be a compact, self-adjoint, positive operator on $\H$. Let $B \in \HS(\H)$. Then
\begin{align}
\lim_{\gamma \approach 0^{+}} ||(A+\gamma I)^{-1/2}A^{1/2}BA^{1/2}(A+\gamma I)^{-1/2}||_{\HS} = ||B||_{\HS}.
\end{align}
\end{lemma}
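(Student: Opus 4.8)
The plan is to deduce this from the weak convergence already established in Lemma~\ref{lemma:weak-convergence-HS}, combined with a uniform upper bound on the Hilbert--Schmidt norm of $(A+\gamma I)^{-1/2}A^{1/2}BA^{1/2}(A+\gamma I)^{-1/2}$ and the weak lower semicontinuity of the norm in the Hilbert space $\HS(\H)$. Throughout, set $T_\gamma = (A+\gamma I)^{-1/2}A^{1/2}$, which is self-adjoint and positive, and let $\{\lambda_k\}_{k=1}^{\infty}$, $\{e_k\}_{k=1}^{\infty}$ be the eigenvalues and an orthonormal eigenbasis of $A$, so that $T_\gamma e_k = \frac{\lambda_k^{1/2}}{(\lambda_k+\gamma)^{1/2}}e_k$ with $0 \le \frac{\lambda_k^{1/2}}{(\lambda_k+\gamma)^{1/2}} \le 1$; in particular $||T_\gamma|| \le 1$ for every $\gamma > 0$.

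First I would prove the upper bound $||T_\gamma B T_\gamma||_{\HS} \le ||B||_{\HS}$. Since applying an operator of norm at most $1$ cannot increase a norm, expanding the Hilbert--Schmidt norm in the eigenbasis $\{e_k\}$ gives
\[
||T_\gamma B T_\gamma||_{\HS}^2 = \sum_{k=1}^{\infty}||T_\gamma B T_\gamma e_k||^2 \le \sum_{k=1}^{\infty}||B T_\gamma e_k||^2 = \sum_{k=1}^{\infty}\frac{\lambda_k}{\lambda_k+\gamma}||B e_k||^2 \le \sum_{k=1}^{\infty}||B e_k||^2 = ||B||_{\HS}^2,
\]
so that $\limsup_{\gamma \to 0^{+}}||T_\gamma B T_\gamma||_{\HS} \le ||B||_{\HS}$. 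For the matching lower bound, Lemma~\ref{lemma:weak-convergence-HS} says that $T_\gamma B T_\gamma \to B$ weakly in $\HS(\H)$ as $\gamma \to 0^{+}$; since the norm of a Hilbert space is weakly lower semicontinuous --- by Cauchy--Schwarz, $||B||_{\HS}^2 = \lim_{\gamma \to 0^{+}}\la T_\gamma B T_\gamma, B\ra_{\HS} \le ||B||_{\HS}\,\liminf_{\gamma \to 0^{+}}||T_\gamma B T_\gamma||_{\HS}$ --- we get $||B||_{\HS} \le \liminf_{\gamma \to 0^{+}}||T_\gamma B T_\gamma||_{\HS}$. Combining the two bounds yields $\lim_{\gamma \to 0^{+}}||T_\gamma B T_\gamma||_{\HS} = ||B||_{\HS}$.

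I do not expect a genuine obstacle here: once Lemma~\ref{lemma:weak-convergence-HS} is available the argument is soft and essentially two lines. The only points to check carefully are the bound $||T_\gamma|| \le 1$, which is immediate from the spectral calculus, and the elementary fact that left or right multiplication by a contraction does not increase the Hilbert--Schmidt norm. As an alternative that avoids invoking weak convergence, one can compute directly $||T_\gamma B T_\gamma||_{\HS}^2 = \sum_{j,k=1}^{\infty}\frac{\lambda_j\lambda_k}{(\lambda_j+\gamma)(\lambda_k+\gamma)}|\la e_j, B e_k\ra|^2$, where the coefficients increase to $1$ as $\gamma \downarrow 0$ while being dominated by the summable family $|\la e_j, Be_k\ra|^2$; the monotone (or dominated) convergence theorem then gives the limit $\sum_{j,k}|\la e_j, B e_k\ra|^2 = ||B||_{\HS}^2$ directly.
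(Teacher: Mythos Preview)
Your argument is correct. The paper, however, proceeds differently: it expands
\[
\|T_\gamma B T_\gamma\|_{\HS}^2=\sum_{k=1}^{\infty}\frac{\lambda_k}{\lambda_k+\gamma}\,\la Be_k,\,A^{1/2}(A+\gamma I)^{-1}A^{1/2}Be_k\ra,
\]
invokes Lemma~\ref{lemma:limit-inner-gamma-0} to identify the pointwise limit of each summand as $\|Be_k\|^2$, dominates by $\|Be_k\|^2$, and applies Dominated Convergence. Thus in the paper Lemmas~\ref{lemma:weak-convergence-HS} and~\ref{lemma:convergence-of-norm-HS} are proved independently and then combined via the Radon--Riesz property to obtain Proposition~\ref{proposition:norm-convergence-HS}.

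Your main route instead shows that Lemma~\ref{lemma:convergence-of-norm-HS} is in fact a soft consequence of Lemma~\ref{lemma:weak-convergence-HS} together with the contraction bound $\|T_\gamma\|\le 1$: the upper bound gives $\limsup\le\|B\|_{\HS}$, and weak lower semicontinuity of the Hilbert-space norm gives $\liminf\ge\|B\|_{\HS}$. This is shorter and avoids the term-by-term estimate, at the cost of reusing Lemma~\ref{lemma:weak-convergence-HS}; there is no circularity, since that lemma is proved independently. Your alternative double-sum computation is essentially the paper's idea recast in matrix-entry form (the paper packages one index into the inner product $\la Be_k, A^{1/2}(A+\gamma I)^{-1}A^{1/2}Be_k\ra$), and is equally valid.
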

\begin{proof}
Let $\{\lambda_k\}_{k=1}^{\infty}$ be the eigenvalues of $A$, with corresponding orthonormal eigenvectors $\{e_k\}_{k=1}^{\infty}$. We have for any $\gamma > 0$,
\begin{align*}
&(A+\gamma I)^{-1/2}A^{1/2}BA^{1/2}(A+\gamma I)^{-1/2}e_k = \frac{\lambda_k^{1/2}}{(\lambda_k + \gamma)^{1/2}}(A+\gamma I)^{-1/2}A^{1/2}Be_k.
\end{align*}
It follows that
\begin{align*}
&||(A+\gamma I)^{-1/2}A^{1/2}BA^{1/2}(A+\gamma I)^{-1/2}||^2_{\HS}
\\
&
= \sum_{k=1}^{\infty}||(A+\gamma I)^{-1/2}A^{1/2}BA^{1/2}(A+\gamma I)^{-1/2}e_k||^2 
\\
&= \sum_{k=1}^{\infty}\frac{\lambda_k}{\lambda_k +\gamma}||(A+\gamma I)^{-1/2}A^{1/2}Be_k||^2
= \sum_{k=1}^{\infty}\frac{\lambda_k}{\lambda_k +\gamma}\la Be_k, A^{1/2}(A+\gamma I)^{-1}A^{1/2}Be_k\ra.
\end{align*}
By Lemma \ref{lemma:limit-inner-gamma-0}, we have
\begin{align*}
\lim_{\gamma \approach 0^{+}}\frac{\lambda_k}{\lambda_k +\gamma}\la Be_k, A^{1/2}(A+\gamma I)^{-1}A^{1/2}Be_k\ra
= ||Be_k||^2.
\end{align*}
Furthermore,
\begin{align*}
&\left|\frac{\lambda_k}{\lambda_k +\gamma}\la Be_k, A^{1/2}(A+\gamma I)^{-1}A^{1/2}Be_k\ra \right|
\leq ||Be_k||\;||A^{1/2}(A+\gamma I)^{-1}A^{1/2}Be_k||
\\
&\leq ||Be_k||^2, \;\text{with}\; \sum_{k=1}^{\infty}||Be_k||^2  = ||B||^2_{\HS} < \infty.
\end{align*}
Thus by Lebesgue's Dominated Convergence Theorem, we have
\begin{align*}
&\lim_{\gamma \approach 0^{+}} ||(A+\gamma I)^{-1/2}A^{1/2}BA^{1/2}(A+\gamma I)^{-1/2}||^2_{\HS} = \sum_{k=1}^{\infty}||Be_k||^2 = ||B||^2_{\HS}.
\end{align*}
\qed \end{proof}

\begin{lemma} 
	\label{lemma:convergence-in-norm-gamma-3}
	Let $A\in \Tr(\H)$
	 be
	self-adjoint, 
	positive.
	Let $B \in \L(\H)$.
	Then
	\begin{align}
	\lim_{\gamma \approach 0^{+}}||(A+\gamma I)^{-1/2}ABA(A+\gamma I)^{-1/2} - A^{1/2}BA^{1/2}||_{\HS} = 0.
	\end{align}
	If $B \in \HS(\H)$, then 
	\begin{align}
	\lim_{\gamma \approach 0^{+}}||(A+\gamma I)^{-1/2}ABA(A+\gamma I)^{-1/2} - A^{1/2}BA^{1/2}||_{\trace} = 0.
	\end{align}
\end{lemma}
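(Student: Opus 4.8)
The plan is to reduce both assertions to a single algebraic identity together with Proposition~\ref{proposition:norm-convergence-HS}. Set $T_\gamma := (A+\gamma I)^{-1/2}A^{1/2} = A^{1/2}(A+\gamma I)^{-1/2}$ (these agree because $(A+\gamma I)^{-1/2}$ and $A^{1/2}$ commute, both being functions of $A$); then $T_\gamma$ is self-adjoint with spectrum in $[0,1]$, so $||T_\gamma|| \leq 1$ uniformly in $\gamma > 0$. Using $A = A^{1/2}A^{1/2}$ one obtains the factorization
\begin{align*}
(A+\gamma I)^{-1/2}ABA(A+\gamma I)^{-1/2} = T_\gamma\, M\, T_\gamma, \qquad M := A^{1/2}BA^{1/2}.
\end{align*}
Since $A \in \Tr(\H)$ we have $A^{1/2} \in \HS(\H)$, so $M = A^{1/2}(BA^{1/2})$ is a product of two Hilbert-Schmidt operators and $M \in \Tr(\H) \subset \HS(\H)$. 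Moreover $T_\gamma M T_\gamma = (A+\gamma I)^{-1/2}A^{1/2}\,M\,A^{1/2}(A+\gamma I)^{-1/2}$ is exactly the operator appearing in Proposition~\ref{proposition:norm-convergence-HS} with $B$ there replaced by $M$; since $A$ is compact and $M \in \HS(\H)$, that proposition gives $||T_\gamma M T_\gamma - M||_{\HS} \approach 0$, which is the first assertion.

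For the trace-norm statement (now assuming $B \in \HS(\H)$) I would telescope,
\begin{align*}
T_\gamma M T_\gamma - M = (T_\gamma - I)\,M\,T_\gamma + M\,(T_\gamma - I),
\end{align*}
and bound each term with the Schatten inequalities $||XY||_{\trace} \leq ||X||\,||Y||_{\trace}$ and $||XY||_{\trace} \leq ||X||_{\HS}\,||Y||_{\HS}$. For the first term, $||(T_\gamma - I)MT_\gamma||_{\trace} \leq ||(T_\gamma - I)M||_{\trace} \leq ||(T_\gamma - I)A^{1/2}||_{\HS}\,||BA^{1/2}||_{\HS}$, using $||T_\gamma|| \leq 1$ and the factorization $(T_\gamma - I)M = [(T_\gamma - I)A^{1/2}]\,[BA^{1/2}]$; for the second, $||M(T_\gamma - I)||_{\trace} \leq ||A^{1/2}B||_{\HS}\,||A^{1/2}(T_\gamma - I)||_{\HS}$, and $||A^{1/2}(T_\gamma - I)||_{\HS} = ||(T_\gamma - I)A^{1/2}||_{\HS}$ because $A^{1/2}$ and $T_\gamma - I$ are self-adjoint. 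It then remains to show $||(T_\gamma - I)A^{1/2}||_{\HS} \approach 0$: the operator $(T_\gamma - I)A^{1/2} = (A+\gamma I)^{-1/2}A - A^{1/2}$ is a function of $A$ with eigenvalue $\lambda_k^{1/2}\bigl((\lambda_k/(\lambda_k+\gamma))^{1/2} - 1\bigr)$ on the eigenvector $e_k$ of $A$, so $||(T_\gamma - I)A^{1/2}||^2_{\HS} = \sum_k \lambda_k\bigl((\lambda_k/(\lambda_k+\gamma))^{1/2} - 1\bigr)^2$, where each summand tends to $0$ as $\gamma \approach 0^{+}$ and is dominated by $\lambda_k$, with $\sum_k \lambda_k = \trace(A) < \infty$; Lebesgue's Dominated Convergence Theorem then gives the claim, and combining the two bounds yields the trace-norm convergence.

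I do not foresee a genuine obstacle here, only careful bookkeeping; the content is in arranging the factorizations so that a Hilbert-Schmidt factor $A^{1/2}$ is always available to carry the convergence. The one point worth emphasizing is that $T_\gamma$ does \emph{not} converge to $I$ in operator norm as $\gamma \approach 0^{+}$ (it converges only strongly, and to the orthogonal projection onto $\overline{\Im(A)}$ when $\ker(A) \neq \{0\}$), so the estimates cannot be closed by operator-norm bounds on $T_\gamma - I$ alone; one must absorb $A^{1/2} \in \HS(\H)$ into the Hilbert-Schmidt norm and then invoke summability of $\{\lambda_k\}$ via dominated convergence. This is where the hypothesis $A \in \Tr(\H)$ is genuinely used in the trace-norm part; it is also what places $M$ in $\HS(\H)$ so that Proposition~\ref{proposition:norm-convergence-HS} applies in the first part. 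A final routine check is that all operator products appearing above are Hilbert-Schmidt (with $M$ in fact trace class), which is immediate from $A^{1/2} \in \HS(\H)$ and boundedness of the remaining factors.
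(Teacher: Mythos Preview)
Your proof is correct, but it follows a different path from the paper's own argument. The paper does not invoke Proposition~\ref{proposition:norm-convergence-HS} at all here; instead it proves both statements by a single elementary estimate. Writing $T_\gamma = (A+\gamma I)^{-1/2}A^{1/2}$ as you do, the paper factorizes the difference as $T_\gamma\bigl[A^{1/2}BA^{1/2} - (A+\gamma I)^{1/2}B(A+\gamma I)^{1/2}\bigr]T_\gamma$, telescopes the inner bracket into $A^{1/2}B[A^{1/2}-(A+\gamma I)^{1/2}] + [A^{1/2}-(A+\gamma I)^{1/2}]B(A+\gamma I)^{1/2}$, and then uses the operator-norm bound $||A^{1/2}-(A+\gamma I)^{1/2}|| = \gamma||[A^{1/2}+(A+\gamma I)^{1/2}]^{-1}|| \leq \sqrt{\gamma}$. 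This yields the explicit rate $\sqrt{\gamma}\bigl[||A^{1/2}B||_{\HS}+||BA^{1/2}||_{\HS}\bigr]$ in the first case and $\sqrt{\gamma}\bigl[||A^{1/2}B||_{\trace}+||BA^{1/2}||_{\trace}\bigr]$ in the second.

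By contrast, your Hilbert--Schmidt argument routes through Proposition~\ref{proposition:norm-convergence-HS}, whose proof rests on the Radon--Riesz property, so it is less elementary and yields no quantitative rate; your trace-norm argument replaces the explicit $\sqrt{\gamma}$ estimate by a dominated-convergence computation on the eigenvalues of $(T_\gamma - I)A^{1/2}$. Both are perfectly valid; the paper's route is shorter and self-contained with an explicit $O(\sqrt{\gamma})$ bound, while yours has the virtue of making clear exactly how Proposition~\ref{proposition:norm-convergence-HS} subsumes the first assertion once one observes $M = A^{1/2}BA^{1/2} \in \HS(\H)$.
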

\begin{proof} 
	Since $A$ and $(A+\gamma I)$ commute, we have
	\begin{align*}
	&||A^{1/2} - (A+\gamma I)^{1/2}|| = ||[A - (A+\gamma )][A^{1/2} + (A+\gamma I)^{1/2}]^{-1}||
	\\
	&= \gamma ||[A^{1/2} + (A+\gamma I)^{1/2}]^{-1}|| \leq \sqrt{\gamma}. 
	\end{align*}
	Since $A$ is trace class, self-adjoint, positive,
	$A^{1/2} \in \HS(\H)$,
	so that for $B \in \L(\H)$, $A^{1/2}B\in \HS(\H)$, $BA^{1/2} \in \HS(\H)$.
	We then have
	\begin{align*}
	&||(A+\gamma I)^{-1/2}ABA(A+\gamma I)^{-1/2} - A^{1/2}BA^{1/2}||_{\HS}
	\\
	&=||(A+\gamma I)^{-1/2}A^{1/2}[A^{1/2}BA^{1/2} - (A+\gamma I)^{1/2}B(A+\gamma I)^{1/2}]A^{1/2}(A+\gamma I)^{-1/2}||_{\HS}
	\\
	& \leq ||(A+\gamma I)^{-1/2}A^{1/2}[A^{1/2}BA^{1/2} - A^{1/2}B(A+\gamma I)^{1/2}]A^{1/2}(A+\gamma I)^{-1/2}||_{\HS}
	\\
	&+ ||(A+\gamma I)^{-1/2}A^{1/2}[A^{1/2}B(A+\gamma I)^{1/2} - (A+\gamma I)^{1/2}B(A+\gamma I)^{1/2}]A^{1/2}(A+\gamma I)^{-1/2}||_{\HS}
	\\ 
	& \leq ||(A+\gamma I)^{-1/2}A^{1/2}||^2||A^{1/2}BA^{1/2} - A^{1/2}B(A+\gamma I)^{1/2}||_{\HS}
	\\
	&+ ||(A+\gamma I)^{-1/2}A^{1/2}[A^{1/2}B(A+\gamma I)^{1/2} - (A+\gamma I)^{1/2}B(A+\gamma I)^{1/2}]A^{1/2}(A+\gamma I)^{-1/2}||_{\HS}
	\\
	& \leq ||A^{1/2}B||_{\HS}||A^{1/2}-(A+\gamma I)^{1/2}|| 
	\\
	&+ ||(A+\gamma I)^{-1/2}A^{1/2}||\;||A^{1/2}BA^{1/2}-(A+\gamma I)^{1/2}BA^{1/2}||_{\HS}
	\\
	& \leq ||A^{1/2}B||_{\HS}||A^{1/2}-(A+\gamma I)^{1/2}|| + ||A^{1/2}-(A+\gamma I)^{1/2}||\;||BA^{1/2}||_{\HS}
	\\
	& = ||A^{1/2}-(A+\gamma I)^{1/2}||[||A^{1/2}B||_{\HS} + ||BA^{1/2}||_{\HS}]
	\\
	& \leq \sqrt{\gamma} [||A^{1/2}B||_{\HS} + ||BA^{1/2}||_{\HS}]\approach 0 \;\text{as}\; \gamma \approach 0^{+}.
	\end{align*}
	If $B \in \HS(\H)$, then we have $A^{1/2}B \in \Tr(\H), BA^{1/2} \in \Tr(\H)$ and
	\begin{align*}
	&||(A+\gamma I)^{-1/2}ABA(A+\gamma I)^{-1/2} - A^{1/2}BA^{1/2}||_{\trace}
	\\
	&\leq \sqrt{\gamma}[||A^{1/2}B||_{\tr} + ||BA^{1/2}||_{\trace}]
	\approach 0 \;\text{as}\; \gamma \approach 0^{+}.
	\end{align*}
\qed \end{proof}

\begin{proof}
{\textbf{(of Proposition \ref{proposition:norm-convergence-HS})}}
By Lemma \ref{lemma:weak-convergence-HS}, we have for any 
$C \in \HS(\H)$,
\begin{align*}
\lim_{\gamma \approach 0^{+}}\la (A+\gamma I)^{-1/2}A^{1/2}BA^{1/2}(A+\gamma I)^{-1/2}, C\ra_{\HS} = \la B, C\ra_{\HS},
\end{align*}
that is the operator $(A+\gamma I)^{-1/2}A^{1/2}BA^{1/2}(A+\gamma I)^{-1/2}$ converges weakly to $B$ on $\HS(\H)$
as $\gamma \approach 0^{+}$.
By Lemma \ref{lemma:convergence-of-norm-HS}, 
\begin{align*}
\lim_{\gamma \approach 0^{+}} ||(A+\gamma I)^{-1/2}A^{1/2}BA^{1/2}(A+\gamma I)^{-1/2}||_{\HS} = ||B||_{\HS}.
\end{align*}
Thus Radon-Riesz Property can be invoked to give
\begin{align*}
\lim_{\gamma \approach 0^{+}}||(A+\gamma I)^{-1/2}A^{1/2}BA^{1/2}(A+\gamma I)^{-1/2} - B||_{\HS} = 0.
\end{align*}
\qed \end{proof}

\begin{proof}
[\textbf{
	of Theorem \ref{theorem:limit-KL-gamma-0-1}}]
	This follows from Proposition \ref{proposition:limit-mean-cov-inner-gamma-0}
	and Theorem \ref{theorem:general-limit-detX-gamma-0}.
\qed \end{proof}

\section{Limiting behavior of the 
	regularized R\'enyi divergence}
\label{section:limiting-Renyi}
In this section, we prove Equation(\ref{equation:limit-Renyi-infinite}) in 
Theorem \ref{theorem:limit-Renyi-infinite}, 
which we restate below.
\begin{theorem}
	\label{theorem:limit-Renyi-infinite-1}
	Assume the hypothesis of Theorem \ref{theorem:limit-Renyi-infinite}. Then
\begin{align}
\label{equation:limit-Renyi-infinite-1}
\lim_{\gamma \approach 0^{+}}D_{\Rrm,r}^{\gamma}(\nu ||\mu) 
&= \frac{1}{2}||(I-(1-r)S)^{-1/2}C_0^{-1/2}(m-m_0)||^2
\nonumber
\\
&+ \frac{1}{2r(1-r)}\log\det[(I-(1-r)S)(I-S)^{r-1}]
\end{align}
\end{theorem}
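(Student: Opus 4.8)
Here is the plan.

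\medskip

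\noindent The plan is to treat the two summands of $D^{\gamma}_{\Rrm,r}(\nu\|\mu)$ separately, exactly in the spirit of the proof of Theorem~\ref{theorem:limit-KL-gamma-0-1}. The starting point is the algebraic identity $(1-r)(C+\gamma I)+r(C_0+\gamma I)=A_r+\gamma I$, where $A_r:=(1-r)C+rC_0=C_0^{1/2}(I-(1-r)S)C_0^{1/2}$. Writing $T:=I-(1-r)S$, I would first note that since $S$ is compact and $I-S$ is strictly positive its largest eigenvalue is $<1$, so $T$ is strictly positive with bounded inverse, and $A_r$ is self-adjoint, positive, trace class and injective. Thus $D^{\gamma}_{\Rrm,r}(\nu\|\mu)=\tfrac12\la m-m_0,(A_r+\gamma I)^{-1}(m-m_0)\ra+\tfrac12 d^{2r-1}_{\logdet}[(C+\gamma I),(C_0+\gamma I)]$, and it suffices to compute the limit of each term as $\gamma\approach 0^{+}$.

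\medskip

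\noindent For the first term: since $m-m_0\in\Im(C_0^{1/2})$ and $\ker(C_0)=\{0\}$, put $v:=C_0^{-1/2}(m-m_0)\in\H$, so the inner product equals $\la v,C_0^{1/2}(A_r+\gamma I)^{-1}C_0^{1/2}v\ra$. Setting $B:=C_0^{1/2}T^{1/2}$, so that $A_r=BB^{*}$ and $C_0^{1/2}=BT^{-1/2}=T^{-1/2}B^{*}$, and using the identity $B^{*}(BB^{*}+\gamma I)^{-1}B=B^{*}B(B^{*}B+\gamma I)^{-1}$, this rewrites as $T^{-1/2}\bigl[E(E+\gamma I)^{-1}\bigr]T^{-1/2}$ with $E:=T^{1/2}C_0T^{1/2}$ compact, positive and injective. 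Hence $E(E+\gamma I)^{-1}\to I$ strongly (no eigenvalue of $E$ is zero; this is the reasoning in Lemma~\ref{lemma:limit-inner-gamma-0}), so $C_0^{1/2}(A_r+\gamma I)^{-1}C_0^{1/2}\to T^{-1}$ strongly, and the inner product tends to $\la v,T^{-1}v\ra=\|T^{-1/2}C_0^{-1/2}(m-m_0)\|^2=\|(I-(1-r)S)^{-1/2}C_0^{-1/2}(m-m_0)\|^2$. (Alternatively, one could apply Lemma~\ref{lemma:limit-inner-gamma-0} directly to $A_r$ after observing that $c_1C_0\le A_r\le c_2C_0$ forces $\Im(A_r^{1/2})=\Im(C_0^{1/2})$, and then identify the two norms.)

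\medskip

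\noindent For the second term, with $\alpha=2r-1\in(-1,1)$ one has $\tfrac{1-\alpha}{2}=1-r$, $\tfrac{1+\alpha}{2}=r$, $\tfrac{4}{1-\alpha^{2}}=\tfrac{1}{r(1-r)}$, so $(\ref{equation:det2})$ gives $d^{2r-1}_{\logdet}[(C+\gamma I),(C_0+\gamma I)]=\tfrac{1}{r(1-r)}\log\bigl[\detX(A_r+\gamma I)\,/\,(\detX(C+\gamma I)^{1-r}\detX(C_0+\gamma I)^{r})\bigr]$. Following the computation in the proof of Proposition~\ref{proposition:d1logdet-HS} (the product property of $\detX$ from \cite{Minh:LogDet2016}, together with $A_r+\gamma I=(C_0+\gamma I)-(1-r)C_0^{1/2}SC_0^{1/2}$ and $C+\gamma I=(C_0+\gamma I)-C_0^{1/2}SC_0^{1/2}$) yields $\detX[(C_0+\gamma I)^{-1}(A_r+\gamma I)]=\det(I-(1-r)S_\gamma)$ and $\detX[(C_0+\gamma I)^{-1}(C+\gamma I)]=\det(I-S_\gamma)$, where $S_\gamma:=(C_0+\gamma I)^{-1/2}C_0^{1/2}SC_0^{1/2}(C_0+\gamma I)^{-1/2}$ is trace class for each fixed $\gamma>0$. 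Dividing numerator and denominator by $\detX(C_0+\gamma I)$ cancels it and leaves, for every $\gamma>0$, the clean expression $d^{2r-1}_{\logdet}[(C+\gamma I),(C_0+\gamma I)]=\tfrac{1}{r(1-r)}\log\det\bigl[(I-(1-r)S_\gamma)(I-S_\gamma)^{r-1}\bigr]$.

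\medskip

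\noindent It then remains to pass to the limit. I would observe that $S_\gamma=B_\gamma S B_\gamma$ with $B_\gamma=C_0^{1/2}(C_0+\gamma I)^{-1/2}$, $0\le B_\gamma\le I$, so the spectra of all the $S_\gamma$ lie in the fixed compact set $[\lambda_{\min}(S),\lambda_{\max}(S)]\subset(-\infty,1)$, making every expression above well defined with a functional calculus that is uniform in $\gamma$. For $\phi(t):=(1-(1-r)t)(1-t)^{r-1}$ on $(-\infty,1)$ one has $\phi(0)=1$ and a short computation gives $\phi'(0)=0$, hence $\phi(t)-1=t^{2}\eta(t)$ with $\eta$ analytic; in particular $(I-(1-r)S)(I-S)^{r-1}-I=S^{2}\eta(S)$ is trace class even though $S$ is only Hilbert--Schmidt. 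Using $\|S_\gamma-S\|_{\HS}\to 0$ (Proposition~\ref{proposition:norm-convergence-HS}) I would then get $\|S_\gamma^{2}-S^{2}\|_{\trace}\to 0$ and $\|\eta(S_\gamma)-\eta(S)\|\to 0$ (norm-continuity of the continuous functional calculus, with $\sup_\gamma\|\eta(S_\gamma)\|<\infty$), so $\phi(S_\gamma)-I=S_\gamma^{2}\eta(S_\gamma)\to S^{2}\eta(S)=\phi(S)-I$ in trace norm; continuity of the Fredholm determinant in the trace norm gives $\det[\phi(S_\gamma)]\to\det[\phi(S)]$, i.e. $d^{2r-1}_{\logdet}[(C+\gamma I),(C_0+\gamma I)]\to\tfrac{1}{r(1-r)}\log\det[(I-(1-r)S)(I-S)^{r-1}]$. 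Adding $\tfrac12$ times this to $\tfrac12$ times the limit of the first term produces $(\ref{equation:limit-Renyi-infinite-1})$. The main obstacle is precisely this last limit: the $S_\gamma$ are trace class for $\gamma>0$ but converge only in the Hilbert--Schmidt norm to the merely Hilbert--Schmidt operator $S$, so one may not pass to the limit inside the individual determinants $\det(I-(1-r)S_\gamma)$ and $\det(I-S_\gamma)$ and must recombine them first; it is exactly the cancellation $\phi'(0)=0$ — equivalently, that $(I-(1-r)S)(I-S)^{r-1}$ differs from $I$ by a trace-class operator — that makes the argument go through.
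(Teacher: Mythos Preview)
Your argument is correct and follows the same overall architecture as the paper: split $D^{\gamma}_{\Rrm,r}(\nu\|\mu)$ into the mean term and the $d^{2r-1}_{\logdet}$ term, rewrite the latter as $\tfrac{1}{r(1-r)}\log\det\bigl[(I-(1-r)S_\gamma)(I-S_\gamma)^{r-1}\bigr]$ with $S_\gamma=(C_0+\gamma I)^{-1/2}C_0^{1/2}SC_0^{1/2}(C_0+\gamma I)^{-1/2}$, and let $\gamma\to 0^{+}$ using Proposition~\ref{proposition:norm-convergence-HS}. The differences are in execution, not in strategy.

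For the mean term, the paper takes your parenthetical route: it applies Lemma~\ref{lemma:limit-inner-gamma-0} directly to $A_r=(1-r)C+rC_0$ (this is Proposition~\ref{proposition:limit-inner-gamma-0-sum}, where $m-m_0\in\Im(A_r^{1/2})$ comes from the Fillmore--Williams range identity $\Im(A^{1/2})+\Im(B^{1/2})=\Im((A+B)^{1/2})$), and then identifies $\|A_r^{-1/2}(m-m_0)\|^2$ with $\|(I-(1-r)S)^{-1/2}C_0^{-1/2}(m-m_0)\|^2$ by an explicit eigenvector computation. Your rewriting via $B=C_0^{1/2}T^{1/2}$ and the identity $B^{*}(BB^{*}+\gamma I)^{-1}B=B^{*}B(B^{*}B+\gamma I)^{-1}$ is a clean structural alternative that avoids the eigenvector step.

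For the log-det term, the paper reaches the same determinant formula and then invokes Lemma~\ref{lemma:power-inequality} (quoted from \cite{Minh:LogDet2016-AB}) to get $(I-S_\gamma)^{r-1}\to(I-S)^{r-1}$ in $\|\cdot\|_{\HS}$, followed by a one-line appeal to $\|A_1A_2\|_{\trace}\le\|A_1\|_{\HS}\|A_2\|_{\HS}$ to pass to trace-norm convergence of the product. Your functional-calculus argument via $\phi(t)-1=t^{2}\eta(t)$ is more explicit about the mechanism: it makes transparent why $(I-(1-r)S)(I-S)^{r-1}-I$ is trace class even for merely Hilbert--Schmidt $S$, and it is self-contained (no external lemma needed). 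Both routes rely on the same underlying cancellation $\phi'(0)=0$; you surface it, the paper leaves it implicit in the combination of the two convergences.
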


We need the following technical results.

\begin{lemma}
	[\cite{Minh:LogDet2016-AB}]
	\label{lemma:power-inequality}
	Let $0 < r \leq 1$ be fixed. Let $\{A_n\}_{n \in \N} \in \Sym(\H) \cap \HS(\H)$, $A \in \Sym(\H) \cap \HS(\H)$ be such that 
	$I+A > 0$, $I+A_n > 0$ $\forall n \in \N$.
	Assume that $\lim_{n \approach \infty}||A_n - A||_{\HS} = 0$. Then
	\begin{align}
	\lim_{n \approach \infty}||(I+A_n)^r - (I+A)^r||_{\HS} = 0,
	\\
	\lim_{n \approach \infty}||(I+A_n)^{-1} - (I+A)^{-1}||_{\HS} = 0,
	\\
	\lim_{n \approach \infty}||(I+A_n)^{-r} - (I+A)^{-r}||_{\HS} = 0,
	\end{align}
\end{lemma}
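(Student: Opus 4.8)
The plan is to deduce all three limits from a single general principle: a function that is Lipschitz on a compact interval containing all the spectra involved lifts to a map on bounded self-adjoint operators that is Lipschitz, with the same constant, when distances are measured in the Hilbert--Schmidt norm.

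First I would isolate a fixed interval $J\subset(0,\infty)$ containing $\sigma(I+A)$ together with $\sigma(I+A_n)$ for all large $n$. Since $\|A_n-A\|_{\HS}\to0$ forces $\|A_n-A\|\to0$ in operator norm, and since $A$ is compact and self-adjoint, its eigenvalues accumulate only at $0$; as $I+A$ is strictly positive, $-1$ is not an eigenvalue of $A$, so $\min\sigma(A)>-1$ and hence $\sigma(I+A)\subset[c,d]$ with $c:=1+\min\sigma(A)>0$ and $d:=1+\|A\|$. The perturbation estimate $\min\sigma(I+A_n)\ge\min\sigma(I+A)-\|A_n-A\|$ then produces an index $N$ such that $\sigma(I+A_n)\subset J$ for all $n\ge N$, where $J:=[c/2,\,d+1]\subset(0,\infty)$; by construction $J$ also contains $\sigma(I+A)$.

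Next, on the fixed interval $J$ each of the scalar functions $t\mapsto t^{r}$, $t\mapsto t^{-1}$, $t\mapsto t^{-r}$ is $C^{1}$ — here the hypothesis $0<r\le1$ is far stronger than needed, only $\inf J>0$ being used — hence each is Lipschitz on $J$ with a common constant $L=L(c,d,r)$. I would then establish the Hilbert--Schmidt Lipschitz estimate: using the spectral decompositions $I+A=\sum_i\mu_iP_i$ and $I+A_n=\sum_j\nu_jQ_j$ (legitimate orthogonal sums since $A$ and $A_n$ are compact and self-adjoint), with all $\mu_i,\nu_j\in J$, a direct trace computation gives, for any real function $\phi$,
\begin{align*}
\|\phi(I+A)-\phi(I+A_n)\|_{\HS}^{2} &= \sum_{i,j}\bigl(\phi(\mu_i)-\phi(\nu_j)\bigr)^{2}\,\|P_iQ_j\|_{\HS}^{2},\\
\|A-A_n\|_{\HS}^{2} &= \sum_{i,j}(\mu_i-\nu_j)^{2}\,\|P_iQ_j\|_{\HS}^{2}.
\end{align*}
The second sum is finite because $A-A_n\in\HS(\H)$, and $(\phi(\mu_i)-\phi(\nu_j))^{2}\le L^{2}(\mu_i-\nu_j)^{2}$ on $J$; hence the first sum is finite as well and $\|\phi(I+A)-\phi(I+A_n)\|_{\HS}\le L\,\|A-A_n\|_{\HS}$. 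Taking successively $\phi(t)=t^{r}$, $\phi(t)=t^{-1}$, $\phi(t)=t^{-r}$ shows that for $n\ge N$ each of the three Hilbert--Schmidt distances in the statement is at most $L\,\|A_n-A\|_{\HS}$, which tends to $0$.

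The argument is conceptually light, and the parts demanding care are the bookkeeping in the first two steps. One must make sure the Lipschitz constant $L$ can be chosen independently of $n$ — this is exactly why one needs $\min\sigma(I+A_n)\ge c/2$ for all $n\ge N$, and that bound in turn uses operator-norm convergence together with the discreteness of $\sigma(A)$ away from $0$. One must also verify that the nonnegative double sums above genuinely compute the squared Hilbert--Schmidt norms, finiteness of $\|\phi(I+A)-\phi(I+A_n)\|_{\HS}$ being a consequence rather than a hypothesis. Alternatively, the explicit spectral computation may be replaced by an appeal to the Birman--Solomyak theory of operator-Lipschitz functions, but the elementary route above is self-contained and suffices here.
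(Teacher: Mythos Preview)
Your argument is correct and takes a genuinely different route from the paper's. The paper (in the commented-out proof, which reproduces the argument from the cited reference) handles the three limits separately: for $(I+A_n)^r\to(I+A)^r$ it invokes a Schatten-norm inequality of Kittaneh, namely $\|A^rX-XB^r\|_p\le rc^{r-1}\|AX-XB\|_p$ for positive operators $A,B\ge cI$ and $0<r\le1$; for the inverse it uses the resolvent identity $(I+A_n)^{-1}-(I+A)^{-1}=(I+A_n)^{-1}(A-A_n)(I+A)^{-1}$; and for $(I+A_n)^{-r}$ it chains the two. Your approach is more unified and self-contained: the eigenbasis double-sum shows directly that any scalar Lipschitz function on $J$ lifts to a Hilbert--Schmidt Lipschitz map with the same constant, so all three limits drop out at once. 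This also explains why the restriction $0<r\le1$ is inessential for the statement---it enters the paper's proof only because Kittaneh's inequality requires it. The trade-off is that your spectral computation leans on the discreteness of $\sigma(I+A)$ and $\sigma(I+A_n)$ (identity plus compact), whereas Kittaneh's bound and the resolvent identity work for arbitrary positive bounded operators; in the present Hilbert--Schmidt setting that discreteness is available, and as you note the Birman--Solomyak theory would cover the general case anyway. One small point of care in your write-up: when $\mu_i=\nu_j=1$ the projection $P_iQ_j$ need not be Hilbert--Schmidt, but then the coefficients $(\phi(\mu_i)-\phi(\nu_j))^{2}$ and $(\mu_i-\nu_j)^{2}$ both vanish, so the term contributes zero to both sums; working with orthonormal eigenbases rather than eigenprojections avoids even this cosmetic issue.
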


\begin{proposition}
	\label{proposition:limit-inner-gamma-0-sum}
	Let $0 < r < 1$ be fixed.
	For $m, m_0 \in \H$ and two self-adjoint, compact, positive operators $C, C_0$ on $\H$, 
	{\small
	\begin{align}
	&\lim_{\gamma \approach 0^{+}}\la m - m_0, [(1-r)(C+\gamma I) + r(C_0 + \gamma I)]^{-1}(m-m_0)\ra
	\\
	&= \left\{\begin{matrix}
	||[(1-r)C + rC_0]^{-1/2}(m-m_0)||^2 &\;\; \text{if} \; m-m_0 \in \Im[(1-r)C + rC_0]^{1/2},
	\\
	\infty &\;\;\text{otherwise}.
	\end{matrix}
	\right.
	\nonumber
	\end{align}
}
	In particular, $||[(1-r)C + rC_0]^{-1/2}(m-m_0)||^2 < \infty$ for $m - m_0 \in \Im(C_0^{1/2})$.
\end{proposition}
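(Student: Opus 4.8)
The plan is to absorb the common regularization into a single operator and then appeal to Lemma~\ref{lemma:limit-inner-gamma-0}. The starting point is the algebraic identity
\begin{align}
(1-r)(C+\gamma I) + r(C_0 + \gamma I) = A + \gamma I, \qquad A := (1-r)C + rC_0 ,
\end{align}
which holds because $(1-r)\gamma + r\gamma = \gamma$. Since $C$ and $C_0$ are self-adjoint, compact and positive and $0<r<1$, the operator $A$ is again self-adjoint, compact and positive, so the quantity whose limit is sought equals $\la m-m_0, (A+\gamma I)^{-1}(m-m_0)\ra$ for every $\gamma > 0$.

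First I would establish the dichotomy exactly as in the proof of Lemma~\ref{lemma:limit-inner-gamma-0}. Expanding $m-m_0 = \sum_k x_k e_k$ in an orthonormal eigenbasis $\{e_k\}$ of $A$ with eigenvalues $\{\lambda_k\}$ gives $\la m-m_0,(A+\gamma I)^{-1}(m-m_0)\ra = \sum_k x_k^2/(\lambda_k+\gamma)$; Lebesgue's Monotone Convergence Theorem then yields, as $\gamma \approach 0^{+}$, the value $\sum_k x_k^2/\lambda_k$, which is $\|A^{-1/2}(m-m_0)\|^2$ when $m-m_0 \in \Im(A^{1/2})$ and $+\infty$ otherwise. (If $\ker(A)\neq\{0\}$ one first splits off $\ker(A)$, on which $(A+\gamma I)^{-1}$ acts as $\gamma^{-1}I$ and the form diverges unless the $\ker(A)$-component of $m-m_0$ vanishes, i.e. unless $m-m_0\in\ker(A)^{\perp}=\overline{\Im(A^{1/2})}$; on $\ker(A)^{\perp}$ the preceding argument applies verbatim.) Substituting $A=(1-r)C+rC_0$ gives the asserted formula.

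For the final assertion it suffices to prove the range inclusion $\Im(C_0^{1/2})\subseteq \Im(A^{1/2})$, since then $m-m_0\in\Im(C_0^{1/2})$ lands in the finite branch above. Because $A - rC_0 = (1-r)C \geq 0$ we have the operator inequality $C_0 \leq \tfrac1r A$, and Douglas' range inclusion theorem immediately gives $\Im(C_0^{1/2})\subseteq\Im(A^{1/2})$. A self-contained alternative avoiding any external citation: from $A+\gamma I \geq rC_0 + \gamma I = r\big(C_0 + (\gamma/r)I\big)$ and operator monotonicity of the inverse, $\la x,(A+\gamma I)^{-1}x\ra \leq \tfrac1r\la x,(C_0+(\gamma/r)I)^{-1}x\ra$ for all $x\in\H$, and for $x = m-m_0\in\Im(C_0^{1/2})$ the right-hand side increases to $\tfrac1r\|C_0^{-1/2}x\|^2 < \infty$ as $\gamma\approach 0^{+}$ (spectral-series argument, using that $x\in\Im(C_0^{1/2})$ forces the $\ker(C_0)$-component of $x$ to be zero); hence the limit furnished by the dichotomy is finite, i.e. $m-m_0\in\Im(A^{1/2})$ and $\|[(1-r)C+rC_0]^{-1/2}(m-m_0)\|^2<\infty$.

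The algebraic reduction and the monotone-convergence computation are routine, essentially transcriptions of Lemma~\ref{lemma:limit-inner-gamma-0}. The one step deserving care is the range inclusion $\Im(C_0^{1/2})\subseteq\Im([(1-r)C+rC_0]^{1/2})$, where Douglas' lemma (or the inverse-comparison estimate above) does the work; keeping track of possibly non-trivial kernels of $A$ and $C_0$ is the other place where a little attention is required.
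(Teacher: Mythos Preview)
Your proof is correct and follows essentially the same approach as the paper: you make the same algebraic reduction $(1-r)(C+\gamma I)+r(C_0+\gamma I)=A+\gamma I$ with $A=(1-r)C+rC_0$ and invoke Lemma~\ref{lemma:limit-inner-gamma-0}. For the range inclusion $\Im(C_0^{1/2})\subseteq\Im(A^{1/2})$ you appeal to Douglas' lemma via the operator inequality $C_0\le\tfrac{1}{r}A$, whereas the paper cites the Fillmore--Williams identity $\Im(A^{1/2})+\Im(B^{1/2})=\Im[(A+B)^{1/2}]$ (Theorem~2.2 in \cite{Fillmore:1971Operator}); these are essentially the same operator-theoretic fact, and your inverse-comparison alternative is a nice self-contained substitute. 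Your treatment of a possibly nontrivial $\ker(A)$ is more careful than the paper's, which simply quotes the lemma without comment.
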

\begin{proof}
	By Lemma \ref{lemma:limit-inner-gamma-0},
	\begin{align*}
	&\lim_{\gamma \approach 0^{+}}\la m - m_0, [(1-r)(C+\gamma I) + r(C_0 + \gamma I)]^{-1}(m-m_0)\ra
	\\
	&= \lim_{\gamma \approach 0^{+}}\la m- m_0, [(1-r)C + r C_0 + \gamma I]^{-1}(m-m_0)\ra
	\\
	& = \left\{\begin{matrix}
	||[(1-r)C + rC_0]^{-1/2}(m-m_0)||^2 &\;\; \text{if} \; m-m_0 \in \Im[(1-r)C + rC_0]^{1/2},
	\\
	\infty &\;\;\text{otherwise}.
	\end{matrix}
	\right.
	\end{align*}
	By Theorem 2.2 in \cite{Fillmore:1971Operator}, for any two bounded operators $A,B$ on $\H$,
	\begin{align}
	\Im(A) + \Im(B) = \Im[(AA^{*} + BB^{*})^{1/2}].
	\end{align}
	In particular, for any two self-adjoint, positive bounded operators $A,B$ on $\H$,
	\begin{align}
	\Im(A^{1/2}) + \Im(B^{1/2}) = \Im[(A+B)^{1/2}].
	\end{align}
	Since $0 \in \Im(A^{1/2})$, $0 \in \Im(B^{1/2})$, this implies that $\Im(A^{1/2}) \subset \Im[(A+B)^{1/2}]$, $\Im(B^{1/2}) \subset \Im[(A+B)^{1/2}]$, and we have
	\begin{align}
	||(A+B)^{-1/2}A^{1/2}x|| < \infty,\;\;\; ||(A+B)^{-1/2}B^{1/2}x|| < \infty \;\;\;\forall x \in \H.
	\end{align}
	Thus if $m-m_0 \in \Im(C_0^{1/2})$, then $m-m_0 \in \Im[(1-r)C + rC_0]^{1/2}$ for $0 < r < 1$ and
	\begin{align*}
	&\lim_{\gamma \approach 0^{+}}\la m - m_0, [(1-r)(C+\gamma I) + r(C_0 + \gamma I)]^{-1}(m-m_0)\ra
	\\
	&= ||[(1-r)C + rC_0]^{-1/2}(m-m_0)||^2 < \infty.
	\end{align*}
\qed \end{proof}

\begin{proof}
	[\textbf{
		of Theorem \ref{theorem:limit-Renyi-infinite-1}}]
	By definition of the regularized Renyi divergence,
	Eq.(\ref{equation:regularized-Renyi-infinite}), 
	\begin{align*}
	D^{\gamma}_{\Rrm,r}(\nu ||\mu) 
	&= \frac{1}{2}\la m - m_0, [(1-r)(C + \gamma I) + r(C_0 + \gamma I)]^{-1}(m - m_0)\ra
	\nonumber
	\\
	&+ \frac{1}{2}d^{2r-1}_{\logdet}[(C + \gamma I), (C_0 + \gamma I)].
	\end{align*}
	For the first term, we have
	\begin{align*}
	&(1-r)(C+\gamma I) + r(C_0 + \gamma I) = 
	(1-r)(C_0^{1/2}(I-S)C_0^{1/2}+\gamma I) + r(C_0 +\gamma I)
	\\
	&= C_0^{1/2}(I - (1-r)S)C_0^{1/2} + \gamma I.
	\end{align*}
	Thus by Proposition \ref{proposition:limit-inner-gamma-0-sum}, we have for $m - m_0 \in \Im(C_0^{1/2})$,
	\begin{align*}
	&\lim_{\gamma \approach 0^{+}}\la m - m_0, [(1-r)(C + \gamma I) + r(C_0 + \gamma I)]^{-1}(m - m_0)\ra
	\\
	& 
	= ||[(1-r)C + rC_0]^{-1/2}(m-m_0)||^2
	= ||[C_0^{1/2}(I - (1-r)S)C_0^{1/2}]^{-1/2}(m-m_0)||^2.
	\end{align*}
	Let $\{\beta_k\}_{k \in \N}$ be the eigenvalues of $C_0^{1/2}(I - (1-r)S)C_0^{1/2}$,
	with corresponding orthonormal eigenvectors $\{\varphi_k\}_{k\in \N}$.
	Since $\ker(C_0) = \{0\}$, we have $\beta_k > 0$ $\forall k \in \N$.
	Then $\{\frac{(I-(1-r)S)^{1/2}C_0^{1/2}\varphi_k}{\sqrt{\beta_k}}\}_{k \in \N}$
	are the orthonormal eigenvectors
	of $(I-(1-r)S)^{1/2}C_0(I-(1-r)S)^{1/2}$, with the same eigenvalues. Thus
	\begin{align*}
&	||[C_0^{1/2}(I - (1-r)S)C_0^{1/2}]^{-1/2}(m-m_0)||^2 = \sum_{k=1}^{\infty}\frac{\la m- m_0, \varphi_k\ra^2}{\beta_k}
\\
& = \sum_{k=1}^{\infty}\left\la (I-(1-r)S)^{-1/2}C_0^{-1/2}(m-m_0), \frac{(I-(1-r)S)^{1/2}C_0^{1/2}\varphi}{\sqrt{\beta_k}}\right\ra^2
\\
& = ||(I-(1-r)S)^{-1/2}C_0^{-1/2}(m-m_0)||^2.
\end{align*}
	
	For the second term, by Definition
	\ref{def:logdet},
	\begin{align*}
	&d^{2r-1}_{\logdet}[(C+\gamma I), (C_0 + \gamma I)] 
	\\
	&= \frac{1}{r(1-r)}\log\left[\frac{\detX((1-r) (C+\gamma I) + r(C_0 + \gamma I))}{\detX(C+\gamma I)^{1-r}\detX(C_0 + \gamma I)^{r}}\right]
	\\
	& = \frac{1}{r(1-r)}\log\left[\frac{\detX[(1-r)(C_0 + \gamma I)^{-1/2}(C+\gamma I)(C_0 + \gamma I)^{-1/2} +r I]}
	{\detX[(C_0 + \gamma I)^{-1/2}(C+\gamma I)(C_0 + \gamma I)^{-1/2}]^{1-r}}\right].
	\end{align*}
	For $C = C_0^{1/2}(I-S)C_0^{1/2} = C_0 - C_0^{1/2}SC_0^{1/2}$, we have
	\begin{align*}
	&(C_0 + \gamma I)^{-1/2}(C+\gamma I)(C_0 + \gamma I)^{-1/2} 
	\\
	&= I - (C_0 +\gamma I)^{-1/2}C_0^{1/2}SC_0^{1/2}(C_0 +\gamma I)^{-1/2}.
	\end{align*}
	Thus the extended Fredholm determinant of $(C_0 + \gamma I)^{-1/2}(C+\gamma I)(C_0 + \gamma I)^{-1/2}$ is 
	the Fredholm determinant 
	of $I - (C_0 +\gamma I)^{-1/2}C_0^{1/2}SC_0^{1/2}(C_0 +\gamma I)^{-1/2}$ and consequently
	\begin{align*}
	&d^{2r-1}_{\logdet}[(C+\gamma I), (C_0 + \gamma I)] 
	\\
	&= \frac{1}{r(1-r)}\log\left[\frac{\det[I -(1-r)(C_0 +\gamma I)^{-1/2}C_0^{1/2}SC_0^{1/2}(C_0 +\gamma I)^{-1/2}]}{\det[I - (C_0 +\gamma I)^{-1/2}C_0^{1/2}SC_0^{1/2}(C_0 +\gamma I)^{-1/2}]^{1-r}}\right]
	\\
	& = \frac{1}{r(1-r)}\log\det\left([I-(1-r)A_{S,\gamma}](I - A_{S,\gamma})^{r-1}\right),
	\end{align*}
	where $A_{S,\gamma} = (C_0 +\gamma I)^{-1/2}C_0^{1/2}SC_0^{1/2}(C_0+\gamma I)^{-1/2}$.
	
	By Proposition \ref{proposition:norm-convergence-HS}, we have
	$\lim_{\gamma \approach 0^{+}}||A_{S,\gamma} - S||_{\HS} = 0$.
	By Lemma \ref{lemma:power-inequality}, 
	\begin{align*}
	\lim_{\gamma \approach 0^{+}}||(I - A_{S,\gamma})^{r-1} - (I-S)^{r-1}||_{\HS} = 0, \;\;\; 0 < r < 1.
	\end{align*}
	We then exploit the property that $||A_1A_2||_{\trace} \leq ||A_1||_{\HS}||A_2||_{\HS}$
	for any two Hilbert-Schmidt operators $A_1,A_2$ (see e.g. \cite{ReedSimon:Functional}). This gives us
	\begin{align*}
	\lim_{\gamma \approach 0^{+}}||(I-(1-r)A_{S,\gamma})(I - A_{S,\gamma})^{r-1} - (I-(1-r)S)(I-S)^{r-1}||_{\trace} = 0.
	\end{align*}
	By the continuity of the Fredholm determinant with respect to the trace norm (see e.g. Theorem 3.5 in \cite{Simon:1977}), we then obtain
	\begin{align*}
	\lim_{\gamma \approach 0^{+}}\log\det\left([I-(1-r)A_{S,\gamma}](I - A_{S,\gamma})^{r-1}\right) = \log\det[(I-(1-r)S)(I-S)^{r-1}].
	\end{align*}
\qed \end{proof}

\section{The Radon-Nikodym derivatives and divergences between Gaussian measures on Hilbert spaces}
\label{section:exact-divergences}
For completeness, 
we now derive the explicit formulas for 
the exact Kullback-Leibler and R\'enyi divergences between two {
{\it equivalent} Gaussian measures, that is
  Eq.~(\ref{equation:limit-KL-gamma-0-equivalent-2}) 
	in Theorem \ref{theorem:limit-KL-gamma-0}
	and 
	Eq.~(\ref{equation:limit-Renyi-infinite-2})
	 in Theorem \ref{theorem:limit-Renyi-infinite}.
%

Throughout the following, we
utilize the 
{\it white noise mapping}, see e.g. \cite{DaPrato:2006,DaPrato:PDEHilbert}.
Let $m \in \H$ and $Q$ be a self-adjoint, positive trace class operator on $\H$.
Assume that $\ker(Q) = \{0\}$, then the Gaussian measure $\mu = \Ncal(m,Q)$ is said to be non-degenerate.
Let $\{\lambda_k\}_{k=1}^{\infty}$ be the eigenvalues of $Q$, with corresponding orthonormal eigenvectors $\{e_k\}_{k=1}^{\infty}$, then
$\lambda_ k > 0$ $\forall k \in \N$, with $\lim_{k \approach \infty}\lambda_k = 0$.
The inverse operator $Q^{-1}:\Im(Q) \mapto \H$ is {\it unbounded}, since
$Q^{-1}e_k = \frac{1}{\lambda_k}e_k$
with $||Q^{-1}e_k|| = \frac{1}{\lambda_k} \approach \infty
$ 
	as $k \approach \infty$.
For $r \geq 0$, define the following subspace
\begin{align}
Q^r(\H) = \Im(Q^{r}) = \left\{\sum_{k=1}^{\infty}\lambda_k^r a_k e_k \; : \; \sum_{k=1}^{\infty}a_k^2 < \infty\right\} \subset \H.
\end{align}
For $r = \frac{1}{2}$, the space $Q^{1/2}(\H) = \Im(Q^{1/2})$ is called the {\it Cameron-Martin space}
associated with the Gaussian measure $\Ncal(m,Q)$. It is a Hilbert space with
inner product
\begin{align}
\la x, y\ra_{Q} = \la Q^{-1/2}x, Q^{-1/2}y\ra, \;\;\; x,y \in \Im(Q^{1/2}).
\end{align}

In the following, for $\mu = \Ncal(m, Q)$, we  define
\begin{align}
\L^2(\H, \mu) = \L^2(\H, \Bsc(\H),\mu) = \L^2(\H, \Bsc(\H), \Ncal(m,Q)).
\end{align}




{\bf White noise mapping}.
Consider the following mapping
\begin{align}
&W:Q^{1/2}(\H) \subset \H \mapto \L^2(\H,\mu), \;\; z  \in Q^{1/2}(\H) \mapto W_z \in \L^2(\H, \mu),
\\
&W_z(x) = \la x -m, Q^{-1/2}z\ra,  \;\;\; z \in Q^{1/2}(\H), x \in \H.
\end{align}
For any pair $z_1, z_2 \in Q^{1/2}(\H)$, we have by definition of the covariance operator
\begin{align}
\la W_{z_1}, W_{z_2}\ra_{\L^2(\H,\mu)}
&= \int_{\H}\la x -m, Q^{-1/2}z_1\ra\la x-m, Q^{-1/2}z_2\ra\Ncal(m, Q)(dx)
\nonumber
\\
& = \la Q(Q^{-1/2}z_1), Q^{-1/2}z_2\ra 
= \la z_1, z_2\ra_{\H}.
\end{align}
Thus the map $W:Q^{1/2}(\H) \mapto \L^2(\H, \mu)$ is an isometry, that is
\begin{align}
||W_z||_{\L^2(\H,\mu)} = ||z||_{\H}, \;\;\; z \in Q^{1/2}(\H).
\end{align}
Since $\ker(Q) = \{0\}$, the subspace $Q^{1/2}(\H)$ is dense in $\H$ and the map $W$ can be uniquely extended to all of $\H$, as follows.
For any $z \in \H$, let $\{z_n\}_{n\in \N}$ be a sequence in $Q^{1/2}(\H)$ with $\lim_{n \approach \infty}||z_n -z||_{\H} = 0$.
Then $\{z_n\}_{n \in \N}$ is a Cauchy sequence in $\H$, so that by isometry, $\{W_z\}_{n\in \N}$ is also
a Cauchy sequence in $\L^2(\H, \mu)$, thus converging to a unique element in $\L^2(\H, \mu)$.
Thus for any $z \in \H$, we can define the map
\begin{align}
W: \H \mapto \L^2(\H, \mu),  \;\;\; z \in \H \mapto \L^2(\H, \mu)
\end{align}
by the following unique limit in $\L^2(\H, \mu)$
\begin{align}
W_z(x) = \lim_{n \approach \infty}W_{z_n}(x) = \lim_{n \approach \infty}\la x-m, Q^{-1/2}z_n\ra.
\end{align}
The map $W: \H \mapto \L^2(\H, \mu)$ is called the {\it white noise mapping}
associated with the measure $\mu = \Ncal(m,Q)$.
One sees immediately that $W$
maps any orthonormal sequence
$\{\phi_k\}_{k\in \N}$ in $\H$ to an orthonormal sequence $\{W_{\phi_k}\}_{k=1}^{\infty}$ in $\L^2(\H,\mu)$, since
\begin{align*}
\la W_{\phi_j}, W_{\phi_k} \ra_{\L^2(\H, \mu)} = \la \phi_j, \phi_k\ra = \delta_{jk}.
\end{align*}
Furthermore, the random variables $\{\W_{\phi_k}\}_{k=1}^N$ are {\it independent}
	(\cite{DaPrato:2006}, Proposition 1.28).
	
{\bf White noise mapping via finite-rank orthogonal projections}.
$W_z$ can be 
expressed  explicitly in terms of the finite-rank orthogonal projections
$P_N = \sum_{k=1}^N e_k \otimes e_k$
onto the $N$-dimensional subspaces of $\H$
spanned by $\{e_k\}_{k=1}^N$, $N \in \N$, where $\{e_k\}_{k \in \N}$ are the orthonormal eigenvectors of $Q$.
For any $z \in \H$, we have
\begin{align}
P_Nz = \sum_{k=1}^N\la z,e_k \ra e_k \imply Q^{-1/2}P_Nz = \sum_{k=1}^N\frac{1}{\sqrt{\lambda_k}}\la z, e_k\ra e_k.
\end{align}
Thus $Q^{-1/2}P_Nz$ is always well-defined $\forall z \in \H$. 
Furthermore, for all $x,y \in \H$, 
\begin{align}
\la Q^{-1/2}P_Nx, y\ra = \sum_{j=1}^N\frac{1}{\sqrt{\lambda_j}}\la x, e_j\ra \la y, e_j\ra =\la x, Q^{-1/2}P_Ny\ra.
\end{align}
In other words, the operator $Q^{-1/2}P_N:\H \mapto \H$ is bounded and self-adjoint $\forall N \in \N$.
Since the 
sequence $\{P_Nz\}_{N\in \N}$ converges to $z$ in $\H$, we
have, in the $\L^2(\H,\mu)$ sense,
\begin{align}
W_z(x) = \lim_{N \approach \infty}W_{P_Nz}(x) = \lim_{N \approach \infty}\la x-m, Q^{-1/2}P_Nz\ra.
\end{align}
{\bf The Radon-Nikodym derivatives between Gaussian measures}.
Given their importance, these objects
have been studied
extensively,
e.g. \cite{Capon:Radon1964,Shepp:1966Radon,Henrich:Gaussian1972,DaPrato:2006,DaPrato:PDEHilbert,Bogachev:Gaussian}.
However, the explicit formulas available in the literature generally consider two separate cases, namely two Gaussian measures both with {\it mean zero} or {\it with the same covariance operator}.
We now present an explicit formula for the general case.

In the following, let $Q,R$ be two self-adjoint, positive trace class operators on $\H$ such that $\ker(Q) = \ker(R) = \{0\}$. Let $m_1, m_2 \in \H$. 
A fundamental result in the theory of Gaussian measures is the Feldman-Hajek Theorem \cite{Feldman:Gaussian1958}, \cite{Hajek:Gaussian1958}, which states that 
two Gaussian measures $\mu = \Ncal(m_1,Q)$ and 
$\nu = \Ncal(m_2, R)$
are either mutually singular or mutually equivalent.
The necessary and sufficient conditions for
the equivalence of the two Gaussian measures $\nu$ and $\mu$ are given by the following.

\begin{theorem}
	[\cite{Bogachev:Gaussian}, Corollary 6.4.11, \cite{DaPrato:PDEHilbert}, Theorems  1.3.9 and 1.3.10]
	\label{theorem:Gaussian-equivalent}
	Let $\H$ be a separable Hilbert space. Consider two Gaussian measures $\mu = \Ncal(m_1, Q)$ and
	$\nu = \Ncal(m_2, R)$ on $\H$. Then $\mu$ and $\nu$ are equivalent if and only if the following
	 hold
	\begin{enumerate}
		\item $m_2 - m_1 \in \Im(Q^{1/2})$.
		\item There exists  $S \in  \Sym(\H) \cap \HS(\H)$, without the eigenvalue $1$, such that
		\begin{align}
		\label{equation:RQ-equivalent}
		R = Q^{1/2}(I-S)Q^{1/2}.
		\end{align}
		
	\end{enumerate}
\end{theorem}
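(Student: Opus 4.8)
The plan is to separate the statement into a ``mean part'' and a ``covariance part'' and recombine them by transitivity of equivalence, then to supply the delicate analytic input (the Hilbert--Schmidt condition and the dichotomy itself) via the Hellinger affinity. First I would record the elementary observation that $\Ncal(m_1,Q)$ is the image of $\Ncal(0,Q)$ under the translation $x\mapsto x+m_1$, so equivalence questions for $\Ncal(m_1,Q)$ and $\Ncal(m_2,R)$ can be routed through $\Ncal(m_1,R)$: if $\Ncal(m_1,Q)\sim\Ncal(m_1,R)$ (a same-mean, different-covariance comparison) and $\Ncal(m_1,R)\sim\Ncal(m_2,R)$ (a same-covariance, different-mean comparison), then $\Ncal(m_1,Q)\sim\Ncal(m_2,R)$ by transitivity; the converse needs the Feldman--H\'ajek dichotomy to exclude the mixed alternative. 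The second comparison is the classical Cameron--Martin theorem: $\Ncal(m_1,R)\sim\Ncal(m_2,R)$ if and only if $m_2-m_1\in\Im(R^{1/2})$, with an explicit exponential-linear density. Moreover, once condition (2) holds, $I-S$ is a strictly positive compact perturbation of $I$ with eigenvalues $1-\lambda_k(S)\to1$ bounded away from $0$ (since $\lambda_k(S)<1$, $\lambda_k(S)\to0$), hence boundedly invertible, so $\Im(R^{1/2})=\Im(Q^{1/2}(I-S)^{1/2})=\Im(Q^{1/2})$ and the Cameron--Martin condition becomes exactly condition (1). Thus everything reduces to characterizing equivalence of the zero-mean measures $\Ncal(0,Q)$ and $\Ncal(0,R)$.

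For the ``if'' direction of this reduced problem I would construct the Radon--Nikodym derivative explicitly through the white-noise mapping $W$ attached to $\mu=\Ncal(0,Q)$, along the lines already sketched (in commented form) in the source. Diagonalize the self-adjoint Hilbert--Schmidt operator $S$ as $S=\sum_k\lambda_k\,\varphi_k\otimes\varphi_k$ with $\sum_k\lambda_k^2<\infty$ and $\lambda_k<1$, and put
\[
\rho(x)=\exp\!\Big(\tfrac12\sum_{k=1}^{\infty}\big[\tfrac{-\lambda_k}{1-\lambda_k}\,W_{\varphi_k}(x)^2-\log(1-\lambda_k)\big]\Big).
\]
Since $\{W_{\varphi_k}\}$ are i.i.d.\ standard Gaussians under $\mu$, Kolmogorov's three-series theorem (using $\sum_k\lambda_k^2<\infty$) gives a.s.\ convergence of the series; the one-dimensional identity $\int\exp(\tfrac12 c\,W_{\varphi_k}^2)\,d\mu=(1-c)^{-1/2}$ for $c<1$, applied to finite products and passed to the limit, gives $\int\rho\,d\mu=1$. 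Finally one checks $\rho\mu=\Ncal(0,R)$ by matching Fourier transforms: expand $\la h,x\ra=\sum_k\la Q^{1/2}h,\varphi_k\ra W_{\varphi_k}(x)$ in $L^2(\mu)$, evaluate $\int e^{i\la h,x\ra}\rho(x)\,d\mu$ term by term with the quadratic-exponential Gaussian integral, and recognize the limit via $\sum_k(1-\lambda_k)\la Q^{1/2}h,\varphi_k\ra^2=\la h,Q^{1/2}(I-S)Q^{1/2}h\ra=\la Rh,h\ra$. Composing with the Cameron--Martin density for the mean shift closes the ``if'' direction.

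For the ``only if'' direction, assume $\Ncal(0,Q)\sim\Ncal(0,R)$. Equivalence forces the Cameron--Martin spaces to coincide as sets ($a$ lies in the Cameron--Martin space iff the $a$-translate is equivalent, and equivalence is translation-stable), so $\Im(Q^{1/2})=\Im(R^{1/2})$; by Douglas' range-inclusion theorem there is a bounded $T$ with bounded inverse and $R^{1/2}=Q^{1/2}T$, whence $R=Q^{1/2}(TT^{*})Q^{1/2}$ and we set $S:=I-TT^{*}$, self-adjoint with $1\notin\sigma_p(S)$ because $TT^{*}$ is injective. It remains to prove $S\in\HS(\H)$, and this is the crux: I would compute the Hellinger affinity $\int\sqrt{d\mu\,d\nu}$, reduce to an eigenbasis so the two measures are, on finite-dimensional cylinders, products of one-dimensional Gaussians, so that the affinity becomes an infinite product that converges to a positive limit precisely when $\sum_k\lambda_k(S)^2<\infty$; divergence of this sum forces the affinity to vanish, i.e.\ mutual singularity, contradicting equivalence. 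The main obstacle is exactly this step: $Q$ and $R$ are not simultaneously diagonalizable, so making the product-measure/Kakutani dichotomy rigorous requires working through the densely defined operator $Q^{-1/2}RQ^{-1/2}=I-(\text{Hilbert--Schmidt part})$ and controlling its off-diagonal contributions --- this is the analytic heart of the Feldman--H\'ajek theorem, and it simultaneously delivers the equivalence-or-singularity dichotomy used in the reduction of the first paragraph.
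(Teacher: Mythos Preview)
The paper does not prove this theorem at all: it is stated with explicit attribution to Bogachev (Corollary~6.4.11) and Da Prato--Zabczyk (Theorems~1.3.9 and~1.3.10) and used as a black box. There is therefore no ``paper's own proof'' to compare against.

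That said, your sketch is a reasonable outline of the standard textbook argument found in those references. The reduction to the centered case via transitivity and Cameron--Martin is correct, as is the identification $\Im(R^{1/2})=\Im(Q^{1/2})$ once condition~(2) holds (the paper itself records this consequence immediately after the theorem, citing Fillmore--Williams). Your ``if'' direction is essentially the construction the paper carries out in its Theorem~\ref{theorem:radon-nikodym-infinite} (there with nonzero means as well). For the ``only if'' direction, your plan via the Hellinger affinity and a Kakutani-type product argument is the classical route; you correctly flag the real obstacle, namely that $Q$ and $R$ need not commute, so one must pass to the (unbounded, densely defined) operator $Q^{-1/2}RQ^{-1/2}-I$ and show it extends to a Hilbert--Schmidt operator. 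That step is genuinely nontrivial and is precisely what the cited sources supply; your proposal acknowledges this rather than resolving it, so as written it is a roadmap rather than a self-contained proof.
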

For any $A \in \L(\H)$, we have $\Im(A) = \Im((AA^{*})^{1/2})$
\cite{Fillmore:1971Operator}, thus Eq.(\ref{equation:RQ-equivalent}) implies
\begin{align}
\Im(R^{1/2}) = \Im((Q^{1/2}(I-S)Q^{1/2})^{1/2}) = \Im(Q^{1/2}(I-S)^{1/2}) = \Im(Q^{1/2}).
\end{align}
We assume from now on that $\mu$ and $\nu$ are equivalent.
In Corollary 6.4.11 in \cite{Bogachev:Gaussian}, an explicit formula for the Radon-Nikodym derivative $\frac{d\nu}{d\mu}$
is given when $m_1 = m_2 = 0$.  In Proposition 1.3.11 in \cite{DaPrato:PDEHilbert},
an explicit formula is given when $m_1 = m_2 =0 $ and $S$ is trace class.
In the following, we present an explicit formula for the general case.

Let $\{\alpha_k\}_{k \in \N}$ be the eigenvalues of $S$, with corresponding orthonormal eigenvectors $\{\phi_k\}_{k \in \N}$, which form an orthonormal basis in $\H$.
The following result expresses the Radon-Nikodym derivative $\frac{d\nu}{d\mu}$ in terms of the $\alpha_k$'s and
$\phi_k$'s.

\begin{theorem}
\label{theorem:radon-nikodym-infinite}
Let $\mu = \Ncal(m_1, Q)$, $\nu = \Ncal(m_2,R)$, with $m_2 - m_1 \in \Im(Q^{1/2})$, $R = Q^{1/2}(I-S)Q^{1/2}$.
The Radon-Nikodym derivative $\frac{d\nu}{d\mu}$
 is given by
\begin{align}
\frac{d\nu}{d\mu}(x) = \exp\left[-\frac{1}{2}\sum_{k=1}^{\infty}\Phi_k(x)\right]\exp\left[-\frac{1}{2}||(I-S)^{-1/2}Q^{-1/2}(m_2 - m_1)||^2\right],
\end{align}
where for each $k \in \N$
\begin{align}
\label{equation:Phik}
\Phi_k = \frac{\alpha_k}{1-\alpha_k}W^2_{\phi_k} - \frac{2}{1-\alpha_k}\la Q^{-1/2}(m_2-m_1), \phi_k\ra W_{\phi_k}+ \log(1-\alpha_k).
\end{align}
The series $\sum_{k=1}^{\infty}\Phi_k$ converges in $\L^1(\H,\mu)$ and $\L^2(\H,\mu)$ and the function $s(x) = \exp\left[-\frac{1}{2}\sum_{k=1}^{\infty}\Phi_k(x)\right] \in \L^1(\H, \mu)$.
\end{theorem}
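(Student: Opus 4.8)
The plan is to define the candidate density
$$\rho(x) = \exp\left[-\tfrac{1}{2}\sum_{k=1}^{\infty}\Phi_k(x)\right]\exp\left[-\tfrac{1}{2}\|(I-S)^{-1/2}Q^{-1/2}(m_2-m_1)\|^2\right],$$
to check that it is a well-defined nonnegative element of $\L^1(\H,\mu)$, and then to prove that the Borel measure $\rho\,d\mu$ coincides with $\nu$ by verifying that the two measures have the same Fourier transform; since $\mu$ and $\nu$ are equivalent we have $\nu\ll\mu$, and since Fourier transforms determine Borel measures on a separable Hilbert space this identifies $\rho$ with $d\nu/d\mu$. Throughout write $c_k = \la Q^{-1/2}(m_2-m_1),\phi_k\ra$ and $b_k = \la Q^{1/2}h,\phi_k\ra$ for $h\in\H$; then $\sum_k c_k^2 = \|Q^{-1/2}(m_2-m_1)\|^2 < \infty$ because $m_2-m_1\in\Im(Q^{1/2})$, and since $S$ is compact with $I-S\ge 0$ and $\ker(I-S)=\{0\}$ we have $\inf_k(1-\alpha_k) > 0$, so $(I-S)^{-1}$ is bounded and $\|(I-S)^{-1/2}Q^{-1/2}(m_2-m_1)\|^2 = \sum_k c_k^2/(1-\alpha_k) < \infty$. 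Recall from the white noise construction that under $\mu$ each $W_{\phi_k}$ is a standard one-dimensional Gaussian, the family $\{W_{\phi_k}\}_{k\in\N}$ is independent, and $\la h, x\ra = \la h, m_1\ra + \sum_k b_k W_{\phi_k}(x)$ in $\L^2(\H,\mu)$ (by linearity and the isometry of $W$, since $Q^{1/2}h = \sum_k b_k\phi_k$).

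The first step is the analysis of the series and of the integrability of $\rho$. Since the $\Phi_k$ depend on disjoint members of the independent family $\{W_{\phi_k}\}$, they are independent; an elementary expansion gives $\mathbb E_\mu[\Phi_k] = \alpha_k/(1-\alpha_k) + \log(1-\alpha_k) = O(\alpha_k^2)$ and $\mathrm{Var}_\mu(\Phi_k) = O(\alpha_k^2 + c_k^2)$. As $S\in\HS(\H)$ gives $\sum_k\alpha_k^2 < \infty$, the partial sums of $\sum_k(\Phi_k - \mathbb E_\mu[\Phi_k])$ are Cauchy in $\L^2(\H,\mu)$ and $\sum_k\mathbb E_\mu[\Phi_k]$ converges, so $\sum_k\Phi_k$ converges in $\L^2(\H,\mu)$, hence in $\L^1(\H,\mu)$. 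For integrability of $s(x) = \exp[-\tfrac12\sum_k\Phi_k(x)]$ I use the factorization $s = \prod_k e^{-\Phi_k/2}$ together with the Gaussian identity $\mathbb E_\mu[e^{-\frac{a}{2}Z^2 + \beta Z}] = (1+a)^{-1/2}e^{\beta^2/(2(1+a))}$, valid for $a>-1$ and complex $\beta$: with $a = \alpha_k/(1-\alpha_k)$ (so $1+a = 1/(1-\alpha_k) > 0$) and $\beta = c_k/(1-\alpha_k)$ one obtains $\int_\H e^{-\Phi_k/2}\,d\mu = \exp[c_k^2/(2(1-\alpha_k))]$, so the finite products $\int_\H\prod_{k\le N}e^{-\Phi_k/2}\,d\mu$ converge to $\exp[\tfrac12\sum_k c_k^2/(1-\alpha_k)] < \infty$. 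A uniform-integrability argument (bounding the partial products in $\L^2$ for all but the finitely many indices with $\alpha_k\le -1$, which are handled by independence and an explicit finite integral) then shows that these partial products converge in $\L^1(\H,\mu)$, so $s\in\L^1(\H,\mu)$ and in fact $\int_\H\rho\,d\mu = 1$.

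The second step is the Fourier transform computation. Using $\la h, x\ra = \la h, m_1\ra + \sum_k b_k W_{\phi_k}(x)$ and the factorization of $\rho$, a truncation argument (approximating $\sum_k b_k W_{\phi_k}$ and $\sum_k\Phi_k$ by their $N$-term partial sums, applying independence at the truncated level, and passing to the limit with the dominating control from the first step) yields
$$\int_\H e^{i\la h,x\ra}\rho(x)\,d\mu(x) = e^{i\la h,m_1\ra}\,e^{-\frac{1}{2}\|(I-S)^{-1/2}Q^{-1/2}(m_2-m_1)\|^2}\prod_{k=1}^{\infty}\int_\H e^{ib_kW_{\phi_k} - \frac{1}{2}\Phi_k}\,d\mu.$$
Each factor is evaluated by the same Gaussian identity, now with $\beta = c_k/(1-\alpha_k) + ib_k$, giving $\exp[c_k^2/(2(1-\alpha_k)) + ib_kc_k - \tfrac12(1-\alpha_k)b_k^2]$. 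Multiplying out, the terms $\exp[c_k^2/(2(1-\alpha_k))]$ cancel the prefactor $e^{-\frac12\|(I-S)^{-1/2}Q^{-1/2}(m_2-m_1)\|^2}$; the identity $\sum_k b_kc_k = \la Q^{1/2}h, Q^{-1/2}(m_2-m_1)\ra = \la h, m_2-m_1\ra$ turns $e^{i\la h,m_1\ra}\prod_k e^{ib_kc_k}$ into $e^{i\la h,m_2\ra}$; and $\sum_k(1-\alpha_k)b_k^2 = \la Q^{1/2}h, (I-S)Q^{1/2}h\ra = \la h, Rh\ra$ produces $e^{-\frac12\la Rh,h\ra}$. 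Hence $\int_\H e^{i\la h,x\ra}\rho\,d\mu = \exp(i\la h,m_2\ra - \tfrac12\la Rh,h\ra)$, which is exactly the Fourier transform of $\nu = \Ncal(m_2,R)$, for every $h\in\H$. Therefore $\rho\,d\mu = \nu$ and $\rho = d\nu/d\mu$.

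I expect the main obstacle to be the passage between the purely formal object $Q^{-1/2}$ and the rigorous $\L^2(\H,\mu)$-limits: every occurrence of $Q^{-1/2}(m_2-m_1)$, the expansion of $\la h,\cdot\ra$, and the factorization of $\rho$ must be justified through the finite-rank projections $P_N$ and the isometry property of $W$, and the interchange of the infinite product with the integral must be controlled uniformly using the summability estimates of the first step (with the finitely many indices $\alpha_k\le -1$ requiring a small separate treatment). By contrast, once the setup is in place the two algebraic identities in the last step and the final comparison of Fourier transforms are routine, and the convergence of $\sum_k\Phi_k$ reduces to the Hilbert--Schmidt summability $\sum_k\alpha_k^2 < \infty$ together with $m_2-m_1\in\Im(Q^{1/2})$.
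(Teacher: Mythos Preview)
Your proposal is correct and follows essentially the same route as the paper: define the candidate density $\rho$, establish $\L^1$- and $\L^2$-convergence of $\sum_k\Phi_k$, show $s\in \L^1(\H,\mu)$ via a uniform-integrability argument, and then identify $\rho\,d\mu$ with $\nu$ by matching characteristic functions using the one-dimensional Gaussian identity and the independence of the $W_{\phi_k}$. The only notable difference is in the uniform-integrability step, where the paper (Proposition \ref{proposition:L1mu-dnu-dmu}) bounds the partial products in $\L^p$ for a single $p>1$ chosen so that $I+(p-1)S>0$ rather than splitting off the finitely many indices with $\alpha_k\le -1$; both variants then invoke Vitali's theorem to pass to the limit.
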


{\bf Special case}. For $m_1 = m_2 = 0$, Theorem \ref{theorem:radon-nikodym-infinite} gives
\begin{align}
\frac{d\nu}{d\mu}(x) = \exp\left\{-\frac{1}{2}\sum_{k=1}^{\infty}\left[\frac{\alpha_k}{1-\alpha_k}W^2_{\phi_k}(x) + \log(1-\alpha_k)\right]\right\}.
\end{align}
This is essentially Eq.~(6.4.13) in Corollary 6.4.11 in \cite{Bogachev:Gaussian}.

\begin{corollary}
\label{corollary:radon-nikodym-traceclass}
Assume the hypothesis of Theorem \ref{theorem:radon-nikodym-infinite}. Assume 
further that $S$ is trace class. The Radon-Nikodym derivative of $\nu$ with respect to $\mu$ is given by
\begin{align}
\frac{d\nu}{d\mu}(x) &= [\det(I-S)]^{-1/2}
\\
& \times \exp\left\{-\frac{1}{2}\la Q^{-1/2}(x-m_1), S(I-S)^{-1}Q^{-1/2}(x-m_1)\ra \right\}
\nonumber
\\
&\times \exp(\la Q^{-1/2}(x-m_1), (I-S)^{-1}Q^{-1/2}(m_2 - m_1)\ra)
\nonumber
\\
&\times \exp\left[-\frac{1}{2}||(I-S)^{-1/2}Q^{-1/2}(m_2 - m_1)||^2\right].
\nonumber
\end{align}
In the above expression,
\begin{align}
&\la Q^{-1/2}(x-m_1), S(I-S)^{-1}Q^{-1/2}(x-m_1)\ra 
\nonumber
\\
& \doteq \lim_{N \approach \infty} \la Q^{-1/2}P_N(x-m_1), S(I-S)^{-1}Q^{-1/2}P_N(x-m_1)\ra 
\\
&
\la Q^{-1/2}(x-m_1), (I-S)^{-1}Q^{-1/2}(m_2 - m_1)\ra 
\nonumber
\\
&\doteq \lim_{N \approach \infty} \la Q^{-1/2}P_N(x-m_1), (I-S)^{-1}Q^{-1/2}(m_2 - m_1)\ra,
\end{align}
with the limits being in the $\L^1(\H,\mu)$ and $\L^2(\H, \mu)$ sense, respectively.
\end{corollary}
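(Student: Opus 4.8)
The plan is to start from the formula for $\frac{d\nu}{d\mu}$ in Theorem~\ref{theorem:radon-nikodym-infinite} and use the extra hypothesis that $S$ is trace class to split the series $\sum_{k=1}^{\infty}\Phi_k$ into three pieces, each of which converges separately. Set $h = Q^{-1/2}(m_2-m_1) \in \H$, which is well-defined since $m_2-m_1 \in \Im(Q^{1/2})$. From~(\ref{equation:Phik}),
\begin{align*}
-\frac12\sum_{k=1}^{\infty}\Phi_k &= -\frac12\sum_{k=1}^{\infty}\frac{\alpha_k}{1-\alpha_k}W^2_{\phi_k} + \sum_{k=1}^{\infty}\frac{\la h, \phi_k\ra}{1-\alpha_k}W_{\phi_k} \\
&\quad - \frac12\sum_{k=1}^{\infty}\log(1-\alpha_k).
\end{align*}
First I would dispose of the scalar term: since $S \in \Tr(\H)$ we have $\sum_k |\alpha_k| < \infty$, and since $I-S$ is strictly positive, $\alpha_k < 1$ for all $k$; as $\log(1-\alpha_k) = -\alpha_k + O(\alpha_k^2)$, the series $\sum_k \log(1-\alpha_k)$ converges absolutely and, by the definition of the Fredholm determinant, $\sum_k \log(1-\alpha_k) = \log\prod_k(1-\alpha_k) = \log\det(I-S)$, which yields the prefactor $[\det(I-S)]^{-1/2}$.

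For the quadratic piece, note that $(I-S)^{-1}$ is bounded ($S$ compact and $I-S$ strictly positive), so $S(I-S)^{-1} = \sum_k \frac{\alpha_k}{1-\alpha_k}\phi_k\otimes\phi_k$ is self-adjoint and trace class, with $\sum_k \bigl|\frac{\alpha_k}{1-\alpha_k}\bigr| < \infty$. Because the $W_{\phi_k}$ are orthonormal (in fact independent standard Gaussian) in $\L^2(\H,\mu)$, we have $\|W^2_{\phi_k}\|_{\L^1(\H,\mu)} = 1$, so $\sum_k \frac{\alpha_k}{1-\alpha_k}W^2_{\phi_k}$ converges absolutely in $\L^1(\H,\mu)$ and $\mu$-a.e. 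To identify this limit with the quantity on the right-hand side of the corollary, I would use that $W_{P_N\phi_k}(x) = \la Q^{-1/2}P_N(x-m_1), \phi_k\ra$ (self-adjointness of $Q^{-1/2}P_N$), whence $\la Q^{-1/2}P_N(x-m_1), S(I-S)^{-1}Q^{-1/2}P_N(x-m_1)\ra = \sum_k \frac{\alpha_k}{1-\alpha_k}W^2_{P_N\phi_k}(x)$, and then let $N \approach \infty$. For each fixed $k$, $W_{P_N\phi_k} \approach W_{\phi_k}$ in $\L^2(\H,\mu)$ (isometry of the white-noise map and $P_N\phi_k \approach \phi_k$), hence $\|W^2_{\phi_k} - W^2_{P_N\phi_k}\|_{\L^1} \le \|W_{\phi_k}-W_{P_N\phi_k}\|_{\L^2}\,\|W_{\phi_k}+W_{P_N\phi_k}\|_{\L^2} \le 2\|\phi_k - P_N\phi_k\| \approach 0$; since this is dominated by the summable $2\bigl|\frac{\alpha_k}{1-\alpha_k}\bigr|$, a dominated-convergence argument for series lets me interchange $\lim_N$ with $\sum_k$, identifying the $\L^1$-limit of the projected quadratic forms with $\sum_k \frac{\alpha_k}{1-\alpha_k}W^2_{\phi_k}$.

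For the linear piece, put $g = (I-S)^{-1}h \in \H$, so that $\frac{\la h,\phi_k\ra}{1-\alpha_k} = \la g,\phi_k\ra$ and $\sum_k |\la g,\phi_k\ra|^2 = \|g\|^2 < \infty$. By linearity and the isometry of the white-noise map, $\sum_k \la g,\phi_k\ra W_{\phi_k} = W_g$ and $\sum_k \la g,\phi_k\ra W_{P_N\phi_k} = W_{P_N g}$, both convergent in $\L^2(\H,\mu)$; moreover $\la Q^{-1/2}P_N(x-m_1), (I-S)^{-1}h\ra = W_{P_N g}(x)$ and $\|W_g - W_{P_N g}\|_{\L^2(\H,\mu)} = \|(I-P_N)g\| \approach 0$, so the $\L^2$-limit defining the corollary's linear term equals $W_g = \sum_k \la g,\phi_k\ra W_{\phi_k}$. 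Assembling the three pieces and exponentiating reproduces the stated formula, with the two scalar products on the right read as the indicated $\L^1$- and $\L^2$-limits (consistent with, and refining, the $\L^1$/$\L^2$-convergence of $\sum_k \Phi_k$ from Theorem~\ref{theorem:radon-nikodym-infinite}). I expect the only real obstacle to be keeping the modes of convergence straight — in particular justifying the two interchanges of $\lim_{N \approach \infty}$ with the sum over eigenmodes of $S$; the trace-class hypothesis, which gives absolute summability of $\alpha_k$ and hence of $\alpha_k/(1-\alpha_k)$ and of $\log(1-\alpha_k)$, is exactly what makes those interchanges and the appearance of the genuine Fredholm determinant $\det(I-S)$ legitimate, none of which is available in the merely Hilbert--Schmidt setting of Theorem~\ref{theorem:radon-nikodym-infinite}.
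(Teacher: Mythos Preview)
Your proposal is correct and follows essentially the same approach as the paper: split $\sum_k\Phi_k$ into scalar, quadratic, and linear parts, use the trace-class hypothesis to sum $\log(1-\alpha_k)$ to $\log\det(I-S)$, and then identify the quadratic and linear series with the projected inner products via $W_{P_N\phi_k}(x)=\la Q^{-1/2}P_N(x-m_1),\phi_k\ra$. The paper packages the two interchange-of-limit steps as Lemmas~\ref{lemma:trace-class-L1mu} and~\ref{lemma:traceclass-L1mu-2}; your dominated-convergence-for-series argument for the quadratic piece and your white-noise isometry identity $\sum_k\la g,\phi_k\ra W_{P_N\phi_k}=W_{P_Ng}$ for the linear piece are exactly the content of those lemmas, phrased slightly more compactly.
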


{\bf Special case}. For $m_1 = m_2 = 0$ and $S$  trace class, Corollary 
\ref{corollary:radon-nikodym-traceclass} gives
\begin{align}
\frac{d\nu}{d\mu}(x) = [\det(I-S)]^{-1/2}\exp\left\{-\frac{1}{2}\la Q^{-1/2}x, S(I-S)^{-1}Q^{-1/2}x\ra \right\}.
\end{align}
This is precisely Proposition 1.3.11 in \cite{DaPrato:PDEHilbert}.

{\bf Special case}. If $Q = R$, then obviously $S = 0$ and Corollary 
\ref{corollary:radon-nikodym-traceclass} gives
\begin{align}
\frac{d\nu}{d\mu}(x) &=
\exp(\la Q^{-1/2}(x-m_1), Q^{-1/2}(m_2 - m_1)\ra)
\nonumber
\\
&\times \exp\left[-\frac{1}{2}||Q^{-1/2}(m_2 - m_1)||^2\right].
\\
& = \exp\left[\la (x-m_1), (m_2 - m_1)\ra_{Q}-\frac{1}{2}||(m_2 - m_1)||^2_{Q}\right].
\end{align}
This is precisely Theorem 6.14 in \cite{Stuart:Inverse2010}.

{\bf Special case: Radon-Nikodym derivative between Gaussian densities on $\R^n$}.
Let $P_1 \sim \Ncal(\mu_1, \Sigma_1)$, $P_2 \sim \Ncal(\mu_2, \Sigma_2)$, with $\mu_1, \mu_2 \in \R^n$, 
$\Sigma_1, \Sigma_2
\in \Sym^{++}(n)$. Let $S\in \Sym(n)$ be such that $\Sigma_2 = \Sigma_1^{1/2}(I-S)\Sigma_1^{1/2}$, then one can verify directly that
\begin{align}
\frac{dP_2}{dP_1}(x) &= [\det(I-S)]^{-1/2}\exp(-\Phi(x)), \;\;\; x \in \R^n, \;\;\; \text{where}
\\
\Phi(x) 
& = \frac{1}{2}\la \Sigma_1^{-1/2}(x-\mu_1), S(I-S)^{-1}\Sigma_1^{-1/2}(x-\mu_1)\ra
\\
& \; \;\; + \la \Sigma_1^{-1/2}(x-\mu_1), (I-S)^{-1}\Sigma_1^{-1/2}(\mu_1 - \mu_2) \ra
\nonumber
\\
& \; \;\;  + \frac{1}{2}\la \Sigma_1^{-1/2}(\mu_2 - \mu_1), (I-S)^{-1}\Sigma_1^{-1/2}(\mu_2 - \mu_1)\ra.
\nonumber
\end{align}

To prove Theorem \ref{theorem:radon-nikodym-infinite}, we first prove the following.

\begin{proposition}
	\label{proposition:positiveIS}
	Assume that $\ker(Q) = \ker(R) = \{0\}$ and that $R = Q^{1/2}(I-S)Q^{1/2}$, 
	where $S \in \Sym(\H) \cap \HS(\H)$. Then
	the operator
	$(I-S)$ is necessarily strictly positive, that is $\la x, (I-S)x\ra > 0$ $\forall 0 \neq x \in \H$.
\end{proposition}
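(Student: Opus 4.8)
The plan is to obtain strict positivity of $I-S$ in two stages. First I would show that $I-S$ is positive (semidefinite); this is the analytic heart of the argument. Then I would show that $I-S$ has trivial kernel, after which strict positivity is immediate from the square‑root characterisation of positive operators.

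For the first stage I would use that $\ker(Q)=\{0\}$ forces $\Im(Q^{1/2})$ to be dense in $\H$, since $\overline{\Im(Q^{1/2})}=\ker((Q^{1/2})^{*})^{\perp}=\ker(Q^{1/2})^{\perp}=\H$. For any $x\in\H$, putting $z=Q^{1/2}x$ gives
\[
\langle z,(I-S)z\rangle=\langle x,Q^{1/2}(I-S)Q^{1/2}x\rangle=\langle x,Rx\rangle\geq 0,
\]
because $R$ is a covariance operator, hence self‑adjoint and positive. Thus the continuous quadratic form $z\mapsto\langle z,(I-S)z\rangle$ is nonnegative on the dense subspace $\Im(Q^{1/2})$, and since $I-S$ is bounded it is nonnegative on all of $\H$; that is, $I-S\geq 0$.

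For the second stage I would first note that the representation $R=Q^{1/2}(I-S)Q^{1/2}$ determines $S$ uniquely among bounded operators: if $Q^{1/2}(S_1-S_2)Q^{1/2}=0$ then $(S_1-S_2)Q^{1/2}x\in\ker(Q^{1/2})=\{0\}$ for all $x$, so $S_1-S_2$ vanishes on the dense set $\Im(Q^{1/2})$ and hence $S_1=S_2$. Working under the standing hypothesis that $\mu=\Ncal(m_1,Q)$ and $\nu=\Ncal(m_2,R)$ are equivalent, the Feldman--H\'ajek theorem (Theorem \ref{theorem:Gaussian-equivalent}) provides a self‑adjoint Hilbert--Schmidt operator without the eigenvalue $1$ realising this representation; by the uniqueness just noted it is $S$, so $\ker(I-S)=\{0\}$. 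Combining the two stages, for the positive self‑adjoint operator $A=I-S$ one has $\langle x,Ax\rangle=\|A^{1/2}x\|^{2}$, so $\langle x,(I-S)x\rangle=0$ implies $A^{1/2}x=0$, hence $Ax=0$, hence $x\in\ker(I-S)=\{0\}$. Therefore $\langle x,(I-S)x\rangle>0$ for every $x\neq 0$.

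The point that must not be overlooked — the genuine obstacle — is the second stage. The hypothesis $\ker(R)=\{0\}$ alone only yields $\ker(I-S)\cap\Im(Q^{1/2})=\{0\}$, since $Rx=0$ is equivalent to $Q^{1/2}x\in\ker(I-S)$; this is strictly weaker than $\ker(I-S)=\{0\}$ whenever $\ker(I-S)$ meets $\Im(Q^{1/2})$ only trivially. Excluding an eigenvalue $1$ of $S$ therefore really does require the equivalence of $\mu$ and $\nu$, and it is precisely here that the Feldman--H\'ajek dichotomy, together with uniqueness of $S$, is invoked. By contrast, the first stage is routine once the density of $\Im(Q^{1/2})$ and the positivity of $R$ are in place.
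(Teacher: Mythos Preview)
Your first stage (density of $\Im(Q^{1/2})$ plus continuity to get $I-S\ge0$) coincides with the paper's. For strictness you take a genuinely different route: the paper argues directly from the stated hypotheses, assuming $\langle y,(I-S)y\rangle=0$ for some $y\ne0$ (so $y\notin\Im(Q^{1/2})$), picking $y_n=Q^{1/2}x_n\to y$, noting $\|R^{1/2}x_n\|\to 0$, and then concluding that $x_n\to 0$ weakly via density of $\Im(R^{1/2})$, whence $y_n\to 0$ weakly, a contradiction. You instead invoke the standing equivalence hypothesis together with Feldman--Hajek (plus your uniqueness observation) to obtain $1\notin\sigma(S)$ at once. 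Your route is clean and correct under that standing hypothesis, while the paper's direct route purports to use only $\ker R=\{0\}$. Your closing remark that the stated hypotheses alone give only $\ker(I-S)\cap\Im(Q^{1/2})=\{0\}$ is exactly on point, and in fact flags a genuine subtlety: in the paper's argument the passage from ``$\langle x_n,w\rangle\to 0$ for $w$ in a dense set'' to ``$x_n\to 0$ weakly'' tacitly requires $\{x_n\}$ to be bounded, but $y\notin\Im(Q^{1/2})$ forces $\{x_n\}=\{Q^{-1/2}y_n\}$ to be unbounded; indeed taking $S=v\otimes v$ with $\|v\|=1$ and $v\notin\Im(Q^{1/2})$ yields $Q,R$ satisfying all the stated hypotheses while $(I-S)v=0$. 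So your appeal to the equivalence assumption is not merely convenient but necessary.
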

\begin{proof}
	For any $x \in \H$, we have
	\begin{align*}
	\la x, Rx\ra = \la x, Q^{1/2}(I-S)Q^{1/2}x\ra = \la Q^{1/2}x, (I-S)Q^{1/2}x\ra \geq 0,
	\end{align*}
	with equality if and only if $x = 0$, since $\ker(R) = \{0\}$.
	Thus we have
	\begin{align*}
	\la y, (I-S)y \ra \geq 0 \;\;\forall y \in \Im(Q^{1/2}),
	\end{align*}
	with equality if and only if $y = 0$.  Since $\ker(Q) = \{0\}$, $\Im(Q^{1/2})$ is dense in $\H$ and 
	$\forall y \in \H$, 
	$\exists$
	a sequence $\{y_n\}_{n\in\N}$ in $\Im(Q^{1/2})$ such that 
	$\lim_{n \approach \infty}||y_n - y|| = 0$. One has
	\begin{align*}
	&|\la y_n, (I-S)y_n\ra - \la y, (I-S)y\ra| \leq |\la y_n - y, (I-S)y_n\ra| + |\la y, (I-S)(y_n -y)\ra|
	\\
	&\leq ||y_n - y||\;||I-S||[||y_n|| + ||y||] \approach 0 \;\text{as}\; n \approach \infty.
	\end{align*}
	It follows that
	$\la y, (I-S)y\ra = \lim_{n \approach \infty}\la y_n, (I-S)y_n\ra \geq 0$.
	Hence the operator $I-S$ is self-adjoint, positive on $\H$. 
	
	Let us show
	that $I-S$ is strictly positive. Assume that $\exists y \neq 0\in \H$ such that $
	\la y, (I-S)y\ra = 0$, then $y \notin \Im(Q^{1/2})$ and there exists a sequence $\{y_n\}_{n \in \N}$ in
	$\Im(Q^{1/2})$ such that $\lim_{n \approach \infty}||y_n - y|| = 0$ and
	$\lim_{n \approach \infty}\la y_n, (I-S)y_n\ra = 0$.
	Equivalently, there exists a sequence $\{x_n\}_{n\in \N}$ in $\H$ such that $y_n = Q^{1/2}x_n$ and
	\begin{align*}
	\lim_{n \approach \infty}\la Q^{1/2}x_n, (I-S)Q^{1/2}x_n \ra = \lim_{n \approach \infty} \la x_n, Rx_n\ra  = \lim_{n\approach \infty}||R^{1/2}x_n||^2= 0.
	\end{align*}
	This implies that for any $z \in \H$, we have
	\begin{align*}
	\lim_{n \approach \infty}\la x_n, R^{1/2}z\ra = \lim_{n\approach \infty} \la R^{1/2}x_n, z\ra = 0.
	\end{align*}
	Since $\ker(R) = \{0\}$, $\Im(R^{1/2})$ is dense in $\H$ and thus
	$\lim_{n \approach \infty}\la x_n, z\ra = 0 \;\;\forall z \in \H$.
	Thus the sequence $\{x_n\}_{n \in \N}$ converges weakly to zero in $\H$. Then for any $z\in \H$,
	\begin{align*}
	\lim_{n \approach \infty}\la y_n, z\ra = \lim_{n \approach \infty}\la Q^{1/2}x_n , z\ra = \lim_{n \approach \infty}\la x_n , Q^{1/2}z\ra = 0.
	\end{align*}
	Thus the sequence $\{y_n\}_{n \in \N}$ also converges weakly to zero in $\H$. Since we already assume that $y_n$ converges strongly, and hence weakly, to $y \in \H$,
	by the uniqueness of the weak limit, we must have $y = 0$, contradicting our prior assumption that $y \neq 0$.
\qed \end{proof}



In the following, 
we make use of the Vitali Convergence Theorem
(see e.g. \cite{Folland:Real,Rudin:RealComplex}).
Let 
$(\X, \F, \mu)$ be a positive measurable space. A sequence of functions $\{f_n\}_{n\in \N} \in \L^1(\X,\mu)$ 
is said to be {\it uniformly integrable} if $\forall \epsilon > 0$ $\exists \delta > 0$ such that
\begin{align}
\sup_{n \in \N}\int_{E}|f_n|d\mu < \epsilon \;\;\; \text{whenever} \;\;\; \mu(E) < \delta, E \in \Fcal.
\end{align}

\begin{theorem}
[\textbf{Vitali Convergence Theorem}]
Assume that $(\X, \F, \mu)$ is a positive measurable space with $\mu(\X) < \infty$.
Let $\{f_n\}_{n\in \N}$ be a sequence of functions that are uniformly integrable on $\X$, with
$f_n \approach f$ a.e. and $|f| < \infty$ a.e.. Then $f \in \L^1(\X, \mu)$ and $||f_n - f||_{\L^1(\X, \mu)} \approach 0$.
\end{theorem}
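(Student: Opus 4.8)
The plan is to prove the Vitali Convergence Theorem by the classical three-step argument: first deduce $f \in \L^1(\X,\mu)$ from a uniform $L^1$ bound via Fatou's lemma, then use Egorov's theorem to reduce to uniform convergence off a set of small measure, and finally control the contribution of that small set by uniform integrability. All three ingredients are elementary measure theory; the only delicate point is extracting a uniform $L^1$ bound from the absolute-continuity form of uniform integrability on a finite measure space.

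First I would establish the uniform bound. Applying the definition of uniform integrability with $\epsilon = 1$ gives a $\delta > 0$ such that $\sup_{n}\int_E |f_n|\,d\mu < 1$ whenever $\mu(E) < \delta$. Since $\mu(\X) < \infty$, I partition $\X$ into finitely many disjoint measurable pieces $E_1,\dots,E_N$ each of measure less than $\delta$ (here $N$ may be taken to be any integer exceeding $\mu(\X)/\delta$; this partition exists because the relevant measures — Gaussian measures on $\H$ — are non-atomic). Summing over the partition yields $\int_{\X}|f_n|\,d\mu = \sum_{j=1}^{N}\int_{E_j}|f_n|\,d\mu < N$ for every $n$, a bound independent of $n$. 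Because $f_n \to f$ almost everywhere and $|f| < \infty$ almost everywhere, Fatou's lemma then gives $\int_{\X}|f|\,d\mu \le \liminf_n \int_{\X}|f_n|\,d\mu \le N < \infty$, so $f \in \L^1(\X,\mu)$.

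Next, fix $\epsilon > 0$. Since $f \in \L^1(\X,\mu)$, the set function $E \mapsto \int_E |f|\,d\mu$ is absolutely continuous with respect to $\mu$, so there is $\delta_1 > 0$ with $\int_E |f|\,d\mu < \epsilon$ whenever $\mu(E) < \delta_1$; uniform integrability supplies $\delta_2 > 0$ with $\sup_n \int_E |f_n|\,d\mu < \epsilon$ whenever $\mu(E) < \delta_2$. Set $\delta = \min(\delta_1,\delta_2)$. Because $\mu(\X) < \infty$, $f_n \to f$ almost everywhere, and $f$ is finite almost everywhere, Egorov's theorem produces a measurable set $A$ with $\mu(\X \setminus A) < \delta$ on which $f_n \to f$ uniformly. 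I then split
\begin{align*}
\int_{\X}|f_n - f|\,d\mu = \int_{A}|f_n - f|\,d\mu + \int_{\X \setminus A}|f_n - f|\,d\mu.
\end{align*}
On $\X \setminus A$ the triangle inequality and the two smallness estimates bound the second term by $\int_{\X\setminus A}|f_n|\,d\mu + \int_{\X\setminus A}|f|\,d\mu < 2\epsilon$, uniformly in $n$. On $A$ uniform convergence gives $\sup_{x \in A}|f_n(x) - f(x)| \to 0$, so the first term is at most $\mu(\X)\sup_{x\in A}|f_n(x)-f(x)| < \epsilon$ for all sufficiently large $n$. Hence $\limsup_n \int_{\X}|f_n - f|\,d\mu \le 3\epsilon$, and letting $\epsilon \to 0$ yields $\|f_n - f\|_{\L^1(\X,\mu)} \to 0$.

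The main obstacle is the uniform $L^1$ bound in the first step: the hypothesis is stated in the absolute-continuity form, which does not by itself guarantee a uniform bound on a general finite measure space with atoms. The partition argument resolves this, relying on the ambient measure being non-atomic — which holds for the Gaussian measures $\mu = \Ncal(m,Q)$ to which the theorem is applied in this paper. (Alternatively, one could replace the partition by a truncation argument, bounding $\int_{\X}|f_n|\,d\mu \le \int_{\{|f_n| \le M\}}|f_n|\,d\mu + \int_{\{|f_n| > M\}}|f_n|\,d\mu \le M\mu(\X) + \epsilon$ after choosing $M$ so that $\mu(\{|f_n| > M\}) < \delta$ uniformly via Chebyshev; either route gives the required uniform bound.)
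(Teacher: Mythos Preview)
The paper does not prove this statement: the Vitali Convergence Theorem is quoted as a classical result with references to Folland and Rudin, and is used as a tool rather than derived. So there is no ``paper's own proof'' to compare against; your task here was really to supply a standard textbook argument, and the Egorov-plus-uniform-integrability route you give is exactly that.

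Your main argument is correct and well organized. One genuine slip is in the parenthetical alternative at the end: the truncation route via Chebyshev is circular as written. Chebyshev gives $\mu(\{|f_n|>M\}) \le M^{-1}\int_{\X}|f_n|\,d\mu$, so to choose $M$ making this smaller than $\delta$ \emph{uniformly in $n$} you already need $\sup_n\|f_n\|_{\L^1}<\infty$, which is precisely what you are trying to establish. The partition argument you give first is the honest fix, and you are right that it relies on non-atomicity of $\mu$; since every application in the paper is to a nondegenerate Gaussian measure $\Ncal(m,Q)$ on an infinite-dimensional $\H$, that hypothesis is available. If you want an argument that does not appeal to non-atomicity, note that in every instance where the paper invokes Vitali the uniform $L^1$ bound is obtained directly (indeed, uniform $L^p$ boundedness for some $p>1$ is shown and H\"older is used to deduce uniform integrability), so one could simply add $\sup_n\|f_n\|_{\L^1}<\infty$ as an explicit hypothesis and drop the first step entirely.
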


\begin{proposition} 
	\label{proposition:exponential-quadratic-whitenoise}
	Let $g \in \H$. Let $c_1\in \R$, $c_2 \in \R$ be such that $c_1||g||^2 < 1$. Then
	\begin{align}
	&\int_{\H}\exp\left[\frac{1}{2}c_1W_{g}^2(x) + c_2W_{g}(x) \right]\Ncal(m,Q)(dx)
	\\
	&=\frac{1}{(1-c_1||g||^2)^{1/2}}\exp\left(\frac{c_2^2||g||^2}{2(1-c_1||g||^2)}\right).
	\nonumber
	\end{align}
\end{proposition}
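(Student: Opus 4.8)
The plan is to reduce this infinite-dimensional integral to a one-dimensional Gaussian integral. The key observation is that, under $\mu = \Ncal(m,Q)$, the white noise variable $W_g$ is a centered real Gaussian with variance $||g||^2$. Indeed, for $z \in Q^{1/2}(\H)$ the function $W_z(x) = \la x-m, Q^{-1/2}z\ra$ is a continuous linear functional of the Gaussian vector $x \sim \Ncal(m,Q)$, hence has a Gaussian law, with mean $\la \int_{\H}(x-m)\mu(dx), Q^{-1/2}z\ra = 0$ and variance $||W_z||^2_{\L^2(\H,\mu)} = ||z||^2$ by the isometry property of $W$ established above. For general $g \in \H$, $W_g$ is the $\L^2(\H,\mu)$-limit of $W_{P_N g}$ with $||P_N g|| \approach ||g||$; since an $\L^2$-limit of real Gaussians is Gaussian (the characteristic functions $\exp(-\tfrac12 t^2 ||P_N g||^2)$ converge pointwise to $\exp(-\tfrac12 t^2 ||g||^2)$), we conclude $W_g \sim \Ncal(0, ||g||^2)$ and $\int_\H W_g\, d\mu = 0$.

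Granting this, the left-hand side equals the scalar integral
\begin{align*}
\int_{\R}\exp\!\left[\tfrac12 c_1 t^2 + c_2 t\right]\Ncal(0, \sigma^2)(dt), \qquad \sigma^2 := ||g||^2.
\end{align*}
The hypothesis $c_1||g||^2 < 1$ is precisely the condition $\sigma^{-2} - c_1 > 0$ that makes this integral finite. Writing out the Gaussian density, completing the square in the exponent $-\tfrac12(\sigma^{-2}-c_1)t^2 + c_2 t$, and performing the resulting Gaussian integral yields $(1-c_1\sigma^2)^{-1/2}\exp\!\big(\tfrac{c_2^2\sigma^2}{2(1-c_1\sigma^2)}\big)$, which is the asserted formula. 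This last computation is entirely routine.

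If one prefers to stay closer to the machinery already in place — in particular to avoid asserting outright that the $\mu$-a.e.\ defined object $W_g$ is Gaussian — an equivalent route is to first evaluate the integral with $W_{P_N g}$ in place of $W_g$ (a genuine finite-dimensional quadratic-form Gaussian integral), obtaining $(1-c_1||P_N g||^2)^{-1/2}\exp\!\big(\tfrac{c_2^2||P_N g||^2}{2(1-c_1||P_N g||^2)}\big)$, and then let $N \approach \infty$. Here the main obstacle, and the only nontrivial point in the whole argument, is justifying the passage of the limit through the integral: $W_{P_N g} \approach W_g$ in $\L^2(\H,\mu)$ and hence $\mu$-a.e.\ along a subsequence, but when $c_1 > 0$ the integrand grows like $\exp(\tfrac12 c_1 W^2)$ and admits no obvious dominating function. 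This is exactly where the Vitali Convergence Theorem enters: since $c_1||P_N g||^2 \le c_1||g||^2 < 1$ for all $N$, one may fix $p>1$ with $p\,c_1||g||^2 < 1$ and bound $\sup_N ||\exp(\tfrac12 c_1 W_{P_N g}^2 + c_2 W_{P_N g})||_{\L^p(\H,\mu)}$ using the same one-dimensional estimate applied to $W_{P_N g}$; this $\L^p$-boundedness (with $\mu$ a probability measure) gives uniform integrability of the family, so by Vitali the integrands converge in $\L^1(\H,\mu)$ and the integrals converge. Passing to the limit on both sides and using $||P_N g|| \approach ||g||$ yields the claim.
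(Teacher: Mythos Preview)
Your proposal is correct, and you actually offer two valid routes. The second route — evaluate the integral with $W_{P_N g}$, pick $p>1$ with $pc_1\|g\|^2<1$, bound the $\L^p$-norms uniformly in $N$, and invoke Vitali — is precisely the paper's argument. The paper carries out the finite-$N$ computation by writing $W_{P_N g}^2(x) = \la M x, x\ra$ with $M$ the rank-one operator $c_1(Q^{-1/2}P_N g)\otimes(Q^{-1/2}P_N g)$, and then applying the DaPrato--Zabczyk formula (Theorem~\ref{theorem:exponential-quadratic-Gaussian}) for $\int_\H \exp(\tfrac12\la My,y\ra + \la b,y\ra)\,\Ncal(0,Q)(dy)$ together with Lemma~\ref{lemma:inverse-rank-one} on inverting a rank-one perturbation; you compress this to ``a genuine finite-dimensional quadratic-form Gaussian integral,'' which is fair. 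One small point: the paper asserts $W_{P_N g}\to W_g$ $\mu$-a.e.\ along the full sequence (not just a subsequence), which is legitimate here because $W_{P_N g}(x)=\sum_{k\le N}\frac{\la g,e_k\ra}{\sqrt{\lambda_k}}\la x-m,e_k\ra$ is a martingale in $N$; your subsequence remark is not wrong but slightly weaker than what is available.

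Your first route is genuinely more elementary than what the paper does: once you know that $W_g$ is $\Ncal(0,\|g\|^2)$ under $\mu$, the image-measure formula reduces the whole thing to a one-line scalar Gaussian integral, and no approximation or Vitali argument is needed at all. The paper instead routes the computation through the general operator-theoretic identity of Theorem~\ref{theorem:exponential-quadratic-Gaussian}, which is heavier machinery for this particular statement but has the advantage of keeping everything inside the framework (white-noise map, finite-rank projections, Fredholm determinants) used throughout Section~\ref{section:exact-divergences}. Your approach buys simplicity; the paper's buys uniformity of method.
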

{\bf Special case}. For $c_1 = 0$, Proposition \ref{proposition:exponential-quadratic-whitenoise} gives
\begin{align}
\int_{\H}\exp[c_2W_{g}(x)]\Ncal(m,Q)(dx) = \exp\left(\frac{c_2^2}{2}||g||^2\right).
\end{align}
With $c_2 = 1$, the above formula gives Proposition 1.2.7 in \cite{DaPrato:PDEHilbert}.

The proof of Proposition \ref{proposition:exponential-quadratic-whitenoise} 
requires the following results. The first one,
Lemma \ref{lemma:inverse-rank-one}, can be directly verified.

\begin{lemma}
	\label{lemma:inverse-rank-one}
	Let $u \in \H$  and $c \in \R$ be such that $c||u||^2 < 1$. Then
	the operator $I - c(u \otimes u)$ is invertible and 
	\begin{align}
	[I - c(u \otimes u)]^{-1} = I + \frac{c(u \otimes u)}{1-c||u||^2}.
	\end{align}
	In particular,
	$[I - c (u \otimes u)]^{-1}u = \frac{1}{1-c||u||^2}u$.
\end{lemma}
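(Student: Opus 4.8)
The plan is to verify the stated formula by direct computation, recognising $I - c(u\otimes u)$ as a rank-one perturbation of the identity to which the Sherman--Morrison identity applies verbatim. First I would record the elementary algebraic fact that, for the rank-one operator $u\otimes u$ acting by $(u\otimes u)x = \la x, u\ra u$, one has $(u\otimes u)^2 = ||u||^2\,(u\otimes u)$: indeed $(u\otimes u)(u\otimes u)x = \la x, u\ra\,(u\otimes u)u = \la x, u\ra\,||u||^2\,u$ for all $x \in \H$. I would also note that the hypothesis $c||u||^2 < 1$ forces $1 - c||u||^2 > 0$, so the operator $T := I + \tfrac{c}{1-c||u||^2}(u\otimes u)$ is a well-defined bounded self-adjoint operator on $\H$; the degenerate case $u = 0$ is trivial, since then $u\otimes u = 0$ and $T = I$.

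Next I would compute the two products $[I - c(u\otimes u)]\,T$ and $T\,[I - c(u\otimes u)]$. Expanding and using $(u\otimes u)^2 = ||u||^2(u\otimes u)$, the coefficient of $u\otimes u$ in either product equals $\tfrac{c}{1-c||u||^2} - c - \tfrac{c^2||u||^2}{1-c||u||^2}$, which collapses to $0$ because $\tfrac{c}{1-c||u||^2}\,(1 - c||u||^2) = c$. Hence both products equal $I$, so $I - c(u\otimes u)$ is invertible with inverse $T$, which is precisely the claimed expression.

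Finally, applying this inverse to $u$ gives $[I - c(u\otimes u)]^{-1}u = Tu = u + \tfrac{c}{1-c||u||^2}\la u, u\ra\,u = \bigl(1 + \tfrac{c||u||^2}{1-c||u||^2}\bigr)u = \tfrac{1}{1-c||u||^2}\,u$, which establishes the last assertion. I do not anticipate any genuine obstacle here: the only points requiring attention are checking that $1 - c||u||^2 \neq 0$ (guaranteed by hypothesis) and carrying out the scalar cancellation correctly; everything else is routine bounded-operator algebra, and this is why the statement is flagged in the excerpt as one that ``can be directly verified.''
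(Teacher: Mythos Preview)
Your proposal is correct and is exactly the direct verification the paper has in mind: the paper gives no detailed proof, merely noting that the lemma ``can be directly verified,'' and your Sherman--Morrison style computation using $(u\otimes u)^2 = \|u\|^2(u\otimes u)$ is the natural way to carry that out.
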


The second is the following result from \cite{DaPrato:PDEHilbert}.

\begin{theorem}
	[\cite{DaPrato:PDEHilbert}, Proposition 1.2.8]
	\label{theorem:exponential-quadratic-Gaussian}
	Assume that $M$ is a self-adjoint operator on $\H$ such that $\la Q^{1/2}MQ^{1/2} x, x \ra < ||x||^2$ $\forall 
	x \in \H, x \neq 0$. Let $b \in \H$. Then 
	\begin{align}
	&\int_{\H}\exp\left(\frac{1}{2}\la M y, y\ra + \la b,y\ra\right)\Ncal(0,Q)(dy)
	\\
	& = [\det(I - Q^{1/2}MQ^{1/2})]^{-1/2}\exp\left(\frac{1}{2}||(I-Q^{1/2}MQ^{1/2})^{-1/2}Q^{1/2}b||^2\right).
	\nonumber
	\end{align}
\end{theorem}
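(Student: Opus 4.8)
The plan is to reduce the infinite-dimensional integral to a limit of genuinely finite-dimensional Gaussian integrals, for which the formula is classical, and then to justify the passage to the limit on both sides. Let $\{e_k\}_{k=1}^{\infty}$ be the orthonormal eigenvectors of $Q$ with eigenvalues $\{\lambda_k\}$, so that in these coordinates $\Ncal(0,Q) = \prod_{k=1}^{\infty}\Ncal(0,\lambda_k)$, and let $P_N = \sum_{k=1}^N e_k \otimes e_k$. Set $M_N = P_N M P_N$, $b_N = P_N b$, and $T = Q^{1/2}MQ^{1/2}$. Since $\la M_N y, y\ra$ and $\la b_N, y\ra$ depend only on $(y_1,\dots,y_N)$, the integrand $f_N(y) = \exp(\frac12\la M_N y, y\ra + \la b_N, y\ra)$ factors, and integrating out the remaining coordinates (each contributing $1$) leaves a finite-dimensional Gaussian integral over $\R^N$ with covariance $Q_N = \mathrm{diag}(\lambda_1,\dots,\lambda_N)$. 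Completing the square there (the quadratic $-\frac12\la(Q_N^{-1}-M_N)y,y\ra + \la b_N,y\ra$ is integrable precisely because $I - Q_N^{1/2}M_NQ_N^{1/2} > 0$), together with the identities $\det(Q_N^{-1}-M_N)\det(Q_N) = \det(I-Q_N^{1/2}M_NQ_N^{1/2})$ and $(Q_N^{-1}-M_N)^{-1} = Q_N^{1/2}(I-Q_N^{1/2}M_NQ_N^{1/2})^{-1}Q_N^{1/2}$, yields $\int_\H f_N\,d\mu = \det(I - P_N T P_N)^{-1/2}\exp(\frac12 ||(I - P_N T P_N)^{-1/2}Q^{1/2}P_N b||^2)$, where the matrix $Q_N^{1/2}M_NQ_N^{1/2}$ has been identified with the compression $P_N T P_N$ (whose finite-rank Fredholm determinant equals the $N\times N$ determinant).

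Next I would let $N \approach \infty$ on the right-hand side. The key point is that $Q^{1/2}\in\HS(\H)$ and $M$ bounded force $T \in \Tr(\H)$; a telescoping estimate using $||P_N Q^{1/2} - Q^{1/2}||_{\HS} \approach 0$ and $||A_1A_2||_{\trace} \le ||A_1||_{\HS}||A_2||_{\HS}$ then gives $||P_N T P_N - T||_{\trace} \approach 0$, so by continuity of the Fredholm determinant in trace norm (Theorem 3.5 in \cite{Simon:1977}) we obtain $\det(I - P_N T P_N) \approach \det(I - T)$. The hypothesis $\la Tx,x\ra < ||x||^2$ together with compactness of $T$ forces its largest eigenvalue $\mu_{\max}(T)$ to satisfy $\mu_{\max}(T) < 1$, so $\la P_N T P_N x, x\ra \le \mu_{\max}(T)||x||^2$ gives $I - P_N T P_N \ge cI$ with $c = 1-\mu_{\max}(T) > 0$ uniformly in $N$. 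Applying Lemma \ref{lemma:power-inequality} with $A_N = -P_N T P_N \approach -T$ in $\HS$ (and $r = 1/2$) yields $(I-P_N T P_N)^{-1/2} \approach (I-T)^{-1/2}$ in $\HS$-, hence operator-, norm; since also $Q^{1/2}P_N b = P_N Q^{1/2}b \approach Q^{1/2}b$ in $\H$, the exponent converges to $\frac12 ||(I-T)^{-1/2}Q^{1/2}b||^2$, and the right-hand sides converge to the asserted value.

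It remains to prove $\int_\H f_N\,d\mu \approach \int_\H f\,d\mu$ with $f(y) = \exp(\frac12\la My,y\ra + \la b,y\ra)$, and this is the crux of the argument. Pointwise convergence $f_N(y) \approach f(y)$ for every $y \in \H$ is immediate from $P_N y \approach y$ and the boundedness of $M$. To upgrade this to convergence of integrals I would invoke the Vitali Convergence Theorem, whose essential hypothesis, the uniform integrability of $\{f_N\}$, I expect to be the main obstacle. The plan is to produce a uniform $\L^{1+\epsilon}$ bound: fix $\epsilon > 0$ with $(1+\epsilon)\mu_{\max}(T) < 1$, so that $\la (1+\epsilon)P_N T P_N x, x\ra \le (1+\epsilon)\mu_{\max}(T)||x||^2$ gives $I - (1+\epsilon)P_N T P_N \ge c'I > 0$ uniformly in $N$. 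Applying the finite-dimensional formula to $\int_\H |f_N|^{1+\epsilon}\,d\mu = \int_\H \exp(\frac{1+\epsilon}{2}\la M_N y,y\ra + (1+\epsilon)\la b_N,y\ra)\,d\mu$ and using this uniform lower bound to control both the determinant factor and the exponent shows $\sup_N \int_\H |f_N|^{1+\epsilon}\,d\mu < \infty$. Boundedness in $\L^{1+\epsilon}(\mu)$ forces uniform integrability, so Vitali yields simultaneously that $f \in \L^1(\H,\mu)$ (hence the left-hand side is finite) and $\int_\H f_N\,d\mu \approach \int_\H f\,d\mu$. Combining this with the convergence of the right-hand sides from the previous step completes the proof. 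The reliance on $I - T \ge cI$, available here only because $T$ is compact, is exactly what powers the uniform $\L^{1+\epsilon}$ estimate.
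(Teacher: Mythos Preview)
The paper does not prove this theorem: it is quoted verbatim from \cite{DaPrato:PDEHilbert}, Proposition 1.2.8, and used as a black box in the proof of Proposition~\ref{proposition:exponential-quadratic-whitenoise}. So there is no ``paper's own proof'' to compare against.

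That said, your argument is correct and is precisely the scheme the paper adopts for its neighbouring results (Proposition~\ref{proposition:exponential-quadratic-whitenoise}, Proposition~\ref{proposition:L1mu-dnu-dmu}, Theorem~\ref{theorem:radon-nikodym-infinite}): reduce to a finite-dimensional projection, compute explicitly, then pass to the limit via Vitali after securing a uniform $\L^{1+\epsilon}$ bound. Your observation that compactness of $T = Q^{1/2}MQ^{1/2}$ upgrades the strict inequality $\la Tx,x\ra < \|x\|^2$ to a spectral gap $\mu_{\max}(T) < 1$ is exactly the right mechanism for the uniform-in-$N$ lower bound on $I - (1+\epsilon)P_NTP_N$, and it is the same mechanism the paper exploits (implicitly) in its own Vitali arguments. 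The trace-norm convergence $P_NTP_N \to T$ via the factorisation $T = Q^{1/2}\cdot(MQ^{1/2})$ with both factors Hilbert--Schmidt, and the use of Lemma~\ref{lemma:power-inequality} for the inverse square root, are both legitimate and already available in the paper. One small point: you rely on $P_N$ commuting with $Q^{1/2}$ to identify $Q_N^{1/2}M_NQ_N^{1/2}$ with $P_NTP_N$; this holds because you chose $\{e_k\}$ to be eigenvectors of $Q$, and it is worth saying so explicitly.
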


\begin{proof}
	{\textbf{(of Proposition \ref{proposition:exponential-quadratic-whitenoise}})}
	It suffices to prove for
	$m = 0$.
	We apply
	Theorem \ref{theorem:exponential-quadratic-Gaussian} as follows.
	Let $\{P_N\}_{N \in \N}$, $P_N = \sum_{j=1}^N e_j \otimes e_j$ be the sequence of orthogonal projections in $\H$ corresponding to the eigenvectors $\{e_j\}_{j\in \N}$ of $Q$.
	Consider the limit
	\begin{align*}
	W_{g}(x) = \lim_{N \approach \infty}W_{P_Ng}(x) = \lim_{N \approach \infty}\la Q^{-1/2}P_Ng, x\ra \;\;\;\Ncal(0,Q) \;\;\text{a.e.}.
	\end{align*}
	Let $N \in \N$ be fixed. We have
	\begin{align*}
	W_{P_Ng}^2(x) = \la Q^{-1/2}P_Ng, x\ra^2 = \la [(Q^{-1/2}P_Ng) \otimes (Q^{-1/2}P_Ng)]x, x\ra.
	\end{align*}
	Let $M = c_1[(Q^{-1/2}P_Ng) \otimes (Q^{-1/2}P_Ng)]$, $b = c_2(Q^{-1/2}P_Ng)$. Then for any $x \in \H$,
	\begin{align*}
	Q^{1/2}MQ^{1/2}x = c_1Q^{1/2}(Q^{-1/2}P_Ng)\la (Q^{-1/2}P_Ng), Q^{1/2}x\ra = c_1P_Ng\la P_Ng, x\ra, 
	\end{align*}
	which implies that
	$Q^{1/2}MQ^{1/2} = c_1(P_Ng) \otimes (P_Ng)$,
	which is a rank-one operator with eigenvalue $c_1||P_Ng||^2$. If $c_1 < 0$, then obviously 
	$c_1||P_Ng||^2 < 1$. If $c_1 \geq 0$, then
	$c_1||P_Ng||^2 \leq c_1||g||^2 < 1$. Also,
	$Q^{1/2}b = c_2P_Ng$.
	By Lemma \ref{lemma:inverse-rank-one}, the operator $(I- Q^{1/2}MQ^{1/2})$ is invertible, with
	\begin{align*}
	(I- Q^{1/2}MQ^{1/2})^{-1}Q^{1/2}b = c_2 [I - c_1(P_Ng \otimes P_Ng)]^{-1}P_Ng = \frac{c_2}{1-c_1||P_Ng||^2}P_Ng.
	\end{align*}
	It follows that
	\begin{align*}
	&||(I-Q^{1/2}MQ^{1/2})^{-1/2}Q^{1/2}b||^2 = \la Q^{1/2}b, (I-Q^{1/2}MQ^{1/2})^{-1}Q^{1/2}b\ra
	\\
	& = \left\la c_2 P_Ng, \frac{1}{1-c_1||P_Ng||^2}c_2 P_Ng\right\ra = \frac{c_2^2||P_Ng||^2}{1-c_1||P_Ng||^2}.
	\end{align*}
	By the assumption that $c_1||g||^2 < 1$, there exists $p > 1$ such that
	$pc_1||g||^2 < 1$, so that $pc_1||P_Ng||^2 < 1$ $\forall N \in \N$.
	Hence by Theorem \ref{theorem:exponential-quadratic-Gaussian}, we have
	\begin{align*}
	&\int_{\H}\exp\left[\frac{1}{2}pc_1W_{P_Ng}^2(x) + pc_2W_{P_Ng}(x) \right]\Ncal(0,Q)(dx)
	\\
	& = \int_{\H}\exp\left[\frac{1}{2}\la pMx,x\ra + \la pb,x\ra \right]\Ncal(0,Q)(dx)
	\\
	& = [\det(I - pQ^{1/2}MQ^{1/2})]^{-1/2}\exp\left(\frac{1}{2}||(I-pQ^{1/2}MQ^{1/2})^{-1/2}Q^{1/2}pb||^2\right)
	\\
	& = \frac{1}{(1-pc_1||P_Ng||^2)^{1/2}}\exp\left(\frac{p^2c_2^2||P_Ng||^2}{2(1-pc_1||P_Ng||^2)}\right).
	\end{align*}
	Taking limit as $N \approach \infty$ gives
	\begin{align*}
	&\lim_{N \approach \infty}\frac{1}{(1-pc_1||P_Ng||^2)^{1/2}}\exp\left(\frac{p^2c_2^2||P_Ng||^2}{2(1-pc_1||P_Ng||^2)}\right)
	\\
	&= \frac{1}{(1-pc_1||g||^2)^{1/2}}\exp\left(\frac{p^2c_2^2||g||^2}{2(1-pc_1||g||^2)}\right) < \infty.
	\end{align*}
	Hence it follows, by applying from H\"older's Inequality, that the sequence of functions
	$\left\{\exp\left[\frac{1}{2}c_1W_{P_Ng}^2(x) + c_2W_{P_Ng}(x) \right]\right\}_{N \in \N}$
	is uniformly integrable. Thus we can apply Vitali's Convergence Theorem to obtain
	%
	\begin{align*}
	&\int_{\H}\exp\left[\frac{1}{2}c_1W_{g}^2(x) + c_2W_{g}(x) \right]\Ncal(0,Q)(dx)
	\\
	& = \int_{\H} \lim_{N \approach \infty} \exp\left[\frac{1}{2}c_1W_{P_Ng}^2(x) + c_2W_{P_Ng}(x) \right]\Ncal(0,Q)(dx)
	\\
	&= \lim_{N \approach \infty}\int_{\H}\exp\left[\frac{1}{2}c_1W_{P_Ng}^2(x) + c_2W_{P_Ng}(x) \right]\Ncal(0,Q)(dx)
	\\
	& = \lim_{N \approach \infty}\frac{1}{(1-c_1||P_Ng||^2)^{1/2}}\exp\left(\frac{c_2^2||P_Ng||^2}{2(1-c_1||P_Ng||^2)}\right)
	\\
	& = \frac{1}{(1-c_1||g||^2)^{1/2}}\exp\left(\frac{c_2^2||g||^2}{2(1-c_1||g||^2)}\right) < \infty.
	\end{align*}
\qed \end{proof}

\begin{proposition}
\label{proposition:L1mu-dnu-dmu}
Assume the hypothesis of Theorem \ref{theorem:radon-nikodym-infinite}.
There exists $p > 1$ such that $I+(p-1)S > 0$. Define
 $s(x) = \exp\left[-\frac{1}{2}\sum_{k=1}^{\infty}\Phi_k(x)\right]$, where $\Phi_k$ is defined by Eq.~(\ref{equation:Phik}) in Theorem \ref{theorem:radon-nikodym-infinite}. Then $s \in \L^q(\H,\mu)$ for all $q$ satisfying 
 $0 < q < p$, 
 with
\begin{align}
||s||_{\L^q(\H, \mu)}^q &= \exp\left(\frac{q^2}{2}||[(I-S)(I+(q-1)S)]^{-1/2}Q^{-1/2}(m_2-m_1)||^2\right)
\nonumber
\\
& \times (\det[(I-S)^{q-1}(I+(q-1)S)])^{-1/2}.
\end{align} 
In particular, for $q=1$,
\begin{align}
||s||_{\L^1(\H,\mu)} =  \exp\left(\frac{1}{2}||(I-S)^{-1/2}Q^{-1/2}(m_2-m_1)||^2\right).
\end{align}
Furthermore, for $s_N(x) = \exp\left[-\frac{1}{2}\sum_{k=1}^{N}\Phi_k(x)\right]$, the sequence $\{s_N^q\}_{N \in \N}$ is uniformly integrable
on $(\Bsc(\H),\mu)$ for
$0 < q < p$.
\end{proposition}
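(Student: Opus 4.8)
The plan is to compute $\int_{\H}s_N^q\,d\mu$ exactly for each finite $N$ using independence of the white--noise coordinates, and then pass to the limit $N\to\infty$ via Vitali's Convergence Theorem. First I would fix the exponent $p$: if $S=0$ take any $p>1$; otherwise take $p=1+\tfrac{1}{2\|S\|}$, so that $\|(p-1)S\|\le\tfrac12$ and hence $I+(p-1)S\ge\tfrac12 I>0$. Since $S\in\Sym(\H)\cap\HS(\H)$ is compact with eigenvalues $\{\alpha_k\}_{k\in\N}$ and, by Proposition \ref{proposition:positiveIS}, $I-S>0$, we have $\alpha_k<1$ for all $k$ and $\inf_k(1-\alpha_k)>0$, and also $1+(p-1)\alpha_k>0$ for all $k$. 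For fixed $k$, the map $q\mapsto 1+(q-1)\alpha_k$ is affine and positive at $q=0$ (value $1-\alpha_k$), at $q=1$ (value $1$), and at $q=p$, hence $1+(q-1)\alpha_k>0$ for all $0\le q\le p$, equivalently $-q\alpha_k/(1-\alpha_k)<1$.

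For fixed $N$ and $0<q<p$, each $\Phi_k$ depends on $x$ only through $W_{\phi_k}$, and $\{W_{\phi_k}\}_{k=1}^N$ are independent $\Ncal(0,1)$ random variables under $\mu$ by the isometry property of the white--noise map, so $\int_{\H}s_N^q\,d\mu=\prod_{k=1}^N\int_{\H}\exp(-\tfrac{q}{2}\Phi_k)\,d\mu$. Writing $v=Q^{-1/2}(m_2-m_1)\in\H$, the $k$-th factor equals $(1-\alpha_k)^{-q/2}\int_{\H}\exp\bigl[\tfrac12 c_{1,k}W_{\phi_k}^2+c_{2,k}W_{\phi_k}\bigr]\,d\mu$ with $c_{1,k}=-q\alpha_k/(1-\alpha_k)$ and $c_{2,k}=q\la v,\phi_k\ra/(1-\alpha_k)$; since $c_{1,k}\|\phi_k\|^2=c_{1,k}<1$ by the first step, Proposition \ref{proposition:exponential-quadratic-whitenoise} applies, and using $1-c_{1,k}=(1+(q-1)\alpha_k)/(1-\alpha_k)$ I obtain
\[
\int_{\H}s_N^q\,d\mu=\prod_{k=1}^N\bigl[(1-\alpha_k)^{q-1}(1+(q-1)\alpha_k)\bigr]^{-1/2}\exp\!\left(\frac{q^2}{2}\sum_{k=1}^N\frac{\la v,\phi_k\ra^2}{(1-\alpha_k)(1+(q-1)\alpha_k)}\right).
\]
Then I would let $N\to\infty$: because $\alpha_k\approach 0$ and $\sum_k\alpha_k^2<\infty$ (as $S\in\HS(\H)$), each factor is $1+O(\alpha_k^2)$, so the product converges to the Fredholm determinant $\det[(I-S)^{q-1}(I+(q-1)S)]$ — this operator being $I$ plus a trace--class perturbation, since it is a function of $S$ with eigenvalues $1+O(\alpha_k^2)$ — while the eigenvalues $\bigl((1-\alpha_k)(1+(q-1)\alpha_k)\bigr)^{-1}$ are uniformly bounded and $\sum_k\la v,\phi_k\ra^2=\|v\|^2<\infty$, so the sum converges to $\|[(I-S)(I+(q-1)S)]^{-1/2}v\|^2$. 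The case $q=1$ follows by specialization, since $(I-S)^{q-1}=I$ and $1+(q-1)\alpha_k=1$.

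It remains to identify $\lim_N\int_{\H}s_N^q\,d\mu$ with $\int_{\H}s^q\,d\mu$, which I would do via Vitali's Convergence Theorem on the probability space $(\H,\mu)$. For uniform integrability of $\{s_N^q\}_N$ I would choose $q<q'<p$; the computation above with $q'$ in place of $q$ shows $\sup_N\int_{\H}s_N^{q'}\,d\mu<\infty$, so $\{s_N^q\}_N$ is bounded in $\L^{q'/q}(\H,\mu)$ with exponent $q'/q>1$, hence uniformly integrable. For $\mu$-a.e.\ convergence $s_N\approach s$ I would show $\sum_k\Phi_k$ converges $\mu$-a.e.: the $\Phi_k$ are independent, $\int_{\H}\Phi_k\,d\mu=\alpha_k/(1-\alpha_k)+\log(1-\alpha_k)=O(\alpha_k^2)$ so $\sum_k\int_{\H}\Phi_k\,d\mu$ converges absolutely, and a direct Gaussian moment computation gives $\mathrm{Var}_\mu(\Phi_k)=O(\alpha_k^2)+O(\la v,\phi_k\ra^2)$, whence $\sum_k\mathrm{Var}_\mu(\Phi_k)<\infty$; thus $\sum_{k=1}^N\bigl(\Phi_k-\int_{\H}\Phi_k\,d\mu\bigr)$ is an $\L^2(\H,\mu)$-bounded sum of independent centered terms and converges $\mu$-a.e., giving $s_N\approach s$ $\mu$-a.e.\ with $s<\infty$ a.e. Vitali then yields $s^q\in\L^1(\H,\mu)$, $\|s_N^q-s^q\|_{\L^1(\H,\mu)}\approach 0$, and the stated value of $\|s\|_{\L^q(\H,\mu)}^q$.

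The main obstacle I anticipate is not any single algebraic identity but the coordinated passage to the limit: one must control the infinite product and the infinite quadratic form using only $S\in\HS(\H)$ and $v=Q^{-1/2}(m_2-m_1)\in\H$, secure uniform integrability of $\{s_N^q\}_N$ by working with the strictly larger exponent $q'<p$, and extract $\mu$-a.e.\ convergence of the partial sums $\sum_k\Phi_k$ from independence and the variance estimates — each of which hinges on the admissible range of $q$ identified in the first step together with the decay $\alpha_k\approach 0$ and $\sum_k\alpha_k^2<\infty$.
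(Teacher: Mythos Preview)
Your proposal is correct and follows essentially the same route as the paper: compute $\int_{\H}s_N^q\,d\mu$ factorwise via independence of the $W_{\phi_k}$ and Proposition~\ref{proposition:exponential-quadratic-whitenoise}, pass to the limit in the infinite product and the quadratic form using $S\in\HS(\H)$, establish uniform integrability of $\{s_N^q\}$ by working at a strictly larger exponent in $(q,p)$ together with H\"older, and invoke Vitali. The only notable addition is your explicit $\mu$-a.e.\ convergence argument for $\sum_k\Phi_k$ via independence and summable variances, a point the paper handles more implicitly through the $\L^2$ convergence established in Lemmas~\ref{lemma:fN-convergence-L2mu} and~\ref{lemma:gN-convergence-L2mu}.
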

\begin{proof}
For each fixed $k \in \N$, we recall that the function $\Phi_k$ is given by
\begin{align*}
\Phi_k = \frac{\alpha_k}{1-\alpha_k}W^2_{\phi_k} - \frac{2}{1-\alpha_k}\la Q^{-1/2}(m_2-m_1), \phi_k\ra W_{\phi_k}+ \log(1-\alpha_k).
\end{align*}
We first claim that there exists $p > 1$ such that $1+(p-1)\alpha_k > 0$ $\forall k \in \N$. Since $\lim_{k \approach \infty}\alpha_k = 0$, there exists $\mu > 0$ such that $\alpha_k \geq - \mu$ $\forall k \in \N$. Let $p$ be such that
$1 < p < \frac{1}{\mu}+1$, so that $(p-1)\mu < 1$. Then 
\begin{align*}
1 +(p-1)\alpha_k \geq 1-(p-1)\mu > 0 \;\;\forall k \in \N, \text{\; or equivalently \;} I+(p-1)S > 0.
\end{align*}
Similarly, we have
$I+ (q-1)S > 0$ for all $q$ satisfying $1 \leq q < p$.
Recall that since $I-S > 0$, we have $\alpha_k < 1$ $\forall k \in \N$.
For $q$ satisfying $0 < q < 1$, we have $1-(1-q)\alpha_k > 0$ when $\alpha_k < 0$ and
$1-(1-q)\alpha_k \geq 1-\alpha_k > 0$ for $0 \leq \alpha_k < 1$.
It follows that $I+(q-1)S > 0$ for all $q$ satisfying $0 < q < 1$. Hence
\begin{align*}
I +(q-1)S > 0 \;\;\;\text{for all $q$ satisfying $0 < q < p$}.
\end{align*}
For each
$k \in \N$,
by Proposition \ref{proposition:exponential-quadratic-whitenoise},
with $c_1 = - \frac{p\alpha_k}{1-\alpha_k},
c_2 = \frac{p}{1-\alpha_k}\la Q^{-1/2}(m_2-m_1), \phi_k\ra$,
\begin{align*}
&\int_{\H}\exp\left[-\frac{p}{2}\Phi_k(x)\right]\mu(dx) 
\\
&= \frac{1}{(1-\alpha_k)^{p/2}}\int_{\H} \exp\left[-\frac{1}{2}\frac{p\alpha_k}{1-\alpha_k}W^2_{\phi_k} + \frac{p}{1-\alpha_k}\la Q^{-1/2}(m_2-m_1), \phi_k\ra W_{\phi_k}\right]\mu(dx)
\\
& =\frac{1}{(1-\alpha_k)^{p/2}}\left[\sqrt{\frac{1-\alpha_k}{1+(p-1)\alpha_k}}\exp\left(\frac{p^2\la Q^{-1/2}(m_2-m_1), \phi_k\ra^2}{2(1-\alpha_k)(1+(p-1)\alpha_k)}\right)\right] 
\\
&= \frac{1}{(1-\alpha_k)^{(p-1)/2} (1+(p-1)\alpha_k)^{1/2}}\exp\left(\frac{p^2\la Q^{-1/2}(m_2-m_1), \phi_k\ra^2}{2(1-\alpha_k)(1+(p-1)\alpha_k)}\right)
\end{align*}
For each $N \in \N$, consider the nonnegative function $s_N(x) = \exp\left[-\frac{1}{2}\sum_{k=1}^{N}\Phi_k(x)\right]$. By the independence of
the functions $W_{\phi_k}$, we have
\begin{align*}
&\int_{\H}s_N^p(x)d\mu(x) = \prod_{k=1}^N\int_{\H}\exp\left[-\frac{p}{2}\Phi_k(x)\right]d\mu(x) 
\\
&= \prod_{k=1}^N
\frac{1}{(1-\alpha_k)^{(p-1)/2} (1+(p-1)\alpha_k)^{1/2}}\exp\left(\frac{p^2\la Q^{-1/2}(m_2-m_1), \phi_k\ra^2}{2(1-\alpha_k)(1+(p-1)\alpha_k)}\right)
\\
& = \exp\left(\frac{p^2}{2}\sum_{k=1}^{N}\frac{\la Q^{-1/2}(m_2-m_1), \phi_k\ra^2}{(1-\alpha_k)(1+(p-1)\alpha_k)}\right)
\\
&\times \exp\left(-\frac{1}{2}\sum_{k=1}^{N}[(p-1)\log(1-\alpha_k) + \log(1+(p-1)\alpha_k)]\right).
\end{align*}
Since $-1/(p-1) < \alpha_k < 1$ $\forall k \in \N$, by Lemma \ref{lemma:inequality-log-2} we have
\begin{align*}
-[(p-1)\log(1-\alpha_k) + \log(1+(p-1)\alpha_k)] \geq 0, \;\; \forall k \in \N.
\end{align*}
Since $\sum_{k=1}^{\infty}\alpha_k^2 < \infty$, 
$\exists$
$N_0 \in \N$ such that $|\alpha_k|  < 1/2$ $\forall k \geq N_0$. Then by Lemma \ref{lemma:inequality-log-2},
\begin{align*}
-[(p-1)\log(1-\alpha_k) + \log(1+(p-1)\alpha_k)] \leq p(p-1)\alpha_k^2 \;\;\forall k \geq N_0.
\end{align*}
Thus it follows that
\begin{align*}
0 \leq -\sum_{k=N_0}^{\infty}[(p-1)\log(1-\alpha_k) + \log(1+(p-1)\alpha_k)] 
\leq p(p-1)\sum_{k=N_0}^{\infty}\alpha_k^2 < \infty.
\end{align*}
It follows that the sequence
$\left\{\exp\left(-\frac{1}{2}\sum_{k=1}^{N}[(p-1)\log(1-\alpha_k) + \log(1+(p-1)\alpha_k)]\right)\right\}_{N \in \N}$ 
is increasing towards the limit $\exp\left(-\frac{1}{2}\sum_{k=1}^{\infty}[(p-1)\log(1-\alpha_k) + \log(1+(p-1)\alpha_k)]\right)$. Hence the sequence $\{\int_{\H}s_N^p(x)d\mu(x)\}_{N \in \N}$ is increasing towards the limit
\begin{align*}
\lim_{N \approach \infty}\int_{\H}s_N^p(x)d\mu(x)
&= \exp\left(\frac{p^2}{2}\sum_{k=1}^{\infty}\frac{\la Q^{-1/2}(m_2-m_1), \phi_k\ra^2}{(1-\alpha_k)(1+(p-1)\alpha_k)}\right)
\\
&\times \exp\left(-\frac{1}{2}\sum_{k=1}^{\infty}[(p-1)\log(1-\alpha_k) + \log(1+(p-1)\alpha_k)]\right)
\\
& = \exp\left(\frac{p^2}{2}||[(I-S)(I+(p-1)S)]^{-1/2}Q^{-1/2}(m_2-m_1)||^2\right)
\\
& \times (\det[(I-S)^{p-1}(I+(p-1)S)])^{-1/2} < \infty.
\end{align*}
By H\"older's Inequality, for any $0 < q < p$, for any set $A \in \Bsc(\H)$, we have
\begin{align*}
\int_{A}s_N^q(x)d\mu(x) &= \int_{\H}\1_A s_N^q(x)d\mu(x) \leq ||\1_A||_{\L^{\frac{p}{p-q}}(\H, \mu)}||s_N^q||_{\L^{\frac{p}{q}}(\H,\mu)}
\\
& = (\mu(A))^{\frac{p-q}{p}}\left(\int_{\H}s_N^p(x)d\mu(x)\right)^\frac{q}{p}.
\end{align*}
Combining
with the
limit for 
$\{\int_{\H}s_N^p(x)d\mu(x)\}_{N \in \N}$, 
this shows
that the sequence $\{s_N^q(x)\}$ is uniformly integrable on $(\Bsc(\H), \mu)$.
By Vitali's Convergence Theorem,
\begin{align*}
&\int_{\H}s^q(x)d\mu(x) = \int_{\H}\lim_{N \approach \infty}s_N^q(x)d\mu(x) = 
\lim_{N \approach \infty}\int_{\H}s_N^q(x)d\mu(x)
\\
& = \exp\left(\frac{q^2}{2}||[(I-S)(I+(q-1)S)]^{-1/2}Q^{-1/2}(m_2-m_1)||^2\right)
\\
& \times (\det[(I-S)^{q-1}(I+(q-1)S)])^{-1/2} < \infty.
\end{align*}
Thus it follows that $s(x) = \exp\left(-\frac{1}{2}\sum_{k=1}^{\infty}\Phi_k(x)\right) \in \L^q(\H, \mu)$.
In particular, for $q=1$, 
\begin{align*}
||s||_{\L^1(\H,\mu)} = \int_{\H}s(x)d\mu(x) = \exp\left(\frac{1}{2}||(I-S)^{-1/2}Q^{-1/2}(m_2-m_1)||^2\right) < \infty.
\end{align*}
\qed \end{proof}

\begin{lemma}
	\label{lemma:norm-W4}
	For any $a \in \H$, we have $W_a^2 \in \L^2(\H, \mu)$. For any $a, b \in \H$,
	\begin{align}
	\int_{\H}W^2_a(x)W^2_b(x)\Ncal(m, Q)(dx) &= ||a||^2 ||b||^2 + 2\la a, b\ra^2.
	\end{align}
	In particular, for $a= b$,
	$\int_{\H}W_a^4(x)\Ncal(m,Q)(dx)  = 3||a||^4$.
	For any two $a,b\in \H$, 
	{\small
	\begin{align}
	&\int_{\H}(W_a^2(x)-1)(W_b^2(x)-1)\Ncal(m,Q)(dx)  = ||a||^2||b||^2 + 2\la a, b\ra^2 - ||a||^2-||b||^2+1.
\\
	&\frac{1}{2}\int_{\H}(W_a^2(x)-1)(W_b^2(x)-1)\Ncal(m,Q)(dx)  =  \la a, b\ra^2, \;\; \text{for $||a|| = ||b||=1$}.
	\end{align}
}
	In particular, an orthonormal sequence $\{a_k\}_{k \in \N}$ in $\H$ gives rise to an 
	orthonormal sequence $\{\frac{1}{\sqrt{2}}(W_{a_k}^2-1)\}_{k\in \N}$ in $\L^2(\H,\mu)$
	(see also \cite{DaPrato:PDEHilbert}, Proposition 1.2.6).
\end{lemma}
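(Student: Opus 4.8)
The plan is to reduce every identity to one- and two-dimensional Gaussian moment computations, using the isometry property of the white noise map $W$, namely $\int_{\H}W_z^2\,d\mu = ||z||^2$, together with its linearity $W_{sa+tb} = sW_a + tW_b$, both already established above.

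First I would record that for every $a \in \H$ the random variable $W_a$ is a centered Gaussian on $(\H,\mu)$ with variance $||a||^2$. For $a \in Q^{1/2}(\H)$ this is immediate, since $W_a(x) = \la x - m, Q^{-1/2}a\ra$ is a bounded linear functional pushed forward by $\mu = \Ncal(m,Q)$, hence distributed as $\Ncal(0, \la Q Q^{-1/2}a, Q^{-1/2}a\ra) = \Ncal(0,||a||^2)$. For general $a$ it follows because $W_a$ is the $\L^2(\H,\mu)$-limit of $W_{a_n}$ with $a_n \in Q^{1/2}(\H)$ and $||a_n - a|| \approach 0$: a centered Gaussian of variance $||a_n||^2 \approach ||a||^2$ converges in distribution, and (since all $\L^p$-norms of a centered Gaussian are proportional to its $\L^2$-norm) in every $\L^p$, $p < \infty$, to a centered Gaussian of variance $||a||^2$. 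In particular $\int_{\H}W_a^4\,d\mu = 3||a||^4 < \infty$, which proves $W_a^2 \in \L^2(\H,\mu)$ and gives the identity $\int_{\H}W_a^4\,d\mu = 3||a||^4$. An alternative route to the latter is to set $c_2 = 0$ in Proposition \ref{proposition:exponential-quadratic-whitenoise}, obtaining $\int_{\H}\exp(\tfrac12 c_1 W_a^2)\,d\mu = (1-c_1||a||^2)^{-1/2}$, and to differentiate twice at $c_1 = 0$, the differentiation being justified by the uniform integrability already proved there.

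Next I would compute the cross term $\int_{\H}W_a^2 W_b^2\,d\mu$, which is finite by Cauchy--Schwarz. A clean way is polarization: apply $\int_{\H}W_c^4\,d\mu = 3||c||^4$ with $c = a+b$ and $c = a-b$, use $W_{a\pm b} = W_a \pm W_b$, and add the two resulting identities, so that the odd cross terms cancel and only $2\int_{\H}W_a^4 + 12\int_{\H}W_a^2 W_b^2 + 2\int_{\H}W_b^4$ survives on the left; expanding $||a\pm b||^4$ then yields $\int_{\H}W_a^2 W_b^2\,d\mu = ||a||^2||b||^2 + 2\la a,b\ra^2$. Equivalently, since $sW_a + tW_b = W_{sa+tb}$ is Gaussian for all $s,t$, the pair $(W_a,W_b)$ is jointly Gaussian with covariance entries $||a||^2$, $||b||^2$, $\la a,b\ra$, and the formula is Isserlis' fourth-moment identity.

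The remaining statements are then algebra. Expanding $(W_a^2-1)(W_b^2-1) = W_a^2W_b^2 - W_a^2 - W_b^2 + 1$ and integrating term by term, using $\int_{\H}W_a^2\,d\mu = ||a||^2$ and $\mu(\H) = 1$, gives $\int_{\H}(W_a^2-1)(W_b^2-1)\,d\mu = ||a||^2||b||^2 + 2\la a,b\ra^2 - ||a||^2 - ||b||^2 + 1$. Specializing to $||a|| = ||b|| = 1$ makes the right-hand side $2\la a,b\ra^2$, hence $\tfrac12\int_{\H}(W_a^2-1)(W_b^2-1)\,d\mu = \la a,b\ra^2$; applied to an orthonormal sequence $\{a_k\}$ this gives $\la \tfrac{1}{\sqrt2}(W_{a_j}^2-1), \tfrac{1}{\sqrt2}(W_{a_k}^2-1)\ra_{\L^2(\H,\mu)} = \la a_j,a_k\ra^2 = \delta_{jk}$, and each element lies in $\L^2(\H,\mu)$ by the first step, so $\{\tfrac{1}{\sqrt2}(W_{a_k}^2-1)\}_{k\in\N}$ is orthonormal. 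I expect the only genuinely delicate point to be the passage from $a \in Q^{1/2}(\H)$ to arbitrary $a \in \H$, i.e. controlling fourth moments along the approximating sequence, which the equivalence of $\L^p$-norms for centered Gaussians (or the uniform integrability in Proposition \ref{proposition:exponential-quadratic-whitenoise}) settles.
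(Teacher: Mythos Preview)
Your proof is correct and takes a genuinely different route from the paper's. The paper first appeals to Lemma~\ref{lemma:Gaussian-integral-1}, a direct coordinate-wise computation of $\int_{\H}\la x-m,a\ra^2\la x-m,b\ra^2\,\Ncal(m,Q)(dx)$, to obtain the identity $\int W_a^2 W_b^2\,d\mu = \|a\|^2\|b\|^2 + 2\la a,b\ra^2$ for $a,b \in Q^{1/2}(\H)$; it then extends to all of $\H$ by showing, via that very identity, that $\{W_{a_n}^2\}$ is Cauchy in $\L^2(\H,\mu)$ whenever $\{a_n\}$ is Cauchy in $\H$. You instead argue abstractly: $W_a$ is a centred Gaussian of variance $\|a\|^2$ (first on the dense subspace, then by $\L^2$-limits), so $\int W_a^4\,d\mu = 3\|a\|^4$ is the one-dimensional fourth moment, and the cross term follows by polarization or, equivalently, by Isserlis' formula for the jointly Gaussian pair $(W_a,W_b)$. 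Your approach avoids the coordinate computation entirely and makes the extension step cleaner via the equivalence of $\L^p$-norms for Gaussians; the paper's approach is more self-contained, relying only on elementary integrals already proved and not invoking the distributional Gaussianity of $W_a$. Either way the remaining identities are, as you note, bookkeeping.
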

\begin{proof}
	For $a, b \in Q^{1/2}(\H)$, by Lemma \ref{lemma:Gaussian-integral-1}, we have
	\begin{align*}
	&\int_{\H}W_{a}^2(x)W_b^2(x)\Ncal(m,Q)(dx)
	= \int_{\H}\la x-m, Q^{-1/2}a\ra^2\la x-m, Q^{-1/2}b\ra^2\Ncal(m,Q)(dx) 
	\\
	& = [\la Q^{-1/2}a, Q(Q^{-1/2}a)\ra \la Q^{-1/2}b, Q(Q^{-1/2}b)\ra + 2 \la Q^{-1/2}a, Q(Q^{-1/2}b)\ra^2)]
	\\
	& = ||a||^2||b||^2 + 2\la a, b\ra^2.
	\end{align*}
	Let $a \in \H$.
	Since
	$Q^{1/2}(\H)$ is dense in $\H$, let $\{a_n\}_{n \in \N}$ be a Cauchy sequence in $\H$ with $a_n \in Q^{1/2}(\H)$ and $\lim_{ n\approach \infty}||a_n -a||= 0$.
	Then $W_{a_n} \approach W_a$ in $\L^2(\H, \mu)$. The previous identity gives
	\begin{align*}
	||W_{a_n}^2 - W_{a_m}^2||^2_{\L^2(\H,\mu)} = 3||a_n||^4+3||a_m||^4 - 2||a_n||^2||a_m||^2 - 4\la a_n,a_m\ra^2
	\end{align*}
	The hypothesis $\lim_{n,m \approach \infty}||a_n - a_m|| = 0$ and the above identity show that
	$\lim_{n,m  \approach \infty} ||W_{a_n}^2 - W_{a_m}^2||_{\L^2(\H,\mu)} = 0$.
	Thus $\{W_{a_n}^2\}_{n \in \N}$ is a Cauchy sequence in $\L^2(\H,\mu)$ and hence converges
	to a unique element in $\L^2(\H,\mu)$, which must be $W_a^2$. Thus $W_a^2 \in \L^2(\H,\mu)$.
	
	Let $b \in \H$ with the corresponding Cauchy sequence $\{b_n\}_{n \in \N}$, $b_n \in Q^{1/2}(\H)$.
	Then 
	\begin{align*}
	&\int_{\H}W^2_a(x)W^2_b(x)\Ncal(m, Q)(dx) = \la W_a^2, W_b^2\ra_{\L^2(\H, \mu)} =
	\lim_{n \approach \infty}\la W_{a_n}^2, W_{b_n}^2\ra_{\L^2(\H, \mu)} 
	\\
	&= \lim_{n \approach \infty}||a_n||^2||b_n||^2 + 2 \la a_n, b_n\ra^2
	= ||a||^2||b||^2 + 2\la a, b\ra^2.
	\end{align*}
	This give us the first and second identities.
	The third identity follows from the first by invoking the isometry
	$||W_a||^2_{\L^2(\H,\mu)} = 
	||a||^2 \;\;\; \forall a \in \H$.
\qed \end{proof}

\begin{lemma}
	\label{lemma:fN-convergence-L2mu}
	Consider the functions
	\begin{align}
	f_N = \sum_{k=1}^N\left[\frac{\alpha_k}{1-\alpha_k}W^2_{\phi_k} + \log(1-\alpha_k)\right],
	f =  \sum_{k=1}^{\infty}\left[\frac{\alpha_k}{1-\alpha_k}W^2_{\phi_k} + \log(1-\alpha_k)\right].
	\end{align}
	Then 
	$\lim_{N \approach \infty} ||f_N - f||_{\L^2(\H,\mu)} = 0,
	\;\;\;
	\lim_{N \approach \infty} ||f_N - f||_{\L^1(\H,\mu)} = 0$.
\end{lemma}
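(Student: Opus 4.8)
The plan is to identify $f$ with the $L^2(\H,\mu)$-limit of the partial sums $f_N$ and verify the Cauchy criterion directly, using the orthogonality of the functions $W_{\phi_k}^2-1$ recorded in Lemma \ref{lemma:norm-W4}. First I would rewrite each summand by isolating its mean-zero part: for $k \in \N$ set
\begin{align*}
\frac{\alpha_k}{1-\alpha_k}W^2_{\phi_k} + \log(1-\alpha_k) = c_k\bigl(W^2_{\phi_k}-1\bigr) + d_k, \qquad c_k := \frac{\alpha_k}{1-\alpha_k}, \quad d_k := \frac{\alpha_k}{1-\alpha_k} + \log(1-\alpha_k),
\end{align*}
so that $c_k(W^2_{\phi_k}-1)$ has zero $\mu$-mean (recall $\|W_{\phi_k}\|^2_{\L^2(\H,\mu)} = \|\phi_k\|^2 = 1$) and $d_k$ is a deterministic constant.

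Second, I would record the two numerical facts that drive the argument. Since $\lim_{k\approach\infty}\alpha_k = 0$ and $\alpha_k < 1$ for all $k$ (because $I-S$ is strictly positive), there is $N_0$ with $|\alpha_k| < \tfrac12$ for $k \geq N_0$; hence $|c_k| \leq 2|\alpha_k|$, and by the elementary estimate $|x/(1-x)+\log(1-x)| \leq C x^2$ for $|x|\leq\tfrac12$ (cf.\ Lemma \ref{lemma:inequality-log-2}) we get $|d_k| \leq C\alpha_k^2$. Since $S \in \HS(\H)$ gives $\sum_{k=1}^{\infty}\alpha_k^2 < \infty$, it follows that $\sum_{k=1}^{\infty}c_k^2 < \infty$ and $\sum_{k=1}^{\infty}|d_k| < \infty$.

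Third comes the main computation. By Lemma \ref{lemma:norm-W4}, the system $\{W^2_{\phi_k}-1\}_{k\in\N}$ is orthogonal in $\L^2(\H,\mu)$ with $\|W^2_{\phi_k}-1\|^2_{\L^2(\H,\mu)} = 2$, and each member has zero mean, so for $N > M$,
\begin{align*}
\|f_N - f_M\|^2_{\L^2(\H,\mu)} = \Bigl\|\sum_{k=M+1}^{N}c_k(W^2_{\phi_k}-1)\Bigr\|^2_{\L^2(\H,\mu)} + \Bigl(\sum_{k=M+1}^{N}d_k\Bigr)^2 = 2\sum_{k=M+1}^{N}c_k^2 + \Bigl(\sum_{k=M+1}^{N}d_k\Bigr)^2,
\end{align*}
which tends to $0$ as $M\approach\infty$ by the summability from the previous step. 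Hence $\{f_N\}$ is Cauchy in $\L^2(\H,\mu)$; its limit coincides $\mu$-a.e.\ with the series defining $f$ (that series converges $\mu$-a.e.\ by Kolmogorov's theorem, since the terms $c_k(W^2_{\phi_k}-1)$ are independent and mean-zero with $\sum_k 2c_k^2 < \infty$, while $\sum_k d_k$ converges absolutely), so the limit equals $f$ and $\|f_N - f\|_{\L^2(\H,\mu)} \approach 0$. Finally, since $\mu$ is a probability measure, Cauchy--Schwarz gives $\|f_N - f\|_{\L^1(\H,\mu)} \leq \|f_N - f\|_{\L^2(\H,\mu)} \approach 0$.

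I expect the only point requiring care is the bookkeeping in the second step, namely the bound $|d_k| = O(\alpha_k^2)$ obtained from the Taylor expansion of $x/(1-x)+\log(1-x)$ uniformly for $\alpha_k$ bounded away from $1$; this is routine, and the one structural ingredient—the $\L^2(\H,\mu)$-orthogonality of the $W^2_{\phi_k}-1$—is already furnished by Lemma \ref{lemma:norm-W4}, so there is no substantial obstacle.
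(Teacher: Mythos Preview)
Your argument is correct and essentially identical to the paper's: both decompose each summand as $c_k(W_{\phi_k}^2-1)+d_k$, invoke the $\L^2(\H,\mu)$-orthogonality of $\{W_{\phi_k}^2-1\}$ from Lemma~\ref{lemma:norm-W4}, use $\sum\alpha_k^2<\infty$ to control both $\sum c_k^2$ and $\sum|d_k|$, and finish the $\L^1$ statement via H\"older. The only cosmetic point is that the estimate $|d_k|=O(\alpha_k^2)$ comes from Lemma~\ref{lemma:inequality-log} (specifically $0\le x+(1-x)\log(1-x)\le x^2$, divided by $1-x$) rather than Lemma~\ref{lemma:inequality-log-2}, and your appeal to Kolmogorov's theorem for the a.e.\ identification of the limit is an extra remark the paper simply leaves implicit.
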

\begin{proof}
	By Lemma \ref{lemma:norm-W4},
	the functions $\{\frac{1}{\sqrt{2}}(W_{\phi_k}^2-1)\}_{k \in \N}$ are orthonormal in 
	$\L^2(\H,\mu)$.
	We rewrite $f_N$ as
	\begin{align*}
	f_N = \sum_{k=1}^{N}\left[\frac{\sqrt{2}\alpha_k}{1-\alpha_k}\frac{1}{\sqrt{2}}(W^2_{\phi_k}-1) + \frac{\alpha_k}{1-\alpha_k} +  \log(1-\alpha_k)\right].
	\end{align*}
	Consider the functions
	\begin{align*}
	h_N = \sum_{k=1}^{N}\left[\frac{\sqrt{2}\alpha_k}{1-\alpha_k}\frac{1}{\sqrt{2}}(W^2_{\phi_k}-1)\right],\;\;
	h = \sum_{k=1}^{\infty}\left[\frac{\sqrt{2}\alpha_k}{1-\alpha_k}\frac{1}{\sqrt{2}}(W^2_{\phi_k}-1)\right].
	\end{align*}
	Since $\sum_{k=1}^{\infty}\alpha_k^2 < \infty$, there exists $N_0 \in \N$ such that $|\alpha_k| < 1/2$ $\forall k > N_0$.
	By Lemma \ref{lemma:norm-W4},
	we have for all $N \geq N_0$,
	\begin{align*}
	||h_N - h||^2_{\L^2(\H,\mu)} = 2\sum_{k=N+1}^{\infty}\frac{\alpha_k^2}{(1-\alpha_k)^2} < 8\sum_{k=N+1}^{\infty}\alpha_k^2
	\approach 0 \;\;\text{as}\;\; N \approach \infty.
	\end{align*}
	Consider next the series
	\begin{align*}
	\sum_{k=1}^{\infty}\left[\frac{\alpha_k}{1-\alpha_k} +  \log(1-\alpha_k)\right] = \sum_{k=1}^{\infty}\frac{\alpha_k +(1-\alpha_k)\log(1-\alpha_k)}{1-\alpha_k}.
	\end{align*}
	By Lemma \ref{lemma:inequality-log}, we have , since $\alpha_k < 1$ $\forall k \in \N$,
	\begin{align*}
	0 \leq \alpha_k +(1-\alpha_k)\log(1-\alpha_k) \leq \alpha_k^2.
	\end{align*}
	It thus follows that for al $N \geq N_0$,
	\begin{align*}
	0 \leq \sum_{k=N+1}^{\infty}\left[\frac{\alpha_k}{1-\alpha_k} +  \log(1-\alpha_k)\right] \leq \sum_{k=N+1}^{\infty}\frac{\alpha_k^2}{1-\alpha_k} < 2\sum_{k=N+1}^{\infty}\alpha_k^2 \approach 0 
	\end{align*}
	as $N \approach \infty$. Thus the series $\sum_{k=1}^{\infty}\left[\frac{\alpha_k}{1-\alpha_k} +  \log(1-\alpha_k)\right]$ converges to a finite positive value. Together with $\lim_{N\approach \infty}||h_N - h||_{\L^2(\H,\mu)} = 0$,
	this implies that $\lim_{N \approach \infty}||f_N - f||_{\L^2(\H,\mu)} = 0$.
	Since $\mu$ is a probability measure, by H\"older's Inequality, we have
	$||f_N - f||_{\L^1(\H, \mu)} \leq \sqrt{\mu(\H)}||f_N - f||_{\L^2(\H, \mu)} = ||f_N - f||_{\L^2(\H, \mu)} \approach 0$
	as $N \approach \infty$. 
\qed \end{proof}

\begin{lemma}
	\label{lemma:gN-convergence-L2mu}
	Consider the functions
	\begin{align}
	g_N &= \sum_{k=1}^N\frac{1}{1-\alpha_k}\la Q^{-1/2}(m_2-m_1), \phi_k\ra W_{\phi_k}, \;\; N \in \N,
	\\
	g &= \sum_{k=1}^{\infty}\frac{1}{1-\alpha_k}\la Q^{-1/2}(m_2-m_1), \phi_k\ra W_{\phi_k}.
	\end{align}
	Then $g \in \L^2(\H, \mu)$, $g \in \L^1(\H, \mu)$, and
	\begin{align}
	\lim_{N \approach \infty}||g_N - g||_{\L^2(\H, \mu)} = 0,
	\;\;\;
	\lim_{N \approach \infty}||g_N - g||_{\L^1(\H, \mu)} = 0.
	\end{align}
\end{lemma}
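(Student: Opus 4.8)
The plan is to observe that $g$ is an orthogonal series in the Hilbert space $\L^2(\H,\mu)$ with square-summable coefficients, deduce $\L^2$-convergence of the partial sums from completeness, and then obtain the $\L^1$ statement essentially for free because $\mu$ is a probability measure. This parallels the proof of Lemma \ref{lemma:fN-convergence-L2mu}.

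First I would introduce $v = Q^{-1/2}(m_2 - m_1) \in \H$, which is well-defined since $m_2 - m_1 \in \Im(Q^{1/2})$ and $\ker(Q^{1/2}) = \ker(Q) = \{0\}$, and set $c_k = \frac{1}{1-\alpha_k}\la v, \phi_k\ra$ so that $g_N = \sum_{k=1}^N c_k W_{\phi_k}$. The one point that needs a small argument is that $\sum_{k=1}^{\infty} c_k^2 < \infty$: because $I - S$ is strictly positive we have $\alpha_k < 1$ for all $k$, and because $\lim_{k \approach \infty}\alpha_k = 0$ the sequence $\{(1-\alpha_k)^{-1}\}_{k \in \N}$ converges to $1$, hence is bounded by some $M < \infty$; then Parseval's identity for the orthonormal basis $\{\phi_k\}_{k\in\N}$ of $\H$ gives $\sum_{k=1}^{\infty} c_k^2 \leq M^2 \sum_{k=1}^{\infty} \la v, \phi_k\ra^2 = M^2||v||^2 < \infty$.

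Next I would invoke the fact, already established in the construction of the white noise mapping, that $\{W_{\phi_k}\}_{k\in\N}$ is an orthonormal system in $\L^2(\H,\mu)$, since $\la W_{\phi_j}, W_{\phi_k}\ra_{\L^2(\H,\mu)} = \la \phi_j, \phi_k\ra = \delta_{jk}$. Consequently, for $N < N'$ one has $||g_{N'} - g_N||^2_{\L^2(\H,\mu)} = \sum_{k=N+1}^{N'} c_k^2$, so $\{g_N\}_{N \in \N}$ is a Cauchy sequence in $\L^2(\H,\mu)$; by completeness it converges to a limit, which is precisely $g$, and $||g_N - g||^2_{\L^2(\H,\mu)} = \sum_{k=N+1}^{\infty} c_k^2 \approach 0$ as $N \approach \infty$. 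In particular $g \in \L^2(\H,\mu)$.

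Finally, since $\mu(\H) = 1$, H\"older's inequality gives $||g_N - g||_{\L^1(\H,\mu)} \leq \sqrt{\mu(\H)}\,||g_N - g||_{\L^2(\H,\mu)} = ||g_N - g||_{\L^2(\H,\mu)} \approach 0$ and $||g||_{\L^1(\H,\mu)} \leq ||g||_{\L^2(\H,\mu)} < \infty$, so $g \in \L^1(\H,\mu)$. I do not anticipate any real obstacle here; the only subtlety is the uniform boundedness of $\{(1-\alpha_k)^{-1}\}_{k\in\N}$, which is immediate once one notes that $\alpha_k < 1$ for all $k$ and $\alpha_k \approach 0$.
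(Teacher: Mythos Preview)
Your proposal is correct and follows essentially the same route as the paper: orthonormality of $\{W_{\phi_k}\}$ in $\L^2(\H,\mu)$, square-summability of the coefficients, and H\"older for the $\L^1$ statement. The only cosmetic difference is that the paper identifies $\sum_{k} c_k^2 = \|(I-S)^{-1}Q^{-1/2}(m_2-m_1)\|^2$ directly (finite since $(I-S)^{-1}$ is bounded), whereas you bound it by $M^2\|v\|^2$ via $\alpha_k \to 0$; these are the same observation in different clothing.
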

\begin{proof}
	Since the functions $\{W_{\phi_k}\}_{k\in \N}$ are orthonormal in $\L^2(\H, \mu)$, we have
	\begin{align*}
	&||g||^2_{\L^2(\H,\mu)} = \sum_{k=1}^{\infty}\frac{1}{(1-\alpha_k)^2}|\la Q^{-1/2}(m_2-m_1), \phi_k\ra|^2 
	\\
	&= ||(I-S)^{-1}Q^{-1/2}(m_2 - m_1)||^2 < \infty.
	\end{align*}
	Thus $g \in \L^2(\H,\mu)$ and
	\begin{align*}
	||g_N - g||^2_{\L^2(\H, \mu)} = \sum_{k=N+1}^{\infty}\frac{1}{(1-\alpha_k)^2}|\la Q^{-1/2}(m_2-m_1), \phi_k\ra|^2 
	\approach 0 \;\text{as}\; N \approach \infty.
	\end{align*}
	Since $\mu$ is a probability measure, by H\"older's Inequality, we have
	$||g_N - g||_{\L^1(\H, \mu)} \leq \sqrt{\mu(\H)}||g_N - g||_{\L^2(\H, \mu)} = ||g_N - g||_{\L^2(\H, \mu)} \approach 0$
	as $N \approach \infty$.
\qed \end{proof}

The following is a direct generalization of Claim 1 in Proposition 1.2.8 in \cite{DaPrato:PDEHilbert}.
\begin{lemma}
	\label{lemma:bx-inner}
	Let $\{\phi_k\}_{k=1}^{\infty}$ be any orthonormal basis in $\H$.
	For any $b \in \H$,
	\begin{align}
	\la b, x-m\ra = \sum_{k=1}^{\infty}\la Q^{1/2}b, \phi_k\ra W_{\phi_k}(x) \;\;\;\Ncal(m,Q)\;\;\text{a.e.},
	\end{align}
	where the series converges in $\L^2(\H, \Ncal(m,Q))$.
\end{lemma}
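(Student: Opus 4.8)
The plan is to deduce the identity from the linearity and isometry of the white noise map $W$, applied to the single vector $Q^{1/2}b$; throughout, write $\mu = \Ncal(m,Q)$. First I would observe that, since $b \in \H$, the vector $Q^{1/2}b$ lies in $Q^{1/2}(\H) = \Im(Q^{1/2})$, so that $Q^{-1/2}(Q^{1/2}b)$ is a genuine element of $\H$ and the defining formula for $W$ on $Q^{1/2}(\H)$ produces the explicit representative
\[
W_{Q^{1/2}b}(x) = \la x - m, Q^{-1/2}(Q^{1/2}b)\ra = \la b, x - m\ra, \qquad x \in \H.
\]
In other words, the pointwise function $x \mapsto \la b, x-m\ra$ is a representative of the $\L^2(\H,\mu)$ element $W_{Q^{1/2}b}$.

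Next I would expand $Q^{1/2}b$ in the orthonormal basis $\{\phi_k\}_{k \in \N}$: setting $c_k = \la Q^{1/2}b, \phi_k\ra$, one has $Q^{1/2}b = \sum_{k=1}^{\infty}c_k\phi_k$ in $\H$, with $\sum_{k=1}^{\infty}c_k^2 = ||Q^{1/2}b||^2 < \infty$. Since $W : \H \to \L^2(\H,\mu)$ is linear and isometric, hence bounded and continuous, applying $W$ to the partial sums $\sum_{k=1}^{N}c_k\phi_k$ and letting $N \approach \infty$ yields $W_{Q^{1/2}b} = \sum_{k=1}^{\infty}c_k W_{\phi_k}$ with convergence in $\L^2(\H,\mu)$. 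Combined with the first step, this gives $\la b, x - m\ra = \sum_{k=1}^{\infty}\la Q^{1/2}b, \phi_k\ra W_{\phi_k}(x)$ as an equality in $\L^2(\H,\mu)$, hence $\mu$-a.e.

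An equivalent but more hands-on route uses the finite-rank projections $P_N$ associated with $Q$: for each fixed $N$ one checks, using that $Q^{-1/2}P_N$ is bounded and self-adjoint and that $P_N\phi_k \in \Im(Q^{1/2})$, that $\la b, P_N(x-m)\ra = \la Q^{1/2}b, Q^{-1/2}P_N(x-m)\ra = \sum_{k=1}^{\infty}\la Q^{1/2}b,\phi_k\ra W_{P_N\phi_k}(x)$, and then lets $N \approach \infty$, invoking $||P_N(x-m)-(x-m)|| \approach 0$ together with $||W_{P_N\phi_k} - W_{\phi_k}||_{\L^2(\H,\mu)} = ||P_N\phi_k - \phi_k|| \approach 0$.

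The lemma carries no deep obstacle; the only point requiring care is the bookkeeping about modes of convergence. The left-hand side is a pointwise-defined function while the right-hand side is the $\L^2$ sum of a series, so the claimed equality is precisely the statement that the distinguished representative $x \mapsto \la b, x-m\ra$ of $W_{Q^{1/2}b}$ agrees $\mu$-a.e. with (any representative of) that $\L^2$ sum — which is exactly what the continuity of $W$ delivers. If one additionally wants genuine almost-everywhere convergence of the partial sums, one may note that the $W_{\phi_k}$ are independent centered Gaussians with $\sum_{k=1}^{\infty} c_k^2\,||W_{\phi_k}||^2_{\L^2(\H,\mu)} < \infty$, so the series converges $\mu$-a.e. by Kolmogorov's three-series theorem; this refinement is not needed for the statement as given.
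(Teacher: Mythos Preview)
Your argument is correct. The key observation --- that $Q^{1/2}b \in Q^{1/2}(\H)$ so that $W_{Q^{1/2}b}(x) = \la x-m, b\ra$ is given by the \emph{explicit} (pointwise) formula, and then one simply transports the $\H$-expansion $Q^{1/2}b = \sum_k \la Q^{1/2}b,\phi_k\ra \phi_k$ through the linear isometry $W$ --- is clean and complete.

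The paper does not actually print a proof: it labels the lemma a direct generalization of Claim~1 in Proposition~1.2.8 of Da~Prato--Zabczyk. The author's own argument (present in the \LaTeX\ source but commented out) is exactly your ``more hands-on route'' via the projections $P_N$: one writes $\la b, P_N(x-m)\ra = \sum_{k}\la Q^{1/2}b,\phi_k\ra W_{P_N\phi_k}(x)$, shows this converges in $\L^2(\H,\mu)$ to $\sum_k \la Q^{1/2}b,\phi_k\ra W_{\phi_k}$ (the computation collapses to $\|P_N Q^{1/2}b - Q^{1/2}b\|^2$), separately shows $\la b, P_N(x-m)\ra \to \la b, x-m\ra$ in $\L^2(\H,\mu)$ via $\|P_Nb - b\|^2\,\trace(Q)$, and combines by the triangle inequality.

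Your primary route is shorter because it bypasses the $P_N$ machinery entirely: once one realizes that $\la b, x-m\ra$ \emph{is} $W_{Q^{1/2}b}$, the lemma is just continuity of $W$. The paper's projection route has the minor advantage that the same $P_N$-calculus is reused elsewhere (e.g.\ Lemmas on $W^2_{P_N\phi_k}$), so it fits the surrounding narrative; but as a standalone proof yours is preferable. Your remark about a.e.\ convergence via Kolmogorov's three-series theorem is correct and indeed not needed for the $\L^2$ statement.
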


\begin{proof}
[\textbf{
	of Theorem \ref{theorem:radon-nikodym-infinite}}]
By Lemmas \ref{lemma:fN-convergence-L2mu} and \ref{lemma:gN-convergence-L2mu}, 
the series $\sum_{k=1}^{\infty}\Phi_k$ converges in $\L^1(\H,\mu)$ and $\L^2(\H,\mu)$. 
By Proposition \ref{proposition:L1mu-dnu-dmu}, 
$s(x) = \exp\left[-\frac{1}{2}\sum_{k=1}^{\infty}\Phi_k(x)\right] \in \L^1(\H, \mu)$, with
$\int_{\H}s(x)d\mu(x) = \exp\left[\frac{1}{2}||(I-S)^{-1/2}Q^{-1/2}(m_2 - m_1)||^2\right]$.
Define 
\begin{align*}
\rho(x) = \exp\left[-\frac{1}{2}\sum_{k=1}^{\infty}\Phi_k(x)\right]\exp\left[-\frac{1}{2}||(I-S)^{-1/2}Q^{-1/2}(m_2 - m_1)||^2\right].
\end{align*}
Then $\rho$ is nonnegative and satisfies
$\rho \in \L^1(\H, \mu)$, with
$\int_{\H}\rho(x)d\mu(x) = 1$,
i.e.
$\rho\mu$ is a probability measure on $\Bsc(\H)$.
To show that the two measures $\rho \mu$ and $\nu$ coincide,
we show that the corresponding characteristic functions are identical, that is
\begin{align*}
\int_{\H}\exp(i \la h, x\ra)\rho(x)d\mu(x) =  \int_{\H}\exp(i \la h, x\ra)d\nu(x) \;\;\forall h \in \H.
\end{align*}
For the measure $\nu$, the characteristic function is given by
\begin{align*}
\int_{\H}\exp(i\la h, x\ra)\nu(dx) &= \int_{\H}\exp(i\la h, x\ra)\Ncal(m_2,R)(dx) 
\\
&= \exp\left(i \la m_2, h\ra -\frac{1}{2}\la Rh, h\ra\right), \;\; h \in \H.
\end{align*}
To compute the characteristic function for $\rho\mu$, we first note that by Lemma \ref{lemma:bx-inner},
\begin{align*}
\la h, x\ra = \la h, m_1\ra + \sum_{k=1}^{\infty}\la Q^{1/2}h, \phi_k\ra W_{\phi_k}(x) \;\;\;\Ncal(m_1,Q)\;\;\text{a.e.} \; \forall h \in \H.
\end{align*}
Let $b_k = \frac{Q^{-1/2}(m_2-m_1)}{(1-\alpha_k)}$.
The characteristic function for $\rho\mu$ is given by
{
\begin{align}
\label{equation:character-inter-1}
&\int_{\H}\exp(i \la h, x\ra)\rho(x)d\mu(x) 
\\
& = \exp\left[-\frac{1}{2}||(I-S)^{-1/2}Q^{-1/2}(m_2 - m_1)||^2\right]\int_{\H}\exp(i \la h, x\ra)s(x)d\mu(x)
\nonumber
\\
&= \exp(i\la h, m_1\ra)\exp\left[-\frac{1}{2}||(I-S)^{-1/2}Q^{-1/2}(m_2 - m_1)||^2\right]
\nonumber \;\; \times \;\;
\\
&\int_{\H}\exp\left\{-\frac{1}{2}\sum_{k=1}^{\infty}\left[\frac{\alpha_k}{1-\alpha_k}W^2_{\phi_k}(x) -2\left\la iQ^{1/2}h +b_k, \phi_k\right\ra W_{\phi_k}(x) + \log(1-\alpha_k)\right]\right\}d\mu(x).
\nonumber
\end{align}
}
For each $k \in \N$, we have by Proposition \ref{proposition:exponential-quadratic-whitenoise}, using the fact that $||\phi_k||=1$,
{
\begin{align*}
&\int_{\H}\exp\left(-\frac{1}{2}\left[\frac{\alpha_k}{1-\alpha_k}W^2_{\phi_k}(x) -2\left\la i Q^{1/2}h + b_k, \phi_k\right\ra W_{\phi_k}(x) + \log(1-\alpha_k)\right]\right) d\mu(x)
\\
& = \frac{1}{(1-\alpha_k)^{1/2}}\int_{\H}\exp\left[-\frac{1}{2}\frac{\alpha_k}{1-\alpha_k}W^2_{\phi_k}(x) + \left\la i Q^{1/2}h +b_k, \phi_k\right\ra W_{\phi_k}(x) \right] \Ncal(m_1,Q)(dx)
\\
& = \frac{1}{(1-\alpha_k)^{1/2}}\left\{(1-\alpha_k)^{1/2}\exp\left[\frac{1}{2}(1-\alpha_k)\left\la i Q^{1/2}h +b_k, \phi_k\right\ra^2\right]\right\}
\\
& = \exp\left[\frac{1}{2}(1-\alpha_k)\left\la i Q^{1/2}h +b_k, \phi_k\right\ra^2\right]
\\
& = \exp\left[-\frac{1}{2}(1-\alpha_k)\la Q^{1/2}h,\phi_k\ra^2 +i\la Q^{1/2}h,\phi_k\ra\la Q^{-1/2}(m_2-m_1), \phi_k\ra\right]
\\
&\times \exp\left[ \frac{\la Q^{-1/2}(m_2-m_1), \phi_k\ra^2}{2(1-\alpha_k)}\right].
\end{align*}
}
For each $N \in \N$,  for the function $s_N(x) = \exp\left[-\frac{1}{2}\sum_{k=1}^N\Phi_k(x)\right]$, we have by the independence of the $W_{\phi_k}$'s that
{\small
\begin{align*}
& \int_{\H}\exp(i \la h, x\ra) s_N(x)d\mu(x)
\\
& = \int_{\H}\exp\left\{-\frac{1}{2}\sum_{k=1}^N\left[\frac{\alpha_k}{1-\alpha_k}W^2_{\phi_k}(x) -2\left\la i Q^{1/2}h + b_k, \phi_k\right\ra W_{\phi_k}(x) + \log(1-\alpha_k)\right]\right\} d\mu(x)
\\
& = \prod_{k=1}^N\int_{\H}\exp\left(-\frac{1}{2}\left[\frac{\alpha_k}{1-\alpha_k}W^2_{\phi_k}(x) -2\left\la i Q^{1/2}h +b_k, \phi_k\right\ra W_{\phi_k}(x) + \log(1-\alpha_k)\right]\right) d\mu(x)
\\
& = \prod_{k=1}^N\exp\left[-\frac{1}{2}(1-\alpha_k)\la Q^{1/2}h,\phi_k\ra^2 +i\la Q^{1/2}h,\phi_k\ra\la Q^{-1/2}(m_2-m_1), \phi_k\ra \right]
\\
& \times \prod_{k=1}^N\exp\left[\frac{\la Q^{-1/2}(m_2-m_1), \phi_k\ra^2}{2(1-\alpha_k)}\right]
\\
& = \exp\left[-\frac{1}{2}\sum_{k=1}^N(1-\alpha_k)\la Q^{1/2}h,\phi_k\ra^2 \right]\exp\left[i\sum_{k=1}^N\la Q^{1/2}h,\phi_k\ra\la Q^{-1/2}(m_2-m_1), \phi_k\ra\right]
\\
& \times \exp\left[\sum_{k=1}^N\frac{\la Q^{-1/2}(m_2-m_1), \phi_k\ra^2}{2(1-\alpha_k)}\right].
\end{align*}
}
By Proposition \ref{proposition:L1mu-dnu-dmu},  there exists $p > 1$ is such that $I+(p-1)S > 0$. Then for $s_N(x) = \exp\left[-\frac{1}{2}\sum_{k=1}^N\Phi_k(x)\right]$, the sequence $\{s_N^q\}_{N \in \N}$ is uniformly integrable on $(\Bsc(\H),\mu)$ for all $1 \leq q < p$. Thus the sequence $\{\exp(i q\la h,x\ra)s_N^q(x)\}_{N \in \N}$ is also uniformly integrable for $1 \leq q < p$. 
For
$q=1$, Vitali's Convergence Theorem gives
\begin{align*}
&\int_{\H}\exp(i \la h, x\ra)s(x)d\mu(x) = \int_{\H}\lim_{N \approach \infty}[\exp(i\la h,x\ra)s_N(x)]d\mu(x)
\\
& = \lim_{N \approach \infty}\int_{\H}\exp(i \la h, x\ra)s_N(x)d\mu(x)
\\
&= \exp\left[-\frac{1}{2}\sum_{k=1}^{\infty}(1-\alpha_k)\la Q^{1/2}h,\phi_k\ra^2 \right]\exp\left[i\sum_{k=1}^{\infty}\la Q^{1/2}h,\phi_k\ra\la Q^{-1/2}(m_2-m_1), \phi_k\ra\right]
\\
& \times \exp\left[\sum_{k=1}^{\infty}\frac{\la Q^{-1/2}(m_2-m_1), \phi_k\ra^2}{2(1-\alpha_k)}\right].
\end{align*} 
For the first exponent,  we have for any $h \in \H$,
\begin{align*}
&\sum_{k=1}^{\infty}(1-\alpha_k)\la Q^{1/2}h, \phi_k\ra^2 = \la Q^{1/2} h, [\sum_{k=1}^{\infty}(1-\alpha_k)\phi_k \otimes \phi_k]Q^{1/2}h\ra 
\\
&= \la Q^{1/2} h, (I-S)Q^{1/2}h\ra = \la h, Q^{1/2}(I-S)Q^{1/2}h\ra = \la h, Rh\ra.
\end{align*}
For the second exponent, since $\{\phi_k\}_{k \in \N}$ is an orthonormal basis for $\H$, we have
\begin{align*}
&\sum_{k=1}^{\infty}\la Q^{1/2}h,\phi_k\ra\la Q^{-1/2}(m_2-m_1), \phi_k\ra = \la Q^{1/2}h, Q^{-1/2}(m_2-m_1)\ra
= \la h, m_2 - m_1\ra.
\end{align*}
For the third exponent, 
\begin{align*}
&\sum_{k=1}^{\infty}\frac{\la Q^{-1/2}(m_2-m_1), \phi_k\ra^2}{2(1-\alpha_k)}
\\
&
= \frac{1}{2}\la Q^{-1/2}(m_2-m_1), \left[\sum_{k=1}^{\infty}\frac{1}{1-\alpha_k}\phi_k \otimes \phi_k\right] Q^{-1/2}(m_2-m_1)\ra
\\
& = \frac{1}{2} \la Q^{-1/2}(m_2-m_1), (I-S)^{-1}Q^{-1/2}(m_2-m_1)\ra
=  \frac{1}{2}||(I-S)^{-1/2}Q^{-1/2}(m_2-m_1)||^2.
\end{align*}
Thus, taking the limit as $N \approach \infty$, we obtain
\begin{align*}
&\int_{\H}\exp(i \la h, x\ra)s(x)d\mu(x) 
\\
& = \exp\left[-\frac{1}{2}\la Rh,h\ra +i\la h, m_2 - m_1\ra+  \frac{1}{2}||(I-S)^{-1/2}Q^{-1/2}(m_2-m_1)||^2\right].
\end{align*}
Combining this with Eq.~(\ref{equation:character-inter-1}), we obtain the desired equality, namely
\begin{align*}
\int_{\H}\exp(i \la h, x\ra)\rho(x)d\mu(x) = \exp\left(i\la h,m_2\ra -\frac{1}{2}\la Rh,h\ra\right).
\end{align*}
\qed \end{proof}

\begin{lemma} 
	\label{lemma:trace-class-L1mu}
	Assume that $S$ is trace class. Then $\sum_{k=1}^{\infty}\frac{\alpha_k}{1-\alpha_k}W^2_{\phi_k} \in \L^1(\H,\mu)$ and the following limit holds
	in the $\L^1(\H, \mu)$ sense,
	\begin{align*}
	\lim_{N \approach \infty}\sum_{k=1}^{\infty}\frac{\alpha_k}{1-\alpha_k}W^2_{P_N\phi_k}= \sum_{k=1}^{\infty}\frac{\alpha_k}{1-\alpha_k}W^2_{\phi_k}.
	\end{align*}
\end{lemma}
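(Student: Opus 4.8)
The plan is to reduce everything to absolute summability of the scalar sequence $\alpha_k/(1-\alpha_k)$, together with a dominated-convergence argument carried out over the index $k$. First I would record that, since $I-S$ is strictly positive (Proposition \ref{proposition:positiveIS}), $\alpha_k < 1$ for every $k$, so the coefficients $\alpha_k/(1-\alpha_k)$ are well defined; and since $S$ is trace class with $\alpha_k \approach 0$, there is an $N_0$ with $|\alpha_k| < 1/2$ for $k \geq N_0$, whence $|\alpha_k/(1-\alpha_k)| \leq 2|\alpha_k|$ for $k \geq N_0$ and therefore $\sum_{k=1}^{\infty}|\alpha_k/(1-\alpha_k)| < \infty$.

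Next I would use the isometry $\|W_z\|_{\L^2(\H,\mu)} = \|z\|$ of the white noise map to get $\|W^2_{\phi_k}\|_{\L^1(\H,\mu)} = \|W_{\phi_k}\|^2_{\L^2(\H,\mu)} = \|\phi_k\|^2 = 1$ and similarly $\|W^2_{P_N\phi_k}\|_{\L^1(\H,\mu)} = \|P_N\phi_k\|^2 \leq 1$ for all $N,k$. Combined with the summability above, both series $\sum_k \frac{\alpha_k}{1-\alpha_k}W^2_{\phi_k}$ and $\sum_k \frac{\alpha_k}{1-\alpha_k}W^2_{P_N\phi_k}$ converge absolutely in the Banach space $\L^1(\H,\mu)$, so in particular $\sum_k \frac{\alpha_k}{1-\alpha_k}W^2_{\phi_k} \in \L^1(\H,\mu)$, which is the first assertion.

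For the limit, the main estimate is a term-by-term bound obtained from the Cauchy--Schwarz inequality and the isometry:
\begin{align*}
\|W^2_{P_N\phi_k} - W^2_{\phi_k}\|_{\L^1(\H,\mu)}
&\leq \|W_{P_N\phi_k}-W_{\phi_k}\|_{\L^2(\H,\mu)}\,\|W_{P_N\phi_k}+W_{\phi_k}\|_{\L^2(\H,\mu)} \\
&\leq \|P_N\phi_k - \phi_k\|\,(\|P_N\phi_k\| + \|\phi_k\|) \leq 2\|P_N\phi_k - \phi_k\| \leq 2 .
\end{align*}
Hence $\left\|\sum_{k=1}^{\infty}\frac{\alpha_k}{1-\alpha_k}\bigl(W^2_{P_N\phi_k} - W^2_{\phi_k}\bigr)\right\|_{\L^1(\H,\mu)} \leq 2\sum_{k=1}^{\infty}\left|\frac{\alpha_k}{1-\alpha_k}\right|\,\|P_N\phi_k - \phi_k\|$. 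For each fixed $k$ one has $\|P_N\phi_k - \phi_k\| \approach 0$ as $N \approach \infty$ because $P_N \approach I$ strongly, and the general term on the right is dominated, uniformly in $N$, by the summable sequence $2|\alpha_k/(1-\alpha_k)|$; the dominated convergence theorem for the counting measure then yields that this sum tends to $0$ as $N \approach \infty$, which is precisely the asserted $\L^1(\H,\mu)$-convergence.

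I expect the only genuinely delicate point to be this last interchange of $\lim_{N\approach\infty}$ with the infinite sum over $k$; the uniform bound $\|W^2_{P_N\phi_k} - W^2_{\phi_k}\|_{\L^1(\H,\mu)} \leq 2\|P_N\phi_k - \phi_k\|$ together with the absolute summability of $\alpha_k/(1-\alpha_k)$ (which is where the trace-class hypothesis on $S$ enters) is exactly what makes the dominated-convergence argument legitimate.
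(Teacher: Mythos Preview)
Your proof is correct and follows essentially the same route as the paper: both establish the same Cauchy--Schwarz bound $\|W^2_{P_N\phi_k}-W^2_{\phi_k}\|_{\L^1(\H,\mu)} \leq 2\|P_N\phi_k-\phi_k\|$ and then reduce the problem to showing that $\sum_k \bigl|\frac{\alpha_k}{1-\alpha_k}\bigr|\,\|P_N\phi_k-\phi_k\| \approach 0$. The only difference is in this last step: you invoke dominated convergence for the counting measure (using $\|P_N\phi_k-\phi_k\|\leq 1$ and the summability of $|\alpha_k/(1-\alpha_k)|$), whereas the paper applies Cauchy--Schwarz once more on the $k$-sum and then uses $\|P_N\phi_k-\phi_k\|^2=\sum_{j>N}\la\phi_k,e_j\ra^2$ together with the trace-class tail $\sum_{j>N}\sum_k\bigl|\frac{\alpha_k}{1-\alpha_k}\bigr|\la\phi_k,e_j\ra^2\approach 0$; your finish is slightly more direct.
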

\begin{proof}We first note that, since $S$ is trace class, $S(I-S)^{-1}$ is also trace class and
	\begin{align*}
&||S(I-S)^{-1}||_{\tr} = \sum_{j=1}^{\infty}\la e_j, |S(I-S)^{-1}|e_j\ra 
= \sum_{j=1}^{\infty}\la e_j, \sum_{k=1}^{\infty}\left|\frac{\alpha_k}{1-\alpha_k}\right| (\phi_k \otimes \phi_k) e_j\ra
\\
& = \sum_{j=1}^{\infty}\sum_{k=1}^{\infty}\left|\frac{\alpha_k}{1-\alpha_k}\right| \la \phi_k, e_j\ra^2 
= \sum_{k=1}^{\infty}\left|\frac{\alpha_k}{1-\alpha_k}\right|< \infty
\imply \sum_{j=N+1}^{\infty}\sum_{k=1}^{\infty}\left|\frac{\alpha_k}{1-\alpha_k}\right| \la \phi_k, e_j\ra^2 \approach 0
	\end{align*}
	as $N \approach \infty$. Furthermore,
	\begin{align*}
	&\sum_{k=1}^{\infty}\left|\frac{\alpha_k}{1-\alpha_k}\right|\int_{\H}W^2_{\phi_k}(x)\mu(dx) = \sum_{k=1}^{\infty}\left|\frac{\alpha_k}{1-\alpha_k}\right|\;||W^2_{\phi_k}||_{\L^2(\H, \mu)}
	= \sum_{k=1}^{\infty}\left|\frac{\alpha_k}{1-\alpha_k}\right| < \infty,
	\end{align*}
	showing that $\sum_{k=1}^{\infty}\frac{\alpha_k}{1-\alpha_k}W^2_{\phi_k} \in \L^1(\H,\mu)$. By H\"older's Inequality, we have
	\begin{align*}
	&\int_{\H}|W^2_{P_N\phi_k}(x)-W^2_{\phi_k}(x)|\mu(dx) = \int_{\H}|W_{P_N\phi_k}(x)-W_{\phi_k}(x)|\;
	|W_{P_N\phi_k}(x)+W_{\phi_k}(x)|\mu(dx)
	\\
	& \leq |||W_{P_N\phi_k}-W_{\phi_k}||_{\L^2(\H,\mu)}\; [||W_{P_N\phi_k}||_{\L^2(\H,\mu)})+||W_{\phi_k}||_{\L^2(\H,\mu)}]
	\\
	& \leq 2 |||W_{P_N\phi_k}-W_{\phi_k}||_{\L^2(\H,\mu)} = 2||P_N\phi_k - \phi_k||,
	\end{align*}
	since $||W_{P_N\phi_k}||_{\L^2(\H,\mu)} = ||P_N\phi_k|| \leq ||\phi_k|| = ||W_{\phi_k}||_{\L^2(\H, \mu)} = 1$.
It follows that
{\small
	\begin{align*}
	&\int_{\H}\left| \sum_{k=1}^{\infty}\frac{\alpha_k}{1-\alpha_k}W^2_{P_N\phi_k}(x) - \sum_{k=1}^{\infty}\frac{\alpha_k}{1-\alpha_k}W^2_{\phi_k}(x)\right|\mu(dx)
	\\
	&\leq \int_{\H}\sum_{k=1}^{\infty}\left|\frac{\alpha_k}{1-\alpha_k}\right| \; 
	|W^2_{P_N\phi_k}(x)-W^2_{\phi_k}(x)|\mu(dx)\leq 2\sum_{k=1}^{\infty}\left|\frac{\alpha_k}{1-\alpha_k}\right|\;||P_N\phi_k - \phi_k||
	\\
	& \leq 2 \left(\sum_{k=1}^{\infty}\left|\frac{\alpha_k}{1-\alpha_k}\right|\right)^{1/2}\left(\sum_{k=1}^{\infty}\left|\frac{\alpha_k}{1-\alpha_k}\right|\;||P_N\phi_k - \phi_k||^2\right)^{1/2}
	\\
	& =  2 \left(\sum_{k=1}^{\infty}\left|\frac{\alpha_k}{1-\alpha_k}\right|\right)^{1/2}\left(\sum_{k=1}^{\infty}\left|\frac{\alpha_k}{1-\alpha_k}\right|\sum_{j=N+1}^{\infty}\la \phi_k, e_j\ra^2\right)^{1/2}
	\approach 0 \;\;\;
	\end{align*}
}
\text{as $N \approach \infty$}.
\qed \end{proof}

\begin{lemma}
	\label{lemma:traceclass-L1mu-2}
	Let $b \in \H$ be arbitrary. Then $\sum_{k=1}^{\infty}\frac{1}{1-\alpha_k} W_{\phi_k}\la b, \phi_k \ra
	\in \L^2(\H,\mu)$ and the following limit holds in the $\L^2(\H,\mu)$ sense
	\begin{align}
	\lim_{N \approach \infty}\sum_{k=1}^{\infty}\frac{1}{1-\alpha_k} W_{P_N\phi_k}\la b, \phi_k \ra
	= \sum_{k=1}^{\infty}\frac{1}{1-\alpha_k} W_{\phi_k}\la b, \phi_k \ra.
	\end{align}	
\end{lemma}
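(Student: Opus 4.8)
The plan is to reduce everything to the strong convergence $P_N \to I$, exploiting that the white noise mapping $W : \H \to \L^2(\H,\mu)$ is a linear isometry and hence continuous. Set $c_k = \frac{\la b,\phi_k\ra}{1-\alpha_k}$. Since $S \in \HS(\H)$ forces $\alpha_k \to 0$, only finitely many $\alpha_k$ exceed $1/2$, and each $\alpha_k < 1$, so $M := \sup_k \frac{1}{1-\alpha_k} < \infty$; in particular $\sup_k \alpha_k < 1$, so that $(I-S)^{-1} = \sum_{k=1}^\infty \frac{1}{1-\alpha_k}\phi_k\otimes\phi_k$ is a bounded operator. It follows that $\sum_k c_k^2 \le M^2\sum_k \la b,\phi_k\ra^2 = M^2\|b\|^2 < \infty$, so the series $v := \sum_{k=1}^\infty c_k\phi_k$ converges in $\H$, with $v = (I-S)^{-1}b$.

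Next I would establish membership and identify the limit simultaneously. By continuity and linearity of $W$ on $\H$, the convergence $\sum_{k=1}^N c_k\phi_k \to v$ in $\H$ gives $\sum_{k=1}^\infty c_k W_{\phi_k} = W_v$, which lies in $\L^2(\H,\mu)$ with $\|W_v\|_{\L^2(\H,\mu)} = \|v\|_\H = \|(I-S)^{-1}b\|_\H < \infty$; this is exactly the assertion $\sum_{k=1}^\infty \frac{1}{1-\alpha_k}\la b,\phi_k\ra W_{\phi_k} \in \L^2(\H,\mu)$. For the truncated series I would use the collapse identity: expanding $W_{P_N\phi_k} = \sum_{j=1}^N \la\phi_k,e_j\ra W_{e_j}$ and exchanging the finite $j$-sum with the $k$-limit (legitimate since for each $j$, $\sum_k |c_k\la\phi_k,e_j\ra| \le (\sum_k c_k^2)^{1/2}(\sum_k \la\phi_k,e_j\ra^2)^{1/2} \le M\|b\|$), one gets
\begin{align*}
\sum_{k=1}^\infty c_k W_{P_N\phi_k} = \sum_{j=1}^N \Big(\sum_{k=1}^\infty c_k\la\phi_k,e_j\ra\Big) W_{e_j} = \sum_{j=1}^N \la v, e_j\ra W_{e_j} = W_{P_N v}.
\end{align*}
Hence
\begin{align*}
\Big\|\sum_{k=1}^\infty c_k W_{P_N\phi_k} - \sum_{k=1}^\infty c_k W_{\phi_k}\Big\|_{\L^2(\H,\mu)} = \|W_{P_N v - v}\|_{\L^2(\H,\mu)} = \|P_N v - v\|_\H \longrightarrow 0
\end{align*}
as $N \approach \infty$, because $\{e_k\}_{k\in\N}$ is an orthonormal basis of $\H$, so $P_N$ converges strongly to $I$.

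I do not anticipate any serious obstacle; the argument runs parallel to Lemmas \ref{lemma:gN-convergence-L2mu} and \ref{lemma:trace-class-L1mu}. The only two points needing a word of justification are the boundedness of $(I-S)^{-1}$ — equivalently $\sup_k\alpha_k < 1$, which follows from $\alpha_k \to 0$ together with $\alpha_k < 1$ — and the interchange of summation orders in the collapse identity, which is absolutely convergent for each fixed $N$ as noted above.
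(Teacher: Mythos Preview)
Your proof is correct and rests on the same core idea as the paper's, namely the isometry of $W:\H\to\L^2(\H,\mu)$, which reduces the $\L^2(\H,\mu)$-distance to $\|P_Nv-v\|_\H$ with $v=(I-S)^{-1}b$. The paper expands $\|\sum_k c_k(W_{P_N\phi_k}-W_{\phi_k})\|^2$ as a double sum and simplifies to $\sum_{j\ge N+1}\la(I-S)^{-1}b,e_j\ra^2$, whereas you first identify each series as $W_{P_Nv}$ and $W_v$ and then apply the isometry once; your route is a shade more direct, but the two arguments are equivalent.
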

\begin{proof}
	Since the sequence $\{W_{\phi_k}\}_{k \in \N}$ is orthonormal in $\L^2(\H,\mu)$, we have
	\begin{align*}
	\left\|\sum_{k=1}^{\infty}\frac{1}{1-\alpha_k} W_{\phi_k}\la b, \phi_k \ra\right\|^2_{\L^2(\H,\mu)}
	= \sum_{k=1}^{\infty}\frac{(\la b, \phi_k\ra)^2}{(1-\alpha_k)^2}  = ||(I-S)^{-1}b||^2 < \infty.
	\end{align*}
	Thus $\sum_{k=1}^{\infty}\frac{1}{1-\alpha_k} W_{\phi_k}\la b, \phi_k \ra \in \L^2(\H,\mu)$. Furthermore,
%
{\small
	\begin{align*}
		&\left\|\sum_{k=1}^{\infty}\frac{1}{1-\alpha_k} W_{P_N\phi_k}\la b, \phi_k \ra
	- \sum_{k=1}^{\infty}\frac{1}{1-\alpha_k} W_{\phi_k}\la b, \phi_k \ra\right\|_{\L^2(\H,\mu)}^2
	\\
	& =\sum_{j,k=1}^{\infty}\frac{\la b, \phi_k\ra}{1-\alpha_k}\frac{\la b, \phi_j\ra}{1-\alpha_j}\la (W_{P_N\phi_k} - W_{\phi_k}),
	(W_{P_N\phi_j} - W_{\phi_j})\ra_{\L^2(\H,\mu)}
	\\
	& 
	= \sum_{j,k=1}^{\infty}\frac{\la b, \phi_k\ra}{1-\alpha_k}\frac{\la b, \phi_j\ra}{1-\alpha_j}
	\la P_N\phi_k - \phi_k, P_N\phi_j - \phi_j\ra = \left\|\sum_{k=1}^{\infty}\frac{\la b, \phi_k\ra}{1-\alpha_k}(P_N\phi_k - \phi_k)\right\|^2
	\\
	& = \left\|\sum_{k=1}^{\infty}\frac{\la b, \phi_k\ra}{1-\alpha_k}
	\sum_{j=N+1}^{\infty}\la \phi_k, e_j\ra e_j\right\|^2 = \sum_{j=N+1}^{\infty}
	\left(\sum_{k=1}^{\infty}\frac{\la b, \phi_k\ra \la \phi_k, e_j\ra}{1-\alpha_k}\right)^2
	\\
	& 
	= \sum_{j=N+1}^{\infty}\la (I-S)^{-1}b, e_j\ra^2 \approach  0 \;\;\text{as $N \approach \infty$}.
	\end{align*}
}
	This gives the desired convergence.
\qed \end{proof}

\begin{proof}
	\textbf{(of Corollary \ref{corollary:radon-nikodym-traceclass})}
	When $S$ is trace class, the Fredholm determinant $\det(I-S)$ is well-defined and for $I-S$ strictly positive, we have
	\begin{align*}
	\exp\left(-\frac{1}{2}\sum_{k=1}^{\infty}\log(1-\alpha_k)\right) = \exp\left(-\frac{1}{2}\log\det(I-S)\right) = 
	\det(I-S)^{-1/2}.
	\end{align*}
	From the spectral decomposition
	$S(I-S)^{-1} = \sum_{k=1}^{\infty}\frac{\alpha_k}{1-\alpha_k} \phi_k \otimes \phi_k$,
	we have $\forall N \in \N$,
	\begin{align*}
	& \la Q^{-1/2}P_N(x-m_1), S(I-S)^{-1}Q^{-1/2}P_N(x-m_1)\ra 
	\\
	&= \sum_{k=1}^{\infty}\frac{\alpha_k}{1-\alpha_k}\la Q^{-1/2}P_N(x-m_1), \phi_k\ra^2
	\\
	& = \sum_{k=1}^{\infty}\frac{\alpha_k}{1-\alpha_k}\la x- m_1, Q^{-1/2}P_N\phi_k\ra^2 
	= \sum_{k=1}^{\infty}\frac{\alpha_k}{1-\alpha_k}W^2_{P_N\phi_k}(x).
	\end{align*}
	By Lemma \ref{lemma:trace-class-L1mu}, 
	taking limit as $N \approach \infty$ gives, where the limit is in $\L^1(\H,\mu)$, 
	\begin{align*}
	&\sum_{k=1}^{\infty}\frac{\alpha_k}{1-\alpha_k}W^2_{\phi_k}(x)
	= \lim_{N \approach \infty} \sum_{k=1}^{\infty}\frac{\alpha_k}{1-\alpha_k}W^2_{P_N\phi_k}(x)
	\\
	& = \lim_{N \approach \infty}\la Q^{-1/2}P_N(x-m_1), S(I-S)^{-1}Q^{-1/2}P_N(x-m_1)\ra
	\\
	&\doteq\la Q^{-1/2}(x-m_1), S(I-S)^{-1}Q^{-1/2}(x-m_1)\ra.
	\end{align*}
	Similarly,
	\begin{align*}
	&\la Q^{-1/2}P_N(x-m_1), (I-S)^{-1}Q^{-1/2}(m_2 - m_1)\ra
	\\
	& = \sum_{k=1}^{\infty}\frac{1}{1-\alpha_k}\la Q^{-1/2}P_N(x-m_1), \phi_k\ra
	\la Q^{-1/2}(m_2 - m_1), \phi_k\ra
	\\
	&= \sum_{k=1}^{\infty}\frac{1}{1-\alpha_k}\la x-m_1, Q^{-1/2}P_N\phi_k\ra\ \la Q^{-1/2}(m_2-m_1), \phi_k\ra
	\\
	&=\sum_{k=1}^{\infty}\frac{1}{1-\alpha_k}W_{P_N\phi_k}(x)\la Q^{-1/2}(m_2-m_1), \phi_k\ra.
	\end{align*}
	By Lemma \ref{lemma:traceclass-L1mu-2}, taking limit as $N \approach \infty$, we have
	\begin{align*}
	&\sum_{k=1}^{\infty}\frac{1}{1-\alpha_k}W_{\phi_k}(x)\la Q^{-1/2}(m_2-m_1), \phi_k\ra
	\\
	& = \lim_{N \approach \infty}\la Q^{-1/2}P_N(x-m_1), (I-S)^{-1}Q^{-1/2}(m_2 - m_1)\ra
	 \\
	 &\doteq \la Q^{-1/2}(x-m_1), (I-S)^{-1}Q^{-1/2}(m_2 - m_1)\ra.
	\end{align*}
	Combining these, we obtain
	\begin{align*}
	\sum_{k=1}^{\infty}\Phi_k(x) &= \la Q^{-1/2}(x-m_1), S(I-S)^{-1}Q^{-1/2}(x-m_1)\ra 
	\\
	&-2\la Q^{-1/2}(x-m_1), (I-S)^{-1}Q^{-1/2}(m_2 - m_1)\ra + \log\det(I-S).
	\end{align*}
\qed \end{proof}

\subsection{Exact Kullback-Leibler divergences}
\label{section:KL}
We now derive the explicit expression for the exact Kullback-Leibler 
divergence between two equivalent Gaussian measures on $\H$.
In the following, let $\mu = \Ncal(m_1, Q)$ and $W:\H \mapto \L^2(\H,\mu)$ be the white noise mapping induced by $\mu$.
Let $\nu = \Ncal(m_2,R)$, with
$m_2 - m_1 \in \Im(Q^{1/2})$ and $R = Q^{1/2}(I-S)Q^{1/2}$ for some $S \in \Sym(\H) \cap \HS(\H)$.
Let $\{\alpha_k\}_{k \in \N}$ be the eigenvalues of $S$ with corresponding orthonormal eigenvectors $\{\phi_k\}_{k \in \N}$.

\begin{theorem} 
\label{theorem:KL-infinite}
Let $\mu = \Ncal(m_1, Q)$ and $\nu = \Ncal(m_2,R)$, with $m_2 - m_1 \in \Im(Q^{1/2})$ and $R = Q^{1/2}(I-S)Q^{1/2}$, where $S \in \Sym(\H) \cap \HS(\H)$. Then
\begin{align}
D_{\KL}(\nu ||\mu) = \frac{1}{2}||Q^{-1/2}(m_2 - m_1)||^2 - \frac{1}{2}\log\dettwo(I-S).
\end{align}
If, furthermore, $S$ is trace class, then 
\begin{align}
D_{\KL}(\nu || \mu) = \frac{1}{2}||Q^{-1/2}(m_2 - m_1)||^2-\frac{1}{2}\log\det(I-S) - \frac{1}{2}\trace(S).
\end{align}
\end{theorem}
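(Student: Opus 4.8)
The plan is to compute $D_{\KL}(\nu\|\mu) = \int_{\H}\log\frac{d\nu}{d\mu}\,d\nu$ by inserting the explicit Radon--Nikodym derivative from Theorem~\ref{theorem:radon-nikodym-infinite}. Taking logarithms there gives, $\mu$-a.e.\ and hence (since $\mu\sim\nu$) also $\nu$-a.e.,
\begin{align*}
\log\frac{d\nu}{d\mu}(x) = -\frac{1}{2}\sum_{k=1}^{\infty}\Phi_k(x) - \frac{1}{2}\|(I-S)^{-1/2}Q^{-1/2}(m_2-m_1)\|^2,
\end{align*}
with $\Phi_k$ as in Eq.~(\ref{equation:Phik}). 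Thus the whole task reduces to integrating $\sum_k\Phi_k$ against $\nu$.

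The first and main step is to identify the joint law under $\nu$ of the white-noise functionals $\{W_{\phi_k}\}$ attached to $\mu$. I would repeat, with $\nu$ in place of $\mu$, the finite-rank projection computations of Section~\ref{section:exact-divergences}: for $z\in\H$ one has $\int_{\H}\la x-m_1, Q^{-1/2}P_Nz\ra\,d\nu = \la Q^{-1/2}P_N(m_2-m_1), z\ra \to \la Q^{-1/2}(m_2-m_1), z\ra$ because $m_2-m_1\in\Im(Q^{1/2})$, and $\int_{\H}\la x-m_2, Q^{-1/2}P_Nz_1\ra\la x-m_2, Q^{-1/2}P_Nz_2\ra\,d\nu = \la (I-S)P_Nz_1, P_Nz_2\ra \to \la (I-S)z_1, z_2\ra$, using $R = Q^{1/2}(I-S)Q^{1/2}$. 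One checks that $\{W_{P_Nz}\}_N$ is Cauchy in $\L^2(\H,\nu)$ and converges $\nu$-a.e.\ to $W_z$, so $W_z\in\L^2(\H,\nu)$; passing to the limit, under $\nu$ the $\{W_{\phi_k}\}$ are jointly Gaussian with $\int_{\H}W_{\phi_k}\,d\nu = \mu_k := \la Q^{-1/2}(m_2-m_1),\phi_k\ra$ and $\int_{\H}(W_{\phi_j}-\mu_j)(W_{\phi_k}-\mu_k)\,d\nu = (1-\alpha_k)\delta_{jk}$; in particular $\int_{\H}W_{\phi_k}^2\,d\nu = (1-\alpha_k)+\mu_k^2$.

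With these moments, a short algebraic simplification yields $\int_{\H}\Phi_k\,d\nu = \alpha_k + \log(1-\alpha_k) - \mu_k^2 - \frac{\mu_k^2}{1-\alpha_k}$. To sum over $k$ I need term-by-term integration of $\sum_k\Phi_k$ under $\nu$, which I obtain exactly as in Lemmas~\ref{lemma:fN-convergence-L2mu} and \ref{lemma:gN-convergence-L2mu}, now in $\L^1(\H,\nu)$: the centered summands $\Phi_k - \int\Phi_k\,d\nu$ are mutually orthogonal in $\L^2(\H,\nu)$ with squared norms $2\alpha_k^2 + 4\mu_k^2(1-\alpha_k)$ (using the Gaussian moment facts and Lemma~\ref{lemma:norm-W4}-type identities under $\nu$), while $\sum_k\alpha_k^2<\infty$, $\sum_k\mu_k^2 = \|Q^{-1/2}(m_2-m_1)\|^2<\infty$, and $\sum_k\frac{\mu_k^2}{1-\alpha_k} = \|(I-S)^{-1/2}Q^{-1/2}(m_2-m_1)\|^2<\infty$ (finiteness as in Propositions~\ref{proposition:limit-inner-gamma-0-sum} and \ref{proposition:L1mu-dnu-dmu}). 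Since $\{\phi_k\}$ is an orthonormal basis and $\sum_k[\alpha_k+\log(1-\alpha_k)] = \log\dettwo(I-S)$ by definition of the Hilbert--Carleman determinant, one gets $\sum_{k}\int_{\H}\Phi_k\,d\nu = \log\dettwo(I-S) - \|Q^{-1/2}(m_2-m_1)\|^2 - \|(I-S)^{-1/2}Q^{-1/2}(m_2-m_1)\|^2$; substituting into $D_{\KL}(\nu\|\mu) = -\frac12\sum_k\int\Phi_k\,d\nu - \frac12\|(I-S)^{-1/2}Q^{-1/2}(m_2-m_1)\|^2$, the two copies of $\|(I-S)^{-1/2}Q^{-1/2}(m_2-m_1)\|^2$ cancel, leaving $\frac12\|Q^{-1/2}(m_2-m_1)\|^2 - \frac12\log\dettwo(I-S)$. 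The trace-class refinement is then immediate from $\log\dettwo(I-S) = \log\det(I-S) + \trace(S)$ valid when $S\in\Tr(\H)$. The hard part is the passage to the second measure $\nu$: establishing the $\nu$-law of the white-noise functionals through the projection limits and justifying the interchange of summation and $\nu$-integration; everything else is bookkeeping parallel to the $\mu$-side analysis already carried out for the Radon--Nikodym derivative.
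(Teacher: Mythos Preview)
Your proposal is correct and follows essentially the same route as the paper. Both start from the Radon--Nikodym formula of Theorem~\ref{theorem:radon-nikodym-infinite}, compute the $\nu$-moments of the white-noise functionals $W_{\phi_k}$ (your $\mu_k$ and covariance $(1-\alpha_k)\delta_{jk}$ are exactly Lemma~\ref{lemma:L2nu}), evaluate $\int_{\H}\Phi_k\,d\nu$ termwise, and then justify the interchange of sum and integral via $\L^2(\H,\nu)$-convergence. The only organizational difference is that the paper splits $\sum_k\Phi_k$ into a quadratic part $f$ and a linear part $g$ and proves $\L^2(\H,\nu)$-convergence for each separately (Propositions~\ref{proposition:fN-convergence-L2nu} and \ref{proposition:gN-convergence-L2nu}, supported by the explicit orthogonality computations of Lemma~\ref{lemma:norm-W4-2} and Proposition~\ref{proposition:orthonormal-L2nu}), whereas you center the full $\Phi_k$ and invoke independence of the $W_{\phi_k}$ under $\nu$ directly; your variance computation $2\alpha_k^2 + 4\mu_k^2(1-\alpha_k)$ is correct and sums the same way.
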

For $m_1 = m_2 = 0$, we obtain the Kullback-Leibler divergence given in \cite{Michalek:1999},
which also derived the R\'enyi divergences between two zero-mean Gaussian measures with different covariance operators.


\begin{lemma}
\label{lemma:L2nu}
For any $z, z_1, z_2\in \H$, 
{\small
\begin{align}
\int_{\H}W_z(x)d\nu(x) &= \la Q^{-1/2}(m_2 - m_1), z\ra,
\\
\la W_{z_1}, W_{z_2}\ra_{\L^2(\H, \nu)} &= \la (I-S)z_1, z_2\ra 
+ \la Q^{-1/2}(m_2-m_1), z_1\ra\la Q^{-1/2}(m_2 - m_1), z_2\ra.
\end{align}
}
In particular, 
for the orthonormal eigenvectors $\{\phi_k\}_{k \in \N}$ of $S$, 
{\small
\begin{align}
\la W_{\phi_j}, W_{\phi_k}\ra_{\L^2(\H,\nu)} &= (1-\alpha_k) \delta_{jk} + \la Q^{-1/2}(m_2-m_1), \phi_k\ra\la Q^{-1/2}(m_2 - m_1), \phi_j\ra,
\\
||W_{\phi_k}||^2_{\L^2(\H,\nu)} &= (1-\alpha_k) + |\la  Q^{-1/2}(m_2-m_1), \phi_k\ra|^2.
\end{align}
}
\end{lemma}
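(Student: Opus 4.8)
The plan is to prove both identities first for $z,z_1,z_2$ lying in the dense subspace $Q^{1/2}(\H)$, where $W_z$ is the genuine affine functional $W_z(x) = \la x - m_1, Q^{-1/2}z\ra$, and then to pass to arbitrary elements of $\H$ by density together with the equivalence $\mu \sim \nu$ (Theorem \ref{theorem:Gaussian-equivalent}). All the substantive work happens in the last step; the computations on $Q^{1/2}(\H)$ are routine Gaussian-integral manipulations.

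For $z \in Q^{1/2}(\H)$ put $h = Q^{-1/2}z \in \H$, so that $W_z(x) = \la x - m_2, h\ra + \la m_2 - m_1, h\ra$. Since $\nu = \Ncal(m_2,R)$ has mean $m_2$, the first term integrates to $0$ against $\nu$, giving
\begin{align*}
\int_{\H}W_z(x)\,d\nu(x) = \la m_2 - m_1, Q^{-1/2}z\ra = \la Q^{-1/2}(m_2 - m_1), z\ra,
\end{align*}
where the last equality uses that $Q^{-1/2}$ is self-adjoint and that both $m_2-m_1$ and $z$ lie in its domain $\Im(Q^{1/2})$. For the second identity, with $h_i = Q^{-1/2}z_i$, substitute $x - m_1 = (x - m_2) + (m_2 - m_1)$ into both factors of $W_{z_1}(x)W_{z_2}(x) = \la x-m_1,h_1\ra\la x-m_1,h_2\ra$ and integrate against $\nu$: the two cross terms vanish because $\nu$ has mean $m_2$, the fully centered term integrates to $\la Rh_1,h_2\ra$ by the defining property of the covariance operator of $\nu$, and the remaining term is the constant $\la m_2-m_1,h_1\ra\la m_2-m_1,h_2\ra$. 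Finally, using $R = Q^{1/2}(I-S)Q^{1/2}$ and $Q^{1/2}Q^{-1/2} = I$ on $\Im(Q^{1/2})$ yields $\la Rh_1,h_2\ra = \la (I-S)z_1,z_2\ra$, so the claimed formula holds on $Q^{1/2}(\H)$.

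To remove the restriction $z \in Q^{1/2}(\H)$, fix $z \in \H$ and a sequence $\{z_n\}_{n\in\N}\subset Q^{1/2}(\H)$ with $z_n \approach z$ in $\H$ (possible since $\Im(Q^{1/2})$ is dense). Applying the second identity, already proved on $Q^{1/2}(\H)$, to $z_n - z_m$ gives
\begin{align*}
||W_{z_n} - W_{z_m}||^2_{\L^2(\H,\nu)} = \la (I-S)(z_n - z_m), z_n - z_m\ra + \la Q^{-1/2}(m_2 - m_1), z_n - z_m\ra^2,
\end{align*}
which is at most $\big(||I-S|| + ||Q^{-1/2}(m_2 - m_1)||^2\big)\,||z_n - z_m||^2 \approach 0$, so $\{W_{z_n}\}$ is Cauchy in $\L^2(\H,\nu)$ and converges there to some $g$. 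On the other hand $W_{z_n}\approach W_z$ in $\L^2(\H,\mu)$ by definition of the white noise map, so along a subsequence $W_{z_n}\approach W_z$ $\mu$-a.e., hence $\nu$-a.e. by equivalence; a further subsequence converges $\nu$-a.e. to $g$, whence $g = W_z$ $\nu$-a.e. Thus $W_z \in \L^2(\H,\nu)$ and $W_{z_n}\approach W_z$ in $\L^2(\H,\nu)$, and passing to the limit in the two identities — using $\L^2 \hookrightarrow \L^1$ on the probability space $(\H,\nu)$ for the first and continuity of the inner product for the second — extends them to all $z,z_1,z_2 \in \H$. The displayed special cases follow by taking $z_1 = \phi_j$, $z_2 = \phi_k$ and using $S\phi_k = \alpha_k\phi_k$, so that $\la (I-S)\phi_j,\phi_k\ra = (1-\alpha_k)\delta_{jk}$, together with $j=k$ for the squared norm.

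The main obstacle is precisely this last extension: the white noise map is constructed through an $\L^2(\H,\mu)$-limit, while the asserted identities are statements in $\L^2(\H,\nu)$, so one must argue carefully — via the equivalence of $\mu$ and $\nu$ — both that $W_z$ genuinely lies in $\L^2(\H,\nu)$ and that the $\L^2(\H,\nu)$-limit of the approximants $W_{z_n}$ is the map $W_z$ itself rather than some other $\nu$-representative. Once this identification is in place, everything else is continuity and the elementary algebra of $R = Q^{1/2}(I-S)Q^{1/2}$.
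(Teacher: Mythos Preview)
Your proof is correct and follows exactly the paper's approach: verify both identities by direct Gaussian computation for $z,z_1,z_2\in Q^{1/2}(\H)$ via the decomposition $x-m_1=(x-m_2)+(m_2-m_1)$, then extend to all of $\H$ by density. In fact your treatment of the extension step is more careful than the paper's, which simply writes ``by a limiting argument'' without spelling out why $\L^2(\H,\mu)$-convergence of the $W_{z_n}$ transfers to $\L^2(\H,\nu)$; your use of the already-established second identity to obtain a Cauchy estimate in $\L^2(\H,\nu)$, together with the $\mu\sim\nu$ equivalence to identify the limit, is exactly the justification that step requires.
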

\begin{proof}
For $z \in Q^{1/2}(\H)$, which is dense in  $\H$, we have
\begin{align*}
& \int_{\H}W_z(x)d\nu(x) = \int_{\H}\la x- m_1, Q^{-1/2}z\ra\Ncal(m_2,R)(dx) 
\\
& = \int_{\H}\la x-m_2 + m_2 - m_1, Q^{-1/2}z\ra\Ncal(m_2,R)(dx) = \la m_2 - m_1, Q^{-1/2}z\ra
\\
&= \la Q^{-1/2}(m_2 - m_1), z\ra.
\end{align*}
By a limiting argument, we then have
$\int_{\H}W_z(x)d\nu(x) = \la Q^{-1/2}(m_2 - m_1), z\ra$
 $\forall z \in \H$.

For any pair $(z_1, z_2) \in Q^{1/2}(\H)$, we have
{\small
\begin{align*}
&\la W_{z_1}, W_{z_2}\ra_{\L^2(\H, \nu)} = 
\int_{\H}\la x - m_1, Q^{-1/2}z_1\ra\la x-m_1, Q^{-1/2}z_2\ra \Ncal(m_2, R)(dx)
\\
& =\int_{\H}\la x - m_2 + m_2-m_1, Q^{-1/2}z_1\ra\la x-m_2 + m_2 - m_1, Q^{-1/2}z_2\ra \Ncal(m_2, R)(dx)
\\
& =\int_{\H}[\la x - m_2, Q^{-1/2}z_1\ra +\la m_2-m_1, Q^{-1/2}z_1\ra]
\\
\;\; &\times [\la x-m_2, Q^{-1/2}z_2\ra + \la m_2 - m_1, Q^{-1/2}z_2\ra]\Ncal(m_2, R)(dx)
\\
& = \int_{\H}\la x - m_2, Q^{-1/2}z_1\ra\la x-m_2, Q^{-1/2}z_2\ra\Ncal(m_2, R)(dx) 
\\
&+ \la m_2-m_1, Q^{-1/2}z_1\ra\int_{\H}\la x-m_2, Q^{-1/2}z_2\ra\Ncal(m_2, R)(dx) 
\\
&+ \la m_2 - m_1, Q^{-1/2}z_2\ra\int_{\H}\la x - m_2, Q^{-1/2}z_1\ra \Ncal(m_2, R)(dx)
\\
&+ \la m_2-m_1, Q^{-1/2}z_1\ra\la m_2 - m_1, Q^{-1/2}z_2\ra
\end{align*}
\begin{align*}
&=\la RQ^{-1/2}z_1, Q^{-1/2}z_2\ra +\la m_2-m_1, Q^{-1/2}z_1\ra\la m_2 - m_1, Q^{-1/2}z_2\ra
\\
&= \la Q^{-1/2}RQ^{-1/2}z_1,z_2\ra + \la Q^{-1/2}(m_2-m_1), z_1\ra\la Q^{-1/2}(m_2 - m_1), z_2\ra
\\
& = \la (I-S)z_1, z_2\ra +  \la Q^{-1/2}(m_2-m_1), z_1\ra\la Q^{-1/2}(m_2 - m_1), z_2\ra.
\end{align*}
} 
Since $Q^{1/2}(\H)$ is dense in $\H$, by a limiting argument, we have $\forall z_1, z_2 \in \H$,
\begin{align*}
\la W_{z_1}, W_{z_2}\ra_{\L^2(\H, \nu)} &= \la (I-S)z_1, z_2\ra 
\\
&+ \la Q^{-1/2}(m_2-m_1), z_1\ra\la Q^{-1/2}(m_2 - m_1), z_2\ra.
\end{align*}
For the orthonormal basis $\{\phi_k\}_{k \in \N}$, we have
$\la (I-S)\phi_j, \phi_k\ra = (1-\alpha_k) \delta_{jk}$, \text{so that}
\begin{align*}
&\la W_{\phi_j}, W_{\phi_k}\ra_{\L^2(\H,\nu)} = (1-\alpha_k) \delta_{jk} + \la Q^{-1/2}(m_2-m_1), \phi_k\ra\la Q^{-1/2}(m_2 - m_1), \phi_j\ra.
\end{align*}
\qed \end{proof}

\begin{proposition}
\label{proposition:gN-convergence-L2nu}
Consider the functions
\begin{align}
g_N &= \sum_{k=1}^N\frac{1}{1-\alpha_k}\la Q^{-1/2}(m_2-m_1), \phi_k\ra W_{\phi_k}, \;\; N \in \N,
\\
g &= \sum_{k=1}^{\infty}\frac{1}{1-\alpha_k}\la Q^{-1/2}(m_2-m_1), \phi_k\ra W_{\phi_k}.
\end{align}
Then  $g \in \L^1(\H,\nu)$,$g\in \L^2(\H,\nu)$, and
\begin{align}
&\lim_{N\approach \infty}||g_N - g||_{\L^2(\H, \nu)} = 0,
\;\;\;
\lim_{N \approach \infty}||g_N - g||_{\L^1(\H, \nu)} = 0.
\end{align}
\end{proposition}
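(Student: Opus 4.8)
The plan is to mirror the argument of Lemma \ref{lemma:gN-convergence-L2mu}, the only new ingredient being that under $\nu$ the family $\{W_{\phi_k}\}$ is no longer orthonormal, so I would work with the inner-product formula of Lemma \ref{lemma:L2nu} instead. Write $b = Q^{-1/2}(m_2-m_1) \in \H$, which is well-defined since $m_2 - m_1 \in \Im(Q^{1/2})$, and put $b_k = \la b, \phi_k\ra$, so $\sum_{k=1}^{\infty} b_k^2 = ||b||^2 < \infty$. Since $S \in \HS(\H)$ its eigenvalues satisfy $\alpha_k \approach 0$, and since $I-S$ is strictly positive, $\alpha_k < 1$ for all $k$; hence $\delta_0 := \inf_{k \in \N}(1-\alpha_k) > 0$ and $\sum_{k=1}^{\infty}\frac{b_k^2}{1-\alpha_k} \leq \delta_0^{-1}||b||^2 < \infty$.

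First I would show that $\{g_N\}_{N \in \N}$ is Cauchy in $\L^2(\H,\nu)$. Each $g_N$ is a finite linear combination of the $W_{\phi_k}$, hence lies in $\L^2(\H,\nu)$ by Lemma \ref{lemma:L2nu}. For $M < N$, using $\la W_{\phi_j}, W_{\phi_k}\ra_{\L^2(\H,\nu)} = (1-\alpha_k)\delta_{jk} + b_j b_k$ and expanding the finite sum $g_N - g_M = \sum_{k=M+1}^N \frac{b_k}{1-\alpha_k}W_{\phi_k}$, the diagonal part contributes $\sum_{k=M+1}^N \frac{b_k^2}{1-\alpha_k}$ and the rank-one part contributes $\left(\sum_{k=M+1}^N \frac{b_k^2}{1-\alpha_k}\right)^2$, so
\begin{align*}
||g_N - g_M||_{\L^2(\H,\nu)}^2 = \sum_{k=M+1}^N \frac{b_k^2}{1-\alpha_k} + \left(\sum_{k=M+1}^N \frac{b_k^2}{1-\alpha_k}\right)^2,
\end{align*}
which tends to $0$ as $M,N \approach \infty$ because both terms are controlled by tails of the convergent series $\sum_k \frac{b_k^2}{1-\alpha_k}$. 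Thus $g_N$ converges in $\L^2(\H,\nu)$ to some $\tilde g \in \L^2(\H,\nu)$.

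Next I would identify $\tilde g$ with $g$. By Lemma \ref{lemma:gN-convergence-L2mu}, $g_N \approach g$ in $\L^2(\H,\mu)$, so passing to a subsequence $\{N_j\}$ along which $g_{N_j} \approach g$ $\mu$-a.e.\ and $g_{N_j} \approach \tilde g$ $\nu$-a.e., and using that $\mu$ and $\nu$ are equivalent (hence have the same null sets), I get $g = \tilde g$ $\nu$-a.e. Therefore $g \in \L^2(\H,\nu)$ and $g_N \approach g$ in $\L^2(\H,\nu)$; letting $N \approach \infty$ in the displayed identity moreover yields the explicit remainder $||g - g_N||_{\L^2(\H,\nu)}^2 = \sum_{k>N}\frac{b_k^2}{1-\alpha_k} + \left(\sum_{k>N}\frac{b_k^2}{1-\alpha_k}\right)^2 \approach 0$. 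Finally, since $\nu$ is a probability measure, H\"older's inequality gives $||g_N - g||_{\L^1(\H,\nu)} \leq ||g_N - g||_{\L^2(\H,\nu)} \approach 0$, and in particular $g \in \L^1(\H,\nu)$.

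The hard part is the identification step: the Cauchy computation only produces \emph{some} $\L^2(\H,\nu)$-limit, and one must argue it coincides with the a.e.-defined series $g$; this is exactly where the equivalence of $\mu$ and $\nu$ together with the $\L^2(\H,\mu)$-convergence of Lemma \ref{lemma:gN-convergence-L2mu} is essential. A more direct route would be to expand $g$ as a white-noise series in the spirit of Lemma \ref{lemma:bx-inner}, but that would require $(I-S)^{-1}Q^{-1/2}(m_2-m_1) \in \Im(Q^{1/2})$, which need not hold in general, so the subsequence argument is the safer path.
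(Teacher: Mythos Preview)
Your proof is correct and follows essentially the same route as the paper: both use the inner-product formula of Lemma \ref{lemma:L2nu} to obtain the identity
\[
||g_N - g_M||_{\L^2(\H,\nu)}^2 = \sum_{k=M+1}^N \frac{b_k^2}{1-\alpha_k} + \left(\sum_{k=M+1}^N \frac{b_k^2}{1-\alpha_k}\right)^2,
\]
deduce $\L^2(\H,\nu)$-convergence from the tails of $\sum_k b_k^2/(1-\alpha_k)$, and then pass to $\L^1(\H,\nu)$ via H\"older. The only difference is that the paper writes the tail formula for $||g_N - g||^2_{\L^2(\H,\nu)}$ directly without explicitly justifying that the Cauchy limit coincides with the a.e.\ series $g$; your subsequence argument using the equivalence $\mu \sim \nu$ and Lemma \ref{lemma:gN-convergence-L2mu} fills that gap cleanly.
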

\begin{proof}
Using the expression for $\la W_{\phi_j}, W_{\phi_k}\ra_{\L^2(\H,\nu)}$
from Lemma \ref{lemma:L2nu}, we have
{\small
\begin{align*}
&||g_N||^2_{\L^2(\H, \nu)} = \sum_{k,j=1}^N\frac{\la Q^{-1/2}(m_2-m_1), \phi_k\ra\la Q^{-1/2}(m_2-m_1), \phi_j\ra }{(1-\alpha_k)(1-\alpha_j)}\la W_{\phi_k}, W_{\phi_j}\ra_{\L^2(\H,\nu)}
\\
& = \sum_{k=1}^N\frac{\la Q^{-1/2}(m_2-m_1), \phi_k\ra^2}{1-\alpha_k} + \sum_{k,j=1}^N\frac{\la Q^{-1/2}(m_2-m_1), \phi_k\ra^2\la Q^{-1/2}(m_2-m_1), \phi_j\ra^2 }{(1-\alpha_k)(1-\alpha_j)}
\\
& = \sum_{k=1}^N\frac{\la Q^{-1/2}(m_2-m_1), \phi_k\ra^2}{1-\alpha_k} + \left(\sum_{k=1}^N\frac{\la Q^{-1/2}(m_2-m_1), \phi_k\ra^2}{1-\alpha_k}\right)^2
\\
& \leq \sum_{k=1}^{\infty}\frac{\la Q^{-1/2}(m_2-m_1), \phi_k\ra^2}{1-\alpha_k} + \left(\sum_{k=1}^{\infty}\frac{\la Q^{-1/2}(m_2-m_1), \phi_k\ra^2}{1-\alpha_k}\right)^2
\\
& = ||(I-S)^{-1/2}Q^{-1/2}(m_2 - m_1)||^2  + ||(I-S)^{-1/2}Q^{-1/2}(m_2 - m_1)||^4 
= ||g||^2_{\L^2(\H,\nu)} < \infty.
\end{align*}
}
Furthermore, the expression for $||g_N||^2_{L^2(\H, \nu)}$ shows that
\begin{align*}
||g_N - g||^2_{\L^2(\H, \nu)} &= \sum_{k=N+1}^{\infty}\frac{\la Q^{-1/2}(m_2-m_1), \phi_k\ra^2}{1-\alpha_k} 
\\
&+ \left(\sum_{k=N+1}^{\infty}\frac{\la Q^{-1/2}(m_2-m_1), \phi_k\ra^2}{1-\alpha_k}\right)^2 \approach 0 \;\;\text{as}\;\; N \approach \infty.
\end{align*}
By the H\"older Inequality, we obtain $||g||_{\L^1(\H,\nu)} \leq ||g||_{\L^2(\H,\nu)} < \infty$ and
$||g_N - g||_{\L^1(\H,\nu)} \leq ||g_N - g||_{\L^2(\H, \nu)} \approach 0 \;\;\text{as}\;\; N \approach \infty$.
\qed \end{proof}

\begin{lemma}
\label{lemma:Gaussian-integral-3}
For any pair $a_1, a_2 \in \H$,
\begin{align*}
&\int_{\H}\la x-m_1,a_1\ra^2\la x-m_1,a_2\ra^2\Ncal(m_2,R)(dx) = \la a_1,Ra_1\ra\la a_2, Ra_2\ra + 2\la a_1, Ra_2\ra^2
\nonumber
\\
&\;\;\; \; + \la m_2 - m_1, a_2\ra^2\la a_1,Ra_1\ra + 4\la m_2 - m_1,a_1\ra\la m_2 - m_1,a_2\ra \la a_1, Ra_2\ra 
\nonumber
\\
& \;\; \; \; + \la m_2-m_1,a_1\ra^2\la a_2,Ra_2\ra + \la m_2 - m_1,a_1\ra^2 \la m_2 -m_1,a_2\ra^2.
\end{align*}
In particular, for $a_1 = a_2 = a$,
\begin{align*}
\int_{\H}\la x-m_1, a\ra^4 \Ncal(m_2,R) &= 3\la a, Ra\ra^2 
+ 6\la m_2 - m_1, a\ra^2 \la a, Ra\ra 
+  \la m_2 - m_1, a\ra^4.
\end{align*}
\end{lemma}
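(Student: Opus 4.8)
The plan is to reduce this to standard Gaussian moments by re-centering the measure. First I would substitute $y = x - m_2$, so that $y$ is distributed according to the centered Gaussian measure $\Ncal(0,R)$ on $\H$, and write $x - m_1 = y + (m_2 - m_1)$. Putting $c = m_2 - m_1$ for brevity, the integrand becomes $(\la y, a_1\ra + \la c, a_1\ra)^2(\la y, a_2\ra + \la c, a_2\ra)^2$, which I would then expand into nine monomials in the two real-valued random variables $\la y, a_1\ra$ and $\la y, a_2\ra$, each carrying the appropriate power of the constants $\la c, a_1\ra$ and $\la c, a_2\ra$.

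Integrating term by term against $\Ncal(0,R)$, every monomial of odd total degree in $y$ vanishes by the symmetry of the centered Gaussian; in particular the two degree-three terms $2\la c,a_2\ra\int_{\H}\la y,a_1\ra^2\la y,a_2\ra\,\Ncal(0,R)(dy)$ and $2\la c,a_1\ra\int_{\H}\la y,a_1\ra\la y,a_2\ra^2\,\Ncal(0,R)(dy)$ are zero, a point worth noting explicitly. What survives is the pure fourth-order term, two ``square'' second-order terms $\la c,a_j\ra^2\int_{\H}\la y,a_i\ra^2\,\Ncal(0,R)(dy)$, one mixed second-order term $4\la c,a_1\ra\la c,a_2\ra\int_{\H}\la y,a_1\ra\la y,a_2\ra\,\Ncal(0,R)(dy)$, and the constant term $\la c,a_1\ra^2\la c,a_2\ra^2$.

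For the remaining integrals I would invoke the elementary Gaussian moment identities already used in the paper (as in the computation behind Lemma \ref{lemma:norm-W4}, i.e.\ Lemma \ref{lemma:Gaussian-integral-1}): the second moment $\int_{\H}\la y, a_i\ra\la y, a_j\ra\,\Ncal(0,R)(dy) = \la a_i, R a_j\ra$ and the fourth-moment (Wick/Isserlis) identity $\int_{\H}\la y, a_1\ra^2\la y, a_2\ra^2\,\Ncal(0,R)(dy) = \la a_1, R a_1\ra\la a_2, R a_2\ra + 2\la a_1, R a_2\ra^2$. Substituting these and writing $c = m_2 - m_1$ back in gives exactly the displayed formula. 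The special case then follows by setting $a_1 = a_2 = a$ and using $\int_{\H}\la y, a\ra^4\,\Ncal(0,R)(dy) = 3\la a, Ra\ra^2$; collecting like terms yields $3\la a,Ra\ra^2 + 6\la m_2 - m_1, a\ra^2\la a, Ra\ra + \la m_2 - m_1, a\ra^4$.

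There is no real obstacle here beyond the bookkeeping of the nine-term expansion and keeping straight which terms are odd in $y$. The only subtlety is ensuring the moment identities apply for arbitrary $a_1, a_2 \in \H$ rather than only for vectors of the form $R^{1/2}(\cdot)$ or $Q^{-1/2}(\cdot)$; if the cited lemma is stated in that restricted form, I would instead observe that the joint law of $(\la y, a_1\ra, \la y, a_2\ra)$ under $\Ncal(0,R)$ is a (possibly degenerate) bivariate centered Gaussian with covariance entries $\la a_i, R a_j\ra$, and apply the classical finite-dimensional second- and fourth-moment formulas there, which hold in the degenerate case as well by a standard limiting argument.
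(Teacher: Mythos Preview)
Your proposal is correct and is essentially identical to the paper's own proof: both write $x - m_1 = (x - m_2) + (m_2 - m_1)$, expand the product, eliminate the odd-degree terms via symmetry (the paper cites Lemma~\ref{lemma:Gauss-integral-2} for the degree-three pieces), and then apply Lemma~\ref{lemma:Gaussian-integral-1} for the surviving second- and fourth-order moments. Your subtlety about the generality of the moment identities is unnecessary here, since Lemma~\ref{lemma:Gaussian-integral-1} is already stated for arbitrary $a,b\in\H$.
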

\begin{proof}
We have, by symmetry, for any $a \in \H$,
$\int_{\H}\la x - m_2, a\ra\Ncal(m_2,R) = \int_{\H}\la x- m_2, a\ra^3\Ncal(m_2,R) = 0$.
Also, by Lemma \ref{lemma:Gauss-integral-2}, for any $a, b \in \H$,
\begin{align*}
\int_{\H}\la x- m_2, a\ra^2\la x- m_2, b\ra \Ncal(m_2,R)(dx) = 0.
\end{align*}
Thus
for any pair $a_1, a_2 \in \H$, by Lemma \ref{lemma:Gaussian-integral-1},
{\small
\begin{align*}
&\int_{\H}\la x-m_1, a_1\ra^2\la x- m_1, a_2\ra^2\Ncal(m_2,R)(dx)
\\
&= \int_{\H}(\la x-m_2, a_1\ra + \la m_2 - m_1,a_1\ra)^2
(\la x-m_2, a_2\ra + \la m_2 - m_1,a_2\ra)^2\Ncal(m_2,R)(dx)
\\
& = \int_{\H}\la x-m_2,a_1\ra^2\la x-m_2,a_2\ra^2\Ncal(m_2,R)(dx) + \la m_2 - m_1, a_2\ra^2\int_{\H}\la x - m_2, a_1\ra^2\Ncal(m_2,R)(dx)
\\
& + 4(\la m_2 - m_1,a_1\ra\la m_2 - m_1,a_2\ra\int_{\H}\la x-m_2,a_1\ra\la x-m_2, a_2\ra \Ncal(m_2,R)(dx)
\\
&+ \la m_2-m_1,a_1\ra^2\int_{\H}\la x-m_2, a_2\ra^2\Ncal(m_2,R)(dx) + \la m_2 - m_1,a_1\ra^2 \la m_2 -m_1,a_2\ra^2
\\
& = \la a_1,Ra_1\ra\la a_2, Ra_2\ra + 2\la a_1, Ra_2\ra^2
+ \la m_2 - m_1, a_2\ra^2\la a_1,Ra_1\ra 
\\
&+ 4(\la m_2 - m_1,a_1\ra\la m_2 - m_1,a_2\ra \la a_1, Ra_2\ra + \la m_2-m_1,a_1\ra^2\la a_2,Ra_2\ra
\\
&+ \la m_2 - m_1,a_1\ra^2 \la m_2 -m_1,a_2\ra^2.
\end{align*}
}
This completes the proof.
\qed \end{proof}

\begin{lemma}
\label{lemma:RQ-equivalent-1}
For any pair $a,b \in Q^{1/2}(\H)$,
\begin{align}
\la Q^{-1/2}a, RQ^{-1/2}b\ra = \la a, (I-S)b\ra.
\end{align}
\end{lemma}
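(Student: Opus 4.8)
The plan is simply to substitute the factorization $R = Q^{1/2}(I-S)Q^{1/2}$ from Assumption~\ref{assumption:main-1} into the left-hand side and cancel the $Q^{\pm 1/2}$ factors, the only point requiring a word of care being that $Q^{-1/2}a$ and $Q^{-1/2}b$ must first be recognized as genuine elements of $\H$ before the bounded operator $R$ can be applied to them.

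First I would observe that since $a, b \in Q^{1/2}(\H) = \Im(Q^{1/2})$, there exist (unique, because $\ker(Q^{1/2}) = \ker(Q) = \{0\}$) vectors $Q^{-1/2}a, Q^{-1/2}b \in \H$ with $Q^{1/2}(Q^{-1/2}a) = a$ and $Q^{1/2}(Q^{-1/2}b) = b$; this is immediate from the spectral decomposition of $Q$, exactly as in the description of $\Im(Q^{1/2})$ in Section~\ref{section:exact-divergences}. Consequently $R$ may be applied to $Q^{-1/2}b$, and using $R = Q^{1/2}(I-S)Q^{1/2}$ together with $Q^{1/2}(Q^{-1/2}b) = b$ we get
\begin{align*}
RQ^{-1/2}b = Q^{1/2}(I-S)Q^{1/2}Q^{-1/2}b = Q^{1/2}(I-S)b.
\end{align*}

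Then, since $Q^{1/2}$ is self-adjoint and $Q^{1/2}(Q^{-1/2}a) = a$, moving $Q^{1/2}$ across the inner product yields
\begin{align*}
\la Q^{-1/2}a, RQ^{-1/2}b\ra = \la Q^{-1/2}a, Q^{1/2}(I-S)b\ra = \la Q^{1/2}Q^{-1/2}a, (I-S)b\ra = \la a, (I-S)b\ra,
\end{align*}
which is the asserted identity. There is no genuine obstacle here; the computation is entirely routine, and the only subtlety is the bookkeeping around the unbounded operator $Q^{-1/2}$, which is harmless because we only ever evaluate it on $\Im(Q^{1/2})$ where it is a well-defined (though unbounded) right inverse of $Q^{1/2}$.
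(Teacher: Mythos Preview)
Your proof is correct and essentially identical to the paper's: the paper writes $a = Q^{1/2}c$, $b = Q^{1/2}d$ for some $c,d \in \H$ and then computes $\la Q^{-1/2}a, RQ^{-1/2}b\ra = \la c, Rd\ra = \la c, Q^{1/2}(I-S)Q^{1/2}d\ra = \la Q^{1/2}c, (I-S)Q^{1/2}d\ra = \la a, (I-S)b\ra$, which is exactly your argument with $c = Q^{-1/2}a$, $d = Q^{-1/2}b$. Your extra remarks about well-definedness of $Q^{-1/2}$ on $\Im(Q^{1/2})$ are fine but not strictly needed.
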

\begin{proof}
By assumption, there exist $c,d \in \H$ such that $a = Q^{1/2}c$, $b = Q^{1/2}d$. Thus
\begin{align*}
\la Q^{-1/2}a, RQ^{-1/2}b\ra &= \la c, Rd\ra = \la c, Q^{1/2}(I-S)Q^{1/2}d\ra 
\\
&= \la Q^{1/2}c, (I-S)Q^{1/2}d\ra =
\la a, (I-S)b\ra.
\end{align*}
\qed \end{proof}

\begin{lemma}
\label{lemma:norm-W4-2}
For any $a,b \in \H$,
{\small
\begin{align*}
&\int_{\H}W_a^2(x)W_b^2(x)\Ncal(m_2,R)(dx)
\\
& = \la a, (I-S)a\ra\la b, (I-S)b\ra + 2 \la a, (I-S)b\ra^2 + \la Q^{-1/2}(m_2 - m_1), b\ra^2\la a, (I-S)a\ra
\nonumber
\\
& + 4\la Q^{-1/2}(m_2-m_1), a\ra\la Q^{-1/2}(m_2-m_1),b\ra \la a, (I-S)b\ra
\nonumber
\\
& + \la Q^{-1/2}(m_2 - m_1), a\ra^2\la b, (I-S)b\ra + \la Q^{-1/2}(m_2 - m_1),a\ra^2 \la Q^{-1/2}(m_2 -m_1),b\ra^2.
\nonumber
\end{align*}
}
In particular, for $a=b$,
\begin{align}
\int_{\H}W_{a}^4(x)\Ncal(m_2,R)(dx) &=  3\la a, (I-S)a\ra^2 + 6\la Q^{-1/2}(m_2 - m_1), a\ra^2 \la a, (I-S)a\ra
\nonumber
\\
&+ \la Q^{-1/2}(m_2 - m_1),a\ra^4.
\end{align}
For two orthonormal eigenvectors $\phi_k, \phi_j$ of $S$,
\begin{align}
&\int_{\H}W_{\phi_k}^2(x)W_{\phi_j}^2(x)\Ncal(m_2,R)(dx) = (1-\alpha_k)(1-\alpha_j) + 2(1-\alpha_k)^2\delta_{jk}
\\
& + (1-\alpha_k)\la Q^{-1/2}(m_2 - m_1), \phi_j\ra^2 + (1-\alpha_j)\la Q^{-1/2}(m_2 - m_1), \phi_k\ra^2 
\nonumber
\\
&+ 4(1-\alpha_k)\delta_{jk}\la Q^{-1/2}(m_2-m_1), \phi_k\ra\la Q^{-1/2}(m_2-m_1),\phi_j\ra
\nonumber
\\
& + \la Q^{-1/2}(m_2 - m_1),\phi_k\ra^2 \la Q^{-1/2}(m_2 -m_1), \phi_j\ra^2.
\nonumber
\\
&\int_{\H}W_{\phi_k}^4(x)\Ncal(m_2,R)(dx) = 3(1-\alpha_k)^2
\\
& + 6(1-\alpha_k)\la Q^{-1/2}(m_2 - m_1), \phi_k\ra^2 
 + \la Q^{-1/2}(m_2 - m_1),\phi_k\ra^4.
\nonumber
\end{align}
\end{lemma}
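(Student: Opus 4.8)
The plan is to reduce everything to the Gaussian moment formula already recorded in Lemma~\ref{lemma:Gaussian-integral-3}, transported through the white noise isometry. For $a, b \in Q^{1/2}(\H)$ the white noise variable is given pointwise by $W_a(x) = \la x - m_1, Q^{-1/2}a\ra$, so that $W_a^2(x)W_b^2(x) = \la x - m_1, Q^{-1/2}a\ra^2\la x - m_1, Q^{-1/2}b\ra^2$ exactly, with no $\L^2$-limit needed. I would then apply Lemma~\ref{lemma:Gaussian-integral-3} with $a_1 = Q^{-1/2}a$ and $a_2 = Q^{-1/2}b$, integrating against $\nu = \Ncal(m_2, R)$, and rewrite the six resulting terms using Lemma~\ref{lemma:RQ-equivalent-1}, which gives $\la Q^{-1/2}a, R Q^{-1/2}b\ra = \la a, (I-S)b\ra$ (and likewise for the diagonal pieces), together with $\la m_2 - m_1, Q^{-1/2}a\ra = \la Q^{-1/2}(m_2 - m_1), a\ra$, valid since $m_2 - m_1 \in \Im(Q^{1/2})$. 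This yields the stated formula for $\int_{\H}W_a^2 W_b^2 \, d\nu$ when $a, b \in Q^{1/2}(\H)$.

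To pass to arbitrary $a, b \in \H$ I would use that $Q^{1/2}(\H)$ is dense in $\H$ (as $\ker Q = \{0\}$) and argue by approximation. First, $W_a^2 W_b^2 \in \L^1(\H, \nu)$: it lies in $\bigcap_{q \geq 1}\L^q(\H, \mu)$ since $W_a, W_b$ are Gaussian under $\mu$, while $d\nu/d\mu \in \L^p(\H, \mu)$ for some $p > 1$ by Proposition~\ref{proposition:L1mu-dnu-dmu}, so H\"older's inequality and $\int_{\H}W_a^2 W_b^2\, d\nu = \int_{\H}W_a^2 W_b^2\,\tfrac{d\nu}{d\mu}\, d\mu$ give integrability. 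Next, for $a_n \to a$ in $\H$ with $a_n \in Q^{1/2}(\H)$, Lemma~\ref{lemma:L2nu} gives $\|W_{a_n} - W_a\|^2_{\L^2(\H,\nu)} = \la (I-S)(a_n - a), a_n - a\ra + \la Q^{-1/2}(m_2 - m_1), a_n - a\ra^2 \to 0$; the $a=b$ case of the formula proved above shows $\|W_{a_n}^2\|^2_{\L^2(\H,\nu)} = \int_{\H}W_{a_n}^4\, d\nu$ stays bounded, so writing $W_{a_n}^2 - W_a^2 = (W_{a_n} - W_a)(W_{a_n} + W_a)$ and applying Cauchy-Schwarz with that fourth-moment bound yields $W_{a_n}^2 \to W_a^2$ in $\L^2(\H,\nu)$. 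Doing the same for $b_n \to b$ and splitting $W_{a_n}^2 W_{b_n}^2 - W_a^2 W_b^2 = W_{a_n}^2(W_{b_n}^2 - W_b^2) + (W_{a_n}^2 - W_a^2)W_b^2$ gives $\L^1(\H,\nu)$-convergence of the integrands; since the right-hand side of the formula is a polynomial in the continuous bilinear form $\la a, (I-S)b\ra$ and the continuous linear functional $\la Q^{-1/2}(m_2 - m_1), \cdot\ra$, both sides converge to the asserted values, establishing the identity for all $a, b \in \H$.

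The two displayed specializations are then immediate substitutions. Putting $a = b$ collapses the general formula to the stated $\int_{\H}W_a^4\, d\nu$ expression (using $\la a,(I-S)b\ra^2 = \la a,(I-S)a\ra^2$). For orthonormal eigenvectors $\phi_k, \phi_j$ of $S$ with $S\phi_k = \alpha_k\phi_k$ one has $(I-S)\phi_j = (1-\alpha_j)\phi_j$, hence $\la \phi_k, (I-S)\phi_j\ra = (1-\alpha_k)\delta_{jk}$ and $\la \phi_k, (I-S)\phi_j\ra^2 = (1-\alpha_k)^2\delta_{jk}$; substituting these into the general formula gives the displayed expression for $\int_{\H}W_{\phi_k}^2 W_{\phi_j}^2\, d\nu$, and the further choice $j = k$ gives the $\int_{\H}W_{\phi_k}^4\, d\nu$ formula.

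I expect the main obstacle to be the density/limiting step, where one must simultaneously control integrability of $W_a^2 W_b^2$ under $\nu$ via the Radon-Nikodym derivative and Proposition~\ref{proposition:L1mu-dnu-dmu}, and the $\L^2(\H,\nu)$-convergence of $W_{a_n}^2$; this is the only part that is not purely algebraic, everything else being bookkeeping with the moment formula of Lemma~\ref{lemma:Gaussian-integral-3} and the change-of-variables identities from Lemmas~\ref{lemma:RQ-equivalent-1} and \ref{lemma:L2nu}.
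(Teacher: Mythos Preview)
Your approach is essentially identical to the paper's: establish the identity for $a,b \in Q^{1/2}(\H)$ by applying Lemma~\ref{lemma:Gaussian-integral-3} with $a_1 = Q^{-1/2}a$, $a_2 = Q^{-1/2}b$ and then rewriting via Lemma~\ref{lemma:RQ-equivalent-1}, after which the paper simply says ``the general case $a,b \in \H$ then follows by a limiting argument'' without further detail. Your elaboration of that limiting step is more careful than the paper's; the only imprecision is that the Cauchy--Schwarz step after writing $W_{a_n}^2 - W_a^2 = (W_{a_n}-W_a)(W_{a_n}+W_a)$ really needs $\L^4(\H,\nu)$-control of $W_{a_n}-W_a$ rather than $\L^2$, which you get either by running the argument as a Cauchy-sequence argument on $\{W_{a_n}^2\}$ (as in the proof of Lemma~\ref{lemma:norm-W4}) or by noting that each $W_c$ is Gaussian under $\nu$.
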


\begin{proof}
For $a,b \in Q^{1/2}(\H)$, by Lemmas \ref{lemma:Gaussian-integral-3} and \ref{lemma:RQ-equivalent-1}, we have
{\small 
\begin{align*}
&\int_{\H}W_a^2(x)W_b^2(x)\Ncal(m_2,R)(dx) 
= \int_{\H}\la x-m_1, Q^{-1/2}a\ra^2\la x-m_1, Q^{-1/2}b\ra^2\Ncal(m_2,R)(dx)
\\
&=
\la Q^{-1/2}a,RQ^{-1/2}a\ra\la Q^{-1/2}b, RQ^{-1/2}b\ra + 2\la Q^{-1/2}a, RQ^{-1/2}b\ra^2
\nonumber
\\
& + \la m_2 - m_1, Q^{-1/2}b\ra^2\la Q^{-1/2}a,RQ^{-1/2}a\ra 
\\
&+ 4(\la m_2 - m_1,Q^{-1/2}a\ra\la m_2 - m_1,Q^{-1/2}b\ra \la Q^{-1/2}a, RQ^{-1/2}b\ra 
\nonumber
\\
&+ \la m_2-m_1,Q^{-1/2}a\ra^2\la Q^{-1/2}b,RQ^{-1/2}b\ra + \la m_2 - m_1,Q^{-1/2}a\ra^2 \la m_2 -m_1,Q^{-1/2}b\ra^2.
\\
& = \la a, (I-S)a\ra\la b, (I-S)b\ra + 2 \la a, (I-S)b\ra^2 + \la Q^{-1/2}(m_2 - m_1), b\ra^2\la a, (I-S)a\ra
\\
& + 4\la Q^{-1/2}(m_2-m_1), a\ra\la Q^{-1/2}(m_2-m_1),b\ra \la a, (I-S)b\ra
\\
& + \la Q^{-1/2}(m_2 - m_1), a\ra^2\la b, (I-S)b\ra + \la Q^{-1/2}(m_2 - m_1),a\ra^2 \la Q^{-1/2}(m_2 -m_1),b\ra^2.
\end{align*}
}
The general case $a,b \in \H$ then follows by a limiting argument.
\qed \end{proof}

\begin{proposition}
\label{proposition:orthonormal-L2nu}
The following functions are orthonormal in $\L^2(\H,\nu)$
\begin{align}
\left\{1,\frac{W_{\phi_k}^2 - [1-\alpha_k + \la Q^{-1/2}(m_2 -m_1), \phi_k\ra^2]}{\sqrt{2(1-\alpha_k)^2 + 4(1-\alpha_k) \la Q^{-1/2}(m_2 -m_1), \phi_k\ra^2}}\right\}_{k=1}^{\infty}.
\end{align}
\end{proposition}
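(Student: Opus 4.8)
The plan is to verify the three orthonormality relations directly, using only Lemmas \ref{lemma:L2nu} and \ref{lemma:norm-W4-2} together with the strict positivity of $I-S$. Abbreviate $c_k = \la Q^{-1/2}(m_2-m_1),\phi_k\ra$ and set $a_k = 1-\alpha_k + c_k^2$ and $f_k = W_{\phi_k}^2 - a_k$, so that the family in question is $\{1\}\cup\{f_k/\sqrt{2(1-\alpha_k)^2+4(1-\alpha_k)c_k^2}\}_{k\in\N}$. First I would record that $\nu$ is a probability measure, so $\|1\|_{\L^2(\H,\nu)}^2 = \nu(\H) = 1$, and that by Lemma \ref{lemma:L2nu} one has $\int_{\H} W_{\phi_k}^2\,d\nu = \|W_{\phi_k}\|_{\L^2(\H,\nu)}^2 = (1-\alpha_k)+c_k^2 = a_k$; hence $\int_{\H} f_k\,d\nu = 0$, that is, $\la f_k,1\ra_{\L^2(\H,\nu)}=0$. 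Lemma \ref{lemma:norm-W4-2} also guarantees $W_{\phi_k}^4\in\L^1(\H,\nu)$, so $f_k\in\L^2(\H,\nu)$ and all the inner products below are finite.

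Next I would compute the pairwise inner products. Expanding and using $\int_{\H} W_{\phi_j}^2\,d\nu = a_j$ and $\int_{\H} W_{\phi_k}^2\,d\nu = a_k$, the cross terms collapse to
\begin{align}
\la f_j,f_k\ra_{\L^2(\H,\nu)} = \int_{\H} W_{\phi_j}^2 W_{\phi_k}^2\,d\nu - a_j a_k.
\end{align}
For $j\neq k$, Lemma \ref{lemma:norm-W4-2} (with all Kronecker-delta terms vanishing) gives $\int_{\H} W_{\phi_j}^2 W_{\phi_k}^2\,d\nu = (1-\alpha_j)(1-\alpha_k) + (1-\alpha_k)c_j^2 + (1-\alpha_j)c_k^2 + c_j^2 c_k^2$, which is exactly $a_j a_k = (1-\alpha_j+c_j^2)(1-\alpha_k+c_k^2)$; hence $\la f_j,f_k\ra_{\L^2(\H,\nu)}=0$. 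For $j=k$, the same lemma gives $\int_{\H} W_{\phi_k}^4\,d\nu = 3(1-\alpha_k)^2 + 6(1-\alpha_k)c_k^2 + c_k^4$, so
\begin{align}
\|f_k\|_{\L^2(\H,\nu)}^2 = 3(1-\alpha_k)^2 + 6(1-\alpha_k)c_k^2 + c_k^4 - a_k^2 = 2(1-\alpha_k)^2 + 4(1-\alpha_k)c_k^2,
\end{align}
which is precisely the square of the normalizing denominator in the statement.

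Finally I would note that since $\ker(Q)=\ker(R)=\{0\}$, Proposition \ref{proposition:positiveIS} shows $I-S$ is strictly positive, so $\alpha_k<1$ and $2(1-\alpha_k)^2 + 4(1-\alpha_k)c_k^2 \geq 2(1-\alpha_k)^2 > 0$; the normalization is therefore legitimate, and dividing $f_k$ by $\sqrt{2(1-\alpha_k)^2+4(1-\alpha_k)c_k^2}$ turns the computations above into $\|\cdot\|_{\L^2(\H,\nu)}=1$, mutual orthogonality for distinct indices, and orthogonality to $1$, which is the claimed orthonormality. I do not expect a real obstacle here: the argument is a bookkeeping exercise, the only point needing care being the algebraic identity $a_j a_k = (1-\alpha_j)(1-\alpha_k) + (1-\alpha_k)c_j^2 + (1-\alpha_j)c_k^2 + c_j^2 c_k^2$ that makes the off-diagonal terms cancel, together with keeping track of the first-moment cancellations in the expansion of $\la f_j,f_k\ra_{\L^2(\H,\nu)}$.
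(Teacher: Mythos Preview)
Your proposal is correct and follows essentially the same route as the paper: both use Lemma \ref{lemma:L2nu} to identify $a_k$ as the $\nu$-mean of $W_{\phi_k}^2$ (giving orthogonality to $1$), and Lemma \ref{lemma:norm-W4-2} to compute $\int W_{\phi_j}^2 W_{\phi_k}^2\,d\nu$ for the off-diagonal cancellation and the normalization constant. Your version is slightly more explicit in checking $\|1\|_{\L^2(\H,\nu)}=1$ and in noting, via Proposition \ref{proposition:positiveIS}, that the normalizing denominator is strictly positive.
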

\begin{proof} We have by Lemma \ref{lemma:L2nu} that
\begin{align*}
[1-\alpha_k + \la Q^{-1/2}(m_2 -m_1), \phi_k\ra^2] = \int_{\H}W_{\phi_k}^2(x)\Ncal(m_2,R).
\end{align*}
Thus the constant function $1$ is orthogonal to $W_{\phi_k}^2 - [1-\alpha_k + \la Q^{-1/2}(m_2 -m_1), \phi_k\ra^2]$.
By Lemma \ref{lemma:norm-W4-2}, for $k \neq j \in \N$,
\begin{align*}
&\int_{\H}\left[W^2_{\phi_k}(x) - \int_{\H}W_{\phi_k}^2(x)\Ncal(m_2,R)\right]\left[W^2_{\phi_j}(x) - \int_{\H}W_{\phi_j}^2(x)\Ncal(m_2,R)\right]\Ncal(m_2,R)(dx)
\\
& = \int_{\H}\W^2_{\phi_k}(x)W^2_{\phi_j}(x)\Ncal(m_2,R)(dx) - \left[\int_{\H}W_{\phi_k}^2(x)\Ncal(m_2,R)\right]
\left[\int_{\H}W_{\phi_j}^2(x)\Ncal(m_2,R)\right]
\\
&= (1-\alpha_k)(1-\alpha_j) 
+ (1-\alpha_k)\la Q^{-1/2}(m_2 - m_1), \phi_j\ra^2 + (1-\alpha_j)\la Q^{-1/2}(m_2 - m_1), \phi_k\ra^2 
\\
& + \la Q^{-1/2}(m_2 - m_1),\phi_k\ra^2 \la Q^{-1/2}(m_2 -m_1), \phi_j\ra^2
\\
& - [1-\alpha_k + \la Q^{-1/2}(m_2 -m_1), \phi_k\ra^2][1-\alpha_j + \la Q^{-1/2}(m_2 -m_1), \phi_j\ra^2] = 0,
\end{align*}
thus
the sequence $\{W^2_{\phi_k}(x) - \int_{\H}W_{\phi_k}^2(x)\Ncal(m_2,R)\}_{k \in \N}$ is orthogonal. 
By Lemma \ref{lemma:norm-W4-2},
\begin{align*}
&\int_{\H}\left(W_{\phi_k}^2 - [1-\alpha_k + \la Q^{-1/2}(m_2 -m_1), \phi_k\ra^2]\right)^2\Ncal(m_2,R)(dx)
\\
& = \int_{\H}W_{\phi_k}^4(x)\Ncal(m_2,R)(dx) - [1-\alpha_k + \la Q^{-1/2}(m_2 -m_1), \phi_k\ra^2]^2
\\
& = 3(1-\alpha_k)^2 + 6(1-\alpha_k)\la Q^{-1/2}(m_2 - m_1), \phi_k\ra^2 
\\
&+ \la Q^{-1/2}(m_2 - m_1),\phi_k\ra^4 - [1-\alpha_k + \la Q^{-1/2}(m_2 -m_1), \phi_k\ra^2]^2
\\
& = 2(1-\alpha_k)^2 + 4(1-\alpha_k) \la Q^{-1/2}(m_2 -m_1), \phi_k\ra^2.
\end{align*}
This gives the normalization constant for each term in the sequence.
\qed \end{proof}

\begin{proposition}
\label{proposition:fN-convergence-L2nu}
Consider the functions
\begin{align}
f_N = \sum_{k=1}^N\left[\frac{\alpha_k}{1-\alpha_k}W^2_{\phi_k} + \log(1-\alpha_k)\right],
f =  \sum_{k=1}^{\infty}\left[\frac{\alpha_k}{1-\alpha_k}W^2_{\phi_k} + \log(1-\alpha_k)\right].
\end{align}
Then $f \in \L^1(\H,\nu)$, $f \in \L^2(\H,\nu)$, and 
\begin{align}
\lim_{N \approach \infty} ||f_N - f||_{\L^2(\H,\nu)} = 0,
\;\;\;
\lim_{N \approach \infty} ||f_N - f||_{\L^1(\H,\nu)} = 0.
\end{align}
\end{proposition}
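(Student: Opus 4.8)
The plan is to follow the proof of Lemma \ref{lemma:fN-convergence-L2mu} almost verbatim, replacing the $\L^2(\H,\mu)$-orthonormal system $\{\tfrac{1}{\sqrt2}(W_{\phi_k}^2-1)\}$ used there by the $\L^2(\H,\nu)$-orthonormal system $\{1\}\cup\{\psi_k\}_{k\in\N}$ furnished by Proposition \ref{proposition:orthonormal-L2nu}, where $\psi_k = c_k^{-1}\bigl(W_{\phi_k}^2 - (1-\alpha_k+\beta_k^2)\bigr)$, with $\beta_k := \la Q^{-1/2}(m_2-m_1),\phi_k\ra$ and $c_k := \sqrt{2(1-\alpha_k)^2 + 4(1-\alpha_k)\beta_k^2} > 0$. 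The relevant summability facts are $\sum_k\beta_k^2 = ||Q^{-1/2}(m_2-m_1)||^2 < \infty$ (since $m_2-m_1\in\Im(Q^{1/2})$) and $\sum_k\alpha_k^2 < \infty$ (since $S\in\HS(\H)$), together with $\alpha_k < 1$, $\alpha_k\approach 0$, $\beta_k\approach 0$.

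First I would rewrite each summand of $f_N$. Using $\int_\H W_{\phi_k}^2\,d\nu = 1-\alpha_k+\beta_k^2$ (Lemma \ref{lemma:L2nu}), a direct computation gives
\begin{align*}
\frac{\alpha_k}{1-\alpha_k}W^2_{\phi_k} + \log(1-\alpha_k) = \frac{\alpha_k c_k}{1-\alpha_k}\,\psi_k + \Bigl[\alpha_k + \frac{\alpha_k\beta_k^2}{1-\alpha_k} + \log(1-\alpha_k)\Bigr],
\end{align*}
so that $f_N = h_N + t_N$, where $h_N = \sum_{k=1}^N \tfrac{\alpha_k c_k}{1-\alpha_k}\psi_k$ lies in the closed span of the $\psi_k$'s and $t_N = \sum_{k=1}^N\bigl[\alpha_k + \tfrac{\alpha_k\beta_k^2}{1-\alpha_k} + \log(1-\alpha_k)\bigr]$ is a partial sum of a numerical series.

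Next I would check square-summability of the $\L^2(\H,\nu)$-coefficients: $c_k^2/(1-\alpha_k)^2 = 2 + 4\beta_k^2/(1-\alpha_k)$, and picking $N_0$ with $|\alpha_k|<\tfrac12$ for $k\ge N_0$ bounds this uniformly for $k\ge N_0$, whence $\sum_k\bigl(\tfrac{\alpha_k c_k}{1-\alpha_k}\bigr)^2 \le C\sum_k\alpha_k^2 + (\text{finite}) < \infty$. By orthonormality of $\{\psi_k\}$ in $\L^2(\H,\nu)$, $h_N$ is Cauchy and converges to some $h\in\L^2(\H,\nu)$ with $||h_N-h||^2_{\L^2(\H,\nu)} = \sum_{k>N}\bigl(\tfrac{\alpha_k c_k}{1-\alpha_k}\bigr)^2\approach 0$. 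For the numerical series, the elementary estimates already used in Lemma \ref{lemma:fN-convergence-L2mu} (via $|\alpha_k+\log(1-\alpha_k)|\le\alpha_k^2$ for $|\alpha_k|<\tfrac12$) give $\sum_k|\alpha_k+\log(1-\alpha_k)|<\infty$, and for $k\ge N_0$ one has $\bigl|\tfrac{\alpha_k\beta_k^2}{1-\alpha_k}\bigr|\le\beta_k^2$, so $\sum_k\bigl|\tfrac{\alpha_k\beta_k^2}{1-\alpha_k}\bigr|<\infty$; hence $t_N\approach t$ for a finite constant $t$. Therefore $f = h + t\in\L^2(\H,\nu)$ and $||f_N - f||_{\L^2(\H,\nu)}\le ||h_N-h||_{\L^2(\H,\nu)} + |t_N - t|\,||1||_{\L^2(\H,\nu)}\approach 0$, using $||1||_{\L^2(\H,\nu)}=1$ since $\nu$ is a probability measure. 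Finally, by H\"older's inequality, $||f_N-f||_{\L^1(\H,\nu)}\le||f_N-f||_{\L^2(\H,\nu)}\approach 0$ and $f\in\L^1(\H,\nu)$.

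The only point that differs from the $\mu$-version — and the one requiring care — is the presence of the mean: the normalization constants $c_k$ now carry the extra term $4(1-\alpha_k)\beta_k^2$, and $f_N$ acquires the extra numerical pieces $\tfrac{\alpha_k\beta_k^2}{1-\alpha_k}$. Both are controlled by combining $\sum_k\beta_k^2<\infty$ with $\alpha_k\approach 0$, so I do not anticipate any serious obstacle; the argument is essentially bookkeeping layered on top of Proposition \ref{proposition:orthonormal-L2nu}.
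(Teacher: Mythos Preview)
Your proposal is correct and follows essentially the same argument as the paper's proof: both decompose $f_N$ into a scalar part plus a sum along the $\L^2(\H,\nu)$-orthonormal family of Proposition~\ref{proposition:orthonormal-L2nu}, check square-summability of the coefficients using $\sum_k\alpha_k^2<\infty$ and $\sum_k\beta_k^2<\infty$, verify absolute convergence of the numerical series, and then pass to $\L^1$ by H\"older. The only cosmetic difference is that the paper evaluates the sums in closed form (e.g.\ $\|h\|_{\L^2(\H,\nu)}^2 = 2\|S\|_{\HS}^2 + 4\|S(I-S)^{-1/2}Q^{-1/2}(m_2-m_1)\|^2$ and the constant series equals $\la S(I-S)^{-1}Q^{-1/2}(m_2-m_1),Q^{-1/2}(m_2-m_1)\ra + \log\dettwo(I-S)$), whereas you simply bound them; both routes suffice.
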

\begin{proof}
Let $a_k = [(1-\alpha_k)^2 + 2(1-\alpha_k) \la Q^{-1/2}(m_2 -m_1), \phi_k\ra^2]$, $b _k = [1-\alpha_k + \la Q^{-1/2}(m_2 -m_1), \phi_k\ra^2]$, then
\begin{align*}
\frac{\alpha_k}{1-\alpha_k}W_{\phi_k}^2 + \log(1-\alpha_k) = \frac{\alpha_k\sqrt{2a_k}}{1-\alpha_k}\frac{1}{\sqrt{2a_k}}[W_{\phi_k}^2 - b_k] + \frac{\alpha_k b_k}{1- \alpha_k} + \log(1-\alpha_k).
\end{align*}
Consider the series of constants
\begin{align*}
&\sum_{k=1}^{\infty}\left[\frac{\alpha_k b_k}{1- \alpha_k} + \log(1-\alpha_k)\right] 
= \sum_{k=1}^{\infty}\left[\frac{\alpha_k\la Q^{-1/2}(m_2 -m_1), \phi_k\ra^2}{1-\alpha_k} + \alpha_k + \log(1-\alpha_k)\right]
\\
& = \sum_{k=1}^{\infty}\left[\frac{\alpha_k\la Q^{-1/2}(m_2 -m_1), \phi_k\ra^2}{1-\alpha_k}\right]
 + \sum_{k=1}^{\infty}[\alpha_k + \log(1-\alpha_k)]
\\
& = \la S(I-S)^{-1}Q^{-1/2}(m_2 - m_1), Q^{-1/2}(m_2 - m_1)\ra + \log\dettwo(I-S) < \infty.
\end{align*}
Consider the functions
\begin{align*}
h_N = \sum_{k=1}^N\frac{\alpha_k\sqrt{2a_k}}{1-\alpha_k}\frac{1}{\sqrt{2a_k}}[W_{\phi_k}^2 - b_k], \;\; N \in \N,
\;
h = \sum_{k=1}^{\infty}\frac{\alpha_k\sqrt{2a_k}}{1-\alpha_k}\frac{1}{\sqrt{2a_k}}[W_{\phi_k}^2 - b_k].
\end{align*}
By Proposition \ref{proposition:orthonormal-L2nu} and the definition of $a_k$ above, we have
\begin{align*}
&||h||^2_{L^2(\H,\nu)} = 2\sum_{k=1}^{\infty}\frac{\alpha_k^2a_k}{(1-\alpha_k)^2}
 = 2\sum_{k=1}^{\infty}\alpha_k^2 + 4\sum_{k=1}^{\infty} \frac{\alpha_k^2\la Q^{-1/2}(m_2 -m_1), \phi_k\ra^2}{1-\alpha_k}
\\
& = 2||S||^2_{\HS} + 4||S(I-S)^{-1/2}Q^{-1/2}(m_2 - m_1)||^2 < \infty.
\end{align*}
Thus $h \in \L^2(\H,\nu)$. Furthermore, 
\begin{align*}
||h_N - h||^2_{\L^2(\H,\nu)} = 2\sum_{k=N+1}^{\infty}\alpha_k^2 + 4\sum_{k=N+1}^{\infty} \frac{\alpha_k^2\la Q^{-1/2}(m_2 -m_1), \phi_k\ra^2}{1-\alpha_k}  \approach 0
\end{align*}
as $N \approach \infty$. Thus it follows that $f \in \L^2(\H,\nu)$ and
$\lim_{N \approach \infty}||f_N - f||_{\L^2(\H,\nu)} = 0$.
Since $\nu$ is a probability measure on $\H$, it also follows that $f \in \L^1(\H,\nu)$ and that
$\lim_{N \approach \infty}||f_N - f||_{\L^1(\H,\nu)} = 0$.
This completes the proof.
\qed \end{proof}

\begin{proof}
	{\textbf{(of Theorem \ref{theorem:KL-infinite})}}
	By Theorem \ref{theorem:radon-nikodym-infinite}, 
	\begin{align*}
	&\log\left\{\frac{d\nu}{d\mu}(x)\right\} 
	=-\frac{1}{2}||(I-S)^{-1/2}Q^{-1/2}(m_2 - m_1)||^2
	\\
	& -\frac{1}{2}\sum_{k=1}^{\infty}\left[
	\frac{\alpha_k}{1-\alpha_k}W^2_{\phi_k}(x) - \frac{2}{1-\alpha_k}\la Q^{-1/2}(m_2-m_1), \phi_k\ra W_{\phi_k}(x)+ \log(1-\alpha_k)\right].
	\end{align*}
	For each $k \in \N$, by Lemma \ref{lemma:L2nu}, we obtain
	\begin{align*}
	&\int_{\H}\left[
	\frac{\alpha_k}{1-\alpha_k}W^2_{\phi_k}(x) - \frac{2}{1-\alpha_k}\la Q^{-1/2}(m_2-m_1), \phi_k\ra W_{\phi_k}(x)+ \log(1-\alpha_k)\right]d\nu(x)
	\\
	& = \frac{\alpha_k}{1-\alpha_k}[(1-\alpha_k)+ |\la Q^{-1/2}(m_2 - m_1), \phi_k\ra|^2]
	-\frac{2}{1-\alpha_k}|\la Q^{-1/2}(m_2 - m_1), \phi_k\ra|^2
	\\
	&\;\; + \log(1-\alpha_k)
	\\
	& = \alpha_k + \log(1-\alpha_k) - \left(1+\frac{1}{1-\alpha_k}\right)|\la Q^{-1/2}(m_2 - m_1), \phi_k\ra|^2.
	\end{align*}
	For each $N \in \N$, consider the function
	$r_N = f_N -2 g_N$, $r = f- 2g$, where
	\begin{align*}
	f_N &= \sum_{k=1}^N\left[\frac{\alpha_k}{1-\alpha_k}W^2_{\phi_k} + \log(1-\alpha_k)\right],
	\;
	f = \sum_{k=1}^N\left[\frac{\alpha_k}{1-\alpha_k}W^2_{\phi_k} + \log(1-\alpha_k)\right]
	\\
	g_N &= \sum_{k=1}^{\infty}\frac{1}{1-\alpha_k}\la Q^{-1/2}(m_2-m_1), \phi_k\ra W_{\phi_k}, 
	\\
	g &= \sum_{k=1}^{\infty}\frac{1}{1-\alpha_k}\la Q^{-1/2}(m_2-m_1), \phi_k\ra W_{\phi_k}.
	\end{align*}
	By Propositions \ref{proposition:fN-convergence-L2nu} and \ref{proposition:gN-convergence-L2nu}, we have
	$f \in \L^1(\H,\nu)$, $g \in \L^1(\H,\nu)$, and
	\begin{align*}
	\lim_{N \approach \infty}||f_N-f||_{\L^1(\H,\nu)} = 0,\;\; \lim_{N \approach \infty}||g_N - g||_{\L^1(\H,\nu)} = 0.
	\end{align*}
	It follows that
	$r \in \L^1(\H,\nu)$ and that
	$\lim_{N \approach \infty}||r_N - r||_{\L^1(\H,\nu)} = 0$.
	Therefore
	\begin{align*}
	&\int_{\H}r(x)d\nu(x) = \lim_{N \approach \infty}\int_{\H}r_N(x)d\nu(x) 
	\\
	&= \lim_{N \approach \infty}\sum_{k=1}^N\left[\alpha_k + \log(1-\alpha_k) - \left(1+\frac{1}{1-\alpha_k}\right)|\la Q^{-1/2}(m_2 - m_1), \phi_k\ra|^2\right]
	\\
	& = \sum_{k=1}^{\infty}[\alpha_k + \log(1-\alpha_k)] 
	-\sum_{k=1}^{\infty}\left(1+\frac{1}{1-\alpha_k}\right)|\la Q^{-1/2}(m_2 - m_1), \phi_k\ra|^2
	\\
	&=  \log\dettwo(I-S) - ||Q^{-1/2}(m_2 - m_1)||^2 - ||(I-S)^{-1/2} Q^{-1/2}(m_2 - m_1)||^2.
	\end{align*}
	Combining the last expression with the expression for $\log\left\{\frac{d\nu}{d\mu}(x)\right\}$, we obtain
	\begin{align*}
	&D_{\KL}(\nu ||\mu) =  \int_{\H}\log\left\{\frac{d\nu}{d\mu}(x)\right\}d\nu(x) 
	\\
	&= -\frac{1}{2}||(I-S)^{-1/2}Q^{-1/2}(m_2 - m_1)||^2 - \frac{1}{2}\int_{\H}r(x)d\nu(x)
	\\
	&=\frac{1}{2}||Q^{-1/2}(m_2 - m_1)||^2 - \frac{1}{2}\log\dettwo(I-S).
	\end{align*}
\qed \end{proof}

\begin{proof}
	[\textbf{
of Theorem \ref{theorem:limit-KL-gamma-0-inverse}}]
	Consider the formula
	\begin{align}
	C = C_0 - C_0A^{*}(AC_0A^{*} + \Gamma)^{-1}AC_0 = C_0 - C_0^{1/2}SC_0^{1/2},
	\end{align}
	where
	$S$ is given by
	$S = C_0^{1/2}A^{*}(AC_0A^{*} + \Gamma)^{-1}AC_0^{1/2} \in \Tr(\H)$.
	By Theorem \ref{theorem:KL-infinite},
	\begin{align*}
	D_{\KL}(\Ncal(m,C), \Ncal(m_0,C_0)) = \frac{1}{2}||C_0^{-1/2}(m-m_0)||^2 - \frac{1}{2}\log\det(I-S) - \frac{1}{2}\trace(S).
	\end{align*}	
	For the first term, since $m = m_0 + C_0A^{*}(AC_0A^{*} + \Gamma)^{-1}(y - Am_0)$,
	\begin{align*}
	&||C_0^{-1/2}(m-m_0)||^2 
	= \la C_0^{-1/2}(m-m_0), C_0^{-1/2}(m-m_0)\ra 
	\\
	&= \la C_0^{1/2}A^{*}(AC_0A^{*} + \Gamma)^{-1}(y - Am_0), C_0^{-1/2}(m-m_0)\ra 
	\\
	& = \la A^{*}(AC_0A^{*} + \Gamma)^{-1}(y - Am_0), m - m_0\ra
	=  \la (AC_0A^{*} + \Gamma)^{-1}(y - Am_0), A(m-m_0)\ra
	\\
	& = \la [\Gamma^{-1} - \Gamma^{-1}AC_0A^{*}(AC_0A^{*} + \Gamma)^{-1}](y - Am_0), A(m - m_0)\ra
	\\
	& = \la \Gamma^{-1}(y-Am_0)-\Gamma^{-1}A(m-m_0)  , A(m - m_0)\ra = \la \Gamma^{-1}(y - Am), A(m-m_0)\ra
	\\
	& = - \la m-m_0, A^{*}\Gamma^{-1}(Am-y)\ra.
	\end{align*}
	For the second and third terms, 
	\begin{align*}
	\trace(S) = \trace[C_0^{1/2}A^{*}(AC_0A^{*} + \Gamma)^{-1}AC_0^{1/2}] = \trace[AC_0A^{*}(AC_0A^{*} + \Gamma)^{-1}].
	\end{align*}
	From the expression
	$C = C_0 - C_0A^{*}(AC_0A^{*} + \Gamma)^{-1}AC_0$,
	we obtain
	\begin{align*}
	ACA^{*} &= AC_0A^{*} - AC_0A^{*}(AC_0A^{*} + \Gamma)^{-1}AC_0A^{*} 
	\\
	&=AC_0A^{*}[I - (AC_0A^{*} + \Gamma)^{-1}AC_0A^{*}] = AC_0A^{*}(AC_0A^{*} + \Gamma)^{-1}\Gamma.
	\end{align*}
	Thus we have
	\begin{align*}
	\trace(S) = \trace[AC_0A^{*}(AC_0A^{*} + \Gamma)^{-1}] = \trace[ACA^{*}\Gamma^{-1}].
	\end{align*}
	For the term $\log\det(I-S)$, we have
	\begin{align*}
	&\det(I-S) = \det[I-C_0^{1/2}A^{*}(AC_0A^{*} + \Gamma)^{-1}AC_0^{1/2}] = \det[I -AC_0A^{*} (AC_0A^{*} + \Gamma)^{-1}]
	\\
	& = \det[\Gamma (AC_0A^{*} + \Gamma)^{-1}],
	\end{align*}
	from which it follows that
	$\log\det(I-S) = \log\det(\Gamma) - \log\det(AC_0A^{*} + \Gamma)$.
	Combining
	$\log\det(I-S)$ and $\trace(S)$ with the first term gives the desired result.
\qed \end{proof}

\subsection{Exact R\'enyi divergences}
\label{section:Renyi}
In this section, we derive the exact formula for the R\'enyi divergences $D_{\Rrm,r}(\nu || \mu)$ between 
two equivalent Gaussian measures $\nu$ and $\mu$ on $\H$. We recall that the R\'enyi divergence between 
$\nu$ and $\mu$ is defined by
\begin{align}
&D_{\Rrm,r}(\nu || \mu) = -\frac{1}{r(1-r)}\log\int_{\H}\left\{\frac{d\nu}{d\mu}(x)\right\}^rd\mu(x).
\end{align}
\begin{theorem}
\label{theorem:Renyi-infinite}
Let $\mu = \Ncal(m_1,Q)$, $\nu = \Ncal(m_2,R)$, with $m_2 - m_1 \in \Im(Q^{1/2})$ and $R = Q^{1/2}(I-S)Q^{1/2}$,
$S \in \Sym(\H) \cap \HS(\H)$. The 
R\'enyi divergence of order $r$, $0 < r < 1$, between $\nu$ and $\mu$ is given by
\begin{align}
D_{\Rrm,r}(\nu || \mu) 
&= \frac{1}{2}||[I-(1-r)S]^{-1/2}Q^{-1/2}(m_2-m_1)||^2 
\nonumber
\\
&+\frac{1}{2r(1-r)}\log\det[(I-S)^{r-1}(I-(1-r)S)].
\end{align}
Furthermore,
\begin{align}
\lim_{r \approach 1^{-}}D_{\Rrm,r}(\nu ||\mu) &= \frac{1}{2}||Q^{-1/2}(m_2-m_1)||^2 - \frac{1}{2}\log\dettwo(I-S) 
= D_{\KL}(\nu ||\mu),
\\
\lim_{r \approach 0}D_{\Rrm,r}(\nu ||\mu) &= \frac{1}{2}||R^{-1/2}(m_1-m_2)||^2 - \frac{1}{2}\log\dettwo[(I-S)^{-1}] = D_{\KL}(\mu ||\nu).
\end{align}
\end{theorem}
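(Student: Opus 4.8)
The plan is to imitate the proof of Theorem \ref{theorem:KL-infinite}, but to integrate the $r$-th power of the Radon--Nikodym derivative rather than its logarithm, reducing everything to Theorem \ref{theorem:radon-nikodym-infinite} together with the $\L^q(\H,\mu)$-norm computation of Proposition \ref{proposition:L1mu-dnu-dmu}. Write $u = Q^{-1/2}(m_2-m_1) \in \H$ and $s(x) = \exp\left[-\frac{1}{2}\sum_{k=1}^{\infty}\Phi_k(x)\right] \ge 0$, so that Theorem \ref{theorem:radon-nikodym-infinite} reads $\frac{d\nu}{d\mu}(x) = s(x)\exp\left[-\frac{1}{2}||(I-S)^{-1/2}u||^2\right]$. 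Hence, for $0 < r < 1$,
\begin{align*}
\int_{\H}\left(\frac{d\nu}{d\mu}(x)\right)^r d\mu(x) = \exp\left(-\frac{r}{2}||(I-S)^{-1/2}u||^2\right)||s||^r_{\L^r(\H,\mu)}.
\end{align*}
Since $I - S > 0$ by Proposition \ref{proposition:positiveIS}, the exponent $p > 1$ provided by Proposition \ref{proposition:L1mu-dnu-dmu} satisfies $0 < r < 1 < p$, so that proposition applies with $q = r$; combined with the identity $I + (r-1)S = I - (1-r)S$ it gives $||s||^r_{\L^r(\H,\mu)}$ in closed form. (Alternatively, one can apply Proposition \ref{proposition:exponential-quadratic-whitenoise} to each $\Phi_k$, multiply over $k$ using the independence of the $W_{\phi_k}$, and pass to the limit by Vitali's theorem, which re-derives the same quantity.)

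Substituting this into $D_{\Rrm,r}(\nu||\mu) = -\frac{1}{r(1-r)}\log\int_{\H}(d\nu/d\mu)^r d\mu$ expresses $D_{\Rrm,r}(\nu||\mu)$ as the sum of $\frac{1}{2(1-r)}\left[||(I-S)^{-1/2}u||^2 - r\,||[(I-S)(I-(1-r)S)]^{-1/2}u||^2\right]$ and $\frac{1}{2r(1-r)}\log\det[(I-S)^{r-1}(I-(1-r)S)]$. The only nontrivial algebra is the eigenvalue identity $\frac{1}{1-\alpha_k} - \frac{r}{(1-\alpha_k)(1-(1-r)\alpha_k)} = \frac{1-r}{1-(1-r)\alpha_k}$ -- equivalently, in operator form, $(I-S)^{-1} - r(I-S)^{-1}(I-(1-r)S)^{-1} = (1-r)(I-(1-r)S)^{-1}$ -- which collapses the first bracket to $\frac{1}{2}||[I-(1-r)S]^{-1/2}Q^{-1/2}(m_2-m_1)||^2$. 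Before invoking $\det$ one should record that $(I-S)^{r-1}(I-(1-r)S) - I$ is trace class: its $k$-th eigenvalue $(1-\alpha_k)^{r-1}(1-(1-r)\alpha_k) - 1$ has vanishing first derivative in $\alpha_k$ at $\alpha_k = 0$, hence is $O(\alpha_k^2)$, and $\sum_k \alpha_k^2 = ||S||^2_{\HS} < \infty$; so $\det[(I-S)^{r-1}(I-(1-r)S)]$ is a genuine Fredholm determinant. This establishes the claimed formula for $D_{\Rrm,r}(\nu||\mu)$.

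For the limits, the mean term is controlled by operator-norm continuity: as $r \approach 1^{-}$, $I - (1-r)S \approach I$ and the term tends to $\frac{1}{2}||Q^{-1/2}(m_2-m_1)||^2$; as $r \approach 0^{+}$, $I - (1-r)S \approach I - S$ and it tends to $\frac{1}{2}||(I-S)^{-1/2}Q^{-1/2}(m_2-m_1)||^2$, which equals $\frac{1}{2}||R^{-1/2}(m_1-m_2)||^2$ since $R^{-1} = Q^{-1/2}(I-S)^{-1}Q^{-1/2}$ and $\Im(R^{1/2}) = \Im(Q^{1/2})$. For the log-determinant term, set $g_k(r) = (r-1)\log(1-\alpha_k) + \log(1-(1-r)\alpha_k)$, so the term equals $\frac{1}{2r(1-r)}\sum_{k=1}^{\infty}g_k(r)$; passing the limit inside the sum (dominated convergence), the termwise limits of $\frac{g_k(r)}{2r(1-r)}$ are $-\frac{1}{2}[\alpha_k + \log(1-\alpha_k)]$ as $r \approach 1^{-}$ and $\frac{1}{2}\left[\log(1-\alpha_k) + \frac{\alpha_k}{1-\alpha_k}\right]$ as $r \approach 0^{+}$, and their sums over $k$ are $-\frac{1}{2}\log\dettwo(I-S)$ and $-\frac{1}{2}\log\dettwo[(I-S)^{-1}]$, respectively. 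Comparison with Theorem \ref{theorem:KL-infinite} (used once as stated and once with $\mu$ and $\nu$ interchanged) identifies these limits as $D_{\KL}(\nu||\mu)$ and $D_{\KL}(\mu||\nu)$.

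The main obstacle is exactly this interchange of limit and infinite sum near $r = 1$ and $r = 0$: the crude bound $|g_k(r)| \le C\alpha_k^2$ would give a dominating function $\frac{C\alpha_k^2}{2r(1-r)}$ that diverges at the endpoints, so one must exploit that $g_k$ vanishes there to first order. A short expansion yields $g_k'(r) = \left(\frac{1}{2} - r\right)\alpha_k^2 + O(\alpha_k^3)$, hence $|g_k'(r)| \le C\alpha_k^2$ uniformly for $|\alpha_k| < \frac{1}{2}$ and $r$ in a fixed neighbourhood of the relevant endpoint; since $g_k(1) = g_k(0) = 0$ this gives $|g_k(r)| \le C(1-r)\alpha_k^2$ near $r = 1$ and $|g_k(r)| \le Cr\alpha_k^2$ near $r = 0$, and dividing by $2r(1-r)$ leaves the summable dominating function $C'\alpha_k^2$ (the finitely many indices with $|\alpha_k| \ge \frac{1}{2}$ being handled termwise). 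One should also confirm the borrowed hypotheses $I - (1-r)S > 0$ for $0 < r < 1$ and $0 < r < p$, both immediate from $I - S > 0$ and $0 < r < 1$.
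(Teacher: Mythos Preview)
Your proposal is correct and follows essentially the same route as the paper for the main formula: both feed $q=r$ into Proposition~\ref{proposition:L1mu-dnu-dmu} and then collapse the mean term via the operator identity $(I-S)^{-1} - r(I-S)^{-1}(I-(1-r)S)^{-1} = (1-r)(I-(1-r)S)^{-1}$.

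The one difference is how the limit is pushed inside the infinite sum. The paper observes (Lemma~\ref{lemma:log-inequality-3}) that each summand $\frac{1}{1-r}g_k(r)$ is nonnegative---indeed, since $g_k$ is concave with $g_k(0)=g_k(1)=0$, the quotients $\frac{g_k(r)}{1-r}$ and $\frac{g_k(r)}{r}$ are monotone in $r$---and invokes Monotone Convergence; you instead build the uniform bound $|g_k(r)|\le C\,r(1-r)\,\alpha_k^2$ from the derivative estimate and apply Dominated Convergence. Both are valid; the paper's route is shorter once the lemma is in hand, while yours also supplies the trace-class check for $(I-S)^{r-1}(I-(1-r)S)-I$ that makes the Fredholm determinant meaningful. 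One small point to tighten: the identity $\|R^{-1/2}(m_2-m_1)\|^2 = \|(I-S)^{-1/2}Q^{-1/2}(m_2-m_1)\|^2$ at $r\to 0$ involves unbounded operators and deserves a line of justification rather than the formal $R^{-1}=Q^{-1/2}(I-S)^{-1}Q^{-1/2}$; the paper handles this via the eigenvector correspondence in the proof of Theorem~\ref{theorem:limit-Renyi-infinite-1}.
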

\begin{proof}
[\textbf{
	of Theorem \ref{theorem:Renyi-infinite}}]
By Proposition \ref{proposition:L1mu-dnu-dmu}, there exists $p > 1$ such that $I+(p-1)S > 0$. 
Proposition \ref{proposition:L1mu-dnu-dmu} then implies that $\frac{d\nu}{d\mu} \in \L^q(\H,\mu)$ for all $q$ satisfying $0 < q < p$.
By definition of the R\'enyi divergence, we then have for $0 < r < 1$,
\begin{align*}
&D_{\Rrm,r}(\nu || \mu) = -\frac{1}{r(1-r)}\log\int_{\H}\left\{\frac{d\nu}{d\mu}(x)\right\}^rd\mu(x)
\\
& = \frac{1}{2(1-r)}||(I-S)^{-1/2}Q^{-1/2}(m_2-m_1)||^2 
-\frac{1}{r(1-r)}\log\int_{\H}\exp\left[-\frac{r}{2}\sum_{k=1}^{\infty}\Phi_k(x)\right]d\mu(x).
\end{align*}
By Proposition \ref{proposition:L1mu-dnu-dmu}, we have for $0 < r < 1$,
%
\begin{align*}
&\int_{\H}\exp\left[-\frac{r}{2}\sum_{k=1}^{\infty}\Phi_k(x)\right]d\mu(x)
= (\det[(I-S)^{r-1}(I+(r-1)S)])^{-1/2}
\\
& \;\; \times\exp\left(\frac{r^2}{2}||[(I-S)(I+(r-1)S)]^{-1/2}Q^{-1/2}(m_2-m_1)||^2\right).
\end{align*}
Thus it follows that
\begin{align*}
& D_{\Rrm,r}(\nu || \mu) = \frac{1}{2(1-r)}||(I-S)^{-1/2}Q^{-1/2}(m_2-m_1)||^2 
\\
&+\frac{1}{2r(1-r)}\log\det[(I-S)^{r-1}(I+(r-1)S)]
\\
&-\frac{r}{2(1-r)}||[(I-S)(I+(r-1)S)]^{-1/2}Q^{-1/2}(m_2-m_1)||^2.
\end{align*}
Let $c = Q^{-1/2}(m_2-m_1)$, then we have
\begin{align*}
&||(I-S)^{-1/2}c||^2 - r||[(I-S)(I+(r-1)S)]^{-1/2}c||^2 
\\
&= \la (I-S)^{-1}c,c\ra - r\la [(I-S)(I+(r-1)S)]^{-1}c,c\ra
\\
& = \la ([(I-S)^{-1} - r[(I-S)(I-(1-r)S)]^{-1})c, c\ra = (1-r)\la [I-(1-r)S]^{-1}c, c\ra 
\\
& = (1-r)|| [I-(1-r)S]^{-1/2}c||^2.
\end{align*}
Combining this with the previous expression, we obtain
\begin{align*}
D_{\Rrm,r}(\nu ||\mu)
&= \frac{1}{2}||[I-(1-r)S]^{-1/2}Q^{-1/2}(m_2-m_1)||^2 
\\
&+\frac{1}{2r(1-r)}\log\det[(I-S)^{r-1}(I-(1-r)S)].
\end{align*}
This completes the proof of the first part of the theorem.

We now compute
$\lim_{r \approach 1^{-}}D_{\Rrm,r}(\nu ||\mu)$.
Let $\{\alpha_k\}_{k\in \N}$ be the eigenvalues of $S$, then 
\begin{align*}
&\frac{1}{1-r}\log\det[(I-S)^{r-1}(I-(1-r)S)] = \frac{1}{1-r}\sum_{k=1}^{\infty}\log[(1-\alpha_k)^{r-1}(1-(1-r)\alpha_k)]
\\
& = \frac{1}{1-r}\sum_{k=1}^{\infty}[\log(1-(1-r)\alpha_k) - (1-r)\log(1-\alpha_k)].
\end{align*}
By Lemma \ref{lemma:log-inequality-3}, we have $\frac{1}{1-r}[\log(1-(1-r)\alpha_k) - (1-r)\log(1-\alpha_k)] \geq 0$ $\forall k \in \N$. Thus
by Lebesgue's Monotone Convergence Theorem,
\begin{align*}
& \lim_{r \approach 1^{-}}\frac{1}{1-r}\log\det[(I-S)^{r-1}(I-(1-r)S)]
\\
& = \lim_{r \approach 1^{-}}\sum_{k=1}^{\infty}\frac{1}{1-r}[\log(1-(1-r)\alpha_k) - (1-r)\log(1-\alpha_k)]
\\
& = \sum_{k=1}^{\infty}\lim_{r \approach 1^{-}}\frac{1}{1-r}[\log(1-(1-r)\alpha_k) - (1-r)\log(1-\alpha_k)]
\\
& = -\sum_{k=1}^{\infty}[\alpha_k + \log(1-\alpha_k)] \;\;\text{(by Lemma \ref{lemma:log-inequality-3})}
= -\log\dettwo(I-S).
\end{align*}
Combining this limit with the expression for $D_{\Rrm,r}(\nu ||\mu)$ above,
we obtain
\begin{align*}
&\lim_{r \approach 1^{-}}D_{\Rrm,r}(\nu ||\mu) = \frac{1}{2}||Q^{-1/2}(m_2 - m_1)||^2 - \frac{1}{2}\log\dettwo(I-S) = D_{\KL}(\nu ||\mu).
\end{align*}
Also by Lemma \ref{lemma:log-inequality-3} and Lebesgue's Monotone Convergence Theorem,
\begin{align*}
& \lim_{r \approach 0}\frac{1}{r}\log\det[(I-S)^{r-1}(I-(1-r)S)]
\\
 &=\sum_{k=1}^{\infty}\lim_{r \approach 0}\frac{1}{r}[\log(1-(1-r)\alpha_k) - (1-r)\log(1-\alpha_k)] = \sum_{k=1}^{\infty}[\frac{\alpha_k}{1-\alpha_k} + \log(1-\alpha_k)]
 \\
 & = -\log\prod_{k=1}^{\infty}(1-\alpha_k)^{-1}\exp(-\frac{\alpha_k}{1-\alpha_k}) =- \log\det[(I-S)^{-1}\exp(-\frac{S}{I-S})]
 \\
 & = -\log\dettwo[(I-S)^{-1}], \;\;\;\text{since $(I-S)^{-1} = I + \frac{S}{I-S}$}.
\end{align*}
From the proof of Theorem \ref{theorem:limit-Renyi-infinite-1}, we have for any $m \in \Im(Q^{1/2})$,
\begin{align*}
&\lim_{r \approach 0}||[I-(1-r)S]^{-1/2}Q^{-1/2}(m)||
= \lim_{r \approach 0}||[Q^{1/2}(I-(1-r)S)Q^{1/2}]^{-1/2}(m)||
\\
&= ||[Q^{1/2}(I-S)Q^{1/2}]^{-1/2}(m)|| = ||R^{-1/2}(m)||.
\end{align*}
Combining the previous two limits with the expression for $D_{\Rrm,r}(\nu ||\mu)$ above,
we obtain
\begin{align*}
&\lim_{r \approach 0}D_{\Rrm,r}(\nu ||\mu) = \frac{1}{2}||R^{-1/2}(m_1 - m_2)||^2 - \frac{1}{2}\log\dettwo[(I-S)^{-1}]
= D_{\KL}(\mu ||\nu).
\end{align*}
\qed \end{proof}

\subsection{Bhattacharyya and Hellinger distances}
\label{section:hellinger}
We now derive the explicit formulas for the 
Bhattacharyya and Hellinger distances between two equivalent Gaussian measures $\nu$ and $\mu$ on $\H$.
Recall that the Bhattacharyya distance is defined by
\begin{align}
D_{\Brm}(\nu ||\mu) = -\log\int_{\H}\sqrt{\frac{d\nu}{d\mu}(x)}d\mu(x) = \frac{1}{4}D_{\Rrm, 1/2}(\nu ||\mu).
\end{align}
The Hellinger distance $D_{\Hrm}(\nu ||\mu)$ between $\nu$ and $\mu$ is defined by
\begin{align}
D_{\Hrm}^2(\nu ||\mu) &=  \int_{\H}\left(1-\sqrt{\frac{d\nu}{d\mu}(x)}\right)^2 d\mu(x) = 2[1-\exp(-D_{\Brm}(\nu ||\mu))]
\\
& = 2 - 2\int_{\H}\sqrt{\frac{d\nu}{d\mu}(x)}d\mu(x).
\end{align}

\begin{corollary}
\label{corollary:Hellinger}
Let $\mu = \Ncal(m_1,Q)$ and $\nu = \Ncal(m_2,R)$ and $S \in \Sym(\H) \cap \HS(\H)$ be such that
$m_2 - m_1 \in \Im(Q^{1/2})$ and $R = Q^{1/2}(I-S)Q^{1/2}$.
The Bhattacharyya distance $D_{\Brm}(\nu||\mu)$ between $\nu$ and $\mu$ is then given by
\begin{align}
D_{\Brm}(\nu ||\mu) &= \frac{1}{8}||(I-\frac{1}{2}S)^{-1/2}Q^{-1/2}(m_2 - m_1)||^2 
\nonumber
\\
&+ \frac{1}{2}\log\det[(I-S)^{-1/2}(I-\frac{1}{2}S)].
\end{align}
The Hellinger distance $D_{\Hrm}(\nu||\mu)$ between $\nu$ and $\mu$ is  given by
\begin{align}
D_{\Hrm}^2(\nu ||\mu) &=  2\left[1 -  \frac{\exp\left(-\frac{1}{8}||(I-\frac{1}{2}S)^{-1/2}Q^{-1/2}(m_2 - m_1)||^2\right)}{\sqrt{\det[(I-S)^{-1/2}(I-\frac{1}{2}S)]}} \right].
\end{align}
\end{corollary}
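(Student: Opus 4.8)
The plan is to deduce both formulas as direct specializations of the exact R\'enyi divergence of Theorem \ref{theorem:Renyi-infinite} at the order $r=\tfrac12$, combined with the two elementary identities recorded just before the corollary, namely $D_{\Brm}(\nu||\mu)=\tfrac14 D_{\Rrm,1/2}(\nu||\mu)$ and $D_{\Hrm}^2(\nu||\mu)=2[1-\exp(-D_{\Brm}(\nu||\mu))]$.

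First I would note that $\int_{\H}\sqrt{d\nu/d\mu}\,d\mu$ is finite: by Proposition \ref{proposition:L1mu-dnu-dmu} the Radon--Nikodym derivative $\tfrac{d\nu}{d\mu}$ lies in $\L^q(\H,\mu)$ for every $0<q<p$ with some $p>1$, in particular for $q=\tfrac12$, so $D_{\Brm}(\nu||\mu)=-\log\int_{\H}\sqrt{d\nu/d\mu}\,d\mu=\tfrac14 D_{\Rrm,1/2}(\nu||\mu)$ is well defined. Then I would substitute $r=\tfrac12$ into the formula of Theorem \ref{theorem:Renyi-infinite}: there $1-r=\tfrac12$, $\tfrac{1}{2r(1-r)}=2$, $(I-S)^{r-1}=(I-S)^{-1/2}$ and $I-(1-r)S=I-\tfrac12 S$, which yields
\begin{align*}
D_{\Rrm,1/2}(\nu||\mu)=\frac{1}{2}||(I-\tfrac12 S)^{-1/2}Q^{-1/2}(m_2-m_1)||^2+2\log\det[(I-S)^{-1/2}(I-\tfrac12 S)].
\end{align*}
Multiplying through by $\tfrac14$ gives the asserted expression for $D_{\Brm}(\nu||\mu)$.

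For the Hellinger distance I would start from $D_{\Hrm}^2(\nu||\mu)=2[1-\exp(-D_{\Brm}(\nu||\mu))]$ and factor $\exp(-D_{\Brm})$ into its Gaussian part and its determinant part, using $\exp\bigl(-\tfrac12\log\det[(I-S)^{-1/2}(I-\tfrac12 S)]\bigr)=\det[(I-S)^{-1/2}(I-\tfrac12 S)]^{-1/2}$. Here it is worth one line of bookkeeping: $I-\tfrac12 S=\tfrac12 I+\tfrac12(I-S)$ is strictly positive since $I-S$ is, by Proposition \ref{proposition:positiveIS}, and the operator $(I-S)^{-1/2}(I-\tfrac12 S)$ differs from $I$ by a trace-class operator — its eigenvalue corrections $(1-\alpha_k)^{-1/2}(1-\tfrac12\alpha_k)-1=\tfrac18\alpha_k^2+O(\alpha_k^3)$ are summable because $S\in\HS(\H)$ — so $\det[(I-S)^{-1/2}(I-\tfrac12 S)]$ is a genuine positive real number (this is already part of the content of Theorem \ref{theorem:Renyi-infinite}). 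Reassembling the two factors gives precisely
\begin{align*}
D_{\Hrm}^2(\nu||\mu)=2\left[1-\frac{\exp\left(-\tfrac18||(I-\tfrac12 S)^{-1/2}Q^{-1/2}(m_2-m_1)||^2\right)}{\sqrt{\det[(I-S)^{-1/2}(I-\tfrac12 S)]}}\right].
\end{align*}

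There is no substantial obstacle: the entire argument is a specialization of Theorem \ref{theorem:Renyi-infinite} together with algebra and the definitions of $D_{\Brm}$ and $D_{\Hrm}$. The only points deserving a remark, both already implicit in the cited results, are the $\mu$-integrability of $\sqrt{d\nu/d\mu}$ (Proposition \ref{proposition:L1mu-dnu-dmu}) and the positivity and finiteness of the Fredholm determinant $\det[(I-S)^{-1/2}(I-\tfrac12 S)]$ (Proposition \ref{proposition:positiveIS} plus the trace-class nature of the correction); everything else is substitution.
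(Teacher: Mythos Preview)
Your proposal is correct and follows essentially the same approach as the paper: specialize Theorem \ref{theorem:Renyi-infinite} at $r=\tfrac12$, multiply by $\tfrac14$ via the identity $D_{\Brm}=\tfrac14 D_{\Rrm,1/2}$, and then obtain $D_{\Hrm}$ from $D_{\Hrm}^2=2[1-\exp(-D_{\Brm})]$. Your additional remarks on integrability (Proposition \ref{proposition:L1mu-dnu-dmu}) and on the positivity and finiteness of the determinant are more explicit than what the paper writes but are consistent with its argument.
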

\begin{proof}
[\textbf{
of Corollary \ref{corollary:Hellinger}}]
For the Bhattacharyya distance, we use the fact that $D_{\Brm}(\nu||\mu) = \frac{1}{4}D_{\Rrm,1/2}(\nu ||\mu)$ and Theorem 
\ref{theorem:Renyi-infinite} to obtain
\begin{align*}
D_{\Brm}(\nu ||\mu) = \frac{1}{4}D_{\Rrm, 1/2}(\nu ||\mu) &= \frac{1}{8}||(I-\frac{1}{2}S)^{-1/2}Q^{-1/2}(m_2 - m_1)||^2 
\\
&+ \frac{1}{2}\log\det[(I-S)^{-1/2}(I-\frac{1}{2}S)].
\end{align*}
The expression for $D_{\Hrm}(\nu ||\mu)$ then follows from
$D_{\Hrm}^2(\nu ||\mu)  = 2[1-\exp(-D_{\Brm}(\nu ||\mu))]$.
\qed \end{proof}


\begin{proof}
[\textbf{
of Theorems \ref{theorem:limit-KL-gamma-0} and \ref{theorem:limit-Renyi-infinite} and Corollary \ref{corollary:limit-Hellinger-infinite}}]
Theorems \ref{theorem:limit-KL-gamma-0} follows
from Theorem \ref{theorem:limit-KL-gamma-0-1} and Theorem
\ref{theorem:KL-infinite}.
Theorem \ref{theorem:limit-Renyi-infinite} follows from Theorem \ref{theorem:limit-Renyi-infinite-1} and Theorem 
\ref{theorem:Renyi-infinite}.
Corollary \ref{corollary:limit-Hellinger-infinite}
follows from Theorem \ref{theorem:limit-Renyi-infinite-1}
and Corollary \ref{corollary:Hellinger}.
\qed \end{proof}




\section{Miscellaneous technical results}
\label{section:misc-tech}

Let $\Ncal(m,Q)$ denote a Gaussian measure on $\H$ with mean $m$ and covariance operator $Q$.
Let $\{\lambda_k\}_{k=1}^{\infty}$ denote the set of eigenvalues of $Q$, 
with corresponding orthonormal eigenvectors $\{e_k\}_{k=1}^{\infty}$.

\begin{lemma}
	\label{lemma:Gaussian-integral-1}
	For any pair $a,b \in \H$,
	\begin{align}
	\int_{\H}\la x-m, a\ra^2\la x-m, b\ra^2\Ncal(m,Q)(dx) = \la a, Qa\ra\la b,Qb\ra + 2\la a,Qb\ra^2.
	\end{align}
	In particular, for $a = b$,
	$\int_{\H}\la x- m, a\ra^4\Ncal(m,Q)(dx) = 3\la a, Qa\ra^2$.
\end{lemma}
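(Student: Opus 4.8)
The plan is to reduce the identity to a two-dimensional Gaussian moment computation. First I would change variables, setting $y = x - m$, so that the left-hand side becomes $\int_{\H}\la y, a\ra^2\la y,b\ra^2\Ncal(0,Q)(dy)$ (using that $\Ncal(m,Q)$ is the image of $\Ncal(0,Q)$ under $y \mapsto y+m$), and it suffices to treat the centered case $m = 0$. Writing $\mu = \Ncal(0,Q)$ for brevity, the two random variables $U = \la y,a\ra$ and $V = \la y,b\ra$ form a jointly Gaussian, mean-zero pair in $\R^2$ under $\mu$, and by the defining property of the covariance operator, $\la Qa,b\ra = \int_{\H}\la y,a\ra\la y,b\ra\,\mu(dy)$, one has $\int U^2\,d\mu = \la a,Qa\ra$, $\int V^2\,d\mu = \la b,Qb\ra$, and $\int UV\,d\mu = \la a,Qb\ra$. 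The claim is then exactly the bivariate Wick (Isserlis) identity $\int U^2V^2\,d\mu = \bigl(\int U^2\,d\mu\bigr)\bigl(\int V^2\,d\mu\bigr) + 2\bigl(\int UV\,d\mu\bigr)^2$, with the case $a=b$ giving the final sentence.

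To establish that Wick identity while staying intrinsic to $\H$, I would expand $a = \sum_k a_k e_k$ and $b = \sum_k b_k e_k$ in the eigenbasis $\{e_k\}$ of $Q$, so that $U = \sum_k a_k\eta_k$ and $V = \sum_k b_k\eta_k$ with $\eta_k = \la y,e_k\ra$ independent and $\eta_k \sim \Ncal(0,\lambda_k)$. Multiplying out $U^2V^2$, only the index patterns in which the four $\eta$'s pair up contribute; using $\int \eta_k^2\,d\mu = \lambda_k$, $\int \eta_k^4\,d\mu = 3\lambda_k^2$, and the vanishing of the odd moments, the quadruple sum collapses to $3\sum_k a_k^2b_k^2\lambda_k^2 + \sum_{k\ne p}a_k^2b_p^2\lambda_k\lambda_p + 2\sum_{k\ne l}a_kb_ka_lb_l\lambda_k\lambda_l$. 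Rewriting the two off-diagonal sums as full sums minus their diagonals produces the cancellation $3 - 1 - 2 = 0$ of the $\sum_k a_k^2b_k^2\lambda_k^2$ term, leaving precisely $\bigl(\sum_k a_k^2\lambda_k\bigr)\bigl(\sum_k b_k^2\lambda_k\bigr) + 2\bigl(\sum_k a_kb_k\lambda_k\bigr)^2 = \la a,Qa\ra\la b,Qb\ra + 2\la a,Qb\ra^2$. Setting $a=b$ gives $\int\la y,a\ra^4\,d\mu = 3\la a,Qa\ra^2$, and undoing the translation recovers both displayed formulas. As an alternative I could instead diagonalize the $2\times 2$ covariance of $(U,V)$ and reduce directly to one-dimensional standard-normal moments $\int Z^2 = 1$, $\int Z^4 = 3$.

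The one point requiring genuine care — and the step I would write out most carefully — is the justification that these manipulations are legitimate for \emph{arbitrary} $a,b \in \H$, not merely for $a,b$ in $\Im(Q^{1/2})$: one must check that $U^2V^2 \in \L^1(\H,\mu)$ and that the rearrangement of the a priori infinite quadruple series is valid. Both follow once one knows that $(U,V)$ is genuinely Gaussian on $\R^2$ with the stated covariance, which comes from the characteristic functional of $\Ncal(0,Q)$; Gaussianity then forces all polynomial moments to be finite, while the absolute convergence of the series is immediate from $\sum_k a_k^2 < \infty$, $\sum_k b_k^2 < \infty$, and the boundedness of the eigenvalues $\lambda_k$ (indeed $Q \in \Tr(\H)$). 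No ingredient beyond these elementary Gaussian facts is needed.
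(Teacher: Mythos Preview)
Your proposal is correct and follows essentially the same route as the paper: reduce to the centered case, expand in the eigenbasis of $Q$ so that the coordinates $\eta_k = \la y,e_k\ra$ are independent one-dimensional Gaussians, use $\int \eta_k^2\,d\mu = \lambda_k$ and $\int \eta_k^4\,d\mu = 3\lambda_k^2$ together with the vanishing of odd moments, and collect the surviving terms. Your additional framing via the Isserlis identity and your explicit attention to integrability and absolute convergence of the quadruple series are welcome clarifications that the paper leaves implicit (it simply invokes ``by symmetry'' and manipulates the series without comment).
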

\begin{proof}
	It suffices to prove for 
	$m = 0$.
	We apply
	the following
	(\cite{Handbook:1972}, Formula 7.4.4)
	\begin{align}
	\int_{0}^{\infty}t^{2n}e^{-at^2}dt = \frac{\Gamma(n + \frac{1}{2})}{2a^{n + \frac{1}{2}}}, \;\; \Real(a) > 0.
	\end{align}
	Thus for any $\lambda > 0$,
	\begin{align*}
	\int_{\R}t^2\Ncal(0,\lambda)(dt) &= \frac{1}{\sqrt{2\pi \lambda}}\int_{-\infty}^{\infty}t^2e^{-\frac{t^2}{2\lambda}}dt = \lambda,
	\\
	\int_{\R}t^4\Ncal(0,\lambda)(dt) &= \frac{1}{\sqrt{2\pi \lambda}}\int_{-\infty}^{\infty}t^4e^{-\frac{t^2}{2\lambda}}dt = \frac{1}{\sqrt{2\pi\lambda}}
	\Gamma(2+\frac{1}{2})(2\lambda)^{2+\frac{1}{2}} = 3\lambda^2.
	\end{align*}
	Write $x = \sum_{k=1}^{\infty}x_ke_k$, $a = \sum_{k=1}^{\infty}a_ke_k$. By symmetry, we have
	\begin{align*}
	&\int_{\H}\la x, a\ra^2\la x,b\ra^2\Ncal(0,Q)(dx) = \int_{\H}(\sum_{k=1}^{\infty}a_jx_j)^2(\sum_{k=1}^{\infty}b_kx_k)^2\Ncal(0,Q)(dx)
	\\
	&=  \int_{\H}\left[\sum_{k=1}^{\infty}a_k^2b_k^2x_k^4 + \sum_{j\neq k =1}^{\infty}(a_j^2b_k^2 + 2a_ja_kb_jb_k)x_j^2x_k^2\right]\Ncal(0,Q)(dx)
	\\
	& = \sum_{k=1}^{\infty}a_k^2b_k^2\int_{\R}x_k^4\Ncal(0,\lambda_k)(dx_k) 
	\\
	& + \sum_{j \neq k =1}^{\infty}(a_j^2b_k^2 + 2a_ja_kb_jb_k)
	\left[\int_{\R}x_j^2\Ncal(0,\lambda_j)(dx_j)\right]
	\left[\int_{\R}x_k^2\Ncal(0,\lambda_k)(dx_k)\right]
	\\
	& = 3\sum_{k=1}^{\infty}a_k^2b_k^2\lambda_k^2 
	+ \sum_{j \neq k =1}^{\infty}(a_j^2b_k^2 + 2a_ja_kb_jb_k)\lambda_j\lambda_k
	\\
	& = \sum_{j,k=1}^{\infty}[a_j^2b_k^2 + 2a_ja_kb_jb_k]\lambda_j\lambda_k 
	= (\sum_{j=1}^{\infty}a_j^2\lambda_j)(\sum_{k=1}^{\infty}b_k^2\lambda_k) + 2(\sum_{j=1}^{\infty}a_jb_j\lambda_j)^2
	\\
	& = \la a, Qa\ra\la b,Qb\ra + 2\la a, Qb\ra^2.
	\end{align*}
\qed \end{proof}

\begin{lemma}
	\label{lemma:Gauss-integral-2}
	For any pair $a,b \in \H$,
	\begin{align}
	\int_{\H}\la x-m, a\ra^2\la x-m, b\ra\Ncal(m,Q)(dx) = 0.
	\end{align}
	In particular, for $a = b$,
	$\int_{\H}\la x - m, a\ra^3\Ncal(m,Q)(dx) = 0$.
\end{lemma}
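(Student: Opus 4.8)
The plan is to reduce to the centered case and then exploit the invariance of a centered Gaussian measure under the reflection $x \mapsto -x$. First I would translate by $m$: replacing $x$ by $x+m$ in the integral, the claim reduces to
\[
\int_{\H}\la x, a\ra^2\la x, b\ra\,\Ncal(0,Q)(dx) = 0, \qquad \forall\, a,b \in \H .
\]
This is the odd-degree analogue of the even-degree identity already established in Lemma~\ref{lemma:Gaussian-integral-1}.

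Before invoking any symmetry I would first check integrability, so that the argument below is legitimate rather than an indeterminate $\infty - \infty$. Since $|\la x,a\ra^2\la x,b\ra| \leq ||a||^2\,||b||\,||x||^3 \leq ||a||^2\,||b||\,(1 + ||x||^4)$, it suffices that $\int_{\H}||x||^4\,\Ncal(0,Q)(dx) < \infty$. Expanding $||x||^4 = \sum_{j,k}x_j^2x_k^2$ in the orthonormal eigenbasis $\{e_k\}$ of $Q$ and using $\int_{\R}t^4\,\Ncal(0,\lambda_k)(dt) = 3\lambda_k^2$ together with $\int_{\R}t^2\,\Ncal(0,\lambda_j)(dt) = \lambda_j$, exactly as in the proof of Lemma~\ref{lemma:Gaussian-integral-1}, one gets $\int_{\H}||x||^4\,\Ncal(0,Q)(dx) = 2||Q||_{\HS}^2 + (\trace Q)^2 < \infty$. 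Hence $f(x) := \la x,a\ra^2\la x,b\ra$ belongs to $\L^1(\H,\Ncal(0,Q))$.

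The key step is then immediate. The reflection map $T\colon \H \to \H$, $Tx = -x$, has pushforward $\Ncal(0,Q)$ again, since the characteristic function $h \mapsto \exp(-\tfrac12\la Qh,h\ra)$ is even. As $f$ is odd, $f(-x) = -f(x)$, the change of variables $x \mapsto -x$ yields
\[
\int_{\H}f(x)\,\Ncal(0,Q)(dx) = \int_{\H}f(-x)\,\Ncal(0,Q)(dx) = -\int_{\H}f(x)\,\Ncal(0,Q)(dx),
\]
so the integral equals its own negative and therefore vanishes. The special case $a = b$ gives $\int_{\H}\la x-m,a\ra^3\,\Ncal(m,Q)(dx) = 0$.

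There is no substantial obstacle here; the only point requiring care is the integrability check, which is what makes cancelling the integral against its own negative valid. As an alternative to the symmetry argument, one may instead expand $\bigl(\sum_j a_j x_j\bigr)^2\bigl(\sum_k b_k x_k\bigr)$ in the eigenbasis directly: every resulting monomial $x_i x_j x_k$ has at least one coordinate appearing to an odd power, and $\int_{\R}t^{2n+1}\,\Ncal(0,\lambda)(dt) = 0$ by the symmetry of the one-dimensional Gaussian density, so each term — and hence the whole integral — is zero.
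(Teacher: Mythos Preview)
Your proof is correct. Your alternative at the end --- expand in the eigenbasis, observe that every monomial carries an odd power of some coordinate, and use the one-dimensional Gaussian symmetry --- is exactly the paper's proof. Your primary argument via the global reflection $x\mapsto -x$ is a cleaner variant of the same idea: rather than tracking odd powers coordinate by coordinate, you exploit the oddness of the whole integrand at once. The integrability check you include is a genuine improvement over the paper, which simply writes ``by symmetry'' without verifying that the integral is not an indeterminate form.
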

\begin{proof}
	It suffices to prove for $m=0$. Write $x = \sum_{k=1}^{\infty}x_ke_k$, $a = \sum_{k=1}^{\infty}a_ke_k$, then
	\begin{align*}
	&\int_{\H}\la x,a\ra^2\la x,b\ra\Ncal(0,Q)(dx) = \int_{\H}(\sum_{j=1}^{\infty}a_jx_j)^2(\sum_{k=1}^{\infty}b_kx_k)\Ncal(0,Q)(dx)
	= 0,
	\end{align*}
	by symmetry, since each term in the integral contains either $x_j$ or $x_j^3$ $\forall k \in \N$.
\qed \end{proof}

\begin{lemma}
\label{lemma:inequality-log}
In all inequalities below, equality happens if and only if $x = 0$.
\begin{align}
&-[x + \log(1-x)] \geq 0 \;\; \forall x < 1,
\\
&-[x + \log(1-x)] \leq x^2 \;\; \forall x < \frac{1}{2},
\\
& 0 \leq x +(1-x)\log(1-x) \leq x^2 \;\; \forall x < 1.
\end{align}
\end{lemma}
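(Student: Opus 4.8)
The plan is to reduce all three statements to the classical inequality $\log t \le t - 1$ for $t > 0$ (with equality if and only if $t = 1$) together with two routine single-variable sign analyses. First I would record the fundamental inequality: the function $t \mapsto t - 1 - \log t$ on $(0,\infty)$ has derivative $1 - 1/t$, which is negative on $(0,1)$ and positive on $(1,\infty)$, so it attains a strict global minimum $0$ at $t = 1$. Substituting $t = 1-x$ (so that $t > 0 \iff x < 1$), this is precisely $-[x + \log(1-x)] \ge 0$ for $x < 1$, with equality if and only if $x = 0$, which is the first inequality.

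For the second inequality I would consider $\phi(x) = x^2 + x + \log(1-x)$ on $(-\infty, 1/2)$ and compute $\phi'(x) = \frac{x(1-2x)}{1-x}$. On this interval $1 - x > 0$ and $1 - 2x > 0$, so $\phi'$ has the same sign as $x$; hence $\phi$ is decreasing on $(-\infty,0)$ and increasing on $(0,1/2)$, and therefore has a strict global minimum $\phi(0) = 0$. This yields $-[x+\log(1-x)] \le x^2$ for $x < 1/2$, with equality if and only if $x = 0$.

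For the third inequality, the lower bound follows by examining $h(x) = x + (1-x)\log(1-x)$ on $(-\infty,1)$: the product rule gives $h'(x) = -\log(1-x)$, which is negative on $(-\infty,0)$ and positive on $(0,1)$, so $h$ has a strict global minimum $h(0) = 0$, i.e. $x + (1-x)\log(1-x) \ge 0$ with equality if and only if $x = 0$. For the upper bound I would again substitute $t = 1-x > 0$: the claim $x + (1-x)\log(1-x) \le x^2$ becomes $(1-t) + t\log t \le (1-t)^2$, and after expanding, subtracting, and dividing by $t > 0$ this is exactly $1 + \log t \le t$, the fundamental inequality once more; equality holds if and only if $t = 1$, i.e. $x = 0$.

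All computations here are elementary; the only points requiring care are keeping track of the domains (in particular that $1 - x > 0$ throughout, and $1 - 2x > 0$ in the second part so that the sign of $\phi'$ is controlled by $x$ alone) and checking that each equality case collapses to $x = 0$. I do not anticipate any genuine obstacle: the lemma is essentially a repackaging of $\log t \le t - 1$ supplemented by two short monotonicity arguments.
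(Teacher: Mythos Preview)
Your proposal is correct and essentially matches the paper's proof: both arguments use the same derivative sign analyses for the first two inequalities and for the lower bound of the third, and your substitution $t=1-x$ reducing the upper bound of the third inequality to $\log t\le t-1$ is equivalent to the paper's direct factorization $x+(1-x)\log(1-x)-x^2=(1-x)[x+\log(1-x)]\le 0$.
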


\begin{lemma}
\label{lemma:inequality-log-2}
Let $p > 1$ be fixed. Then
\begin{align}
(p-1)\log(1-x) + \log[1+(p-1)x] &\leq 0,  &-\frac{1}{p-1} < x < 1,
\\
(p-1)\log(1-x) + \log[1+(p-1)x] &\geq -p(p-1)x^2, &-1/2 < x < 1/2.
\end{align}
\end{lemma}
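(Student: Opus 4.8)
The plan is to reduce both inequalities to elementary monotonicity properties of the single function $g(x) = (p-1)\log(1-x) + \log[1+(p-1)x]$, which is smooth on the open interval $I_p = (-\tfrac{1}{p-1},\,1)$ on which both logarithmic arguments are positive. First I would record $g(0)=0$ and compute
\[
g'(x) = -\frac{p-1}{1-x} + \frac{p-1}{1+(p-1)x} = \frac{-p(p-1)\,x}{(1-x)\bigl(1+(p-1)x\bigr)} .
\]
On $I_p$ the denominator is strictly positive and $p(p-1)>0$, so $g'(x)$ has the sign of $-x$; hence $g$ increases on $(-\tfrac1{p-1},0]$, decreases on $[0,1)$, and $g(x)\le g(0)=0$ throughout $I_p$, which is the first inequality (strict for $x\neq 0$, in the spirit of Lemma~\ref{lemma:inequality-log}).

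For the second inequality I would introduce $h(x) = g(x) + p(p-1)x^2$, so that $h(0)=0$ and the claim is $h(x)\ge 0$ for $-\tfrac12<x<\tfrac12$. Using the formula for $g'$,
\[
h'(x) = \frac{p(p-1)\,x\,\bigl[\,2(1-x)(1+(p-1)x)-1\,\bigr]}{(1-x)\bigl(1+(p-1)x\bigr)} ,
\]
and expanding the bracket gives $2(1-x)(1+(p-1)x)-1 = 1+2(p-2)x-2(p-1)x^2 =: q(x)$. Since $q(0)=1>0$ and, on the interval under consideration, both $1-x$ and $1+(p-1)x$ are positive, the sign of $h'(x)$ equals that of $x\,q(x)$. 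I would then argue that $q$ stays positive on $(-\tfrac12,\tfrac12)$, so that $h'$ has the sign of $x$, $h$ has its minimum at $0$, and $h(x)\ge h(0)=0$, which completes the proof.

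The main obstacle is precisely this last positivity claim: one must control the roots of the downward parabola $q(x)=1+2(p-2)x-2(p-1)x^2$ and keep the denominator factor $1+(p-1)x$ bounded away from zero on $-\tfrac12<x<\tfrac12$, and this is where the size of $p$ enters quantitatively. A fallback, if the parabola estimate turns out to be inconvenient, is to work from the Taylor expansion $g(x) = -\tfrac{p(p-1)}{2}x^2 + \sum_{n\ge 3}\tfrac{x^n}{n}\bigl[(-1)^{n+1}(p-1)^n-(p-1)\bigr]$, valid for $|x|<1$ and $|(p-1)x|<1$, and estimate the tail; the coefficient signs there require some care when $1<p<2$, so I expect the direct monotonicity route to be the cleaner one.
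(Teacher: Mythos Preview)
Your treatment of the first inequality via
\[
g'(x) = -\frac{p(p-1)\,x}{(1-x)\bigl(1+(p-1)x\bigr)}
\]
is correct and is effectively the same argument the paper gives; the paper phrases it as summing the two separate inequalities $(p-1)[x+\log(1-x)]\le 0$ and $\log(1+(p-1)x)-(p-1)x\le 0$, which amounts to the same derivative computation.

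For the second inequality your monotonicity route via the parabola $q$ does not close. Evaluating at the left endpoint,
\[
q\bigl(-\tfrac12\bigr) = 1 - (p-2) - \tfrac{p-1}{2} = \tfrac{7-3p}{2},
\]
which is negative for every $p>\tfrac73$; since $q(0)=1>0$, $q$ changes sign inside $(-\tfrac12,0)$ and $h'$ does \emph{not} have the sign of $x$ on all of $(-\tfrac12,\tfrac12)$. For $p\ge 3$ the situation is worse still: $1+(p-1)x$ itself vanishes in $(-\tfrac12,0)$, so $g$ is not even defined on the whole interval. Your remark that ``the size of $p$ enters quantitatively'' is therefore not a technicality to be patched later; it is exactly where this approach breaks.

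The paper sidesteps the parabola entirely by an additive splitting. Writing
\[
g(x) = (p-1)\bigl[x+\log(1-x)\bigr] + \bigl[\log(1+(p-1)x)-(p-1)x\bigr]
\]
and bounding each bracket separately by the single elementary estimate $-y^2 \le \log(1+y)-y \le 0$ for $y>-\tfrac12$ (the first bracket is Lemma~\ref{lemma:inequality-log}) gives $g(x)\ge -(p-1)x^2-(p-1)^2x^2=-p(p-1)x^2$ directly, with no sign analysis of a quadratic. Note that this argument, and indeed the statement of the lemma, tacitly uses both $|x|<\tfrac12$ and $(p-1)x>-\tfrac12$; this is precisely the regime in which the lemma is invoked in Proposition~\ref{proposition:L1mu-dnu-dmu}.
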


\begin{lemma} 
	\label{lemma:log-inequality-3}
	Let $\alpha < 1$ be fixed. Then
	\begin{align}
	\frac{1}{1-r}[\log(1-(1-r)\alpha) - (1-r)\log(1-\alpha)] &\geq 0, \;\; 0 < r < 1,
	\\
	\lim_{r \approach 1^{-}} \frac{1}{1-r}[\log(1-(1-r)\alpha) - (1-r)\log(1-\alpha)] &= -[\alpha + \log(1-\alpha)].
	\\
	\lim_{r \approach 0}\frac{1}{r}[\log(1-(1-r)\alpha) - (1-r)\log(1-\alpha)] &=\frac{\alpha}{1-\alpha}
+\log(1-\alpha).
	\end{align}
\end{lemma}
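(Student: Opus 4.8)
The plan is to reduce all three assertions to elementary one-variable calculus. Throughout I fix $\alpha < 1$ and set $s = 1-r$, so that $1 - (1-r)\alpha = 1 - s\alpha = (1-s)\cdot 1 + s(1-\alpha)$, a convex combination of the positive numbers $1$ and $1-\alpha$ whenever $r \in (0,1)$; in particular the arguments of all logarithms below are positive.

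For the first displayed inequality I would argue by concavity of $\log$. Applying the concavity inequality with weights $1-s$ and $s$ gives
\begin{align*}
\log(1 - s\alpha) = \log\big((1-s)\cdot 1 + s(1-\alpha)\big) \geq (1-s)\log 1 + s\log(1-\alpha) = s\log(1-\alpha),
\end{align*}
and dividing by $s = 1-r > 0$ yields exactly $\frac{1}{1-r}[\log(1-(1-r)\alpha) - (1-r)\log(1-\alpha)] \geq 0$. Equivalently, $g(s) := \log(1-s\alpha) - s\log(1-\alpha)$ satisfies $g(0) = g(1) = 0$ and $g''(s) = -\alpha^2/(1-s\alpha)^2 \leq 0$, so $g$ is concave on $[0,1]$ and hence nonnegative there, with equality on $(0,1)$ only when $\alpha = 0$.

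For the two limits, the point is that each bracketed expression vanishes at the base point of the limit, so the limiting difference quotient is just the derivative there. In the $r \to 1^{-}$ case, with $g$ as above we have $g(0) = 0$ and the limit equals $\lim_{s \to 0^{+}} \frac{g(s) - g(0)}{s} = g'(0)$; since $g'(s) = -\frac{\alpha}{1-s\alpha} - \log(1-\alpha)$, this gives $g'(0) = -[\alpha + \log(1-\alpha)]$. In the $r \to 0$ case, put $h(r) = \log(1-(1-r)\alpha) - (1-r)\log(1-\alpha)$, so that $h(0) = \log(1-\alpha) - \log(1-\alpha) = 0$ and the limit equals $h'(0)$; differentiating (the inner function $1-(1-r)\alpha$ has $r$-derivative $+\alpha$) gives $h'(r) = \frac{\alpha}{1-(1-r)\alpha} + \log(1-\alpha)$, hence $h'(0) = \frac{\alpha}{1-\alpha} + \log(1-\alpha)$.

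I do not anticipate any real obstacle; the only things to keep straight are the sign in the chain rule for $h$ and the positivity of the logarithm arguments, both handled by the convex-combination remark. All functions involved are smooth near the relevant base points, so the reduction of the limits to derivatives is immediate.
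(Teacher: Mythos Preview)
Your proof is correct. The approach differs from the paper's in a small but genuine way. For the inequality, the paper fixes $r$ and studies $f(x) = \log(1-(1-r)x) - (1-r)\log(1-x)$ as a function of $x=\alpha$, computing $f'(x) = \frac{r(1-r)x}{(1-x)(1-(1-r)x)}$ and reading off that $f$ has its unique minimum $f(0)=0$; you instead fix $\alpha$ and invoke concavity of $\log$ on the convex combination $(1-s)\cdot 1 + s(1-\alpha)$, which is cleaner and avoids the derivative computation. For the limits, the paper applies L'Hopital's rule, while you observe that the bracketed expression vanishes at the relevant endpoint and identify the limit directly as a derivative; the two are equivalent in content, but your phrasing makes the $0/0$ structure explicit from the start. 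Either route is entirely adequate for this lemma.
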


\bibliographystyle{plain}
\bibliography{cite_RKHS}

\end{document}